    \tikzstyle{new style 0}=[fill={rgb,255: red,255; green,94; blue,247}, draw=black, shape=circle]
    \tikzstyle{pointy}=[fill=white, draw=black, shape=circle]
    \tikzstyle{pointy}=[->]
\newcommand{\ra}[1]{\renewcommand{\arraystretch}{#1}}
\renewcommand{\phi}{\varphi}
\newcommand{\R}{\mathbb{R}}
\newcommand{\Uu}{\mathcal{U}}
\newcommand{\eqdef}{\ensuremath{\stackrel{\mbox{\upshape\tiny def.}}{=}}}
\newtheorem{result_intro}{Informal Results}%{Result}
\definecolor{darkcerulean}{rgb}{0.03, 0.27, 0.49}
\definecolor{darkmidnightblue}{rgb}{0.0, 0.2, 0.4}
\definecolor{darkcyan}{rgb}{0.0, 0.55, 0.55}
\definecolor{mediumseagreen}{rgb}{0.24, 0.7, 0.44}
\definecolor{lightseagreen}{rgb}{0.13, 0.7, 0.67}
\definecolor{electricultramarine}{rgb}{0.25, 0.0, 1.0}
\definecolor{darkgreen}{rgb}{0.0, 0.2, 0.13}
\definecolor{deepjunglegreen}{rgb}{0.0, 0.29, 0.29}
\definecolor{darkcandyapplered}{rgb}{0.64, 0.0, 0.0}
\definecolor{darkred}{rgb}{0.55, 0.0, 0.0}
\definecolor{darkscarlet}{rgb}{0.34, 0.01, 0.1}
\definecolor{jasper}{rgb}{0.84, 0.23, 0.24}
    \definecolor{deepcarrotorange}{rgb}{0.91, 0.41, 0.17}
\definecolor{amethyst}{rgb}{0.6, 0.4, 0.8}
\definecolor{darkviolet}{rgb}{0.58, 0.0, 0.83}
\definecolor{darkpurple}{rgb}{0.58, 0.0, 0.83}
\definecolor{electricpurple}{rgb}{0.75, 0.0, 1.0}
\definecolor{darkjazzberryjam}{rgb}{0.45, 0.04, 0.37}
\definecolor{MidnightBlue}{RGB}{25,25,112}
\definecolor{MidnightBlueComplementingGreen}{RGB}{25,112,25}
\definecolor{MidnightBlueComplementingPurple}{RGB}{112,25,112}
\definecolor{MidnightBlueComplementingRed}{RGB}{112,25,69}
\definecolor{lightgray}{rgb}{0.83, 0.83, 0.83}
\definecolor{lightkhaki}{rgb}{0.94, 0.9, 0.55}
\definecolor{cadmiumorange}{rgb}{0.93, 0.53, 0.18}
\definecolor{darkseagreen}{rgb}{0.56, 0.74, 0.56}
\definecolor{babyblueeyes}{rgb}{0.63, 0.79, 0.95}
\definecolor{deepjunglegreen}{rgb}{0.0, 0.29, 0.29}
\NewDocumentCommand{\good}{}{{\color{green}\boldsymbol{\checkmark}}}
\NewDocumentCommand{\bad}{}{{\color{red}\boldsymbol{\times}}}
\def\sym{{\operatorname{sym}}}
\newcounter{totenum}
\let\Oldenumerate\enumerate
\def\enumerate{\refstepcounter{totenum}\Oldenumerate}
\newcommand{\norm}[1]{\left\lVert#1\right\rVert}
\newcommand{\abs}[1]{\left |#1\right|}
\DeclareMathOperator{\memory}{{H}}
\definecolor{darkcyan}{rgb}{0.0, 0.55, 0.55}
\definecolor{midnightgreen_eaglegreen}{rgb}{0.0, 0.29, 0.33}
\definecolor{shamrockgreen}{rgb}{0.0, 0.62, 0.38}
\definecolor{phthalogreen}{rgb}{0.07, 0.21, 0.14}
\NewDocumentCommand{\anastasis}{mo}{
    \IfValueF{#2}{
    %%%%% NO Margin Note
                        {{\scriptsize
                            \textcolor{phthalogreen}{ 
                            \textbf{A:}
                            \textit{{#1}}
                            }
                        }}
        }
    %%%% Margin Note
    \IfValueT{#2}{
                        \marginnote{{\scriptsize
                            \textcolor{phthalogreen}{ 
                            \textbf{A:}
                            \textit{{#1}}
                            }
                        }}
        }
                    }
\definecolor{iris}{rgb}{0.35, 0.31, 0.81}
\NewDocumentCommand{\reza}{mo}{
    \IfValueF{#2}{
    %%%%% NO Margin Note
                        {{\scriptsize
                            \textcolor{iris}{ 
                            \textbf{RA:}
                            \textit{{#1}}
                            }
                        }}
        }
    %%%% Margin Note
    \IfValueT{#2}{
                        \marginnote{{\scriptsize
                            \textcolor{iris}{ 
                            \textbf{RA:}
                            \textit{{#1}}
                            }
                        }}
        }
                    }
\NewDocumentCommand{\luca}{mo}{
    \IfValueF{#2}{
    %%%%% NO Margin Note
                        {{\scriptsize
                            \textcolor{deepjunglegreen}{ 
                            \textbf{L:}
                            \textit{{#1}}
                            }
                        }}
        }
    %%%% Margin Note
    \IfValueT{#2}{
                        \marginnote{{\scriptsize
                            \textcolor{deepjunglegreen}{ 
                            \textbf{L:}
                            \textit{{#1}}
                            }
                        }}
        }
                    }
\NewDocumentCommand{\giulia}{mo}{
    \IfValueF{#2}{
    %%%%% NO Margin Note
                        {{\scriptsize
                            \textcolor{red}{ 
                            \textbf{GL:}
                            \textit{{#1}}
                            }
                        }}
        }
    %%%% Margin Note
    \IfValueT{#2}{
                        \marginnote{{\scriptsize
                            \textcolor{red}{ 
                            \textbf{GL:}
                            \textit{{#1}}
                            }
                        }}
        }
}
\NewDocumentCommand{\john}{mo}{
    \IfValueF{#2}{
    %%%%% NO Margin Note
                        {{\scriptsize
                            \textcolor{amethyst}{ 
                            \textbf{JA:}
                            \textit{{#1}}
                            }
                        }}
        }
    %%%% Margin Note
    \IfValueT{#2}{
                        \marginnote{{\scriptsize
                            \textcolor{amethyst}{ 
                            \textbf{L:}
                            \textit{{#1}}
                            }
                        }}
        }
                    }
\DeclareFontFamily{U}{rcjhbltx}{}
\DeclareFontShape{U}{rcjhbltx}{m}{n}{<->rcjhbltx}{}
\DeclareSymbolFont{hebrewletters}{U}{rcjhbltx}{m}{n}
\author{Reza Arabpour\footnote{Vector Institute and Department of Mathematics, McMaster University.},
Luca Galimberti\footnote{Department of Mathematics, King's College London.}, 
Anastasis Kratsios\footnote{Vector Institute and Department of Mathematics, McMaster University.}, 
Giulia Livieri\footnote{Department of Statistics, London School of Economics and Political Science.},
John Armstrong\footnote{Department of Mathematics, King's College London.}}
\newcommand{\code}{%
    \url{https://github.com/arabporr/HyperNetwork}
}
\numberwithin{equation}{section} 
\begin{document}

% JMLR
\ShortHeadings{Low-dim. approximations of conditional Volterra processes via NPC geometry}{Arabpour, Armstrong, Galimberti, Kratsios, Livieri}
\title{Low-dimensional approximations of the conditional law of Volterra processes: a non-positive curvature approach}
\author{\name Reza Arabpour\footnotemark[1]\thanks{Alphabetic ordering.}
\email arabpour@mcmaster.ca\\ 
\addr Vector Institute and Department of Mathematics, McMaster University
\AND 
\name John Armstrong\footnotemark[1]
\email john.armstrong@kcl.ac.uk\\
\addr Department of Mathematics, King's College London.
\AND
\name Luca Galimberti\footnotemark[1]~\footnotemark[2]\thanks{Primary authors, equal contribution, alphabetical ordering.}
\email luca.galimberti@kcl.ac.uk\\
\addr Department of Mathematics, King's College London
\AND
\name Anastasis Kratsios\footnotemark[1]~\footnotemark[2]~\thanks{Corresponding author.}
\email kratsioa@mcmaster.ca\\
\addr Vector Institute and Department of Mathematics, McMaster University
\AND
\name Giulia Livieri\footnotemark[1]~\footnotemark[2]
\email g.livieri@lse.ac.uk\\
\addr Department of Statistics, London School of Economics and Political Science.
}

\editor{TBD}

\maketitle

\begin{abstract}
Predicting the conditional evolution of Volterra processes with stochastic volatility is a crucial challenge in mathematical finance. While deep neural network models offer promise in approximating the conditional law of such processes, their effectiveness is hindered by the curse of dimensionality caused by the infinite dimensionality and non-smooth nature of these problems. To address this, we propose a two-step solution. Firstly, we develop a stable dimension reduction technique, projecting the law of a reasonably broad class of Volterra process onto a low-dimensional statistical manifold of non-positive sectional curvature.  Next, we introduce a sequentially deep learning model tailored to the manifold's geometry, which we show can approximate the projected conditional law of the Volterra process.  Our model leverages an auxiliary hypernetwork to dynamically update its internal parameters, allowing it to encode non-stationary dynamics of the Volterra process, and it can be interpreted as a gating mechanism in a mixture of expert models where each expert is specialized at a specific point in time.  Our hypernetwork further allows us to achieve approximation rates that would seemingly only be possible with very large networks.
\end{abstract}
\noindent
{\itshape Keywords:} Geometric Deep Learning, Measure-Valued Stochastic Processes, Non-Positive Curvature, Barycenters, Universal Approximation, hypernetworks, Mixture of Experts.
\noindent
% {\bf Mathematics Subject Classification (2020):} \textbf{TBD}.

\section{Introduction}
\label{s:Intro}
The dynamic prediction of the conditional distribution $\mathbb{P}[X_{t+1}|X_{[0:t]}=x_{[0:t]}]$ of a non-Markovian Volterra process $X_{\cdot}$, conditioned on its realized path $x_{[0:t]}$ up to any given time $t$, is a fundamental problem spanning the sciences, ranging from Bayesian modelling (e.g., \cite{BarnardoSmith_BayesianStatistics_1994}) to mathematical finance (e.g., \cite{back2004incomplete}).  While there are several machine learning models for learning a process conditional distribution on its historical paths; e.g.\ \cite{bishop1994mixture, gauthier2014conditional, mirza2014conditional, borovykh2017conditional, chevyrev2022signature} and deep learning models which can approximate signed measure-valued functions~\cite{chen1995universal,korolev2022two,benth2023neural,cuchiero2023global}, the available quantitative approximation bounds for measure-valued models; e.g.\ \cite{kratsios2021universal,Acciaio2022_GHT} suggest that the measure-valued maps cannot be efficiently approximated.  This is due to two factors; firstly, they are infinite-dimensional, meaning they suffer from extreme forms of the curse of dimensionality; see \cite{lanthaler2023curse} for a lower-bound in the linear case.  Secondly, most spaces of probability measures, e.g.\ Wasserstein spaces, do not have any smooth or linear structure, which a deep learning model can naturally leverage. To the best of the authors' knowledge, there are currently no available deep-learning models which can approximate the evolving conditional distribution of most stochastic processes while also depending on a computationally feasible number of parameters.  

To address this issues, we present a two-step approach for dynamically approximating the conditional evolution of a class of discrete-time stochastic processes $X_{\cdot}$.  By allowing for an irreducible dimension-reduction-type error, our first step corrects the deficits of the geometries of most spaces of probability measures by projecting $\mathbb{P}[X_{t+1}|X_{[0:t]}=x_{[0:t]}]$ onto the (smooth) Riemannian manifold $\mathcal{N}_d$ of non-singular $d$-dimensional Gaussian measures equipped with a new Riemannian metric which has a non-positively curved geometry when the $d\ge 2$. This can be contrasted with the standard Fisher(-Rao) metric \citep{Dowty_2018__ChentsovExponentialFamilies}, from information geometry, which is only non-positively curved when $d=1$~\citep{Skovgaard_FisherRaoGeometry_NonDegenreateGaussians__1984}.  Critically, we show that (refer to Theorem~\ref{thrm:Projection_Results}), though our new projection shares an analogous characterization as ``distance'' minimizer to information-theoretic projections, e.g.~the $I$-projection \cite{csiszar2003information,CsiszarIprojectionIntro_1975}, it preserves the dependence of $\mathbb{P}[X_{t+1}|X_{[0:t]}=x_{[0:t]}]$ on the realized path $x_{[0:t]}$ in a stable way as it is Lipschitz.  Furthermore, the dependence of the projected dynamical system (see below) on the historical paths generated by $X_{\cdot}$ is found to be proportional to the persistence of the memory of the stochastic process itself  (Theorem~\ref{thrm:VanishingMemoryProperty_Qx}).

The projection of the conditional distribution $\mathbb{P}[X_{t+1}|X_{[0:t]}=x_{[0:t]}]$ onto $\mathcal{N}_d$, defines a dynamical system on the Riemannian manifold $\mathcal{N}_d$.  This long memory dynamical system is then approximated by a dynamic version of the geometric deep learning model of \cite{kratsios2021universal}.  We prove a quantitative universal approximation theorem which shows that this system can be approximated by our \textit{sequential} deep learning model (Theorem~\ref{theorem:optimal_GDN_Rates__ReLUActivation}). Our result has the following properties: 1) it shows that any infinite-memory dynamical systems between non-positively curved Riemannian manifolds, e.g.\ generalized hyperbolic spaces, can be approximated provided that its memory fades. This is similar to results in the reservoir computing literature \cite{GrigoryevaOrtegaUniversalDiscretTime2018_JMLR,grigoryeva2019differentiable}.  2) It shows that the geometric deep learning model does not suffer from the curse of dimensionality when the dynamical systems evolve according to smooth dynamics; in particular, this happens for the projected conditional law of $X_{\cdot}$ when its generator is ``smooth'' and has no hidden random state.  3) In the static setting, i.e.\ when ignoring the flow of time, our bounds are polynomially tighter than the non-Euclidean universal approximation theorems of \cite{kratsios2021universal,Acciaio2022_GHT} and in the dynamic setting, our bounds are exponentially tighter than the static setting.  4) Our constructive proof yields a federated algorithm where a sequence of independent experts approximate the dynamical system at individual points in time, after which an overarching hypernetwork \cite{ha2017hypernetworks} is used to synchronize them and create the recurrence without having to optimize (backpropagate) in time.  
While our approximation theorems hold for dynamical systems on any non-positively curved Riemannian manifold, we focus our exposition on the statistical manifold $\mathcal{N}_d$. 
We empirically verify the trainability of the proposed model and the role of each component via an ablation study evaluating the dependence of the model on its parameters. 

All code can be found at \code.

\paragraph{Outline} The paper is organized as follows.
\begin{enumerate}
    \item Section~\ref{s:Prelim} reviews notions from metric geometry required to formulate our main results.  
    \item Section~\ref{s:Prob_Setting} formalizes the setting and introduces our new perturbation of the Fisher metric on $\mathcal{N}_d$, which has a well-behaved geometry. 
    \item Section~\ref{s:Projection} contains our results on the Gaussian random projection of the conditional distribution of Volterra processes.
    \item Section~\ref{s:Approximation} contains universal approximation theorems for our model, first in the static and then in the dynamic settings.  
    \item Section~\ref{s:Numerics} contains an ablation study of our model, which confirms our theoretical results.  
\end{enumerate}
All proofs are deferred to the appendix, which also describes results from differential geometry for which we have been unable to provide a proper reference in the literature. The appendix also contains an example outlining the problem with information projection for dimension reduction of the conditional law of Volterra processes.  

We assume the reader to be familiar with the terminology of stochastic processes and optimal transport.  Both notions are widely used in machine learning, for example in denoising diffusion models \cite{sohl2015deep} and generative adversarial networks \cite{arjovsky2017wasserstein}.

\subsection{Related Work}
\label{s:Introduction__ss:RelatedWork}

Our results focus on the broad class of stochastic processes known as \textit{Volterra processes}, which represent a rich yet well-structured class of \textit{non-Markovian} stochastic differential equations (SDEs, henceforth) with latent stochastic factors. Both the discrete and the continuous versions of stochastic Volterra processes, and their generalizations~\cite{jaber2024polynomial}, play a crucial role in mathematical finance (e.g., \citep{jacquier2018vix,AbiJaberLarssonPulido_AffinVolterraProceeses_2019_AnnProb,ChristaJosef_2020_JEE_GenFellerStochVoltProcessesAffine,BondiLivieriPulido_2024_SPA}, reservoir computing (e.g., \cite{grigoryeva2019differentiable,gonon2022reservoir}), engineering (e.g., \cite{shiki2017application}), and computational biology (e.g., \cite{korenberg1996identification}). 

Our projection of the conditional law of stochastic Volterra processes exploits the existence and stability of barycenters of probability measures on Riemannian manifolds of non-positive curvature, in the sense of \cite{Aleksandrov_OG} and \cite{Revetnyak_1960}.  Various notions of barycenters, e.g.\ Wasserstein barycenters \citep{cuturi2014fast,srivastava2018scalable,puccetti2020computation,altschuler2021wasserstein,kolesov2024estimating,kolesov2024energyguided}, have been used in generative modelling as they provide a natural tool for averaging distributions.  The new element of our approach is to consider barycenters of distributions, also called the Karcher/Fr\'{e}chet mean, whose definition is rooted in Riemannian \citep{grove1973conjugate,karcher1977riemannian,kim2020nonpositive} and metric geometry \citep{SturmMartingalesNPCPt2,Sturm_2003,ohta2012barycenters,MendelNaor_Barycenter_SpectralExtension_2013,Yokota_BarycenterCat1_2017}.  These provide a non-linear generalization of the expectation of a random variable taking values in a non-linear space and are commonly used in computational geometry \citep{BiniBruno_KarcherMeanPSD_2103}, shape analysis \citep{le2001locating,fletcher2004principal}, geometric statistics \citep{LimPalfia_2020_SLOLN_Karchermean,bhattacharya2003large_AnnStatI,bhattacharya2003large_AnnStatII}, and recently in differential privacy \citep{utpala2023differentially}.  We will use them to determine the single most representative $d$-dimensional non-singular Gaussian measure, amongst a family of random Gaussian measures in $\mathcal{N}_d$ in which we encode the conditional law of the Volterra process.  

This notion of barycenter, or intrinsic expectation, is only well-defined globally for manifolds with non-positive curvature \citep{afsari2011riemannian}.  Examples include the hyperbolic spaces used in hierarchical clustering \citep{chami2020trees} or latent graph inference for GNNs \citep{de2022latent}.
Such geometries appear for various statistical manifolds  \citep{pinele2020fisher,le2021fisher} with the Fisher-Rao metric  \citep{AyJostLeSchwachhofer__InformationGeometryBook_2017}; notably the \textit{one-dimensional} non-degenerate Gaussian distributions \citep{AtkinsonMitchell_1981_SPD,CostaSummaryArticle_2015,pinele2020fisher}.  
However, as shown in \citep{Skovgaard_FisherRaoGeometry_NonDegenreateGaussians__1984}, the space $\mathcal{N}_d$ of multivariate non-degenerate Gaussian distributions fails to have non-positive curvature in dimension $2$ or greater.  Our perturbation of the standard information-theoretic geometry on the statistical manifold $\mathcal{N}_d$ 
(see Proposition~\ref{prop:NPC_FisherRao})
identifies a non-positively curved complete Riemannian geometry on $\mathcal{N}_d$ which coincides with the standard information geometry on any submanifolds of Gaussian distributions with fixed mean.
This allows us to apply the intuition from information geometry, but with the analytic benefit of a non-positive metric geometry.
Moreover our construction provides a closed-form expression for the Riemannian distance: for typical statistical families, the Riemannian distance in the Fisher-Rao metric can only be computed numerically.

Projecting the conditional distribution of the stochastic Volterra process $X_{\cdot}$, conditioned on its realized path $x_{[0:t]}$ up to any time $t$, down to $\mathcal{N}_d$ results in a (generalized) dynamical system between finite-dimensional spaces.  Since all the resulting spaces are finite-dimensional and well-structured, one can reasonably hope that this system can be approximated without the curse of dimensionality if the involved maps are smooth enough, a feature not shared by infinite-dimensional approximation problems \citep{lanthaler2023curse,Acciaio2022_GHT}. There is a well-developed literature on the approximation of dynamical systems by recurrent deep learning models such as reservoir computers \citep{GrigoryevaOrtegaUniversalDiscretTime2018_JMLR,gonon2019reservoir,grigoryeva2019differentiable,Gonon2022NNs}, recurrent neural networks \citep{LiJMLR2022,RecepHutterHelmut_2022_AHA__RNNs}, or transformers \cite{Yun2020Are}. 
However, the available universal approximation results in the literature
only apply to dynamical systems between linear input spaces and systems whose dynamics do not change in time.  We, therefore first extend the static \textit{geometric deep learning} of \cite{kratsios2021universal} to a sequential/dynamic model capable of processing \textit{sequences} of inputs and outputs in any given appropriate pair of non-positively curved Riemannian manifolds, e.g.\ on $\mathcal{N}_d$, and we then prove a universal approximation showing that it can approximate most time-inhomogeneous dynamical systems between these spaces, possibly having infinite but polynomially fading memory (Theorem~\ref{defn:HCGNs}).  

Our estimates on the parameters and guarantees are novel even in the Euclidean setting:  many of the aforementioned approximation theorems for recurrent neural networks (RNNs)  are only qualitative and no rates are provided, especially for low-regularity dynamical systems with slowly-fading infinite memory.  
Even in the static setting, our results significantly improve the rates for the available universal approximation theorems for deep learning models between Riemannian manifolds \cite{kratsios2021universal,Acciaio2022_GHT}.  Furthermore, in the static setting (Theorem~\ref{thrm:approx_HGCNN}), our approximate rates are optimal since they match those of \cite{pmlr-v75-yarotsky18a,yarotsky2020phase,LuShenYangZhang_2021_UATRegularTargets,ShenYangHaizhaoZhang_OptimalReLU_2022} in the special case where the input space is Euclidean and the output space is the real line.  In the static setting, there are other geometric deep learning models defined between Riemannian manifolds such as the geodesic convolutional model of~\citep{GCNNs_Bronstein_2015}, but no approximation theory for these models has yet been developed.

Mixture of experts (MoE) models such as DBRX~\cite{dbrx_MosaicAI2024},  Gemini~\citep{Gemini}, Switch Transformers~\cite{moe_switch_fedus2022switch}, Mixtral~\cite{moe_jiang2024mixtral}, and many others (e.g.~\cite{saad2023active,chowdhury2023patch,li2024merge,puigcerver2024from,MoERaeidStasisandfriends2024}) have taken a central role in modern deep learning, due to the need to scale-up the model complexity while maintaining a constant computational cost on the forward pass~\cite{borde2024breaking,kratsios2024mixtureInfDim}.  This is achieved via a gating mechanism which routes any given input to one of a large number of ``expert'' neural network models, which is then used to produce a prediction from that input.  Thus, only the gating network parameters and the selected ``expert'' neural network are ever activated for that input.  One can interpret our (hyper-geometric network) as a mixture of infinitely many experts, and each of them specializes in predicting at exactly one specific moment in time.  The hypernetwork in our HGN model acts as a gating mechanism which, given the current point in time, routes the input to the corresponding expert at that moment in time.

\section{Geometric Background}
\label{s:Prelim}
We briefly overview some of the relevant notions from the geometry of non-positively curved metric spaces, also called (global) NPC spaces or Hadamard spaces. For further details we
recommend the book by \cite{BridsonHaefliger_1999Book} as well as the results of \cite{Sturm_2003}.
% With slight abuse of notation, this section follows the usual convention that $d$ refers to a metric.
% \giulia{I know that it is clear from the context, but $d$ now is the distance.}
\begin{definition}[NPC space]
\label{def:NPC}
A metric space $(N,d)$ is called global NPC space if it is complete and for each $x_0, x_1 \in N$ there exists a point 
 $y\in N$ with the property that for all points $z\in N$
\[
d^2(z,y) \le \frac{1}{2} d^2(z,x_0) + \frac{1}{2}d^2(z,x_1) - \frac1{4}d^2(x_0,x_1)
.\]
\end{definition}
In particular, $(N,d)$ is contractible and hence simply connected.

A primary example of global NPC spaces is provided by Cartan-Hadamard manifolds. 

\begin{proposition}[Manifolds (Proposition 3.1 in \cite{Sturm_2003}]\label{prop:sturmone}
Let $(N, g)$ be a Riemannian manifold and let $d$ be its induced Riemannian distance. Then $(N, d)$ is a global NPC space if and only if it is complete, simply connected and of non-positive (sectional) curvature.
\end{proposition}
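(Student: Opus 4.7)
The statement is the standard Cartan--Hadamard characterization of global NPC (equivalently, CAT(0)) Riemannian manifolds, so I would split the argument into the two implications and reduce each to classical results in comparison geometry.

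For the forward direction, assume $(N,d)$ is a global NPC space. Completeness is built into Definition~\ref{def:NPC}, and the simply connectedness was already noted as a consequence of contractibility right after that definition, so only the curvature bound requires work. The plan is to show that the four--point NPC inequality implies the CAT(0) comparison inequality for (small) geodesic triangles: given the point $y$ in Definition~\ref{def:NPC}, one first verifies that $y$ is the midpoint of a geodesic from $x_0$ to $x_1$ (uniqueness and existence of midpoints come from completeness together with the NPC inequality applied with $z=x_0,x_1$), so that the NPC inequality becomes the classical Bruhat--Tits/semi--parallelogram law. Iterating it along dyadic subdivisions of a geodesic triangle produces the CAT(0) comparison between distances in $N$ and in the Euclidean comparison triangle. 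Finally, in the Riemannian setting one translates the CAT(0) condition into an infinitesimal statement on Jacobi fields: expanding both sides in normal coordinates to fourth order shows that the sectional curvature of every two-plane $\sigma\subset T_pN$ must satisfy $K(\sigma)\le 0$.

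For the backward direction, assume $(N,g)$ is complete, simply connected and has $\sec\le 0$. The plan is to invoke the Cartan--Hadamard theorem, which asserts that for every $p\in N$ the exponential map $\exp_p\colon T_pN\to N$ is a diffeomorphism; in particular, any two points are joined by a unique minimizing geodesic and midpoints exist and are unique. Next I would apply the Rauch/Toponogov comparison theorem with the Euclidean model space: since $\sec\le 0$, geodesic triangles in $N$ are ``thinner'' than their Euclidean comparison triangles. Specialized to the triangle with vertices $x_0, x_1, z$ and letting $y$ be the midpoint of the geodesic from $x_0$ to $x_1$, this thinness is exactly the median (Apollonius) inequality
\[
d^2(z,y)\le \tfrac12 d^2(z,x_0)+\tfrac12 d^2(z,x_1)-\tfrac14 d^2(x_0,x_1),
\]
which is the NPC inequality. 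Together with completeness this yields that $(N,d)$ is a global NPC space.

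The main obstacle is the forward direction, since the NPC property is a global four--point inequality while sectional curvature is an infinitesimal invariant; one has to localize the four--point condition to a CAT(0) triangle comparison and then pass to the infinitesimal limit via Jacobi field expansions. The backward direction is essentially an application of Cartan--Hadamard plus Toponogov and is mostly bookkeeping once the correct comparison triangle is chosen.
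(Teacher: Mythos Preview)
Your proof plan is correct and follows the standard Cartan--Hadamard/CAT(0) route. However, there is nothing to compare against: the paper does not prove this proposition at all. It is stated as a quoted background result, attributed directly to \cite[Proposition~3.1]{Sturm_2003}, and is used only as a black box (e.g.\ in the proof of Proposition~\ref{prop:NPC_FisherRao} and Theorem~\ref{thrm:VanishingMemoryProperty_Qx}) to conclude that a complete, simply connected Riemannian manifold with non-positive sectional curvature is a global NPC space. So your sketch goes well beyond what the paper supplies; if anything, it is closer to what one would find in Sturm's monograph or in \cite{BridsonHaefliger_1999Book}.
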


We will describe some results on barycenters of probability measures on metric spaces of NPC. First, we need to introduce the following notation; cfr.~\cite{Sturm_2003}, Section 4. Let $(N,d)$ be a complete metric space. Denote by $\mathcal{P}(N)$ the set of all probability measures $p$  on $(N,\mathcal{B}(N))$ with separable support $\text{supp}(p)\subset N$. For $1 \leq \theta < \infty$, $\mathcal{P}^{\theta}(N)$ denotes the set of $p \in \mathcal{P}(N)$ with $\int_N d^{\theta}(x,y)p(dy)<\infty$ for some (hence all) $x \in N$, and $\mathcal{P}^{\infty}(N)$ denotes the set of all $p \in \mathcal{P}(N)$ with bounded support. Denote by $\mathcal{W}_1$ the $(L^1-)$ Wasserstein (or Kantorovich-Rubinstein) distance on $\mathcal{P}^{1}(N)$. Let $(\Omega, \mathcal{F}, \mathbb{P})$ be an arbitrary probability space and $X:\Omega\rightarrow N$ a \textit{strongly}\footnote{Given a measurable space $(M,\mathcal{M})$ and $(N, d)$ a metric space, a map $f:M\rightarrow N$ is called \textit{strongly measurable} %iff it is the (point-wise) limit of a elementary measurable maps or, equivalently, 
iff it is $\mathcal{M}/\mathcal{B}(N)$ measurable and has separable range $f(M)$.} measurable map. It defines a probability measure $X_{*}\mathbb{P}\in\mathcal{P}(N)$, called the \textit{push forward measure} of $\mathbb{P}$ under $X$, by
\begin{equation*}
    X_{*}\mathbb{P}[A] \eqdef \mathbb{P}[X^{-1}(A)] =\mathbb{P}[\omega\in\Omega\,:\,X(\omega)\in A],\quad\quad\forall A \in \mathcal{B}(N).
\end{equation*}
In probabilistic language, a strongly measurable map $X:\Omega\rightarrow N$ is called $N$-valued \textit{random variable}, the push forward measure $X_{*}\mathbb{P}$ is called \textit{distribution} of $X$ and denoted by $\mathbb{P}_{X}$. In particular $\mathbb{P}_{X} \in \mathcal{P}^{\theta}(N) \Leftrightarrow X \in L^{\theta}(\Omega, N)$ where $L^{\theta}(\Omega, N)$ denotes the set of all (strongly) measurable maps $f:\Omega\to N$ such that $\int_\Omega d^\theta(x,f(\omega))\mathbb P(d\omega)<\infty$ for some (and hence all) $x\in N$. We have the following     
\begin{proposition}[Existence of barycenters (Proposition 4.3 in \cite{Sturm_2003})]
\label{prop:sturmtwo}
    Let $(N, d)$ be a global NPC space and fix $y \in N$. For each $q \in \mathcal{P}^{1}(N)$ there exists a unique point $z \in N$ which minimizes the continuous function $z 
\mapsto \int_{N} [d^2(z, x)-d^2(y, x)]\,q(dx)$. This point is independent of $y$: it is called barycenter (or, more precisely, $d^2$-barycenter) of $q$ and denoted by
    \begin{equation*}
        \beta(q) = \underset{z \in N}{\operatorname{argmin}} \int_{N} [d^2(z, x)-d^2(y, x)]\,q(dx).
    \end{equation*}
If $q \in \mathcal{P}^{2}(N)$, then $\beta(q) = \underset{z \in N}{\operatorname{argmin}} \int_{N} d^2(z, x)\,q(dx)$.
\end{proposition}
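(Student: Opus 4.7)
The plan is to combine standard convex-analytic arguments with the NPC inequality of Definition~\ref{def:NPC}. First I would verify that the functional
\begin{equation*}
F_y(z) \eqdef \int_N \bigl[d^2(z,x) - d^2(y,x)\bigr]\,q(dx)
\end{equation*}
is finite, continuous, and independent of $y$ in the argmin sense. Finiteness follows from the identity $|d^2(z,x) - d^2(y,x)| \le d(z,y)\bigl(d(z,x) + d(y,x)\bigr)$ combined with the triangle inequality and $q \in \mathcal{P}^1(N)$, which also yields a local Lipschitz estimate in $z$ and hence continuity via dominated convergence. Swapping $y$ for another base point $y'$ shifts $F_y$ by the $z$-independent (and $q$-integrable) additive constant $\int_N [d^2(y,x) - d^2(y',x)]\,q(dx)$, so the minimizer is intrinsic to $q$.

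The workhorse of the proof is strong convexity along midpoints. Inserting $z = x_0$ and then $z = x_1$ in the inequality defining Definition~\ref{def:NPC}, I would first observe that the witness point $y$ is forced to be a \emph{metric midpoint} of $x_0, x_1$, so that $(N,d)$ admits midpoints and, upon iterating together with completeness, is a geodesic space. Applying the NPC inequality pointwise in $x$ with arbitrary $z_0, z_1 \in N$ and their midpoint $z_{1/2}$, then integrating against $q$, yields
\begin{equation*}
F_y(z_{1/2}) \le \tfrac{1}{2} F_y(z_0) + \tfrac{1}{2} F_y(z_1) - \tfrac{1}{4} d^2(z_0, z_1).
\end{equation*}
Uniqueness is immediate: if both $z_0$ and $z_1$ attain $m \eqdef \inf_N F_y$, then $m \le F_y(z_{1/2}) \le m - \tfrac{1}{4} d^2(z_0, z_1)$, forcing $z_0 = z_1$.

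For existence, I would pick a minimizing sequence $(z_n)_n$ with $F_y(z_n) \to m$, apply the same inequality to each pair $z_n, z_m$ at their midpoint $z_{n,m}$, and deduce
\begin{equation*}
\tfrac{1}{4} d^2(z_n, z_m) \le \tfrac{1}{2} F_y(z_n) + \tfrac{1}{2} F_y(z_m) - m,
\end{equation*}
which tends to $0$ as $n, m \to \infty$. Thus $(z_n)$ is Cauchy and, by completeness of $N$, converges to some $z^\ast \in N$; continuity of $F_y$ then gives $F_y(z^\ast) = m$, so $\beta(q) \eqdef z^\ast$ is the unique minimizer. For the second assertion, when $q \in \mathcal{P}^2(N)$ the map $z \mapsto \int_N d^2(z,x)\,q(dx)$ is finite on $N$ by the quadratic triangle bound $d^2(z,x) \le 2 d^2(z,y) + 2 d^2(y,x)$, and it differs from $F_y$ by the $z$-independent additive constant $\int_N d^2(y,x)\,q(dx)$, so the two argmins coincide.

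The main obstacle is identifying the ``witness point'' $y$ in Definition~\ref{def:NPC} with an actual geodesic midpoint of $x_0, x_1$: without this identification one cannot iterate to pass from midpoint convexity to convexity of $F_y$ along full geodesics, and the Cauchy estimate above breaks down. I would resolve this by the two-line calculation mentioned above, namely testing the NPC inequality at $z = x_0$ and $z = x_1$ to conclude $d(y, x_0) = d(y, x_1) = \tfrac{1}{2} d(x_0, x_1)$. Once this is in place, the remainder of the argument is essentially the parallelogram-law proof of existence of projections onto closed convex sets in Hilbert space, transported to the NPC setting.
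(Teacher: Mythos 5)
The paper itself does not prove this statement; it quotes it verbatim as \citep[Proposition 4.3]{Sturm_2003}, so you are reconstructing Sturm's argument from scratch. Your reconstruction is correct in outline and correctly locates the key mechanism (midpoint-level uniform convexity of $F_y$ plus completeness), but it has one genuine gap that you do not address and that you in fact misdiagnose at the end.

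The gap is that you never establish $m \eqdef \inf_{z\in N} F_y(z) > -\infty$. Your uniqueness argument is vacuously fine if $m=-\infty$, but your existence argument collapses: the inequality
\begin{equation*}
\tfrac14 d^2(z_n,z_m) \le \tfrac12 F_y(z_n) + \tfrac12 F_y(z_m) - m
\end{equation*}
only forces Cauchy if $m$ is a finite real number, and for $q\in\mathcal{P}^1(N)$ that is not obvious. Your pointwise bound $\abs{d^2(z,x)-d^2(y,x)}\le d(z,y)(d(z,x)+d(y,x))$ gives only $F_y(z)\ge -d^2(z,y)-2C_1 d(z,y)$ with $C_1\eqdef\int d(y,x)\,q(dx)$, which is \emph{not} bounded below. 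Coercivity is not free here the way it is in the Hilbert-space parallelogram argument you invoke, precisely because $q$ is only in $\mathcal{P}^1$: the clean bound $F_y(z)\ge\tfrac12 d^2(z,y)-2\int d^2(y,x)\,q(dx)$ would require a finite second moment. The fix is a short bootstrap using the very midpoint inequality you already proved. Specializing it with $z_1=y$ and $z_0=z$ (so $F_y(y)=0$) gives $F_y(z)\ge 2F_y(\mathrm{mid}(z,y)) + \tfrac12 d^2(z,y)$. Feeding the coarse lower bound into the right-hand side at $w=\mathrm{mid}(z,y)$ (with $d(w,y)=\tfrac12 d(z,y)$) improves it to $F_y(z)\ge -2C_1 d(z,y)$; feeding \emph{that} back in once more yields $F_y(z)\ge\tfrac12 d^2(z,y) - 2C_1 d(z,y)$, which is coercive in $d(z,y)$. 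That gives $m>-\infty$ and the rest of your proof then goes through.

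One smaller point: the ``main obstacle'' paragraph at the end is off target. For the Cauchy estimate you only need midpoint convexity, not convexity of $F_y$ along full geodesics, and Definition~\ref{def:NPC} already supplies a witness point for each pair $(z_n,z_m)$; identifying it as the metric midpoint is a pleasant fact (and your two-line check of it is correct) but is not what makes or breaks the argument. The real obstacle is the lower boundedness/coercivity of $F_y$ on $\mathcal{P}^1(N)$.
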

As an example, if $q = \delta_{x_0}$, then trivially $\beta(q)=x_0$.
For $X \in L^{1}(\Omega, N)$ we define its \textit{expectation} by 
\begin{equation*}
    \mathbb{E}[X]  \eqdef  \underset{z \in N}{\operatorname{argmin}}\,\mathbb{E}[d^2(z, X)-d^2(y, X)] = \beta(\mathbb{P}_{X}).
\end{equation*}
\noindent That is, $\mathbb{E}[X]$ is the unique minimizer of the function $z \mapsto \int_{N} [d^2(z, x)-d^2(y, x)]\mathbb{P}_{X}(\,dx)$ on $N$ (for each fixed $y \in N$). 
We recall the following:
\begin{theorem}[Fundamental contraction property; {\citep[Theorem 6.3]{Sturm_2003})}]
\label{thm:sturmthree}
    For any given $p,q\,\in\mathcal{P}^{1}(N)$ it holds
    \begin{equation}\label{eq:fundcontprop}
    d(b(p), b(q)) \leq \mathcal{W}_1(p, q).
\end{equation} 
\end{theorem}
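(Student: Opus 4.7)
The plan is to combine two structural features of global NPC spaces---the variance inequality that characterises the barycenter as a minimiser, and the quadrilateral (Reshetnyak) inequality---and then to optimise over couplings in order to introduce the Wasserstein-1 metric on the right-hand side.

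First I would establish a \emph{variance inequality}: for every $p \in \mathcal{P}^1(N)$ and every $z \in N$,
\[
\int_N \bigl[d^2(z,x) - d^2(b(p), x)\bigr]\, p(dx) \;\ge\; d^2(z, b(p)).
\]
This is the key quantitative refinement of the defining minimisation property of $b(p)$ in Proposition~\ref{prop:sturmtwo}: one inserts the midpoint of the unique geodesic between $b(p)$ and $z$ into the NPC inequality of Definition~\ref{def:NPC}, integrates against $p$, uses minimality of $b(p)$ on the one-parameter family of such midpoints, and rearranges. Integrability in the $\mathcal{P}^1$ setting follows from the crude bound $|d^2(z,x) - d^2(b(p),x)| \le d(z,b(p))\,[d(z,x) + d(b(p),x)]$, which in turn uses the triangle inequality together with $p \in \mathcal{P}^1(N)$.

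Second I would apply the variance inequality twice---once with $z = b(q)$ against $p$ and once with $z = b(p)$ against $q$---and add the two inequalities. Fixing an arbitrary coupling $\pi \in \Pi(p,q)$ and rewriting the two single-integrals as an integral against $\pi$, the resulting bound becomes
\[
2\, d^2(b(p), b(q)) \;\le\; \int_{N \times N} \Delta(x,y)\, \pi(dx, dy),\quad \Delta(x,y) \eqdef \bigl[d^2(b(q),x) - d^2(b(p),x)\bigr] - \bigl[d^2(b(q),y) - d^2(b(p),y)\bigr].
\]
The critical third step is to control $\Delta(x,y)$ \emph{linearly} in $d(x,y)$. This is exactly the content of the \emph{Reshetnyak quadrilateral inequality}, a well-known consequence of the NPC condition: for any $a,b,c,d \in N$,
\[
d^2(a,c) + d^2(b,d) \;\le\; d^2(a,d) + d^2(b,c) + 2\, d(a,b)\, d(c,d).
\]
Applying it with $(a,b,c,d) = (b(p), b(q), y, x)$ yields $\Delta(x,y) \le 2\, d(b(p), b(q))\, d(x,y)$. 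Plugging this into the previous display, dividing by $2\, d(b(p), b(q))$ (the statement is trivial if this vanishes), and taking the infimum over $\pi \in \Pi(p,q)$ produces $d(b(p), b(q)) \le \mathcal{W}_1(p,q)$.

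The principal obstacle is the Reshetnyak quadrilateral inequality itself, which is not recalled in the excerpt but is equivalent to the NPC condition of Definition~\ref{def:NPC}; proving it requires a polarisation-style argument built on midpoints of geodesics and their comparison with Euclidean quadrilaterals. A secondary technical subtlety is that the variance inequality is most naturally derived for $\mathcal{P}^2(N)$ measures, so one must be careful that the formally infinite quantities $\int d^2(z,x)\,p(dx)$ never appear in isolation: only the differences $d^2(z,x) - d^2(b(p),x)$ enter, which is why the $\mathcal{P}^1$ integrability bound above is essential.
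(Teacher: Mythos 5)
The paper does not give its own proof of this theorem; it is quoted directly from \citep[Theorem 6.3]{Sturm_2003}, so there is nothing internal to compare against. Your argument is correct and is in fact Sturm's own route: the variance inequality (Sturm's Proposition~4.4) applied to both $p$ and $q$, combined with the quadrilateral cosine inequality (a form of Reshetnyak's inequality valid in every global NPC space), followed by optimization over couplings.

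Two small remarks on the points you flagged yourself. First, to obtain the constant $1$ rather than $1/2$ in the variance inequality, a single midpoint application of Definition~\ref{def:NPC} is not enough; one must let the comparison point tend to $b(p)$ along the geodesic to $z$, i.e.\ differentiate $t\mapsto\int_N[d^2(\gamma(t),x)-d^2(y,x)]\,p(dx)$ at $t=0$ using the one-sided convexity estimate $d^2(\gamma(t),x)\le(1-t)d^2(\gamma(0),x)+t\,d^2(\gamma(1),x)-t(1-t)d^2(\gamma(0),\gamma(1))$; this is presumably what your ``one-parameter family of midpoints'' alludes to, but a literal single midpoint gives only $\tfrac12 d^2(z,b(p))$ on the right-hand side. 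Second, your treatment of $\mathcal{P}^1$-integrability via differences of squared distances is exactly right, and it is the same device by which Proposition~\ref{prop:sturmtwo} extends the definition of the barycenter from $\mathcal{P}^2(N)$ to $\mathcal{P}^1(N)$.
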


\section{Problem Setting}
\label{s:Prob_Setting}

In this paper, for a fixed $T\in \mathbb{N}_+$ we consider a discrete-time stochastic process $X_{\cdot}=(X_t)_{t=0}^T\subset \mathbb R^{d}$, which evolves according to the following dynamics:
\begin{equation}
\tag{Volterra}
\label{eq:Volterra_X}
X_{t+1} = X_t + \operatorname{Drift}(t,X_{[0:t]}) + \operatorname{Diffusion}(t,X_{[0:t]},\mathbf{S}_{[0:t]})
W_t,\quad t=0,\dots,T-1.
\end{equation}
In the previous equation, $W_{\cdot}\eqdef (W_t)_{t=0}^{T-1}$ is a Gaussian white-noise, i.e.\ an i.i.d.\
collection of $\mathbb{R}^{d}$-valued standard normal random variables
defined on a probability space $(\Omega,\mathcal{F},\mathbb{P})$, and $\mathbf{S}_{\cdot}\eqdef (S_t)_{t=0}^{T-1}$ is a symmetric matrix-valued stochastic process independent of $W_{\cdot}$, and $X_0 = x_0\in \mathbb{R}^d$.
In this set-up, the direction and random fluctuations of the process $X_{\cdot}$, depend on its realized path and the path of a \textit{latent (unobservable) stochastic factor} $\mathbf{S}_{\cdot}$

We focus on the case of discrete-time Volterra processes with non-singular diffusion. Let 
$\operatorname{Sym}(d)$ (resp.~$\operatorname{Sym}_+(d)$) denote the set of $d\times d$ symmetric (resp.~symmetric and positive definite) matrices with real entries. 
We will call a function whose best Lipschitz constant is {at most} $L$ an $L$-Lipschitz function. 
% \john{Surely we mean which has a Lipschitz constant of $L$ or better?}
A Volterra process with non-singular diffusion is a discrete-time stochastic process with $\operatorname{Drift}:\mathbb R \times \mathbb{R}^{d(t+1)}\to \mathbb{R}^d,$ and $\operatorname{Diffusion}: \mathbb R \times \mathbb{R}^{d(t+1)}\times(\operatorname{Sym}(d))^{t+1}\to\operatorname{Sym}_+(d)$ satisfying: 
\begin{equation}
\label{eq:VolterraProcess_Dynamics}
\begin{aligned}
\operatorname{Drift}\big(t,x_{[0:t]}\big)\eqdef &   \sum_{r=0}^t   \kappa(t,r)
\,\mu(t,x_r), 
    \\
        \operatorname{Diffusion}\big(t,x_{[0:t]},s_{[0:t]}\big) 
    \eqdef &
            \exp\Biggl(
                \frac1{2}
                \,
                \sum_{r=0}^t
                \,
                \kappa(t,r)
                    \,
                        [\sigma(t,x_r)
                        +
                        s_r]
            \Biggr), 
\end{aligned}
\end{equation}
for all $t\in\mathbb N_+$, $x_{[0,t]}\in \mathbb R^{d(t+1)}$, and $s_{[0,t]}\in (\operatorname{Sym}(d))^{t+1}$. Here $\exp$ denotes the matrix exponential, $\mu:\mathbb{R}^{1+d}\rightarrow \mathbb{R}^{d}$, $\sigma:\mathbb{R}^{1+d}\rightarrow \operatorname{Sym}(d)$ are $L_{\mu}$-Lipschitz and $L_{\sigma}$-Lipschitz functions respectively and the \textit{Volterra kernel} $\kappa$ is a real-valued function satisyfing certain properties which we will decribe below.

If the process $\mathbf{S}$ is $\mathbb{P}$-a.s.\ equal to $0$ then, we observe that for each time $t\in \mathbb{N}_+$ and each path $x_{[0:t]}\in \mathbb{R}^{(1+t)d}$ realized by the process $X_{\cdot}$ up to time $t$, the distribution of $X_{t+1}$ conditioned on $X_{[0:t]}=x_{[0:t]}$ is \textit{Gaussian} with mean $x_t+\operatorname{Drift}(t,x_{[0:t]})$ and covariance equal to $\operatorname{Diffusion}(t,x_{[0:t]},\boldsymbol{0})^2=
    \exp\big( \sum_{r=0}^t \,\kappa(t,r) \, \sigma(t,x_r) \big)
$.

The \textit{(discrete-time) Volterra kernel} in Equation \eqref{eq:VolterraProcess_Dynamics}  determines the persistence of $X_{\cdot}$'s memory on the distant past. A Volterra kernel is defined to be a map
$\kappa:
            \big\{(t,r):\,t, r\in \mathbb{N} \mbox{ and } r \le t
            \big\}
        \rightarrow 
            [0,1] $ satisfying
\[
        0 < \sum_{r=0}^t\,\kappa(t,r) \le 1
    \mbox{ and }
        \kappa(t,0) =0 
\]
whenever $t>0$.  We typically require that the Volterra kernel satisfies either one of the following decay conditions 
\begin{enumerate}[label=\textcolor{blue}{\roman*.}, ref=\roman*]
    \item
    \textbf{Exponential decay:}  For some $0<\alpha<1$ and $C>0$, $\kappa(T,r) \le C \, \alpha^{T-r}$ for all integers $0\le r\le T$;
    \label{exp_decay}
    \item
    \textbf{Polynomial decay:} For some $\alpha<-1,C>0$, $\kappa(T,r) \le C(T-r)^{\alpha}$ for all integers $0\le r <T$.
    \label{pol_decay}
\end{enumerate}

We define the ``tangential version'' of the $\operatorname{Diffusion}$ map via the matrix logarithm 
\[
        \operatorname{diffusion}(t,x_{[0:t]},s_{[0:t]}) 
    \eqdef 
        \log\Big(
            \operatorname{Diffusion}(t,x_{[0:t]},s_{[0:t]})
        \Big)
    =
        \frac1{2}
        \,
        \sum_{r=0}^t
            \,
            \kappa(t,r)
            \,
            [\sigma(t,x_r)
            +
            s_r]
    ,
\]
where the uniqueness of the choice of branch, in this case the logarithm, is discussed in Appendix~\ref{s:auxlemmata__ss:Matrix_Alg}.
The intuition is that the inverse Riemannian exponential map at $\mathcal{N}(0,I_{d})$ on the submanifold of $\mathcal{N}_{d}$ of mean $0$ normal distributions coincides with the matrix logarithm\footnote{See~\eqref{prop:NPC_FisherRao__ExponentialNiceForm} in Proposition~\ref{prop:NPC_FisherRao}.}.  The stochastic diffusion factor $s_{[0:t]}$ will be a realization of a $\operatorname{Sym}(d)$-valued stochastic process $\mathbf{S}_{\cdot}\eqdef (\mathbf{S}_t)_{t=0}^{T-1}$ on $(\Omega,\mathcal{F},
\mathbb{P})$. 
We require that the stochastic $\operatorname{Sym}(d)$-valued process and the diffusive factor $\sigma$ satisfy the following conditions. 

\begin{assumption}[Uniformly bounded stochastic factor]
\label{ass:uniformboundedness__SProcecess}
We assume there is a constant $R> 0$ satisfying 
\begin{equation}
\label{eq:as_Bounded_StochasticFactor}
        \sup_{t \le T-1}\,
            \|\mathbf{S}_t\|_F
        \le R,
\quad \mathbb{P}-a.s. 
\end{equation}
where $\|\cdot\|_F$ denotes the Frobenius norm of a matrix, and that there is $M>0$ such that 
\[
    \sup_{t\le T-1,\,x\in\mathbb{R}^d } \|\sigma(t,x)\|_F\le M.
\]
\end{assumption}
 
\begin{remark} Assumption~\ref{ass:uniformboundedness__SProcecess} is required for the existence of a barycenter map.  Though this boundedness assumption is a bit strong, it can potentially be relaxed to an exponentially decaying moment's assumption in technical future work.
\end{remark}

We now provide an example concerning discretization of stochastic delay differential equations that fit into our framework.

\begin{example}[Stochastic delay differential equations]
\label{ex:sdde}

Considered a filtered probability space \\
$(\Omega,\mathcal{A},(\mathcal{A}_u)_{u\in[0,\mathcal T]},\mathbb P)$ satisfying the usual assumptions and carrying a standard one-dimensional Wiener process $W(u)$
($u$ denotes the time-variable here). Let $f:\R\to \R$ and $g:\R\to \R_{>0}$ be bounded and Lipschitz continuous functions. Fix a time delay $\tau>0$ and let $\psi(u)$ be an $\mathcal{A}_0$-measurable $C([-\tau,0],\R)$ valued random variable satisfying $\mathbb E[\norm{\psi}_\infty^2]<\infty$. Consider the following stochastic delay differential equation
\[
X(u)=X(0) + \int_0^uf(X(s-\tau))ds + \int_0^ug(X(s-\tau))dW(s)
\]
for $0\le u \le \mathcal{T}$ and with $X(u)=\psi(u)$ for $-\tau\le u \le 0$ (for further details see e.g. \cite{buckwar2000introduction}). We consider a grid with a uniform step $h$ on the interval $[0,\mathcal T]$ and $h=\mathcal{T}/N,u_t=th,t=0,\dots,N$, and assume that there is an integer $N_\tau$ such that $\tau =h N_\tau$. 
The Euler-Maruyama scheme is
\begin{equation}
\label{eq:Euler_DelayedSDE}
\tilde{X}_{t+1} = \tilde{X}_t + hf(\Tilde{X}_{t-N_\tau}) + g(\Tilde{X}_{t-N_\tau})\Delta W_{t+1},\quad 0\le t \le N-1
\end{equation}
where $\Tilde{X}_{t-N_\tau}:=\psi(u_t-\tau)$ for $t-N_\tau\le 0$, and $\Delta W_{t+1}:=W((t+1)h)-W(nh)\sim N(0,h)$ are independent.

Consider an identically zero stochastic factor $\mathbf S$, setting $\kappa(t,r)=0$ for $r\neq t-N_\tau\ge 0$ and $\kappa(t,r)=1$ for $r=t-N_\tau\ge 0$
, $\mu(x):=hf(x)$ and $\sigma(x)=2h\log g(x)$, from which we see that the discretization in~\eqref{eq:Euler_DelayedSDE} falls within our framework. 
\hfill\\
Nice applications involving stochastic differential are linked to, \textit{e.g.}, problems of optimal advertising under uncertainty for the introduction of a new product to the market (\cite{gozzi200513}), some infinite-dimensional variants of the Black-Scholes equation for the fair price of an option (\cite{fuhrman2010stochastic} and \cite{chang2007infinite}), stochastic control problem with delay arising in a pension fund models (\cite{federico2011stochastic}).
\end{example}
% }

\subsection{A Globally NPC perturbation of the Fisher geometry for Gaussian measures}
\label{sec:InformationGeometryGaussianMeasures}
Our target Riemannian (statistical) manifold of interest is the set $\mathcal{N}_d$ of non-singular Gaussian measures of a given dimension $d$, and formally defined below, endowed with a perturbation of the Fisher Riemannian metric (\cite{Skovgaard_FisherRaoGeometry_NonDegenreateGaussians__1984}), whose geometry we now review.  Let $\mathcal{N}_{d}$ be the collection of non-degenerate normal distributions of fixed dimension $d$, \textit{i.e.}
\begin{equation}\label{eq:NormalFixedDimension}
        \mathcal{N}_{d} 
    \eqdef 
        \{\mathcal{N}_{d}(\mathfrak{m},\Sigma)\,:\,
                \mathfrak{m} \in \mathbb{R}^{d} 
            \mbox{ and } 
                \Sigma \in \operatorname{Sym}_+(d)
        \},
\end{equation}
where, we recall, $\operatorname{Sym}_{+}(d)$ is the set of $d\times d$ positive definite symmetric matrices. Thus, $\mathcal{N}_{d}(\mathfrak{m},\Sigma)$ has Lebesgue density given by 
\begin{equation*}
    p_{\mathfrak{m},\Sigma}(u) =
    \frac{e^{-\frac{1}{2} (u - {\mathfrak{m}})^{\top} \Sigma^{-1} (u - {\mathfrak{m}})}}{
        \sqrt{(2\pi)^{d}\det (\Sigma)}}, \quad u\in\mathbb{R}^{d}. 
\end{equation*}
Through the identification
\begin{equation}\label{eq:identification_Nd_Vectors}
    \phi(\mathcal{N}_{d}(\mathfrak{m},\Sigma)) = ( (\mathfrak{m}_i)_{i=1}^{d},(\Sigma_{ij})_{i\le j}),
\end{equation}
where $\mathfrak{m}=(\mathfrak{m}_1,\dots ,\mathfrak{m}_d)^\top,\Sigma=(\Sigma_{ij})_{i,j=1}^d$, we see that $\mathcal{N}_{d}$ is isomorphic to an open subset $\Theta\subset\mathbb R^m$, $m= (d^2 + 3d)/2$. Hence, $\mathcal{N}_d$ can be considered as a differentiable manifold of dimension $m$, with $(\mathcal{N}_d, \phi)$ as a global chart.  We define a basis, in this coordinate system, of the set of vector fields on $\mathcal{N}_d$
\[
\frac{\partial}{\partial \mathfrak{m}_i},\quad i=1,\dots ,d, \quad \frac{\partial}{\partial \Sigma_{ij}},\quad i,j=1,\dots d,\, i\le j.
\]
We identify these vector fields with the vectors and matrices
\[
\frac{\partial}{\partial \mathfrak{m}_i} \leftrightarrow e_i\in\mathbb R^d
,\quad i=1,\dots ,d,
\]
\[
\frac{\partial}{\partial \Sigma_{ij}} \leftrightarrow E_{ij}\in \operatorname{Sym}_+(d),\quad i,j=1,\dots d,\, i\le j,
\]
where $(e_i)_i$ is the canonical basis of $\mathbb R^d$ and 
\[
E_{ij}\eqdef
\begin{cases}
    I_{(i,j)},\quad\quad\quad\quad i=j\\
    I_{(i,j)} + I_{(j,i)},\quad i\neq j,
\end{cases}
\]
where $I_{(i,j)}$ denotes the $d\times d$ matrix whose $(i,j)$-$th$ entry is equal to 1, while all the remaining entries are zero.
Therefore, each smooth vector field $A$ on $\mathcal{N}_d$ can be decomposed as
\[
A=\sum_{i=1}^d F_ie_i + \sum_{
\overset{i,j=1}{i\le j}}^d G_{ij}E_{ij}
\]
where $F_i,G_{ij}:\mathcal{N}_d\to \mathbb R$ are smooth functions. We shall call the two terms in this last equation the components of $A$ in the $\mathfrak m$- and $\Sigma$-direction, respectively.  
The space $\mathcal{N}_d$ can be turned into a Riemannian manifold by using the Fisher information as its Riemannian metric. More precisely, we define at an arbitrary point $(\mathfrak m,\Sigma)$ the following Riemannian metric $\mathcal{I}^{F}$
    \begin{equation}\label{eq:FisherRaoMetric}
    \begin{split}
        &\mathcal{I}^{F}\left(\frac{\partial}{\partial \mathfrak m_i},\frac{\partial}{\partial \mathfrak m_j} \right) \equiv \mathcal{I}^{F}(e_i,e_j) \eqdef \Sigma^{ij}, \\%,\quad i,j=1,\dots ,d,\\
        &\mathcal{I}^{F}\left(\frac{\partial}{\partial \mathfrak m_i},\frac{\partial}{\partial \Sigma_{kl}} \right) \equiv \mathcal{I}^{F}(e_i,E_{kl}) \eqdef 0, \\%\quad i,k,l=1,\dots ,d,\,k\le l,\\
        &\mathcal{I}^{F}\left(\frac{\partial}{\partial \Sigma_{ij}},\frac{\partial}{\partial \Sigma_{kl}} \right) \equiv \mathcal{I}^{F}(E_{ij},E_{kl}) \eqdef 
        \frac12\operatorname{tr}(\Sigma^{-1}E_{ij}\Sigma^{-1}E_{kl}) =
        \Sigma^{il}\Sigma^{jk} + \Sigma^{ik}\Sigma^{jl},
        % \quad i,j,k,l=1,\dots ,d,\,i\le j \text{ and }k\le l,\\
    \end{split}
    \end{equation}
    for $i,j,k,l=1,\dots ,d,\,i\le j$ and $k\le l$; where $\Sigma^{-1}=(\Sigma^{ij})_{i,j=1}^d$ denotes the inverse of $\Sigma$, and $\operatorname{tr}$ is the trace operator. From the last equation, by the bilinearity of the metric $\mathcal{I}^F$, we see that
    \begin{equation}\label{eq:FisherRaoMetric2}
    \mathcal{I}^F(X,Y) = \frac{1}{2}\operatorname{tr}(\Sigma^{-1}X\Sigma^{-1}Y), \quad X,Y\in \operatorname{Sym}(d).    
    \end{equation}

Though the Fisher metric is the canonical Riemannian metric on the smooth manifold $\mathcal{N}_d$, in the sense of Chentsov's Theorem \citep{CsiszarSansovTheorem_1984,Dowty_2018__ChentsovExponentialFamilies} from information geometry, its geometry is not amenable to intrinsically averaging in dimension above $1$.  When $d>1$ the Fisher geometry on $\mathcal{N}_d$ exhibits positive sectional curvature along tangent planes in the directions spanned by $\frac{\partial}{\partial\mathfrak{m}_i}$ and $\frac{\partial}{\partial\mathfrak{m}_j}$ for distinct $i,j=1,\dots,d$ (see \citep[Theorem 2.2]{Skovgaard_FisherRaoGeometry_NonDegenreateGaussians__1984}).  These directions of positive sectional curvature do not exist in univariate setting, since no such tangent plane exist when $d=1$, and the only tangent plane at any point is generated by $\frac{\partial}{\partial \mathfrak{m}_1}$ and $\frac{\partial}{\partial \Sigma_{1,1}}$ whose sectional curvature equals $-1/2$; this leads to the well-known identification of the Fisher geometry on $\mathcal{N}_1$ with that of the hyperbolic plane.

As we have learnt above from the results of \cite{Sturm_2003}, lack of globally NPC on the Fisher geometry of $\mathcal{N}_d$ for $d>1$ makes it unsuitable to intrinsic averaging.  Furthermore, there are no known closed-form expressions for the geodesic distance and for the Riemannian exponential map about any point for Fisher geometry on $\mathcal{N}_d$, in dimension larger than $1$.  Both these limitations make the Fisher geometry on $\mathcal{N}_d$ incompatible with the available tools for probability theory on NPC spaces and approximation theory between Riemannian manifold (\cite{kratsios2022universal}). In light of this, we perturb the Fisher metric~\eqref{eq:FisherRaoMetric} as follows
\begin{equation}
\label{eq:Riemannian_metric}
\begin{split}
    &\mathfrak I\left(\frac{\partial}{\partial \mathfrak m_i},\frac{\partial}{\partial \mathfrak m_j} \right) = \boldsymbol{\delta}_{ij}, \\ %,\quad i,j=1,\dots ,d,\\
    &\mathfrak I\left(\frac{\partial}{\partial \mathfrak m_i},\frac{\partial}{\partial \Sigma_{kl}} \right) = 0, \\ %,\quad i,k,l=1,\dots ,d,\,k\le l,\\
    &\mathfrak I\left(\frac{\partial}{\partial \Sigma_{ij}},\frac{\partial}{\partial \Sigma_{kl}} \right) = 
    \frac12\operatorname{tr}(\Sigma^{-1}E_{ij}\Sigma^{-1}E_{kl}) =
    \Sigma^{il}\Sigma^{jk} + \Sigma^{ik}\Sigma^{jl}, %,\quad i,j,k,l=1,\dots ,d,\,i\le j \text{ and }k\le l.\\
\end{split}
\end{equation}
for $i,j,k,l=1,\dots ,d,\,i\le j$ and $k\le l$, where $\boldsymbol{\delta}$ denotes the standard Riemannian metric of $\mathbb{R}^{d}$.  
Besides being a non-positively curved perturbation of the geometry induced by Fisher metric, which also has a closed-form distance function, the Riemannian metric $\mathfrak{J}$ has a second simple geometric interpretation.  Namely, $(\mathcal{N}_d,\mathfrak{J})$ is isometric as a Riemannian manifold to $\mathbb{R}^d\times \operatorname{Sym}_+(d)$ equipped with the product between the Euclidean metric $\boldsymbol{\delta}$ on $\mathbb{R}^d$ and the well-studied affine-invariant metric on $\operatorname{Sym}_+(d)$, see~\cite{SmithIEEETransSigProces_2005_CovarianceIntrinsicCramerRaoBounds,pennec2006riemannian,SchirattiAllassoniereColliotDurrleman_2017_JMLR_PSD,brooks2019riemannian,FCM_2022_AcceleratedFirstOrderMethodGDManifolds}. Some of the key geometric properties of $(\mathcal{N}_d, \mathfrak{I})$ can now be recorded.  
Below, $d_{\mathfrak I}$ will denote the distance function induced by $\mathfrak I$.

\begin{proposition}[{The geometry of $(\mathcal{N}_d, \mathfrak{I})$}]
\label{prop:NPC_FisherRao}
For any $d\ge 1$, $(\mathcal{N}_d, d_{\mathfrak{I}})$ is a complete, path-connected and simply connected, Riemannian manifold with sectional curvatures in $[-1/2,0]$. Therefore, $(\mathcal{N}_d,d_{\mathfrak{I}})$ is a global NPC space, and for any $\mathcal{N}_d(\mathfrak{m}_0,\Sigma_0),\mathcal{N}_d(\mathfrak{m}_1, \Sigma_1)\in \mathcal{N}_d$, $d_{\mathfrak{I}}$ admits the closed-form expression
\begin{equation}
\label{eq:simple_Closedform}
        d_{\mathfrak{I}}\big(\mathcal{N}_d(\mathfrak m_0,\Sigma_0),
        \mathcal{N}_d(\mathfrak m_1,\Sigma_1)\big)
    = 
        \left[\norm{\mathfrak m_0 - \mathfrak m_1 }^2_2 + 
        \frac12\sum_{i=1}^{d}\, \ln(\lambda_i)^2\right]^{1/2}
,
\end{equation}
where $0<\lambda_1\le \dots \le \lambda_d$ are the eigenvalues of $\Sigma_0^{-1}\Sigma_1$. 

For any $\mathfrak{m}\in\mathbb{R}^d$, $\Sigma\in \operatorname{Sym}_+(d)$, and any $(\tilde{\mathfrak{m}},X)\in T_{\mathcal{N}_d(\mathfrak{m},
\Sigma
)} \mathcal N_d\cong \mathbb{R}^d\times \operatorname{Sym}(d)$ the Riemannian exponential map is given by
\begin{equation}
\label{prop:NPC_FisherRao__ExponentialNiceForm}
        \operatorname{Exp}_{\mathcal{N}_d(\mathfrak{m},
        \Sigma
        )}(\tilde{\mathfrak{m}},X)
    =
        \mathcal{N}\left(
                \mathfrak{m}+\tilde{\mathfrak{m}}
            ,
                \Sigma^{1/2}
                \operatorname{exp}\big(
                    \Sigma^{-1/2}
                    X
                    \Sigma^{-1/2}
                \big)
                \Sigma^{1/2}
        \right)
    ,
\end{equation}
where $T_{\mathcal{N}_d(\mathfrak{m},
\Sigma
)}\mathcal{N}_d$ denotes the tangent space of $\mathcal N_d$ at the point $\mathcal{N}_d(\mathfrak{m},
\Sigma)$. 
\end{proposition}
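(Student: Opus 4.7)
The plan is to recognize that $(\mathcal{N}_d, \mathfrak{I})$ is isometric to the Riemannian product $(\mathbb{R}^d, \boldsymbol{\delta}) \times (\operatorname{Sym}_+(d), g_{\mathrm{AI}})$, where $g_{\mathrm{AI}}$ denotes the affine-invariant metric defined at a point $\Sigma$ by $g_{\mathrm{AI}}(X,Y) \eqdef \tfrac12 \operatorname{tr}(\Sigma^{-1} X \Sigma^{-1} Y)$. Reading off the block structure in~\eqref{eq:Riemannian_metric}, the mean-block coincides with $\boldsymbol{\delta}$, the covariance-block coincides with $g_{\mathrm{AI}}$, and the cross-terms vanish; thus the global chart $\phi$ from~\eqref{eq:identification_Nd_Vectors} is precisely a Riemannian isometry onto the product. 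Once this is in place the proposition reduces to classical facts about the two factors together with elementary properties of Riemannian products.

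Next, I would invoke the well-studied geometry of $(\operatorname{Sym}_+(d), g_{\mathrm{AI}})$, see e.g.\ \cite{SmithIEEETransSigProces_2005_CovarianceIntrinsicCramerRaoBounds, pennec2006riemannian}: it is a symmetric space of non-compact type (isometric to $\operatorname{GL}(d)/O(d)$ up to scaling), hence complete, simply connected, and with nonpositive sectional curvatures bounded below by $-1/2$. Combined with the flatness and completeness of $(\mathbb{R}^d, \boldsymbol{\delta})$, standard product manifold theory then gives: (i) completeness and simple connectedness of the product (products of complete simply connected manifolds inherit both, via Hopf-Rinow applied factor-wise, together with the fact that $\pi_1(M \times N) = \pi_1(M) \times \pi_1(N)$); (ii) sectional curvatures lie in $[-1/2, 0]$, since a $2$-plane tangent to a product splits into mean and covariance components and its curvature is a convex combination of the factor curvatures and of zero (on mixed planes). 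The NPC conclusion then follows immediately from Proposition~\ref{prop:sturmone}.

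For the closed-form formulas, I would use that the Riemannian distance of a product is the Pythagorean sum of the factor distances, and that the exponential map acts componentwise. On the Euclidean factor both objects are trivial, contributing $\norm{\mathfrak{m}_0 - \mathfrak{m}_1}_2^2$ to the squared distance and $\mathfrak{m}+\tilde{\mathfrak{m}}$ to the exponential. On the affine-invariant factor one uses the classical identities $d_{\mathrm{AI}}^2(\Sigma_0, \Sigma_1) = \tfrac12 \lVert \log(\Sigma_0^{-1/2} \Sigma_1 \Sigma_0^{-1/2}) \rVert_F^2 = \tfrac12 \sum_{i=1}^d \ln(\lambda_i)^2$, where the $\lambda_i$ are the eigenvalues of $\Sigma_0^{-1/2} \Sigma_1 \Sigma_0^{-1/2}$; similarity invariance of the spectrum then replaces these by the eigenvalues of $\Sigma_0^{-1} \Sigma_1$, yielding~\eqref{eq:simple_Closedform}. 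Likewise $\operatorname{Exp}_{\Sigma}(X) = \Sigma^{1/2} \exp(\Sigma^{-1/2} X \Sigma^{-1/2}) \Sigma^{1/2}$ on the covariance factor gives~\eqref{prop:NPC_FisherRao__ExponentialNiceForm}.

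The main obstacle is not the sectional curvature bound itself, which can be quoted from the symmetric space literature, but rather the bookkeeping to verify carefully that the metric $\mathfrak{I}$ as defined in~\eqref{eq:Riemannian_metric}, with its explicit coordinate expression on the basis $\{\partial/\partial \Sigma_{ij}\}_{i \le j}$, genuinely reproduces the coordinate-free affine-invariant metric $g_{\mathrm{AI}}$ on $T_\Sigma \operatorname{Sym}_+(d) \cong \operatorname{Sym}(d)$; in particular one must check that the factor of $\tfrac12$ and the specific choice of basis matrices $E_{ij}$ are consistent with the bilinear extension $g_{\mathrm{AI}}(X,Y) = \tfrac12 \operatorname{tr}(\Sigma^{-1} X \Sigma^{-1} Y)$ via~\eqref{eq:FisherRaoMetric2}. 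With this identification confirmed, every remaining assertion is a direct consequence of the product structure and classical symmetric-space formulas.
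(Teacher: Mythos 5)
Your proposal follows essentially the same route as the paper's own proof: recognize $(\mathcal{N}_d,\mathfrak{J})$ as a Riemannian product of $(\mathbb{R}^d,\boldsymbol{\delta})$ with the SPD cone under the (appropriately normalized) affine-invariant metric, then import completeness, simple connectedness, and the sectional curvature bound from that factor, use the Pythagorean product distance for \eqref{eq:simple_Closedform}, and use componentwise geodesics for \eqref{prop:NPC_FisherRao__ExponentialNiceForm}. The paper organizes this the same way, with an explicit isometry $\chi:\mathcal{N}_d\to\mathbb{R}^d\times\mathcal{N}_d^0$, a separate product-distance lemma, a case analysis on $2$-planes for the curvature bound, and O'Neill's theorem for the product geodesics; these are precisely the ``classical facts about the factors'' you invoke. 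You are also right to flag the factor-of-$\tfrac12$ bookkeeping between the coordinate expression of $\mathfrak{I}$ and the coordinate-free affine-invariant metric as the step that must be checked carefully, since that is exactly where the curvature constant and the $\tfrac12$ in \eqref{eq:simple_Closedform} are generated — in the paper this is handled through \eqref{eq:FisherRaoMetric2} and the totally geodesic embedding of $\mathcal{N}_d^0$, and you would need the same reconciliation between the two normalizations of $\operatorname{tr}(\Sigma^{-1}X\Sigma^{-1}Y)$ to land on the stated $[-1/2,0]$ bound rather than a rescaled one.
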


\begin{proof}
See Appendix \ref{app:proof_proposition_NPC_FisherRao}.
\end{proof}

\subsubsection{{Comparison with other geometries on \texorpdfstring{$\mathcal{N}_d$}{Nd}}}
We may further motivate the proposed Riemannian geometry on $\mathcal{N}_d$ by contrasting it with other standard geometries on the set $\mathcal{N}_d$, occurring in optimal transport and information geometry.  Our discussion is summarized concisely in Table~\ref{tab:Non-Singular_Gaussian}. 

\begin{table}[htp!]
\caption{Comparison of topological, metric, and computational properties between different geometries of non-singular Gaussian measures from information geometry and optimal transport.}
\label{tab:Non-Singular_Gaussian}
\centering
\ra{1.3}
\resizebox{\columnwidth}{!}{%
\begin{tabular}{c|p{2cm}p{3.5cm}p{2.5cm}p{3cm}l}
    \textbf{Geometry} &  \textbf{Complete} & \textbf{NPC (Barycenters)}  &  \textbf{Closed-Form Dist.} & \textbf{Isometric Copies $(\mathcal{N}_d^{\mathfrak{m}},\mathcal{I})$} & \textbf{Reference}\\
    \midrule
    $(\mathcal{N}_d,d_{\mathfrak{J}})$ & $\good$ & $\good$ & $\good$ & $\good$ & Equation~\eqref{eq:Riemannian_metric}\\%\textbf{This Paper} \\
    \arrayrulecolor{lightgray}\hline
    $(\mathcal{N}_d,d_{\mathcal{I}})$ & $\good$ & $\bad$ ($\good$ $d=1$) &  $\bad$ & $\good$ &  \cite{Skovgaard_FisherRaoGeometry_NonDegenreateGaussians__1984} \\
    $(\mathcal{N}_d,\mathcal{W}_2)$ & $\bad$ & ? & $\good$ & $\bad$ & \cite{takatsu2011wasserstein} \\
    $(N_d,g_{SL})$ & $\good$ & $\good$ & $\good$ & $\bad$ & 
    \cite{GeometryOfMultivariateNormal_Lie_Canada}\\
    \bottomrule
\end{tabular}
}
\end{table}

In what follows, we linearly identify the spaces via the symmetrization map $\operatorname{sym}$ defined by
    \begin{equation}
    \label{eq:vecsym_def}           \operatorname{sym}:\mathbb{R}^{d(d+1)/2} \rightarrow \operatorname{Sym}(d)
            ,\quad
            v  \mapsto
            \begin{pmatrix}
                v_1 & v_2 & \dots & v_{d} \\
                v_2 & v_{d+1} & & . \\
                \vdots & & \ddots & \vdots \\
                v_d & \dots & & v_{d(d+1)/2}
            \end{pmatrix}
    \end{equation}
    whose inverse is often denoted by $\operatorname{vec}$ (see \cite[Section 10.2.2]{CookingMatrices_v2105}).  
We observe that the linear identification is not an isometry, but it is a bi-Lipschitz map, when $\operatorname{Sym}(d)$ is equipped with Fr\"{o}benius inner-product $\langle \Sigma_1,\Sigma_2\rangle\eqdef \operatorname{tr}(\Sigma_1\Sigma_2)$ and $\mathbb{R}^{d(d+1)/2}$.

Table~\ref{tab:Non-Singular_Gaussian} compares the Riemannian geometry $(\mathcal{N}_d,\mathfrak{J})$ with alternative geometries found in the literature. Specifically, it contrasts $(\mathcal{N}_d,\mathfrak{J})$ with the standard Fisher information geometry as studied in \cite{Skovgaard_FisherRaoGeometry_NonDegenreateGaussians__1984}, the Riemannian geometry inducing the $2$-Wasserstein metric ($\mathcal{W}_2$) on $\mathcal{N}_d$ as presented in \cite{takatsu2011wasserstein}, and a non-positive curvature geometry on $\mathcal{N}_d$, denoted $g_{SL}$, constructed in \cite{GeometryOfMultivariateNormal_Lie_Canada} by identifying $\mathcal{N}_d$ with a subset of a special linear group, modulo the action of a special orthogonal group. The columns of Table~\ref{tab:Non-Singular_Gaussian} should be self-explanatory apart from the penultimate column. Here we say that the geometry admits isometric copies of $(\mathcal{N}_d,\mathcal{I})$ if for each $\mathfrak{m}\in \mathbb{R}^d$, the submanifold  $\mathcal{N}^{\mathfrak{m}}_d \subset \mathcal{N}_d$ whose points have mean $\mathfrak{m}$ is isometric to $(\mathcal{N}_d^{\mathfrak{m}},\mathcal{I})$.

Finally, we remark that the the manifolds $(\mathcal{N}_d,W_2)$, $(\mathcal{N}_d,\mathfrak{I})$, and $(\mathcal{N}_d,\mathcal{J})$ are diffeomorphic; in particular, their topologies coincide.
Nevertheless, we emphasize that all three structures are different as Riemannian manifolds and as metric spaces.

\section{Gaussian random projections}
\label{s:Projection}

The law of the process $X_{\cdot}$ in~\eqref{eq:Volterra_X}, conditioned on any realized path up to a given time $t\in \mathbb{N}$, is an arbitrary probability distribution on $\mathbb{R}^{d}$ with a priori no favourable 
% \giulia{Maybe we should explain a little bit?} 
structure. For instance, the space of probability distribution on $\mathbb{R}^d$, e.g.\ with the L\'{e}vy-Prokhorov metric, see e.g.~\citep[Theorem 8.3.2]{bogachev2007measure__BookII}, is neither a finite-dimensional as a topological space nor is it a vector space (even though it is convex).
This lack of structure implies that its elements cannot be approximated by simpler probability measures, such as finitely supported measures, without facing the curse of dimensionality (see, e.g.~\citep[Theorem 6.2]{GradLuschgy_QuantizationProbMeasure_2000}). 
\paragraph{Intuitive idea} However, if we had access to the unobserved realized path of the ``stochastic factor process'' $\mathbf{S}_{[0:t]}$, we could leverage the fact that $X_{t+1}$ is Gaussian {when conditioned on the realized path $X_{[0:t]}$ and belongs to $\mathcal{N}_{d}$, the smooth manifold of non-singular $d$-dimensional Gaussian measures. This motivates the family of probability measures 
$\{\Lambda_{x_{[0:t]},s_{[0:t]}};\, x_{[0:t]}\in \mathbb R^{(1+t)d},\, s_{[0:t]}\in \operatorname{Sym}(d)^{1+t}
\}\subset \mathcal{N}_{d}$, defined by
\begin{equation}
\label{eq:Q_prederandomized}
    \Lambda_{x_{[0:t]},s_{[0:t]}}[\cdot] \eqdef 
        \mathbb{P}[X_{t+1}\in \cdot \vert X_{[0:t]}=x_{[0:t]},\,\mathbf{S}_{[0:t]}=s_{[0:t]}\big],\quad x_{[0:t]}\in \mathbb R^{(1+t)d},\, s_{[0:t]}\in \operatorname{Sym}(d)^{1+t}
\end{equation}
but since the process $\mathbf{S}_{\cdot}$ is latent, we need to reintroduce its ``random effects'' into the family of probability measures $\Lambda_{x_{[0:t]},s_{[0:t]}}$. Traditionally, this is achieved by integrating against the law of $\mathbf{S}_{[0:t]}$. Informally, we instead reinsert the observed path $\mathbf{S}_{[0:t]}$ as the conditional state $s_{[0:t]}$, \textit{i.e.}:
\begin{equation}
\label{eq:randomized_encoding__informal}
            \mathbb{Q}_{x_{[0:t]}}(\omega) 
        \overset{\text{informal}}{=}
            \Lambda_{x_{[0:t]},\mathbf{S}_{[0:t]}(\omega)},\quad \omega\in \Omega 
 .
\end{equation}
A rigorous construction of~\eqref{eq:randomized_encoding__informal}, formalized below in Definition~\ref{defn:Projection}, hinges on our Riemannian geometry on $\mathcal{N}_d$ and it enjoys Lipschitz stability, meaning that it is a Lipschitz function of path realized by the process $X_{\cdot}$.  
In some cases, the dependence of $\mathbb{Q}_{x_{[0:t]}}$ on the realized path of $X_{\cdot}$ is even smooth, meaning that deep learning model can approximate it with a feasible number of parameters.  This is generally not true with geometric deep learning models on non-smooth metric spaces, see \cite{Acciaio2022_GHT}. 
The results of \cite{Sturm_2003} guarantee that there is a unique point $\Pi_{x_{[0:t]}}$ in $\mathcal{N}_{d}$ which best represents the intrinsic average behaviour of $\mathbb{Q}_{x_{[0:t]}}$
\begin{equation}
\label{eq:projection}
    \Pi_{x_{[0:t]}}
    \eqdef 
        \operatorname{argmin}_{u\in \mathcal{N}_{d}}\, 
            \int_{v\in \mathcal{N}_{d}}
                d_{\mathfrak{J}}^2(u,v) 
            \text{Law}(\mathbb{Q}_{x_{[0:t]}}\in dv)
    ,
\end{equation}
where $d_{\mathfrak{J}}$ is the \textit{geodesic distance} induced by our perturbation of the Fisher Riemannian metric on $\mathcal{N}_{d}$. 
Unlike information projections, the map defined by~\eqref{eq:projection} enjoys the following stability properties and can even be smooth in certain cases.

\begin{result_intro}[Gaussian projections Theorem]
\label{informal_summary_A}
Under mild conditions, if the $\operatorname{Drift}$ and $\operatorname{Diffusion}$ maps in~\eqref{eq:Volterra_X} are $L$-Lipschitz, then the map $x_{[0:t]}\mapsto\Pi_{x_{[0:t]}}$ is well-defined, $\mathcal{O}(L)$-Lipschitz, and it solves the following minimization problem 
\[
        \Pi_{x_{[0:t]}}
    =
        \underset{\mu \in \mathcal{N}_d}{\operatorname{argmin}}
        \,
            \mathbb{E}
            \big[
                d^2_\mathfrak J
                (\mu, 
                \mathbb{Q}_{x_{[0:t]}}
                )
            \big]   
\]
Furthermore, if the latent factor process $\mathbf{S}_{\cdot}=0$ then
\[
        \Pi_{x_{[0:t]}}
    =
        \mathbb{P}[X_{t+1}\in \cdot \vert X_{[0:t]}=x_{[0:t]}]
\]
for each time $t\in \mathbb{N}_+$ and every realized path $x_{[0:t]}\in \mathbb{R}^{(1+t)d}$.
\end{result_intro}

We now formalize our notion of projection of the conditional law of the stochastic process~\eqref{eq:Volterra_X}, called a \textit{Gaussian random projection}.  The terminology should not be understood in the sense of the Johnson-Lindenstrauss lemma, see \citep[Theorem 12.2.1]{MatousekWonderfulBook_2002}, but rather in the sense of \cite{OhtaStochasticRetracts_2009}. That is, as a Lipschitz measure-valued function with the property that a certain class of inputs are fixed in certain sense (see Corollary~\ref{cor:DeterministicHighRegularity}).

\subsection{Encoding the conditional distribution of a Volterra process in \texorpdfstring{$\mathcal{N}_d$}{Nd}}}
\label{s:Projection__ss:EncodingAsMeasureValuedProcess}
Our argument centers around the geometry of the regular conditional distribution (RCD) function sending any $(x_{[0:t]},s_{[0:t]})$ to 
\[
    \mathbb{P}[X_{t+1}\in \cdot \vert X_{[0:t]}=x_{[0:t]},\mathbf{S}_{[0:t]}=s_{[0:t]}]
.
\]
Our first observation is to note that the RCD's image lies in $\mathcal{N}_{d}$. 

The map $\tilde{\phi}:\mathcal{N}_d \rightarrow \mathbb{R}^{d}\times \mathbb{R}^{d(d+1)/2}$, defined by $\mathcal{N}_d(\mathfrak m, \Sigma) \stackrel{\tilde{\phi}}{\mapsto} (\mathfrak m, \operatorname{vec}(\log (\Sigma)))$, is a smooth chart.  This is because it is the composition of the maps $1_{\mathbb{R}^d}\times \operatorname{vec}$, 
$\mathbb{R}^d\times \operatorname{Sym}(d)$
, $\log$ \footnote{See \citep[Proposition 3.3.5]{HilgertNeeb_2012_StructureGeometryLie}.} 
, $1_{\mathbb{R}^d}\times \log$ 
and $\phi$, defined in~\eqref{eq:identification_Nd_Vectors}, each of which are diffeomorphisms.
Thus, 
\begin{equation}
\label{eq:chart}
\begin{aligned}
    \psi:
    \mathbb{R}^{d(d+3)/2}& \rightarrow \mathcal{N}_{d}\\
    (u,v) & \mapsto \tilde{\phi}^{-1}(u,v) = \mathcal{N}_{d}(u,\exp(\operatorname{sym}(v)))
\end{aligned}
\end{equation}
defines a smooth global chart on $\mathcal{N}_{d}$. For every $t \in \mathbb{N}_+$, the maps 
\[
\begin{aligned}
    \mathbb{R}^{(1+t)d}\ni x_{[0:t]} & \mapsto \operatorname{Drift}(t,x_{[0:t]})\in \mathbb{R}^{d}
\\
\mathbb{R}^{(1+t)d}\times \operatorname{Sym}(d)^{(1+t)}\ni (x_{[0:t]},s_{[0:t]})& \mapsto \operatorname{Diffusion}(t,x_{[0:t]},s_{[0:t]})\in \operatorname{Sym}_+(d)
\end{aligned}
\]
are locally Lipschitz continuous.  
Thus, for every $t \in \mathbb{N}_+$, the map
\[
\mathbb{R}^{(1+t)d}\times \mathbb R^{(1+t)d(d+1)/2}
\ni (x_{[0:t]},y_{[0:t]}) \mapsto \operatorname{Diffusion}(t,x_{[0:t]},\operatorname{sym}( y_{[0:t]}))\in \operatorname{Sym}_+(d),
\]
where $\operatorname{sym}$ is applied componentwise,
is locally Lipschitz continuous.
Thus, for every $t \in \mathbb{N}_+$ the map $\Psi_t: \mathbb{R}^{(1+t)d}\times \mathbb{R}^{(1+t)\,d(d+1)/2}  \rightarrow \mathcal{N}_{d}$ defined for any $x_{[0:t]}\in \mathbb{R}^{(1+t)d}$ and $y_{[0:t]}\in \mathbb{R}^{(1+t)d(d+1)/2}$ by
\begin{equation}
\label{eq:Psi_t_Definition}
\begin{aligned}
        \Psi_t(x_{[0:t]},y_{[0:t]}) 
    & 
    \eqdef
        \psi\big(
                x_t + \operatorname{Drift}(t,x_{[0:t]})
            ,
                    \operatorname{vec}\circ\log\circ
                    \operatorname{Diffusion}(t,x_{[0:t]},\operatorname{sym}(
                    y_{[0:t]}))
        \big)\\
    & = 
        \mathcal{N}_{d}\left(
            x_t + \operatorname{Drift}(t,x_{[0:t]})
            ,
            \exp\left(
                \sum_{r=0}^t\kappa(t,r)\left[\,\sigma(t,x_r) + \operatorname{sym}(y_r)\right]
            \right)
        \right)
\end{aligned}
\end{equation}
must also be locally-Lipschitz when 
$\mathcal{N}_d$ is equipped with the Riemannian distance function $d_{\mathfrak{J}}$. Note that $\operatorname{Diffusion}(t,x_{[0:t]},\operatorname{sym}(s_{[0:t]}))$ is positive definite.  Therefore we can assume that its eigenvalues are such that $0<\lambda_{1}\le \dots \le\lambda_{d}$.
Furthermore, if $\mu$ and $\sigma$ are smooth, then each $\Psi_t$ is also smooth by composition.
Consequently, we may rigorously define the $\mathcal{N}_{d}$-valued stochastic process as the composition of $x_{[0:t]}$ and $\mathbf{S}_{[0:t]}$ and the locally Lipschitz continuous map $\Psi_t$, namely $\Psi_t( x_{[0:t]}, \operatorname{vec}\circ\mathbf{S}_{[0:t]})$.

We note that $\|e^A\|_{op}\le e^{\|A\|_{op}}$ for any $d\times d$ matrix $A$; hence, for some universal constants $c_1,c_2>0$ depending only $d$, $\|e^A\|_{F}\le c_1e^{c_2\|A\|_{F}}$; therefore the boundedness of $S_{\cdot}$ and the image of $\sigma$ in  Assumption~\ref{ass:uniformboundedness__SProcecess} imply that the law of $\Psi_t(x_{[0:t]},\operatorname{vec}\circ\mathbf{S}_{[0:t]})$ is compactly supported on $\mathcal{N}_d$.  Therefore, $\Psi_t(x_{[0:t]},\operatorname{vec}\circ \mathbf{S}_{[0:t]})$ is integrable over the global NPC space $(\mathcal{N}_d,d_{\mathfrak{J}})$, in the sense that it belongs to $L^1(\Omega,\mathcal{N}_d)$, and the barycenter of its law is well-defined .

\begin{definition}[Gaussian random projection of $X_{\cdot}$]
\label{defn:Projection}
Given a Volterra process $X_{\cdot}$ in~\eqref{eq:VolterraProcess_Dynamics}, we define the \textit{Gaussian Random projection} of $X_{\cdot}$ as the map, sending any $x_{[0:t]}\in\R^{(1+t)d}$ to the measure $\Pi_{x_{[0:t]}}\in\mathcal{N}_{d}$ defined by
\begin{equation}
\label{rmk: barycenter of a Dirac}
\begin{aligned}
        \Pi_{x_{[0:t]}}
    \eqdef
        \beta\left(
            \operatorname{Law}\left(
                \Psi_t\left(x_{[0:t]},\operatorname{vec} ( \mathbf{S}_{[0:t]}) 
            \right)
        \right)\right) \in \mathcal{N}_d
\end{aligned}
\end{equation}
where $\beta$ is given in Proposition \ref{prop:sturmtwo}. 
For the sake of brevity, we will henceforth write $\Pi_{x_{[0:t]}} = \beta(\mathbb{Q}_{x_{[0:t]}})$, where $\mathbb{Q}_{x_{[0:t]}}\eqdef \operatorname{Law}\left(\Psi_t(x_{[0:t]},\operatorname{vec}( \mathbf{S}_{[0:t]}))\right)$. 
\end{definition}
In particular, notice that if we have $ \mathbf{S}_{[0:t]}=s_{[0:t]}$, then 
\begin{equation*}
    \Pi_{x_{[0:t]}} = \mathcal{N}_{d}(x_t +\operatorname{Drift}(t,x_{[0:t]}), \operatorname{Diffusion}(t, x_{[0:t]}, s_{[0:t]})^2) 
\end{equation*}
where $\operatorname{Diffusion}(t, x_{[0:t]}, s_{[0:t]})^2=\exp\left(
                \sum_{r=0}^t\kappa(t,r)\left[\,\sigma(t,x_r) + s_r\right]
            \right)$.
We now have the following.
\begin{theorem}[{Optimality and Lipschitz Stability of the Gaussian Random Projection}]
\label{thrm:Projection_Results}
Fix $t \in \mathbb{N}$ and a compact $\mathcal{K}_{[0:t]}\subseteq \mathbb{R}^{(1+t)\,d}$.  If~\eqref{eq:VolterraProcess_Dynamics} and Assumption~\ref{ass:uniformboundedness__SProcecess} hold then, $\Pi:x_{[0:t]}\mapsto \beta(\mathbb{Q}_{x_{[0:t]}})$ is well-defined and Lipschitz and
\[
\Pi_{x_{[0:t]}}
    =
        \underset{
            {u\in \mathcal{N}_{d}}
        }{
            \operatorname{argmin}
        }\,
            \int_{v\in \mathcal{N}_{d}}
                d_{\mathfrak{J}}^2(u,v) 
            \mathbb{Q}_{x_{[0:t]}}( dv).
\]
Furthermore, the Lipschitz constant of $\Pi_{\cdot}$ 
depends only on $t$ and $R$, as defined in Assumption~\ref{ass:uniformboundedness__SProcecess}.
\end{theorem}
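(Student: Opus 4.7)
I proceed in three steps: verify well-definedness, establish the argmin characterization, and then prove the Lipschitz bound with the claimed dependence. The first two are immediate from the machinery built in Section~\ref{s:Prelim}; the Lipschitz estimate is the substantive part.

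\textbf{Well-definedness and the argmin characterization.} By Proposition~\ref{prop:NPC_FisherRao}, $(\mathcal N_d, d_\mathfrak{J})$ is a global NPC space, so to invoke Proposition~\ref{prop:sturmtwo} I only need $\mathbb Q_{x_{[0:t]}}\in \mathcal P^1(\mathcal N_d)$. Assumption~\ref{ass:uniformboundedness__SProcecess} gives $\|\mathbf{S}_r\|_F\le R$ a.s.\ and $\sup\|\sigma(t,\cdot)\|_F\le M$, so the argument of the matrix exponential in the definition~\eqref{eq:Psi_t_Definition} of $\Psi_t$ lies in the Frobenius ball of radius $M+R$ in $\operatorname{Sym}(d)$ (using $\sum_{r} \kappa(t,r)\le 1$). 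Consequently $\mathbb Q_{x_{[0:t]}}$ has \emph{compact} support in $\mathcal N_d$, hence belongs to $\mathcal P^\infty(\mathcal N_d)\subset \mathcal P^2(\mathcal N_d)$, and the second formula in Proposition~\ref{prop:sturmtwo} yields the desired argmin characterization of $\Pi_{x_{[0:t]}}$.

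\textbf{Reduction of the Lipschitz estimate.} Theorem~\ref{thm:sturmthree} gives $d_\mathfrak{J}(\Pi_{x_{[0:t]}}, \Pi_{\tilde x_{[0:t]}}) \le \mathcal W_1(\mathbb Q_{x_{[0:t]}}, \mathbb Q_{\tilde x_{[0:t]}})$. Coupling through the same realization of $\mathbf S_{[0:t]}$ bounds the Wasserstein distance by $\mathbb E\bigl[d_\mathfrak{J}(\Psi_t(x_{[0:t]},\operatorname{vec}\mathbf S_{[0:t]}),\Psi_t(\tilde x_{[0:t]},\operatorname{vec}\mathbf S_{[0:t]}))\bigr]$, so it suffices to bound $d_\mathfrak{J}(\Psi_t(x_{[0:t]},y),\Psi_t(\tilde x_{[0:t]},y))$ uniformly for $y$ in the (bounded) essential range of $\operatorname{vec}\mathbf S_{[0:t]}$. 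Using the isometric splitting $(\mathcal N_d, \mathfrak J)\cong (\mathbb R^d,\boldsymbol\delta)\times(\operatorname{Sym}_+(d),d_{AI})$ noted after~\eqref{eq:Riemannian_metric}, and the closed form in Proposition~\ref{prop:NPC_FisherRao}, I split the estimate into a mean and a covariance part.

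\textbf{Mean and covariance pieces.} For the mean, the drift representation~\eqref{eq:VolterraProcess_Dynamics}, the $L_\mu$-Lipschitz regularity of $\mu$, and $\sum_r\kappa(t,r)\le 1$ give
$\|x_t+\operatorname{Drift}(t,x)-\tilde x_t-\operatorname{Drift}(t,\tilde x)\|\le (1+L_\mu)\|x_{[0:t]}-\tilde x_{[0:t]}\|$. For the covariance, writing $A(x,y)\eqdef \sum_{r}\kappa(t,r)[\sigma(t,x_r)+\operatorname{sym}(y_r)]$, I need to bound $d_{AI}(\exp A(x,y),\exp A(\tilde x,y))$ by $\|x-\tilde x\|$. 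First, $\|A(x,y)-A(\tilde x,y)\|_F\le L_\sigma \|x-\tilde x\|$, again using the kernel summability. Second, Assumption~\ref{ass:uniformboundedness__SProcecess} uniformly confines $A(x,y)$ to the Frobenius ball of radius $M+R$ in $\operatorname{Sym}(d)$, \emph{independently of $\mathcal K_{[0:t]}$}. By Proposition~\ref{prop:NPC_FisherRao} the matrix exponential is the Riemannian exponential of $(\operatorname{Sym}_+(d),d_{AI})$ at $I_d$, hence a smooth diffeomorphism onto this complete NPC manifold, and so it is Lipschitz from any Frobenius ball in the tangent space at $I_d$ into $(\operatorname{Sym}_+(d),d_{AI})$ with constant $C(M,R)$ depending only on the radius. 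Combining these yields a covariance bound $C(M,R)L_\sigma\|x-\tilde x\|$.

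\textbf{Main obstacle.} The only subtle step is the covariance estimate: the matrix exponential is neither globally $d_{AI}$-Lipschitz nor Lipschitz into Frobenius norm, so a naive Taylor argument fails. What rescues the proof is that Assumption~\ref{ass:uniformboundedness__SProcecess} together with $\sum_r\kappa(t,r)\le 1$ confines the argument to a fixed compact subset of $\operatorname{Sym}(d)$ whose size depends only on $M$ and $R$; on this set local Lipschitzness of the Riemannian exponential can be made uniform. This is precisely why the resulting Lipschitz constant depends only on $t$ and $R$ (and on the structural constants $L_\mu,L_\sigma,M$ of the Volterra process) and is independent of the compact set $\mathcal K_{[0:t]}$ on which continuity is asserted.
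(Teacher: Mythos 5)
Your proposal is correct and follows essentially the same route as the paper. Well-definedness and the argmin characterization are obtained as you describe, from the NPC geometry of $(\mathcal N_d,d_{\mathfrak J})$ (Proposition~\ref{prop:NPC_FisherRao}) and Proposition~\ref{prop:sturmtwo}, once compact support of $\mathbb Q_{x_{[0:t]}}$ is deduced from Assumption~\ref{ass:uniformboundedness__SProcecess} and $\sum_r\kappa(t,r)\le 1$; this is exactly the paper's ``discussion near~\eqref{rmk: barycenter of a Dirac}.'' For the Lipschitz estimate the paper likewise starts from the fundamental contraction property (Theorem~\ref{thm:sturmthree}), passes to a same-$\mathbf S$ coupling to bound $\mathcal W_1$ by an expected pathwise distance, and then decomposes into a mean and a covariance piece via the product structure of $(\mathcal N_d,\mathfrak J)$; this is Proposition~\ref{lem:Stability_Estimates} built on Lemma~\ref{lem:lipshcitzBound_on_RCD}. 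The one place where your argument is less precise than the paper's is the covariance piece: you appeal to smoothness of the Riemannian exponential to assert it is Lipschitz on the Frobenius ball of radius $M+R$ with a constant depending only on that radius, whereas the paper derives the explicit bound $\tfrac{2^{3/2}}{M+R}\sinh\bigl(\tfrac{M+R}{\sqrt 2}\bigr)$ from a Rauch comparison estimate (Lemma~\ref{lem:EstimateLocLipExp}, via Lemma~\ref{lem:Control_GeneralSym+Case}). Your softer argument suffices for the qualitative claim of Theorem~\ref{thrm:Projection_Results}, but the paper's explicit constant is what later feeds into the quantitative complexity bounds of Theorem~\ref{theorem:optimal_GDN_Rates__ReLUActivation}. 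Your closing observation that the resulting Lipschitz constant is independent of the compact set $\mathcal K_{[0:t]}$, and in fact also of $t$ (since $\sum_r\kappa(t,r)\le 1$ absorbs any $t$-dependence), is accurate and slightly sharper than the statement of the theorem.
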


\begin{proof}
As regards the Lipschitz part, we refer to Proposition \ref{lem:Stability_Estimates}. The remainder of the proof follows from the discussion near~\eqref{rmk: barycenter of a Dirac}. 
\end{proof}

The smooth structure on $\mathcal{N}_d$ has the advantage of granting meaning to the smoothness of the map $\Pi_{\cdot}$ when $\mu$ and $\sigma$ are smooth and $\mbox{\textbf{S}}_{[0:t]}$ is almost surely constant.   
Smoothness is beneficial from an approximation perspective since neural networks can effectively approximate smooth Lipschitz functions between Riemannian manifolds. This is demonstrated in Theorem~\ref{theorem:optimal_GDN_Rates__ReLUActivation} below. In contrast, even in the Euclidean setting, non-smooth Lipschitz functions between Euclidean spaces on simple, compact subsets cannot be accurately approximated by feasible neural networks, as shown in \citep[Theorem 5]{yarotsky2017error}.

The following Corollary provides a natural sufficient condition for the smoothness of $\Pi_{\cdot}$.  Namely, if the maps $\mu$ and $\sigma$, which define the drift and diffusion maps of the Volterra process, are smooth, and that $\mathbf{S}_{\cdot}$ is constantly $0$ then $\Pi_{\cdot}$ is also smooth.

\begin{proposition}[{Smoothness}]
\label{cor:DeterministicHighRegularity} 
Suppose that $\mathbf{S}_t=0$ for every $t\in \mathbb{N}$, Assumption~\ref{ass:uniformboundedness__SProcecess} holds and that $\mu$ and $\sigma$ are of class $C^{\infty}$.  Then for every $t\in \mathbb{N}$, the following map is $C^{\infty}$  
    \[ 
    \begin{aligned}
        \Pi_{\cdot}:\,
            (\mathbb{R}^{(1+t)d},\|\cdot\|) 
        & \rightarrow 
            (\mathcal{N}_{d},d_{\mathfrak{J}}) 
        \\
            x_{[0:t]} 
        & \mapsto \Pi_{x_{[0:t]}}.
    \end{aligned}
    \]
\end{proposition}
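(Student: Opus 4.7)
The plan is to exploit the fact that when $\mathbf{S}_{\cdot}$ is identically zero, the law $\mathbb Q_{x_{[0:t]}}$ degenerates to a Dirac mass, so the barycenter collapses to the underlying point and the problem reduces to verifying smoothness of an explicit expression in a global chart on $\mathcal{N}_d$.

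\textbf{Step 1: Dirac reduction.} Since $\mathbf{S}_t=0$ $\mathbb{P}$-a.s., the random variable $\Psi_t(x_{[0:t]},\operatorname{vec}(\mathbf{S}_{[0:t]}))$ is $\mathbb{P}$-a.s.\ equal to the deterministic point $\Psi_t(x_{[0:t]},0)\in\mathcal N_d$. Therefore $\mathbb Q_{x_{[0:t]}}=\delta_{\Psi_t(x_{[0:t]},0)}$, and by the remark following Proposition~\ref{prop:sturmtwo} (barycenter of a Dirac mass) we have the pointwise identity
\[
\Pi_{x_{[0:t]}} \;=\; \beta\bigl(\delta_{\Psi_t(x_{[0:t]},0)}\bigr) \;=\; \Psi_t(x_{[0:t]},0).
\]

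\textbf{Step 2: Work in the global chart $\tilde\phi$.} Smoothness of a map into $\mathcal{N}_d$ is equivalent to smoothness of its coordinate representation in any smooth chart; we use the global chart $\tilde\phi\colon\mathcal N_d\to\mathbb R^d\times \mathbb R^{d(d+1)/2}$ sending $\mathcal N_d(\mathfrak m,\Sigma)\mapsto(\mathfrak m,\operatorname{vec}(\log\Sigma))$, whose smoothness (together with that of its inverse) was established in the discussion preceding~\eqref{eq:chart}. Combining Step~1 with the explicit form of $\Psi_t$ in~\eqref{eq:Psi_t_Definition} and using the fact that $\log\circ\exp$ is the identity on $\operatorname{Sym}(d)$ (the principal branch of $\log$ is a diffeomorphism $\operatorname{Sym}_+(d)\to\operatorname{Sym}(d)$ inverting $\exp$), we obtain
\[
\tilde\phi\bigl(\Pi_{x_{[0:t]}}\bigr) \;=\; \Bigl(x_t+\sum_{r=0}^t\kappa(t,r)\,\mu(t,x_r),\;\operatorname{vec}\Bigl(\sum_{r=0}^t\kappa(t,r)\,\sigma(t,x_r)\Bigr)\Bigr).
\]

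\textbf{Step 3: Smoothness of the coordinate map.} Both components of the right-hand side are finite linear combinations (with coefficients $\kappa(t,r)$) of the $C^\infty$ maps $x_r\mapsto\mu(t,x_r)$ and $x_r\mapsto\sigma(t,x_r)$, composed with the projections $x_{[0:t]}\mapsto x_r$ and the linear map $\operatorname{vec}$. Each of these operations preserves the $C^\infty$ regularity, so the coordinate representation $\tilde\phi\circ\Pi_{\cdot}$ is $C^\infty$ as a map $\mathbb R^{(1+t)d}\to\mathbb R^{d(d+3)/2}$. Since $\tilde\phi$ is a global smooth chart on $\mathcal N_d$, this yields that $\Pi_{\cdot}\colon\mathbb R^{(1+t)d}\to(\mathcal N_d,d_{\mathfrak J})$ is $C^\infty$.

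\textbf{Expected obstacle.} There is essentially no hard analytic step once the Dirac observation is made; the only subtlety is justifying $\log\circ\exp=\operatorname{id}$ on $\operatorname{Sym}(d)$ (so that the chart $\tilde\phi$ effectively cancels the matrix exponential inside $\Psi_t$) and ensuring that smoothness of a map into a Riemannian manifold is correctly reduced to smoothness of its coordinate representation in a single global chart, which is standard but worth stating explicitly.
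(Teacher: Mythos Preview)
Your proposal is correct and follows essentially the same route as the paper: both use the Dirac reduction $\Pi_{x_{[0:t]}}=\beta(\delta_{\Psi_t(x_{[0:t]},0)})=\Psi_t(x_{[0:t]},0)$ and then verify smoothness of $\Psi_t(\cdot,0)$. The only cosmetic difference is that the paper argues smoothness by noting that $\Psi_t$ is a composition of the smooth chart $\psi$ with smooth maps built from $\mu,\sigma$, whereas you equivalently compose with $\tilde\phi=\psi^{-1}$ and check the coordinate expression directly.
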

\begin{proof}
    See Appendix~\ref{app:proof_thrm:VanishingMemoryProperty_Qx}. 
\end{proof}

Next, we study how the Volterra kernel dictates the rate at which the Gaussian random projection forgets the distant history of any realized path of a Volterra process.

\subsection{Memory decay of 
\texorpdfstring{$\Pi_{\cdot}$}{the Gaussian Projection operation} given a decaying Volterra kernel}
\label{s:Projection__ss:MemoryDecay}
In order for the map $x_{[0:t]}\mapsto \Pi_{x_{[0:t]}}$ to be approximable by any model which only processes the path realized by $X_{\cdot}$ up to the current time, we need that the stochastic Volterra process does not exhibit a persistent memory of its distant past. As one may expect, a stochastic Volterra process exhibits no persistent long-term memory if its Volterra kernel does not place large weights on long-past states realized by the process $X_{\cdot}$.  
Similar phenomena have been observed in the RNN literature when approximating linear state-space systems with exponentially decaying impulse response in \citep{RecepHutterHelmut_2022_AHA__RNNs} or in \citep[Theorem 11]{li2022approximation} when quantifying the approximability of linear functionals by linear continuous-time RNNs. 
Our next result shows that if the Volterra kernels decay at least polynomially, then one has a quantitative analogue of the fading memory property in reservoir computing \citep[Definition 3]{GrigoryevaOrtegaUniversalDiscretTime2018_JMLR}, and of the approximable complexity property of \cite[Definition 4.9]{Acciaio2022_GHT}.  

\begin{theorem}[Memory Decay Rate of Gaussian Projection of Volterra Process]
\label{thrm:VanishingMemoryProperty_Qx}
\hfill\\
Suppose 
Assumption~\ref{ass:uniformboundedness__SProcecess} holds
and the Volterra-kernel $\kappa$ either satisfies~\eqref{exp_decay} or~\eqref{pol_decay}.
For every $T\in \mathbb{N}_+$, each compact $\mathcal{K}_{[0:T]}\subseteq \mathbb{R}^{(1+T)d}$, there is a constant $C_{M,R,\mu}>0$ such that for every pair of integers $\memory$ and $t$, with $0\le \memory < t\le T$, there is a Lipschitz function $\tilde{f}^{(t,\memory)}:(\mathbb{R}^{(1+\memory)d},\|\cdot\|)\rightarrow (\mathcal{N}_d,d_{\mathfrak{J}})$ satisfying
\[
        d_{\mathfrak{J}}\big(
            \Pi_{x_{[0:t]}},
            \tilde{f}^{(t,\memory)}(x_{[t-\memory:t]})
        \big)
    \le 
        C_{M,R,\mu}
        \,
        \operatorname{diam}(\mathcal{K}_{[0:T]})
        \,
        \begin{cases}
            \frac{C\alpha}{\alpha-1}\,(\alpha^t-\alpha^{\memory})
            &
            \mbox{if (i) holds}
            \\
            C\,(\memory+1)^{\alpha}\,(t-\memory) 
            &
            \mbox{if (ii) holds}
        \end{cases}
\]
for every $x_{[0:T]}\in \mathcal{K}_{[0:T]}$. 
\end{theorem}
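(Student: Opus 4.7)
The plan is to construct $\tilde{f}^{(t,\memory)}$ by freezing the ``forgotten'' coordinates of a path to a reference point, and then bound the resulting approximation error through Sturm's fundamental contraction estimate applied to a natural coupling. Concretely, fix any reference $x^{\star}\in\R^d$ (e.g., a point in the projection of $\mathcal{K}_{[0:T]}$ onto one of its coordinates) and, for each truncated input $x_{[t-\memory:t]}$, set $\tilde{x}_r\eqdef x^{\star}$ for $0\le r<t-\memory$ and $\tilde{x}_r\eqdef x_r$ for $t-\memory\le r\le t$; then define $\tilde{f}^{(t,\memory)}(x_{[t-\memory:t]})\eqdef \Pi_{\tilde{x}_{[0:t]}}$. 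Lipschitz regularity of $\tilde{f}^{(t,\memory)}$ is then immediate from Theorem~\ref{thrm:Projection_Results}, so the remaining task is to estimate $d_{\mathfrak{J}}(\Pi_{x_{[0:t]}},\Pi_{\tilde{x}_{[0:t]}})$.

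To estimate this distance I would first invoke the fundamental contraction property of barycenters on global NPC spaces (Theorem~\ref{thm:sturmthree}) and couple the laws $\mathbb{Q}_{x_{[0:t]}}$ and $\mathbb{Q}_{\tilde{x}_{[0:t]}}$ through the common realization of the latent factor $\mathbf{S}_{[0:t]}$, obtaining
\[
    d_{\mathfrak{J}}\!\left(\Pi_{x_{[0:t]}},\Pi_{\tilde{x}_{[0:t]}}\right)
    \le \mathcal{W}_1\!\left(\mathbb{Q}_{x_{[0:t]}},\mathbb{Q}_{\tilde{x}_{[0:t]}}\right)
    \le \mathbb{E}\!\left[d_{\mathfrak{J}}\!\left(\Psi_t(x_{[0:t]},\operatorname{vec}\mathbf{S}_{[0:t]}),\,\Psi_t(\tilde{x}_{[0:t]},\operatorname{vec}\mathbf{S}_{[0:t]})\right)\right].
\]
The coupling cancels $\mathbf{S}_{[0:t]}$ from the difference of $\operatorname{diffusion}$ arguments, leaving only the discrepancy caused by $x_r\neq\tilde{x}_r$ for $r<t-\memory$. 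I would then bound the integrand pointwise in $\omega$ by splitting the closed form~\eqref{eq:simple_Closedform} into a mean part and a log-covariance part. Since $\tilde{x}_t=x_t$, the mean part equals $\|\operatorname{Drift}(t,x_{[0:t]})-\operatorname{Drift}(t,\tilde{x}_{[0:t]})\|$, and $L_\mu$-Lipschitzness of $\mu$ bounds it by $L_\mu\operatorname{diam}(\mathcal{K}_{[0:T]})\sum_{r=0}^{t-\memory-1}\kappa(t,r)$. For the log-covariance part, Assumption~\ref{ass:uniformboundedness__SProcecess} confines both $\operatorname{diffusion}(t,x,\mathbf{S}(\omega))$ and $\operatorname{diffusion}(t,\tilde{x},\mathbf{S}(\omega))$ to a Frobenius-bounded subset of $\operatorname{Sym}(d)$; on this bounded set the smooth chart $\tilde{\phi}$ from Section~\ref{s:Projection__ss:EncodingAsMeasureValuedProcess} provides a Lipschitz equivalence between $d_{\mathfrak{J}}$ restricted to the fixed-mean submanifold of $\mathcal{N}_d$ and the Frobenius distance on log-covariances, so arguing exactly as for the mean part produces a bound of the same shape with $L_\sigma$ in place of $L_\mu$ and an extra multiplicative constant depending only on $M,R,d$.

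Absorbing all multiplicative constants into a single $C_{M,R,\mu}$, the theorem reduces to bounding $\sum_{r=0}^{t-\memory-1}\kappa(t,r)$. Under the exponential decay hypothesis~\eqref{exp_decay}, the change of index $j=t-r$ yields a geometric sum $\sum_{j=\memory+1}^{t}C\alpha^{j}=C(\alpha^{\memory+1}-\alpha^{t+1})/(1-\alpha)$, which rearranges algebraically into $\tfrac{C\alpha}{\alpha-1}(\alpha^{t}-\alpha^{\memory})$; under the polynomial decay hypothesis~\eqref{pol_decay}, the condition $\alpha<0$ gives $(t-r)^{\alpha}\le(\memory+1)^{\alpha}$ for $r\le t-\memory-1$, producing the bound $C(t-\memory)(\memory+1)^{\alpha}$. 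The main technical hurdle will be the log-covariance estimate: one must make the Lipschitz equivalence between $d_{\mathfrak{J}}$ on the fixed-mean submanifold and the Frobenius distance on log-covariances quantitative on Frobenius-bounded subsets. This follows from the smoothness of the metric $\mathfrak{J}$ in the chart $\tilde{\phi}$ combined with compactness, but careful tracking of the constant is what ensures $C_{M,R,\mu}$ has the claimed dependence on $M$, $R$, and the Lipschitz data of $\mu,\sigma$.
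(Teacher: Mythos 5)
Your proposal follows essentially the same route as the paper: pad the forgotten coordinates with a fixed point, bound $d_{\mathfrak{J}}\big(\Pi_{x_{[0:t]}},\tilde{f}^{(t,\memory)}(x_{[t-\memory:t]})\big)$ by a $\mathcal{W}_1$ distance via Sturm's barycenter contraction (Theorem~\ref{thm:sturmthree}), couple through the common realization of $\mathbf{S}_{[0:t]}$, split the integrand into a mean term and a log-covariance term, and close with the kernel-tail estimates. The only cosmetic deviations are that you pad with a reference $x^{\star}$ where the paper pads with $0$ (the paper's Lemma~\ref{prop:VMP} uses $\iota^{(t,\memory)}$ filling with zeros), and that the paper obtains the covariance-side Lipschitz bound via an explicit Rauch comparison estimate for the exponential map (Lemmata~\ref{lem:EstimateLocLipExp} and~\ref{lem:Control_GeneralSym+Case}, which surface the $\sinh$-dependent constant entering $C_{M,R,\mu}$) rather than the softer compactness-plus-smoothness argument you sketch, which is correct but less quantitative.
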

\begin{proof}
    See Appendix \ref{app:proof_thrm:VanishingMemoryProperty_Qx}
\end{proof}

Our next objective is to approximate the map $x_{[0:T]}\mapsto (\Pi_{x_{[0:t]}})_{t=0}^T$ while respecting the forward flow of information in time. We will accomplish this by developing a more general approximation theory for causal maps between Riemannian manifolds which are global NPC spaces.

\section{Universal approximation of manifold-valued processes}
\label{s:Approximation}

In this section, we introduce our \textit{sequential} geometric deep learning model, called the HGN and defined below in~\eqref{eq:GCN_Summary}.  We then use it to prove a universal approximation theorem guaranteeing that the map $x_{[0:t]}\to \Pi_{x_{[0:t]}}$ can be causally (in time) approximated to arbitrary precision if the memory of the Volterra process decays at-least polynomially.  

\begin{result_intro}[HGNs can approximate projected RCDs]
\label{thrm:IntroVersionapprox_HGCNN}
For every time-horizon $T$, and every suitable process $X_{\cdot}$ defined by~\eqref{eq:Volterra_X}, for each $\varepsilon>0$ and every compact set of inputs $\mathcal{K}_{[0:T]}\subseteq \mathbb{R}^{(1+T)\,d}$, there exists a HGN 
$F:(\mathbb{R}^{d})^T\rightarrow \mathcal{M}^T$, as in Figure~\ref{fig:Model}, satisfying
\[
    \underset{x_{[0:T]\in \mathcal{K}_{[0:T]}}}{\max}
    \underset{t=T-\memory,\dots,T}{\max}
        \,\,
        d_g\big(
        % (
        \Pi_{x_{[0:t]}},
        % )_{t=0}^T,
            F(x_{\cdot})_t
        \big)
    <
        \varepsilon
    .
\]
The number of parameters and structure of the HGN depends quantitatively on $\varepsilon,\memory,\mathcal{K}_{[0:T]}$, and on the $\operatorname{Drift}$ and $\operatorname{Diffusion}$, and on the Volterra kernel $\kappa$, where $\memory\in \mathbb{N}$ is the \textit{history} of the HGN.
\end{result_intro}
The main result of this section (Theorem~\ref{thrm:approx_HGCNN}) is a significant generalization of Informal Result~\ref{thrm:IntroVersionapprox_HGCNN}, as it applies to infinite-memory dynamical systems between any pair of well-behaved non-positively curved complete Riemannian manifolds.  We now present our main approximation theoretic results.  

\subsection{Standing Assumptions for Universal Approximation Results}
\label{s:Approximation__ss:SettingApprox}
In this section, we present our model and build on the theory of causal maps, as formulated in~\cite{Acciaio2022_GHT}, between a source and target connected Riemannian manifolds $(\mathcal{N},h)$ and $(\mathcal{M},g)$.  
\begin{assumption}[Regularity of input and output manifolds]
\label{ass:NPC}
Consider two $C^\infty$ Riemannian manifolds $(\mathcal{N},h)$ and $(\mathcal{M},g)$ of dimensions $D\in\mathbb N$ and $d\in\mathbb N$ respectively. We assume that:
\begin{enumerate}
    \item[\textit{(i)}] $(\mathcal{N},h)$ and $(\mathcal{M},g)$ are geodesically complete and simply connected.
    \item[(ii)] Their sectional curvatures are bounded in $[-\kappa,0]$ for some $\kappa>0$.
\end{enumerate}
\end{assumption}
We denote the geodesic distance on these spaces respectively by $d_{h}$ and $d_g$.  Under Assumption~\ref{ass:NPC}, $(\mathcal{N},d_{h})$ and $(\mathcal{M},d_g)$ are complete separable metrics spaces of NPC in the sense of \cite{Sturm_2003}; see Proposition \ref{prop:sturmone}. 
\begin{example}
The Euclidean space $(\mathbb{R}^{d},\boldsymbol{\delta})$ satisfies Assumption~\ref{ass:NPC} with $\kappa=0$.
\end{example}
\begin{example}
By Proposition~\ref{prop:NPC_FisherRao}, $(\mathcal{N}_d,h)$ satisfies Assumption~\ref{ass:NPC} with $\kappa=-1/2$.
\end{example}

\subsection{Static Case}
\label{s:Approximation__ss:Static}
We begin by defining the static version of our main (dynamical) model, in which time is not yet considered.  This geometric deep-learning model extends the geometric deep network of \cite{kratsios2022universal}, thus we use the same name.  From the technical standpoint, our main approximation theorem for the static case (Theorem~\ref{theorem:optimal_GDN_Rates__ReLUActivation}) is a significant technical improvement of the main result of \cite{kratsios2021universal} as it has optimal rates, matching the optimal bounds of \cite{ShenYangHaizhaoZhang_OptimalReLU_2022} in the classical Euclidean setting when the target function is Lipschitz and/or smooth; this is an exponential improvement of the guarantees in \cite{kratsios2021universal} for the $\operatorname{ReLU}$ case.  Moreover, we obtain probabilistic approximation guarantees for Borel measurable and for general functions being approximated on finite subsets of the input space.  The static case will serve as the first main technical tool for our main approximation theorem for the dynamic case, (Theorem~\ref{thrm:approx_HGCNN}),

\begin{figure}[H]
    \centering
  \includegraphics[width=1\linewidth]{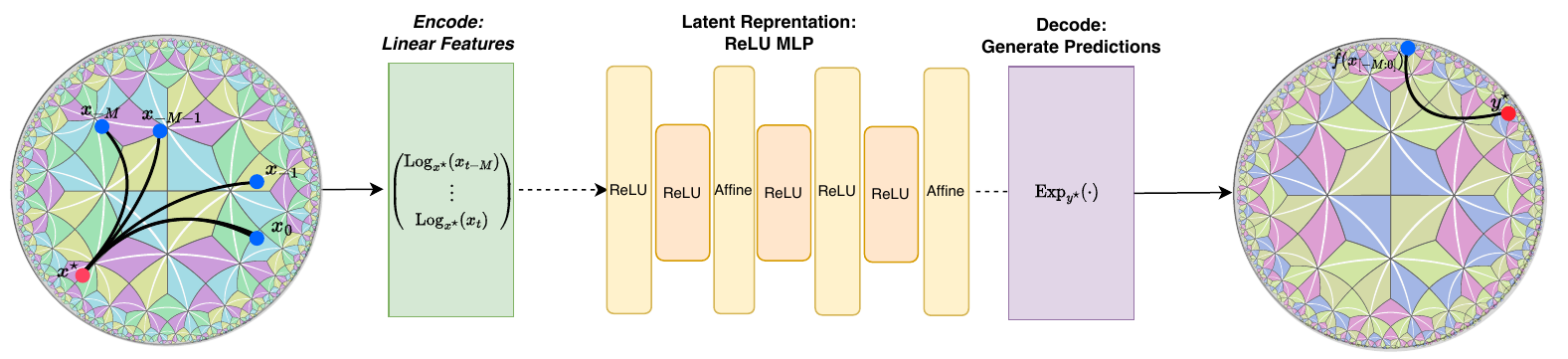}
    \caption{\textbf{The GDN Model:}~%
    GDN model process an input in $x_{[-M:0]}\in \mathcal{N}^{1+\memory}$, interpreted as sequential points $x_{-M},\dots,x_0$ inputs in $\mathcal{M}$, in three steps: an encoding, transformation, and decoding phase.  First, it linearized (purple) the inputs in $\mathcal{N}^{1+\memory}$ along products of geodesics emanating from a set of reference points $x_0^{\star},\dots,x^{\star}_M$ in $\mathcal{N}^{1+\memory}$. 
    It then transforms the linearized features and maps them to a vector $v$ in the tangent space of $\mathcal{M}$ using a standard ReLU-MLP (yellow).  
    In the decoding phase (green), the model maps $v$ to a point $\hat{f}(x_{[-M:0]})$ on $\mathcal{M}$ by travelling geodesics in $\mathcal{M}$ emanating from a reference point $y^{\star}$ therein with initial velocity $v$.}
    \label{fig:GCNN}
\end{figure}

\noindent 
We now formalize the static version of our model, illustrated in Figure~\ref{fig:GCNN}, and extend the geometric deep network (GDN) of \cite{kratsios2022universal}.
Fix $J,\memory \in \mathbb{N}$, with $J>0$, and a multi-index $[\mathbf{d}]  \eqdef  (d_0, \ldots, d_{J})$ with $d_0=(1+\memory)D$ and $d_J=d$, and let $
P([\mathbf{d}])  \eqdef  
%%% Exp and Log
(1+\memory)D
+
d
+
%%% MLP
\sum_{j=0}^J\,
d_{j+1}(1+d_j)
$. Weights, biases, and base-point parameters are encoded as a unique parameter $\theta \in \mathbb{R}^{P([\mathbf{d}])}$ with 
\begin{equation}\label{eq:identification}
    % \mathbb{R}^{P([\mathbf{d}])}\,\reflectbox{$\in$}
    %\,
    \theta
    \,\,
    \leftrightarrow
    \,\,
    \Big(
    %% MLP
        (A^{(j)},b^{(j)})_{j=0}^J
    ,
    %% Expansion Points
    %%% Exp
        (\tilde{x}_m)_{m=0}^{\memory}
    ,
    %%% Log
        \tilde{y}
    \Big)
\end{equation}
where for $j=0,\dots,J$, $A^{(j)}\in \mathbb{R}^{d_{j+1}\times d_j}$, $b^{(j)}\in \mathbb{R}^{d_{j+1}}$, $\tilde{y}\in \mathbb{R}^d$, and $\tilde{x}_m\in \mathbb{R}^D$ for $m=0,\dots,\memory$.
We henceforth denote componentwise composition of the $\operatorname{ReLU}\eqdef \max\{0,\cdot\}$ activation function with any vector $u\in\mathbb{R}^n$ for any $n\in \mathbb{N}_+$, by $\operatorname{ReLU}\bullet x \eqdef \big(\operatorname{ReLU}(x_i)\big)_{i=1}^n$. 
Given $\memory \in \mathbb{N}$, a GDN model is a (parameterized) map $f_{\theta}:
\mathbb{R}^{P([\mathbf{d}])}
\times 
\mathcal{N}^{1+\memory}\rightarrow \mathcal{M}$ sending any $(\theta,x_{[0:\memory]})\in \mathbb{R}^{P([\mathbf{d}])}\times \mathcal{N}^{1+\memory}$ to 
\begin{equation}
\tag{Static}
\label{eq:GCN_Summary}
\begin{aligned}
f(\theta,x_{[0:\memory]})\eqdef 
f_{\theta}(x_{[0:\memory]}) & = \operatorname{Exp}^g_{\bar{y}}\circ \iota^{-1}_{\mathcal{M},\bar{y}}(A^{(J)}x^{(J)} + b^{(J)})
\\
x^{(j+1)}& \eqdef 
            \operatorname{ReLU}\bullet
            (A^{(j)} x^{(j)} + b^{(j)})\quad\text{for\,\,}j= 0, \ldots, J-1,\\
    x^{(0)}
\eqdef &
    \big(\operatorname{Log}_{\bar{x}_m}^h\circ \iota_{\mathcal{N},\bar{x}_m}(x_m)\big)_{m=0}^{\memory},
\end{aligned}
\end{equation}
where the \textit{encoding/feature} and \textit{decoding/readout} maps are defined by
the product of Riemannian exponential map at pre-specified points and its inverse up to identification with the relevant Euclidean spaces via choices of linear isomorphisms
$\iota_{\mathcal{N},\bar{x_m}}:T_{\bar{x}_m}\mathcal{M}\to \mathbb{R}^d$ and $\iota_{\mathcal{M},y}:T_y\mathcal{M}\to \mathbb{R}^d$. 

We encode the trainable parameters $\bar{x}_{[0:\memory]}=(\bar{x}_m)_{m=0}^{\memory}\in\mathcal N^{1+\memory}$ and $\bar{y}\in\mathcal{M}$ vectorially via
\begin{equation}
\label{eq:HopfRinow_Encoding}
\begin{aligned}
\bar{x}_{[0:\memory]} \eqdef %&
(\operatorname{Exp}_{x^{\star}}^h(\tilde{x}_{m}))_{m=0}^{\memory}
\,\,\,\mbox{ and }\,\,\,
\bar{y} \eqdef %& 
\operatorname{Exp}_{y^{\star}}^g(\tilde{y})
,
\end{aligned}
\end{equation}
where $\tilde{x}_{[0,\memory]}\in \mathbb{R}^{(1+\memory)D}\cong (T_{x^{\star}}\mathcal{N})^{1+\memory}$, $\tilde{y}\in \mathbb{R}^{d}\cong T_{y^{\star}}\mathcal{M}$ for arbitrary, but a-priori fixed, points $x^{\star}\in \mathcal{N}$ and $y^{\star}\in \mathcal{M}$. 
The set of all GDNs with representation~\eqref{eq:GCN_Summary}-\eqref{eq:HopfRinow_Encoding} is denoted by $\mathcal{GDN}_{[\mathbf{d}],\memory}$.

The following universal approximation theorem significantly improves on \citep[Theorem 9]{kratsios2022universal} in three ways.  
First, when the target function is smooth, we obtain dimension-free approximation rates, unlike \citep[Theorem 9]{kratsios2022universal}, which only gives cursed approximation rates regardless of the regularity of the target function (on infinite sets).
Second, it exponentially improves on the loose rate given therein when approximating $\alpha$-H\"{o}lder functions; indeed, when $(\mathcal{N},h)$ is a Euclidean space and $(\mathcal{M},g)$ is the real line with Euclidean metric, our result matches the optimal approximation rates of \citep[Theorem 2.4]{ShenYangHaizhaoZhang_OptimalReLU_2022}.  
Third, when the compact set on which the function is being approximated is finite, we obtain approximation guarantees without assuming any continuity of the target function using the fact that such functions have smooth extensions.

\begin{theorem}[Approximation capacity of GDNs for smooth and H\"{o}lder concept classes]
\label{theorem:optimal_GDN_Rates__ReLUActivation}
Fix a depth parameter $J \in \mathbb{N}_+$.  Suppose that $(\mathcal{N},h)$ and $(\mathcal{M},g)$ satisfy Assumptions~\ref{ass:NPC}, $\memory \in \mathbb{N}_+$, and $0<\varepsilon\le 1$.  Fix reference points $\bar{x}\in \mathcal{N}^{1+\memory}$ and $\bar{y}\in \mathcal{M}$ as in~\eqref{eq:GCN_Summary}. 
For every locally $\alpha$-H\"{o}lder map (resp.~$k$-times continuously differentiable, resp.\ arbitrary if $\mathcal{K}$ is finite) $f:\mathcal{N}^{\memory+1} \rightarrow \mathcal{M}$, for some $0<\alpha \le 1$ (resp. $k\in \mathbb{N}_+)$, and each non-empty compact $\mathcal{K}_{[0:\memory]}\subseteq \mathcal{N}^{\memory+1}$ there is a GDN $\hat{f}:\mathcal{N}^{\memory+1}\rightarrow \mathcal{M}$ satisfying
\begin{equation*}
% \label{eq:lemma:optimal_GDN_Rates__ReLUActivation}
    \sup_{x_{[0:\memory]} \in \mathcal{K}_{[0:\memory]}}\,
        d_g\big(f(x_{[0:\memory]}),\hat{f}(x_{[0:\memory]})\big)<\varepsilon.
\end{equation*}
and depth and width as recorded in Table~\ref{tab:casestudies}.
\end{theorem}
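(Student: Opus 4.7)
The plan is to reduce the manifold approximation problem to an Euclidean one via the encoding/decoding maps of the GDN, then invoke a sharp Euclidean ReLU-MLP approximation theorem, and finally lift the Euclidean error back to a manifold error using Rauch-type comparison bounds. Concretely, fix reference points $\bar x_{[0:\memory]}\in\mathcal{N}^{1+\memory}$ and $\bar y\in\mathcal{M}$. Since $(\mathcal{N},h)$ and $(\mathcal{M},g)$ are Cartan--Hadamard by Assumption~\ref{ass:NPC}, the exponential maps $\operatorname{Exp}_{\bar x_m}^{h}$ and $\operatorname{Exp}_{\bar y}^{g}$ are global smooth diffeomorphisms, so the encoding $E:=(\iota_{\mathcal{N},\bar x_m}\circ \operatorname{Log}_{\bar x_m}^h)_{m=0}^{\memory}$ and decoding $D:=\operatorname{Exp}_{\bar y}^{g}\circ \iota_{\mathcal{M},\bar y}^{-1}$ are globally well-defined, smooth, and invertible on all of $\mathbb R^{(1+\memory)D}$ and $\mathbb R^{d}$ respectively. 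Define the Euclidean ``conjugate'' target
\[
    \tilde{f}\,\eqdef\, D^{-1}\circ f\circ E^{-1}\colon \mathbb{R}^{(1+\memory)D}\to\mathbb{R}^{d},
\]
and the compact set $\tilde{\mathcal{K}}\eqdef E(\mathcal{K}_{[0:\memory]})\subset\mathbb{R}^{(1+\memory)D}$.

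Next, I would show that $\tilde{f}$ inherits the regularity of $f$ on $\tilde{\mathcal{K}}$. Because $(\mathcal{N},d_h)$ and $(\mathcal{M},d_g)$ are NPC, the logarithm maps are $1$-Lipschitz; combined with the Rauch II comparison theorem (using the lower curvature bound $-\kappa$), the exponential maps are bi-Lipschitz on bounded subsets with a constant of the form $C(\kappa,R)=\sinh(\sqrt{\kappa}R)/(\sqrt{\kappa}R)$, where $R$ controls the radius of the relevant tangent-space region. Hence, if $f$ is locally $\alpha$-H\"older (resp.\ $C^{k}$), then so is $\tilde f|_{\tilde{\mathcal{K}}}$, with constants and seminorms depending only on $\kappa$, $\operatorname{diam}(\mathcal K_{[0:\memory]})$, $\sup_{\mathcal K}d_g(\bar y,f(\cdot))$, and the corresponding constants of $f$. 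The case ``$\mathcal{K}_{[0:\memory]}$ finite, $f$ arbitrary'' reduces to the $C^{\infty}$ case by extending $f|_{\mathcal K}$ through Whitney's extension theorem to a smooth map between the ambient Euclidean charts and pushing forward via $D$.

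Once $\tilde f$ is cast as a H\"older, $C^{k}$, or smooth function between Euclidean spaces on a compact set, I would invoke the optimal ReLU-MLP approximation theorems in each regime, \emph{i.e.}\ \citet{ShenYangHaizhaoZhang_OptimalReLU_2022} for the H\"older and Lipschitz cases, \citet{LuShenYangZhang_2021_UATRegularTargets} for the $C^{k}$ case, and \citet{yarotsky2020phase} for the smooth/near-dimension-free regime. These produce a ReLU MLP $\widehat{\tilde f}:\mathbb{R}^{(1+\memory)D}\to\mathbb{R}^{d}$ whose depth and width match the rates listed in Table~\ref{tab:casestudies} and which satisfies $\sup_{u\in\tilde{\mathcal K}}\|\tilde f(u)-\widehat{\tilde f}(u)\|<\varepsilon'$. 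Defining the candidate GDN $\hat f\eqdef D\circ \widehat{\tilde f}\circ E$ exactly reproduces the architecture~\eqref{eq:GCN_Summary}--\eqref{eq:HopfRinow_Encoding}, where the base points $\bar x_{[0:\memory]},\bar y$ are encoded using the Hopf--Rinow surjectivity of $\operatorname{Exp}$ from arbitrary anchors $x^{\star},y^{\star}$.

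Finally, the error transfer is straightforward: for $x_{[0:\memory]}\in\mathcal K_{[0:\memory]}$ and $u\eqdef E(x_{[0:\memory]})\in\tilde{\mathcal K}$,
\[
    d_{g}\bigl(f(x_{[0:\memory]}),\hat f(x_{[0:\memory]})\bigr)
    = d_{g}\bigl(D(\tilde f(u)),D(\widehat{\tilde f}(u))\bigr)
    \le L_{\operatorname{Exp}}\,\|\tilde f(u)-\widehat{\tilde f}(u)\|,
\]
where $L_{\operatorname{Exp}}$ is the Lipschitz constant of $\operatorname{Exp}_{\bar y}^{g}$ on a ball in $T_{\bar y}\mathcal M$ large enough to contain the images of both $\tilde f$ and $\widehat{\tilde f}$; this constant is finite by Rauch, with explicit dependence on $\kappa$ and the radius. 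Choosing $\varepsilon'=\varepsilon/L_{\operatorname{Exp}}$ in the Euclidean step yields the claim. The main technical obstacles I anticipate are (i) keeping the image of $\widehat{\tilde f}$ inside a controlled compact set, so that $L_{\operatorname{Exp}}$ is a priori bounded independently of the approximator --- this may require a harmless clipping of the MLP outputs to a ball of radius $\operatorname{diam}_h(\mathcal K)+\operatorname{Lip}(f)+1$, implementable by two extra $\operatorname{ReLU}$ layers --- and (ii) propagating the H\"older/smoothness seminorms through the composition with $\operatorname{Exp}$ and $\operatorname{Log}$ without losing the optimal exponent in Table~\ref{tab:casestudies}, which reduces to an application of the Fa\`a di Bruno formula together with uniform bounds on the derivatives of $\operatorname{Exp}$ on compact tangent neighbourhoods, available from the Jacobi equation.
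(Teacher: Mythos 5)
Your proposal follows the same overall architecture as the paper's proof: conjugate $f$ by the encoding $\varphi$ (your $E$) and readout $\rho$ (your $D$) to get a Euclidean target $\tilde f=\rho^{-1}\circ f\circ\varphi^{-1}$, approximate $\tilde f$ by a ReLU MLP using the optimal Euclidean rates, and then transfer the error back to $\mathcal{M}$ via the local Lipschitz bound for $\operatorname{Exp}^g_{\bar y}$ from Rauch's comparison theorem (Lemma~\ref{lem:EstimateLocLipExp}). So for the H\"{o}lder and $C^k$ cases the route is essentially the paper's.

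There are, however, two places where your version diverges, one cosmetic and one substantive. On image control: you propose appending clipping layers so that the image of $\widehat{\tilde f}$ lies in a ball on which $\operatorname{Lip}(\rho)$ is bounded. The paper instead exploits the hypothesis $\varepsilon\le 1$: it defines $\widetilde{\mathcal K}$ as the image under $\rho$ of the closed $1$-fattening of $\rho^{-1}\circ f(\mathcal{K}_{[0:\memory]})$, which is fixed \emph{before} choosing the approximator. Then requiring the Euclidean error to be $\le 1$ automatically forces $\rho\circ F(\varphi(\mathcal{K}_{[0:\memory]}))\subseteq\widetilde{\mathcal K}$, so $\operatorname{Lip}(\rho|\widetilde{\mathcal{K}})$ is finite and $\varepsilon$-independent. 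This removes the need for extra clipping layers, which would also slightly change the architecture budget.

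The substantive gap is in the finite-$\mathcal{K}$ case. You propose extending $f|_{\mathcal{K}}$ by Whitney's theorem to a smooth map and then applying the $C^k$ rates, but a Whitney extension of arbitrary data on $N$ points gives no control over the $C^k$ seminorms, so that route cannot recover the explicit finite-case complexity in Table~\ref{tab:casestudies} (depth $\mathcal{O}(N^2 J)$, width $\mathcal{O}(\varepsilon^{-1/(2NJ)}+N)$). The paper instead constructs a degree-$2N$ Lagrange-type polynomial interpolant of the rescaled data (Lemma~\ref{lem:ReLUMLPFiniteSetsNoRegularityTarget}), approximates each coordinate polynomial efficiently via the product-approximation results of \cite{LuShenYangZhang_2021_UATRegularTargets}, and parallelises the coordinate networks. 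That is the key lemma you would need; without it the stated finite-set rates do not follow from your argument.
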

\begin{proof}
See Appendix \ref{s:Proof_Static_Case}. 
\end{proof}

By randomizing, we obtain a global probabilistic universal approximation theorem for general Borel functions.  
This result can be interpreted as a non-Euclidean generalization of \citep[Theorem 2.3]{hornik1989multilayer} where the authors quantify the accuracy with which MLPs can approximate Borel functions between Euclidean spaces, which are not necessarily in some $L^p$-space, using a weaker notion than uniform convergence on compact sets.

\begin{table}[H]%[t]%
    \centering
	\ra{1.3}
    \caption{\textbf{Complexity estimates for the GDN in Theorem~\ref{theorem:optimal_GDN_Rates__ReLUActivation}}}
    \label{tab:casestudies__BigO}
    \begin{tabular}{@{}lll@{}}
    \cmidrule[0.3ex](){1-2}
    \textbf{$C^k$-Times Cnt.\ Diff.\ } ($k\in \mathbb{N}_+$) &\\
    \midrule
        Depth & $
            %%% Cost of Deep Parallelization
            \tilde{\mathcal{O}}\big(
                D(J+k^2+
                \memory
                d)
            \big)
            $\\
        Width Parameter &  $
                %%% CONSTANT
                \tilde{\mathcal{O}}\big(
                    J(\sqrt{D}/\varepsilon)^{(1+\memory)/(2k)}
                \big)
                $
        \\    \arrayrulecolor{lightgray}
    \cmidrule[0.3ex](){1-2}
    \textbf{Locally $\alpha$-H\"{o}lder} ($0<\alpha\le 1$) & \\
    \arrayrulecolor{lightgray}\hline
        Depth & $\mathcal{O}(JD)$\\
        Width Parameter & $\mathcal{O}\big(
        \memory
        d\,V(J^{-2}\varepsilon)^{\frac1{\alpha(1+\memory)d}}\big)
            $
    \\
    \arrayrulecolor{lightgray}
    \cmidrule[0.3ex](){1-2}
    % \textbf{Hyperparam.} & 
    \textbf{No. Reg. - Finite $N\eqdef \mathcal{K}_{[0:\memory]}$} & \\
    \arrayrulecolor{lightgray}\hline
    Depth & $\mathcal{O}(N^2\,J)$ \\
    Width Parameter & $\mathcal{O}(\varepsilon^{-\frac{1}{2N\,J}} + N)$\\
    \bottomrule
    \end{tabular}
    % }
    \caption*{Explicit expressions for all constants are given in Table~\ref{tab:casestudies}.}
\end{table}

\begin{corollary}[GDN are universal approximators of Borel functions]
\label{cor:Universality}
Let $(\mathcal{N},h)$ and $(\mathcal{M},g)$ be Riemannian manifolds satisfying Assumptions~\ref{ass:NPC}.  Let $\memory\in \mathbb{N}_+$ and $X_0,\dots, X_{\memory}$ are i.i.d.\ $\mathcal{N}$-valued random variables defined on a common probability space $(\mathcal{S},\mathcal{A},\mu)$. 
For any Borel function $f:\mathcal{N}^{1+\memory}\rightarrow \mathcal{M}$ and every approximation error $0<\varepsilon\le 1$, there is a GDN $\hat{f}:\mathcal{N}^{1+\memory}\rightarrow \mathcal{M}$ satisfying the probabilistic bound
\[
        \mu\big(
            d_h\big(
                f(X_{[0:\memory]}),\hat{f}(X_{[0:\memory]})
            \big) < \varepsilon
        \big)
    > 
        1- \varepsilon
.
\]
\end{corollary}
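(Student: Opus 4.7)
The plan is a three-step reduction: first, pass from Borel to continuous via Lusin's theorem; second, replace the continuous map by a Lipschitz surrogate using the Hadamard structure of $\mathcal{M}$; third, invoke the static approximation Theorem~\ref{theorem:optimal_GDN_Rates__ReLUActivation} with $\alpha=1$.

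For the first step, let $\nu \eqdef (X_{[0:\memory]})_{\ast}\mu$ be the law of the input tuple; it is a Borel probability measure on the Polish space $\mathcal{N}^{1+\memory}$ and is therefore inner regular. I pick a compact set $K_1 \subseteq \mathcal{N}^{1+\memory}$ with $\nu(K_1) > 1 - \varepsilon/3$. Since $f$ is a Borel map into the separable metric space $(\mathcal{M},d_g)$, Lusin's theorem applied to the Radon measure $\nu$ furnishes a closed set $F$ with $\nu(F) > 1 - \varepsilon/3$ such that $f|_F$ is continuous. I set $K \eqdef K_1 \cap F$: it is compact, satisfies $\nu(K) > 1 - 2\varepsilon/3$, and $f$ is continuous on it.

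For the second step, Assumption~\ref{ass:NPC} makes $\mathcal{M}$ a Hadamard manifold, so for a fixed $y^{\star} \in \mathcal{M}$ the Riemannian exponential $\operatorname{Exp}^g_{y^{\star}} : T_{y^{\star}}\mathcal{M} \cong \R^d \to \mathcal{M}$ is a global diffeomorphism with smooth inverse $\operatorname{Log}^g_{y^{\star}}$. The composition $\tilde g \eqdef \operatorname{Log}^g_{y^{\star}} \circ f|_K : K \to \R^d$ is continuous on the compact $K$ and hence uniformly approximable by Lipschitz functions---either componentwise via Stone--Weierstrass after charting $K$ into Euclidean coordinates through the Log maps of $\mathcal{N}$, or directly by infimal convolution. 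I would then extend such a Lipschitz approximation to a Lipschitz $\R^d$-valued map on all of $\mathcal{N}^{1+\memory}$ via McShane--Whitney (working in the global chart of $\mathcal{N}$), and postcompose with $\operatorname{Exp}^g_{y^{\star}}$ to obtain a locally $1$-H\"{o}lder map $\tilde f : \mathcal{N}^{1+\memory} \to \mathcal{M}$ satisfying $\sup_{x \in K} d_g(\tilde f(x), f(x)) < \varepsilon/2$. The accuracy of the Euclidean Lipschitz step is chosen in terms of the Lipschitz constant of $\operatorname{Exp}^g_{y^{\star}}$ on a bounded neighbourhood containing the image of $\tilde g$.

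For the third step, Theorem~\ref{theorem:optimal_GDN_Rates__ReLUActivation} applied to $\tilde f$ on the compact $K$ yields a GDN $\hat f$ with $\sup_{x \in K} d_g(\hat f(x), \tilde f(x)) < \varepsilon/2$. The triangle inequality gives $d_g(\hat f(x), f(x)) < \varepsilon$ for every $x \in K$, so that
\[
    \mu\!\left( d_g\!\left( f(X_{[0:\memory]}), \hat f(X_{[0:\memory]}) \right) < \varepsilon \right) \;\ge\; \nu(K) \;>\; 1 - 2\varepsilon/3 \;>\; 1 - \varepsilon.
\]
The delicate point is the second step: transferring a Euclidean Lipschitz approximation to the non-Euclidean target $\mathcal{M}$ while preserving a usable Lipschitz bound. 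The global Cartan--Hadamard chart $(\operatorname{Exp}^g_{y^{\star}}, \operatorname{Log}^g_{y^{\star}})$, available thanks to Assumption~\ref{ass:NPC}, is precisely what makes this transfer clean and keeps the argument quantitative.
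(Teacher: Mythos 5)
Your proof is correct and follows the same Lusin-based skeleton as the paper: push forward to a Radon measure on the Polish input space, apply Lusin's theorem and inner regularity to restrict to a compact set where $f$ is (equal to or) continuous, then invoke Theorem~\ref{theorem:optimal_GDN_Rates__ReLUActivation} on that compact and convert the uniform bound into a probabilistic one. The one genuine difference is your intermediate Lipschitz-surrogate step, and it is worth noting: the paper passes from Lusin's continuous representative $\rho\circ\ell_\varepsilon$ directly to Theorem~\ref{theorem:optimal_GDN_Rates__ReLUActivation}, yet that theorem's hypotheses ask for locally $\alpha$-H\"{o}lder (or $C^k$, or a finite domain), and continuity alone does not deliver any of those. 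Your explicit detour---read $f|_K$ through the $\operatorname{Log}^g_{y^{\star}}$ chart, uniformly approximate by a Lipschitz map on the compact $K$ (infimal convolution or Stone--Weierstrass both work), extend by McShane--Whitney, postcompose with $\operatorname{Exp}^g_{y^{\star}}$---produces a locally $1$-H\"{o}lder target that genuinely lies in the hypothesis class, so your $\varepsilon/2 + \varepsilon/2$ budget closes cleanly. In short, you take the same route but make the one step the paper glosses over fully rigorous; the only cost is losing any quantitative control on the resulting Lipschitz constant (and hence on the network size), which is exactly why the conclusion is a purely qualitative probabilistic bound with no rate. Two minor presentational points: the paper's proof text contains a self-referential typo (``Applying Theorem~\ref{cor:Universality}'' should read Theorem~\ref{theorem:optimal_GDN_Rates__ReLUActivation}), and the corollary statement writes $d_h$ where $d_g$ is meant since the outputs live in $\mathcal{M}$; you correctly used $d_g$ throughout.
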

\begin{proof}
    See Appendix \ref{s:Proof_Static_Case}. 
\end{proof}

Next, we consider the dynamic version of our results, and our main approximation theorem.
\subsection{Dynamic/Sequential universality} \label{s:Approximation__ss:Dynamic}

Before presenting the dynamic universal approximation theorems for GDNs, we formalize their architecture summarized graphically in Figure~\ref{fig:Model}.
Briefly, the blue loop is an auxiliary feed-forward neural network defined on the GDN model's parameter space.  This auxiliary feedforward neural network, called \textit{hypernetwork} (e.g., \cite{von2019continual}), synchronizes the parameters of several GDNs each of which independently/in parallel approximates the map $x_{[0:T]}\mapsto(\Pi_{x_{[0:t]}})_{t=0}^T$ for a unique $t$. The hypernetwork synchronizes these GDNs, memorizing the parameters of an optimal GDN at time $t+1$ from those defining an optimal one at time $t$, for each $t=0,\dots,T$.  Since memorization requires exponentially fewer parameters than approximation, then the HGN can approximate the causal map $x_{[0:T]}\mapsto(\Pi_{x_{[0:t]}})_{t=0}^T$ using roughly the same number of parameters as one GDN needed to approximate the map at any one step in time.  This allows us to obtain our main ``dynamic'' approximation theorem.

\begin{figure}[htp!]%[H]
\centering
  \includegraphics[width=1\linewidth]{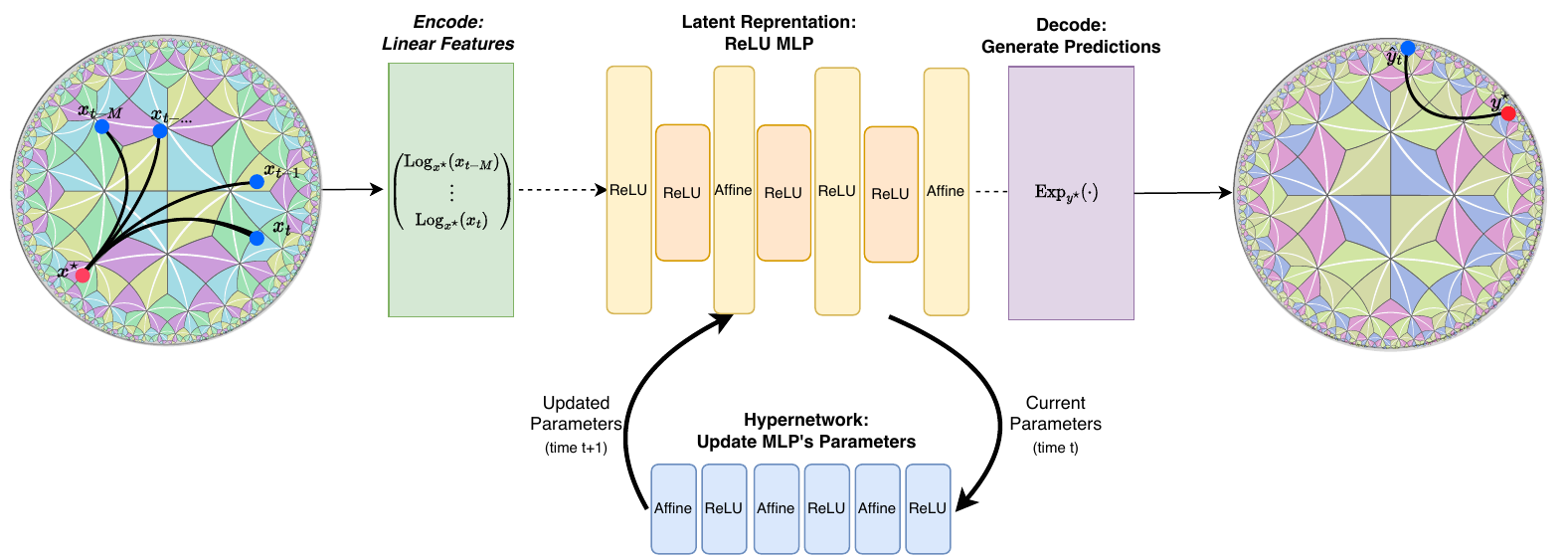}
  \caption{\textbf{The HGN Model:} The green layer encodes sequence segments in the input manifold into distances relative to a reference/landmark point $x^{\star}$ therein.  These linearized features are then processed through a ReLU MLP, illustrated by the yellow repeated applying fully-connected affine (also called linear) layers interspersed with ReLU activation functions orange.  Finally, the purple decodes the vector $v$ generated by the downsampled CNN into a manifold-valued prediction, by travelling along a geodesic emanating from a reference/landmark point $y^{\star}$ therein with initial velocity $v$.  
  \hfill\\
  \textbf{The HGN Model:} Applies the GDN model while iteratively updating its internal parameters, at each time step, using an (blue) auxiliary ReLU network, called a \textit{hypernetwork}.}
  \label{fig:Model}
\end{figure}

\begin{definition}[(HGN) Hypergeometric Network]
\label{defn:HCGNs}
A map $F: \mathcal{N}^{\mathbb{Z}}\rightarrow \mathcal{M}^{\mathbb{Z}}$ 
is called a \textit{hyper-geometric network (HGN)} if there are $\memory,d^{\star},d,D\in \mathbb{N}_+$, a multi-index $[\mathbf{d}]
\eqdef
(d_0=(1+\memory)D,\dots,d_J=d)$, a \textit{hypernetwork} given by a ReLU MLP $h:\mathbb{R}^{d^{\star}}\rightarrow \mathbb{R}^{d^{\star}}$, an \textit{initial parameter} $z\in \mathbb{R}^{d^{\star}}$, and a linear readout map $\tilde{L}:\mathbb{R}^{d^{\star}}\rightarrow \mathbb{R}^{P([\mathbf{d}])}$ representing $F$ as
\[
\begin{aligned}
    F\big(x_{\cdot}\big)_t
        & = 
    f_{\tilde{L}(z_t)}\big(
            x_{[t-\memory:t]}
    \big)
    \\
    z_{t}  & = \begin{cases}
        h(z_{t-1}) & \mbox{ if } t>0\\
        z & \mbox{ if } t\le 0
    \end{cases}
,
\end{aligned}
\]
for every $t\in \mathbb{Z}$ and each sequence $x_{\cdot} \in \mathcal{N}^{\mathbb{Z}}$, where
$f_{\theta}\in \mathcal{GDN}_{[\mathbf{d}],\memory}$.
The quadruple $(z,\tilde{L},h,[\mathbf{d}])$ is called a HGN with \textit{history} $H$ and latent dimension $d^{\star}$.  The causal map $F$ is called the representation of the HGN $(z,\tilde{L},h,[\mathbf{d}])$.
\end{definition}
When clear from the context, we refer to the representation of a HGN simply as a HGN.

We now formalize what it means for a map $f:\mathcal{N}^{\mathbb{N}}\rightarrow \mathcal{M}^{\mathbb{N}}$ to flow information forward in time while not overemphasizing the arbitrarily distant past.  When $\mathcal{N}$ and $\mathcal{M}$ are the Euclidean space, this definition is a quantitative version of the fading memory property for reservoir systems (e.g., \cite{lukovsevivcius2009reservoir,Gonon2022NNs}) and it is a variant of the approximable complexity property of \cite{Acciaio2022_GHT} and the related notion introduced in \citep[Definition 9]{galimberti2022designing}. 
We use $C^{\lambda}_{\alpha}(\mathcal{N}^{\memory},\mathcal{M})$ to denote the space of $\alpha$-H\"{o}lder functions from $\mathcal{N}^{\memory}$ to $\mathcal{M}$ whose $\alpha$-H\"{o}lder coefficients is at most $\lambda$-Lipschitz.
We use $C^{k,\lambda}(\mathcal{N}^{\memory},\mathcal{M})$ to denote the space of all $C^k$-functions $f:\mathcal{N}^{\memory} \to \mathcal{M}$ for which there exists $x_1,\dots,x_{\memory}\in \mathcal{N}^{\memory}$ and some $y\in \mathcal{M}$ such that 
\[
\operatorname{Log}_y^g\circ f\circ 
\Big(\prod_{m=1}^{\memory}\operatorname{Exp}_{x_m}^h\Big)
\]
has at most $\lambda$-Lipschitz $k^{th}$ partial derivatives. 

\begin{definition}[Causal maps of finite virtual memory between Hadamard manifolds]
\label{def:causal_maps}
\hfill\\
    A map $f\,:\,\mathcal{K} \rightarrow \mathcal{M}^{\mathbb{Z}}$ from a compact subset $\mathcal{K} \subseteq \mathcal{N}^{\mathbb{Z}}$ is called a causal map with virtual memory $r \ge 0$, if for every ``compression level'' $\varepsilon>0$ and each ``time-horizon'' $T\in \mathbb{N}_+$ there is a positive integer $\memory
    \eqdef \memory(\varepsilon,T,\mathcal{K})
    \in O(\varepsilon^{-r})$ with $\memory\le T$ and functions $f_1,\dots,f_T \in C(\mathcal{N}^{\memory}, \mathcal{M})$
     satisfying
    \begin{equation}
    \label{eq:causal_maps}
        \max_{t \in [[T]]}
        \sup_{x \in \mathcal{K}} \,
            d_g\big(
                    f(x)_t
                , 
                    f_t(x_{(t-\memory: t]})
            \big)<\varepsilon.
    \end{equation}
We say that $f$ is $(r, k,\lambda)$-smooth if there are $k\in \mathbb{N}_+$ and $\lambda>0$ such that each $t\in\mathbb N_+$ the map $f_{t}$ belongs to $C^{k,\lambda}(\mathcal{N}^{\memory}, \mathcal{M})$.  
\hfill\\
\noindent
We say that $f$ is $(r, \alpha, \lambda)$-H\"{o}lder $k\in \mathbb{N}_+$ and $0<\alpha\le 1$ such that for each $t\in \mathbb{N}_+$ the map $f_{t}$ belongs to $C_{\alpha}^{\lambda}(\mathcal{N}^{\memory},\mathcal{M})$. 
\end{definition}

%%%%
In order for causal maps to be uniformly approximable on compact sets, they need to possess a basic level of continuity.  Therefore, we focus on the following two classes of ``high regularity'' and ``low regularity'' causal maps of finite virtual memory as approximable target functions, respectively.  These definitions were adapted from the infinite-dimensional linear analogues in \cite{galimberti2022designing} to the finite-dimensional non-vectorial setting considered herein. 

We may now state our main \textit{universal approximation theorem}, which shows that HGN can approximate causal maps of finite virtual memory.  Quantitative rates are obtained when the target function is either smooth or H\"{o}lder in the above sense.  
Our result is quantitative and depends only on the regularity and memory of the target causal map being approximated, as well on the structure of the compact path space over which the uniform spatio-temporal approximation is guaranteed.

\begin{table}[htp!]%[H]
    \centering
	\ra{1.3}
    \caption{\textbf{Complexity estimates for the HGN in Theorem~\ref{thrm:approx_HGCNN}}}
    \label{tab:HGCNN__BigO}
    % \resizebox{\columnwidth}{!}{%
    \begin{tabular}{@{}ll@{}}
    \cmidrule[0.3ex](){1-2}
    \textbf{Hyperparame.} & \textbf{Estimate} \\
    \hline
    \textbf{Depth} & $\tilde{\mathcal{O}}(T^{3/2})$ 
    \\
        \textbf{Width} & $\mathcal{O}\big(P([\bar{J}])+Q)\, T\big)$ 
    \\
    \textbf{N. Parameters} & 
    $
        \tilde{\mathcal{O}}\Big(
        T^4(P([{J}])+Q)^3
        \Big)
    ,
    $
    \\
    \textbf{Latent Dimension} %($d^{\star}=Q+P([\mathbf{d}]$) 
    & $\Theta ( P([\mathbf{d}]) )$
    \\
    \bottomrule
    \end{tabular}
    %}
    \caption*{The depth, width, and value of $P([\mathbf{d}])$ is as in Table~\ref{tab:casestudies} and as determined by the regularity of $f$ (and the cardinality of $\mathcal{K}_{[0:T]}$) in Theorem~\ref{theorem:optimal_GDN_Rates__ReLUActivation}, with the value of $\varepsilon$ instead set to be $\varepsilon/2$.}
\end{table}

\begin{theorem}[Universal approximation of causal maps between Hadamard manifolds]
\label{thrm:approx_HGCNN}
\hfill\\
Under Assumption~\ref{ass:NPC}, 
let $\mathcal{K}$ be a non-empty compact subset of $\mathcal{N}^{\mathbb{Z}}$, and let $f:\mathcal{K}\rightarrow \mathcal{M}^{\mathbb{Z}}$ be a causal map of finite virtual memory.
Fix depth and memory hyperparameters $J,Q\in \mathbb{N}_+$ and $0<\delta<1$. \hfill\\
\noindent 
Suppose either that 1) $f$ is $(r,k,\lambda)$-smooth ($r,\lambda \ge 0$ and $k\in \mathbb{N}_+$), or 2) $f$ is $(r, \alpha, \lambda)$-H\"older ($r,\lambda\ge 0$ and $\alpha \in (0,1]$) or 3) that each $\mathcal{K}_t$ is finite for $t\in \mathbb{Z}$ and $f$ has a finite virtual memory $r\ge 0$.  
\hfill\\
\noindent
For every ``approximation error'' $\varepsilon>0$ and each ``time-horizon'' $T_{\delta,Q} \eqdef \lfloor \delta^{-Q}\rfloor$, there is an integer $0\le \memory \lesssim 
\min\{T_{\delta,Q},\varepsilon^{-r}\}
$, and an HGN $F:\mathcal{N}^{\mathbb{Z}}\rightarrow \mathcal{M}^{\mathbb{Z}}$ with history $\memory$ and latent dimension $d^{\star}=Q+D$ satisfying the uniform approximation guarantee
\begin{equation*}
    \max_{t = 0,\dots,T_{\delta,Q}}\,
        \sup_{x \in \mathcal{K} }\,\,
            d_{g}\big(
                        f(x)_t
                    ,
                        F(x)_t
                \big) 
    <  
        \varepsilon
.
\end{equation*}
Moreover, the structure and number of parameters determining $\hat{h}$ are recorded in Tables~\ref{tab:casestudies__BigO} and~\ref{tab:HGCNN__BigO}.

\end{theorem}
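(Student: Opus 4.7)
My plan is to decompose the task along three independent axes --- truncation by virtual memory, static spatial approximation, and hypernetwork-based temporal memorization --- and then recombine via a triangle inequality. First, I invoke the finite virtual memory hypothesis of $f$ at compression level $\varepsilon/2$ and horizon $T_{\delta,Q}$: this produces a window size $\memory \in \mathcal{O}(\varepsilon^{-r})$ (which we may assume is at most $T_{\delta,Q}$ by monotonicity) and a finite collection of continuous ``snapshots'' $f_1,\dots,f_{T_{\delta,Q}}\in C(\mathcal{N}^{\memory},\mathcal{M})$ such that $\max_{t}\sup_{x\in \mathcal{K}}d_g(f(x)_t, f_t(x_{(t-\memory:t]}))<\varepsilon/2$. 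In the three regularity regimes we get that each snapshot $f_t$ inherits the $C^{k,\lambda}$, $C_{\alpha}^{\lambda}$, or finite-set structure assumed of $f$, which is exactly the input shape required by the static theorem.

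Second, for every $t\in\{0,\dots,T_{\delta,Q}\}$ I apply Theorem~\ref{theorem:optimal_GDN_Rates__ReLUActivation} to $f_t$ on the compact set $\mathcal{K}_{(t-\memory:t]}\subseteq \mathcal{N}^{\memory}$ (the canonical projection of $\mathcal{K}$ onto the sliding window), with tolerance $\varepsilon/2$. This yields parameters $\theta^{\star}_t \in \mathbb{R}^{P([\mathbf{d}])}$ defining a GDN $\hat f_t=f_{\theta^{\star}_t}\in \mathcal{GDN}_{[\mathbf{d}],\memory}$ such that $\sup_{x} d_g(f_t(x),\hat f_t(x))<\varepsilon/2$. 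Crucially, since the regularity parameters $(k,\lambda)$ or $(\alpha,\lambda)$ are uniform in $t$, the multi-index $[\mathbf{d}]$, depth $J$, and the ambient dimension $P([\mathbf{d}])$ of the parameter space can be chosen identically across $t$; what varies with $t$ is only the weight vector $\theta^{\star}_t$.

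Third --- and this is the heart of the argument --- I must construct the hypernetwork $h:\mathbb{R}^{d^{\star}}\to\mathbb{R}^{d^{\star}}$, the initial latent $z\in \mathbb{R}^{d^{\star}}$, and the linear readout $\tilde L:\mathbb{R}^{d^{\star}}\to \mathbb{R}^{P([\mathbf{d}])}$ so that the iterates $z_t=h^{t}(z)$ satisfy $\tilde L(z_t)=\theta^{\star}_t$ for every $t=0,\dots,T_{\delta,Q}$. Observe that this is \emph{memorization}, not approximation: I only need a ReLU dynamical system whose orbit of length $T_{\delta,Q}=\lfloor \delta^{-Q}\rfloor$ hits the prescribed list of vectors. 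The standard construction uses one counter coordinate plus $D$ ``address'' coordinates to index time (hence $d^{\star}=Q+D$) and then exploits the exponential expressivity of depth in ReLU MLPs: a depth roughly $Q^{3/2}$ (equivalently $\tilde{\mathcal{O}}(T_{\delta,Q}^{3/2})$) ReLU network of appropriate width suffices to realise such a ``lookup table'' of $T_{\delta,Q}$ vectors in $\mathbb{R}^{P([\mathbf{d}])}$, using the bit-extraction / piecewise-constant memorization techniques underlying the optimal ReLU bounds of \cite{ShenYangHaizhaoZhang_OptimalReLU_2022}. Composing with $\tilde L$ recovers the desired parameters at each step. Concatenating the three approximation errors (the snapshot gives $\varepsilon/2$, the GDN gives $\varepsilon/2$, and the hypernetwork reproduces parameters \emph{exactly}) gives $\max_t\sup_{x}d_g(f(x)_t, F(x)_t)<\varepsilon$ by the triangle inequality.

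The main obstacle is the third step: proving that the depth/width/parameter budget recorded in Table~\ref{tab:HGCNN__BigO} actually suffices to memorize a sequence of $T_{\delta,Q}$ high-dimensional vectors. The difficulty is that naive memorization would scale linearly in $T_{\delta,Q}=\lfloor \delta^{-Q}\rfloor$, destroying the advertised polynomial-in-$T$ bounds; one must invoke a memorization result with depth-based exponential gains so that the dependence on $T_{\delta,Q}$ in the depth is $\tilde{\mathcal{O}}(T_{\delta,Q}^{3/2})$ rather than $\Theta(T_{\delta,Q})$, and the parameter count scales like $\tilde{\mathcal{O}}(T_{\delta,Q}^4(P([\mathbf{d}])+Q)^3)$. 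This is what turns the static GDN into a truly sequential model whose cost is comparable to a single time-slice approximator, and is what distinguishes our bounds from those of backpropagation-through-time style recurrent architectures.
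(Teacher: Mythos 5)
Your decomposition --- truncate by virtual memory at tolerance $\varepsilon/2$, statically approximate each snapshot $f_t$ by a GDN at tolerance $\varepsilon/2$, then exactly memorize the resulting parameter sequence $\theta^{\star}_0,\dots,\theta^{\star}_{T_{\delta,Q}}$ with a ReLU hypernetwork --- is the same three-step scheme the paper uses, and Steps 1 and 2 of your sketch match the paper's Steps 1 and 2 essentially verbatim (the paper is a little more careful about replacing the sliding projections $\mathcal{K}_{(t-\memory:t]}$ by a single time-invariant compact $\mathcal{K}^{\star:T}$, which is needed to get $t$-uniform constants and, in the finite case, a stars-and-bars cardinality bound, but the idea is yours).

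The gap is in Step 3, and it is not merely a missing citation. You assert that a ReLU network can realize a ``lookup table'' hitting the prescribed orbit, which is true, but the paper does not re-derive this: it invokes a specific ``dynamic weaving lemma'' (\cite[Lemma 5]{galimberti2022designing}) that outputs the initial latent $z$, the hypernetwork $h$, \emph{and} the linear readout $L$ satisfying $\theta^{\star}_t = L(h^{\circ t}(z))$ with the stated depth/width/parameter scaling. Without that lemma your Step 3 is a claim, not a proof; you should either cite it or reconstruct it. More seriously, your justification of the latent dimension $d^{\star}=Q+D$ (``one counter coordinate plus $D$ address coordinates'') is not correct and cannot be: the readout $\tilde L:\mathbb{R}^{d^{\star}}\to\mathbb{R}^{P([\mathbf{d}])}$ is \emph{linear}, so its image is a subspace of dimension at most $d^{\star}$, and the constructed GDN parameters $\theta^{\star}_0,\dots,\theta^{\star}_{T_{\delta,Q}}$ are arbitrary vectors in the (typically much larger) space $\mathbb{R}^{P([\mathbf{d}])}$. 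If they span a subspace of dimension exceeding $Q+D$, no linear $\tilde L$ from $\mathbb{R}^{Q+D}$ can reproduce them all, regardless of what the hypernetwork does. Indeed, the paper's own proof sets $d^{\star}\eqdef P([\bar{J}])+Q$ (and Table~\ref{tab:HGCNN__BigO} records the latent dimension as $\Theta(P([\mathbf{d}]))$), which is consistent with the rank constraint; the $Q+D$ appearing in the theorem statement looks like a slip, and your ``address coordinate'' story builds an argument on top of that slip rather than spotting it. You should correct the latent dimension to $d^{\star}\ge P([\mathbf{d}])$ (plus $Q$ routing coordinates) and route Step 3 through the weaving lemma or an equivalent explicit construction.
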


\begin{proof}
    See Appendix \ref{s:Proof_Dynamic_Case}.
\end{proof}

\section{Ablation Study}
\label{s:Numerics}

We now complement our theoretic analysis of the \textit{Gaussian random projection} and the \textit{HGN} with a numerical analysis of these new tools.  The primary purpose of this section is to show how such a pipeline can be implemented and to explain the role of each component of our model and how each of these interacts with the Volterra process $X_{\cdot}$.  
Since there are no available benchmarks for approximating dynamical systems on Riemannian manifolds, which are guaranteed to be universal, we instead perform an ablation study to better understand the dependence of each of these various factors determining the process $X_{\cdot}$.  Additional details are provided in Appendix~\ref{a:Experiment_Details}.

\paragraph{Experiment Setup}
% \label{s:Numerics__ss:Setup}

Consider a family of i.i.d.\ random Bernoulli variables $(B_t)_{t=0}^{\infty}$ taking values in $\{0,1\}$ with equal probabilities of each state.  Fix a ``randomness'' parameter $\lambda\ge0$ and define the random matrices
\[
        \mathbf{S}_t 
    \eqdef 
        \lambda B_t\cdot I_d
\]
where $t\in \mathbb{N}$.
Fix a weight $w\in (0,1]$, Lipschitz functions $\mu:\mathbb{R}^{d}\rightarrow\mathbb{R}^d$, $\varsigma:\mathbb{R}^d\rightarrow (0,2]$, and a $d\times d$ symmetric positive-definite matrix $\sigma$.  Consider the $d$-dimensional stochastic process 
\begin{equation}
\label{eq:Volterra__ExperimentsS}
\begin{aligned}
        X_{t+1}
    & = 
            X_t
        +
            \operatorname{Drift}(X_{[t-1:t]})
        +
            \operatorname{Diffusion}(X_{t},\mathbf{S}_t)
            \,
            W_t
    \\
        \operatorname{Drift}(z,x) & \eqdef w \mu(x) + (1-w) \,\mu(z)
    \\
        \operatorname{Diffusion}(x,s) & \eqdef 
        \varsigma (x)\cdot \sigma + s
,
\end{aligned}
\end{equation}
for $t\in \mathbb{N}$, where $(W_t)_{t=0}^{T}$ are i.i.d.\ $d$-dimensional standard normal random variables independent of $(B_t)_{t=0}^{T}$, and both $X_{-1},X_{0}$ are $d$-dimensional standard normal random variables.  The diffusion component of the process $X_t$, conditionally on $X_t$, randomly moves between $\varsigma(X_t)\cdot\sigma$ and $\varsigma(X_t)\cdot\sigma+\lambda I_d$ with equal probabilities, independently of the driving Brownian motion. 
\noindent For any $T\in \mathbb{N}_+$, path $x_{[-1:T]}\in \mathbb{R}^{(2+T)d}$, and integer $0\le t\le T$, the $\mathcal{N}_d$-valued random variable $\mathbb{Q}_{x_{[-1:t]}}$ is distributed according to 
\begin{equation}
\label{eq:Q_states}
\begin{aligned}
        \mathbb{P}\big[
                \mathbb{Q}_{x_{[-1:t]}}
            =
                \mathcal{N}_d(
                        x_{t}
                        +
                        \operatorname{Drift}(x_{[t-1:t]})
                    ,
                        \varsigma(x_t)^2
                        \cdot
                        \sigma^2
                    )
        \big]
    =&
        \frac12
\\
        \mathbb{P}\big[
                \mathbb{Q}_{x_{[-1:t]}}
            =
                \mathcal{N}_d(
                        x_{t}+\operatorname{Drift}(x_{[t-1:t]})
                    ,
                        (
                            \varsigma(x_t)
                            \cdot
                            \sigma
                            +
                            \lambda
                            I_d
                        )^2
                    )
        \big]
    =&
        \frac12
    .
\end{aligned}
\end{equation}
By \citep[Proposition 5.5]{Sturm_2003} and the product Riemannian structure on $(\mathcal{N}_d,\mathfrak{J})$ we have that the barycenter of $\operatorname{Law}(\mathbb{Q}_{x_{[0:t]}})$ is the Cartesian product of the barycenters of its components, up to identification of $(\mathcal{N}_d,\mathfrak{J})$ with $(\mathbb{R}^d\times \operatorname{Sym}_+(d),\boldsymbol{\delta}\oplus g)$.  Using, see \citep[page 1701]{BiniBruno_KarcherMeanPSD_2103}, for the expression of the barycenter between two-points in $(\operatorname{Sym}_+(d),g)$ we find that
\begin{equation}
\label{eq:GaussianProjection__Experiments}
        \beta(\mathbb{Q}_{x_{[-1:t]}})
    =
        \mathcal{N}_d\big(
                x_t+\operatorname{Drift}(x_{[t-1:t]})
            ,
            \,\,
                \varsigma(x_t)
                \cdot
                \sigma^2(
                    \sigma^{-2} 
                    (\lambda I_d + \varsigma(x_t) \cdot \sigma)^2 
                )^{1/2}
        \big)
.
\end{equation}

Next, we confirm that the HGN model can indeed approximate the map $x_{[-1:t]}\to \beta(\mathbb{Q}_{x_{[-1:t]}})$ in practice.  
Furthermore, we inspect the dependence of each of the components of our framework on the parameters defining the Volterra process~\eqref{eq:Volterra__ExperimentsS}; namely, the drift $\mu$, the diffusion $\sigma$, $\varsigma$, the ``randomness of the stochastic factor process''  $\lambda>0$, and the effect of non-Markovianity $w$.

\paragraph{HGN Training Pipeline}
The HGN model is trained as follows.  First, we sample several $N\in \mathbb{N}_+$ paths segments $\{x^{(n)}_{[-1:T]} \}_{n=1}^N$ up to time $T \in \mathbb{N}_+$ and we train a GDN to predict $y^{(n)}_1\eqdef \beta(\mathbb{Q}_{x^{(n)}_{[-1:1]}})$ given each sampled path, by minimizing the intrinsic mean squared error (IMSE)
\[
    \ell_1(\theta)\eqdef \sum_{n=1}^N\, d_g(f_{\theta}(x_{[-1:1]}^{(n)}),y^{(n)}_1)^2
\]
where $\theta$ parameterized a set of GDNs of pre-specified depth and width.  The IMSE is optimized using the native ADAM optimizer built into Pytorch until a suitable GDN parameter $\theta_1$ is obtained.

Then, for every subsequent time $t$, each we train a GDN by rolling the training window forward and minimizing the corresponding GDN
\begin{equation}
\label{eq:IMSE_timet}
        \ell_t(\theta)
    \eqdef
        \sum_{n=1}^N\, d_g(f_{\theta}(x_{[-t-2:t]}^{(n)}),y^{(n)}_t)^2
\end{equation}
where $y^{(n)}_t\eqdef \beta(\mathbb{Q}_{x^{(n)}_{[-t-2:t]}})$.  To avoid instability due to the several symmetries present in the parameter space of most MLPs, see e.g.~\cite{ainsworth2023git,sharma2024simultaneous}, and thus of our GDNs, we initialize the optimization of each GDN at time $t+1$ using the optimized parameters $\theta_t$ obtained by minimizing $\ell_t$ at time $t$.  Additionally, this implicitly encodes a transfer-learning effect, whereby the GDN responsible for predicting at time $t$ encodes the pre-trained structure in previous times.  We note that when training the first GDN, $20$ ADAM epochs are used while subsequent GDNs are sequentially fine-tuned using $10$ ADAM epochs.  

Once we have trained each ``expert'' GDN $\{f_{\theta_t}\}_{t=0}^T$, specialized only on approximating $\beta(\mathbb{Q}_{\cdot})$ at each time $t$, the HGN can be trained by simply minimizing the ``hyper-MSE'' $\ell_{\text{hyper}}$ in the common parameter space $\mathbb{R}^p$ of the GDNs.  Namely, 
\[
    \ell_{\text{hyper}}(\vartheta) \eqdef \sum_{t=0}^{T-1}\, \|h_{\vartheta}(\theta_t)-\theta_{t+1}\|^2
    ,
\]
where $\vartheta$ encodes the parameters of a hypernetwork of fixed depth and width.
The HGN is then fully encoded into the pair $(\theta_0,h)$.   
Additional details, and pseudo-code is contained in Appendix~\ref{a:Experiment_Details} and our code can be found at our Github page~\code.

\subsection{Ablation Results}
\label{ss:Numerics}

We study the sensitivity of the HGN and GDN models to the principle characteristics dictating the stochastic evolution of $X$.  We subsequently study the effect of encoding a large number GDN ``experts'' into a single HGN.

We fixed one base problem (a process with a specific set of parameters) and $30$ additional variations used during our ablation study, each with a similar set of parameters but with exactly one hyperparameter different (e.g.\ drift, volatility, etc...) perturbed during each ablation.  The training set consists of the first $t=0,\dots,159$ time steps and the test set consists of the final $160,\dots,200$ time steps of the process $X_{\cdot}$.   In each result we report $95\%$ empirical confidence intervals.
All experiment details on the computational resources used are in Appendix~\ref{s:ExperimentDetails}.

\paragraph{Sensitivities to aspects of $X_{\cdot}$}

We begin by ablating the sensitivity of the HGN and GDN models to: the simplicity/complexity of the drift $(\mu)$, 
the level of randomness $(\lambda)$ in the stochastic factor $\mathbf{S}_{\cdot}$, 
the effect of large/small fluctuations $(\varsigma)$ in the diffusion,
the dimension $(d)$ of the process $X_{\cdot}$, and the level of non-Markovianity/persistence of memory $(w)$ of the process $X$.  In each case, we report the intrinsic mean squared error for the GDN and HGN models and the confidence intervals formed from one standard deviation of the loss distribution about the (mean) intrinsic mean squared error across all time steps in the test set. 

\begin{table}[H]
\centering
\caption{\textbf{Drift Ablation:} Sensitivity to the structure of the drift $(\mu)$ of $X$.}
\label{tab:loss_mu}
\resizebox{1\textwidth}{!}{% %
\begin{tabular}{@{}lccccc@{}}
\toprule
\textbf{$\mu$} & \textbf{GDN Loss Mean} & \textbf{GDN Loss 95\% C.I.} & \textbf{HGN Loss Mean} & \textbf{HGN Loss 95\% C.I.} \\ \midrule
% 0.01
$\frac1{100}$
& $1.27\times{10^{-6}}$ & $[1.18, 1.36]\times{10^{-6}}$ & $4.78\times{10^{-4}}$ & $[3.89, 5.67]\times{10^{-4}}$ \\
%0.1
$\frac1{10}$
& $2.21\times{10^{-6}}$ & $[2.03, 2.39]\times{10^{-6}}$ & $4.39\times{10^{-2}}$ & $[3.65, 5.13]\times{10^{-2}}$ \\
% (0.01 - $x$) 0.5 
$\frac1{2}\,(\frac1{100}-x)$
& $1.58\times{10^{-3}}$ & $[1.55, 1.60]\times{10^{-3}}$ & $1.58\times{10^{-3}}$ & $[1.55, 1.60]\times{10^{-3}}$ \\
% exp(-$x$) + cos(0.01 $x$) 
$e^{-x} + \cos(\frac{x}{100})$
& $1.04\times{10^{-5}}$ & $[0.88, 1.19]\times{10^{-5}}$ & $1.41\times{10^{+1}}$ & $[1.29, 1.53]\times{10^{+1}}$ \\ \bottomrule
\end{tabular}
}% %
\end{table}

Table~\ref{tab:loss_mu} shows that the HGN and GDN models can predict Volterra processes whose drift is both simple, e.g.\ constant, or complicated, e.g.\ exhibiting osculations $\cos(x/100)$ and decay such as $e^{-x}$.  Nevertheless, as one would expect, the more complicated drifts are more difficult to learn for both models; as reflected by larger test set errors.  Moreover, as the drift becomes more complicated the gap between the test set performance of the HGN and GDN models grows as the parameters of the GDN become increasingly difficult to predict for the hypernetwork in the HGN model.

\begin{table}[H]
\centering
\caption{\textbf{Random Factor Ablation:} Sensitivity to the randomness $(\lambda)$ in the stochastic factor process $\mathbf{S}$.}
\label{tab:loss_lambda}
\resizebox{1\textwidth}{!}{% %
\begin{tabular}{@{}lccccc@{}}
\toprule
\textbf{$\lambda$} & \textbf{GDN Loss Mean} & \textbf{GDN Loss 95\% C.I.} & \textbf{HGN Loss Mean} & \textbf{HGN Loss 95\% C.I.} \\ \midrule
0 & $3.47\times{10^{-7}}$ & $[3.36, 3.58]\times{10^{-7}}$ & $3.49\times{10^{-7}}$ & $[3.41, 3.58]\times{10^{-7}}$ \\
0.1 & $1.58\times{10^{-3}}$ & $[1.55, 1.60]\times{10^{-3}}$ & $1.58\times{10^{-3}}$ & $[1.55, 1.60]\times{10^{-3}}$ \\
0.25 & $2.32\times{10^{-5}}$ & $[2.19, 2.46]\times{10^{-5}}$ & $7.67\times{10^{-4}}$ & $[6.87, 8.46]\times{10^{-4}}$ \\
0.5 & $8.94\times{10^{-5}}$ & $[8.25, 9.63]\times{10^{-5}}$ & $4.22\times{10^{-3}}$ & $[3.75, 4.69]\times{10^{-3}}$ \\
0.75 & $2.25\times{10^{-4}}$ & $[2.07, 2.44]\times{10^{-4}}$ & $1.34\times{10^{-2}}$ & $[1.21, 1.47]\times{10^{-2}}$ \\
0.9 & $3.30\times{10^{-4}}$ & $[3.00, 3.60]\times{10^{-4}}$ & $1.69\times{10^{-2}}$ & $[1.55, 1.83]\times{10^{-2}}$ \\
1 & $4.30\times{10^{-4}}$ & $[3.91, 4.69]\times{10^{-4}}$ & $2.12\times{10^{-2}}$ & $[1.94, 2.30]\times{10^{-2}}$ \\ \bottomrule
\end{tabular}
}% %
\end{table}

Table~\ref{tab:loss_lambda} shows that all models have increasingly larger challenges when predicting from processes with large levels of randomness $(\lambda)$ in the stochastic factor $\mathbf{S}_{\cdot}$ influencing their diffusion component.  This is because the larger $\lambda$ is, the more spread out both states~\eqref{eq:Q_states} of the random variable $\mathbb{Q}_{x_{[-1:t]}}$ becomes and, consequentially, the more information is lost when computing the intrinsic averaging using $\beta$.  As anticipated, highly random stochastic factors produce a larger gap in the test set performance of the GDN and the HGN models.  In contrast, less randomness in the stochastic factor yields a smaller test set performance gap between the GDN and HGN models.

\begin{table}[H]
\centering
\caption{\textbf{Dimension Ablation:} Sensitivity to Dimension $(d)$ of the Volterra Process $X$.}
\label{tab:loss_dimension}
\resizebox{1\textwidth}{!}{% %
\begin{tabular}{@{}lccccc@{}}
\toprule
$d$ & \textbf{GDN Loss Mean} & \textbf{GDN Loss 95\% C.I.} & \textbf{HGN Loss Mean} & \textbf{HGN Loss 95\% C.I.} \\ \midrule
2 & $3.84\times{10^{-6}}$ & $[3.32, 4.36]\times{10^{-6}}$ & $4.27\times{10^{-5}}$ & $[3.83, 4.71]\times{10^{-5}}$ \\
5 & $4.44\times{10^{-6}}$ & $[4.14, 4.75]\times{10^{-6}}$ & $9.56\times{10^{-5}}$ & $[0.88, 1.04]\times{10^{-4}}$ \\
10 & $1.58\times{10^{-3}}$ & $[1.55, 1.60]\times{10^{-3}}$ & $1.58\times{10^{-3}}$ & $[1.55, 1.60]\times{10^{-3}}$ \\
20 & $1.77\times{10^{-3}}$ & $[1.73, 1.81]\times{10^{-3}}$ & $1.76\times{10^{-3}}$ & $[1.72, 1.80]\times{10^{-3}}$ \\ \bottomrule
\end{tabular}
}% %
\end{table}

Table~\ref{tab:loss_dimension} confirms the effect of dimensionality on the expressive power of the HGN and GDN models, described in Theorems~\ref{theorem:optimal_GDN_Rates__ReLUActivation} and~\ref{thrm:approx_HGCNN} and detailed in Table~\ref{tab:casestudies__BigO}.  
Importantly, the performance of the HGN consistently mirrors that of the GDN model in dimensions $2$ to $20$.  
Since roughly the same number of parameters are used in each case, then, naturally, the performance of both models is higher in low dimensions than in higher dimensions.

\begin{table}[H]
\centering
\caption{\textbf{Non-Markovianity Ablation:} Sensitivity to the persistence of memory $(w)$ in $X$.}
\label{tab:loss_memory}
\resizebox{1\textwidth}{!}{% %
\begin{tabular}{@{}lccccc@{}}
\toprule
\textbf{Memory} & \textbf{GDN Loss Mean} & \textbf{GDN Loss 95\% C.I.} & \textbf{HGN Loss Mean} & \textbf{HGN Loss 95\% C.I.} \\ \midrule
0 & $3.34\times{10^{-8}}$ & $[2.82, 3.85]\times{10^{-8}}$ & $7.97\times{10^{-6}}$ & $[6.63, 9.31]\times{10^{-6}}$ \\
0.1 & $9.31\times{10^{-5}}$ & $[8.64, 9.98]\times{10^{-5}}$ & $9.32\times{10^{-5}}$ & $[0.86, 1.00]\times{10^{-4}}$ \\
0.25 & $1.02\times{10^{-4}}$ & $[0.94, 1.10]\times{10^{-4}}$ & $1.02\times{10^{-4}}$ & $[0.94, 1.10]\times{10^{-4}}$ \\
0.5 & $1.58\times{10^{-3}}$ & $[1.55, 1.60]\times{10^{-3}}$ & $1.58\times{10^{-3}}$ & $[1.55, 1.60]\times{10^{-3}}$ \\
\bottomrule
\end{tabular}
}% %
\end{table}

Both the HGN and GDN models perform nearly identically for all degrees of memory persistence, from Markovianity to higher levels of non-Markovian memory.  This confirms that the hypernetwork can reliably predict GDN parameters regardless of the degree of memory, as in Theorem~\ref{thrm:approx_HGCNN}.

\begin{table}[H]
\centering
\caption{\textbf{Diffusion Ablation:} Sensitivity to the size of the fluctuations $(\varsigma)$ in the diffusion of $X$.}
\label{tab:loss_varsigma}
\resizebox{1\textwidth}{!}{% %
\begin{tabular}{@{}lccccc@{}}
\toprule
\textbf{$\varsigma$} & \textbf{GDN Loss Mean} & \textbf{GDN Loss 95\% C.I.} & \textbf{HGN Loss Mean} & \textbf{HGN Loss 95\% C.I.} \\ \midrule
0.005 & $4.53\times{10^{-6}}$ & $[4.19, 4.88]\times{10^{-6}}$ & $1.39\times{10^{-4}}$ & $[1.25, 1.54]\times{10^{-4}}$ \\
0.01 & $5.02\times{10^{-6}}$ & $[4.67, 5.37]\times{10^{-6}}$ & $1.46\times{10^{-4}}$ & $[1.27, 1.66]\times{10^{-4}}$ \\
0.05 & $1.01\times{10^{-5}}$ & $[0.94, 1.08]\times{10^{-5}}$ & $4.04\times{10^{-4}}$ & $[3.55, 4.54]\times{10^{-4}}$ \\
0.1 & $1.90\times{10^{-5}}$ & $[1.79, 2.02]\times{10^{-5}}$ & $7.30\times{10^{-4}}$ & $[6.43, 8.17]\times{10^{-4}}$ \\
\arrayrulecolor{black!30}\midrule
1 & $1.17\times{10^{-3}}$ & $[1.09, 1.24]\times{10^{-3}}$ & $3.33\times{10^{-2}}$ & $[2.97, 3.70]\times{10^{-2}}$ \\
10 & $4.13\times{10^{-1}}$ & $[3.69, 4.57]\times{10^{-1}}$ & $4.56\times{10^{+0}}$ & $[4.14, 4.99]\times{10^{+0}}$ \\
100 & $2.80\times{10^{+3}}$ & $[2.74, 2.86]\times{10^{+3}}$ & $3.27\times{10^{+3}}$ & $[3.18, 3.35]\times{10^{+3}}$ \\
1000 & $3.10\times{10^{+5}}$ & $[3.01, 3.18]\times{10^{+5}}$ & $3.15\times{10^{+5}}$ & $[3.07, 3.23]\times{10^{+5}}$ \\
\arrayrulecolor{black}\bottomrule
\end{tabular}
}% %
\end{table}

Table~\ref{tab:loss_varsigma} shows that both models can reliably predict regardless of the diffusion component of the process $X_{\cdot}$ has large or small fluctuations.  As expected, the reliability of the HGN predictions deteriorates when $\varsigma$ increases, as can be seen by an increase in the standard deviation of the loss.

\begin{table}[H]
\centering
\caption{\textbf{Curvature Ablation:} Sensitivity to the size of the fluctuations $(\varsigma)$ in the diffusion of $X$.}
\label{tab:loss_varsigma__highcurvature}
\resizebox{1\textwidth}{!}{% %
\begin{tabular}{@{}lccccc@{}}
\toprule
\textbf{$\varsigma$} & \textbf{GDN Loss Mean} & \textbf{GDN Loss 95\% C.I.} & \textbf{HGN Loss Mean} & \textbf{HGN Loss 95\% C.I.} \\ \midrule
0.000001 & $2.34\times{10^{-3}}$ & $[2.30, 2.38]\times{10^{-3}}$ & $2.32\times{10^{-3}}$ & $[2.28, 2.36]\times{10^{-3}}$ \\
0.0001 & $1.52\times{10^{-3}}$ & $[1.48, 1.57]\times{10^{-3}}$ & $1.54\times{10^{-3}}$ & $[1.50, 1.58]\times{10^{-3}}$ \\
0.001 & $1.58\times{10^{-3}}$ & $[1.55, 1.60]\times{10^{-3}}$ & $1.58\times{10^{-3}}$ & $[1.55, 1.60]\times{10^{-3}}$ \\
\arrayrulecolor{black}\bottomrule
\end{tabular}
}% %
\end{table}

One may a priori expect that a very small variance hyperparameter $\varsigma$ (near $0$) would further improve the performance of the GDN and HGN models.  As illustrated by Table~\ref{tab:loss_varsigma__highcurvature}, this is not the case since Gaussian measures with nearly singular covariance matrices (which happens when $\varsigma\approx 0$) are near the \textit{(Gromov) boundary%
\footnote{This concept generalizes the circle/boundary of the hyperbolic disc model used in the visualization in Figure~\ref{fig:Model}.  Importantly, all points in this so-called Gromov boundary are infinitely far from points genuinely inside the non-positively curved Riemannian manifold.  See~\citep[Chapter II.8]{BridsonHaefliger_1999Book} for details.}~%
``at infinity''} of our non-positively curved Riemannian manifold $(\mathcal{N}_d,\mathfrak{J})$.  The trouble here is that the Lipschitz constant of the exponential map/layer based at a Gaussian measure far from the boundary is very large (see Lemma~\ref{lem:EstimateLocLipExp}) which translates to constants in our approximation guarantees in Theorem~\ref{theorem:optimal_GDN_Rates__ReLUActivation} (see Table~\ref{tab:casestudies}).  Consequentially, significantly more parameters are required for the HGN to achieve a comparable approximation accuracy when the target Gaussian measure is near the Gaussian measure used as the base point of the exponential layer of the GDN (as in Table~\ref{tab:loss_varsigma}).  This shows that the constants in our main result concretely impact practical implementations of the GDN and the HGN models.

The next set of ablation studies will continue to examine the efficacy of the hypernetwork in encoding and predicting GDN parameters in the test set.  

\subsubsection{Ablation of the Hypernetwork Encoding}
Theorem~\ref{thrm:approx_HGCNN} guarantees that the hypernetwork can effectively encode a large number of GDNs.  
In particular, doing so suggests that the HGN model can be recursively rolled, allowing us to predict well into the future.  Two questions naturally arise: 1) In practice, is a hypernetwork encoding of a sequence of GDN models legitimately trainable? 2) Does the hypernetwork continue to generate well-performing GDN models out-of-sample in future times?  This section yields an affirmative \textit{yes} to both of these questions; thus showing the feasibility and reliability of the hypernetwork in the HGN model.

In this next experiment, we test this by comparing three different degrees of hypernetwork encoding.  These experiments are run with a subset of the same configurations from the last section; which we annotate in each figure caption.
Each of the figures~\ref{fig:gap_HGN} and~\ref{fig:no_gap__HGN} plot the test set performance of each model, which begins at time $t=160$ and ends at time $t=200$.

1) (GDN) no hypernetwork is used and only a different GDN ``expert'' is used to generate predictions at any time, trained at time $t-1$.
2) (HGN $1$ step) at every time $t$, the hypernetwork loads the predictions of the GDN at time $t-1$ and uses them to predict a GDN, which is then used to predict at time $t$.  In this case, the hypernetwork component of the HGN is only ever used to make one-step-ahead predictions.
3) (HGN) loads the GDN parameters at time $160$ and then sequentially predicts the parameters at every subsequent time $t$ using its predicted parameters at time $t-1$; up until the terminal time $t=200$.
All three models perform nearly identically, as illustrated by the log-scale losses.  This shows that the hypernetwork encoding of the GDN models is as practically effective as in Theorem~\ref{thrm:approx_HGCNN}.

\begin{figure}[H]
    \centering
    \includegraphics[width=1\linewidth]{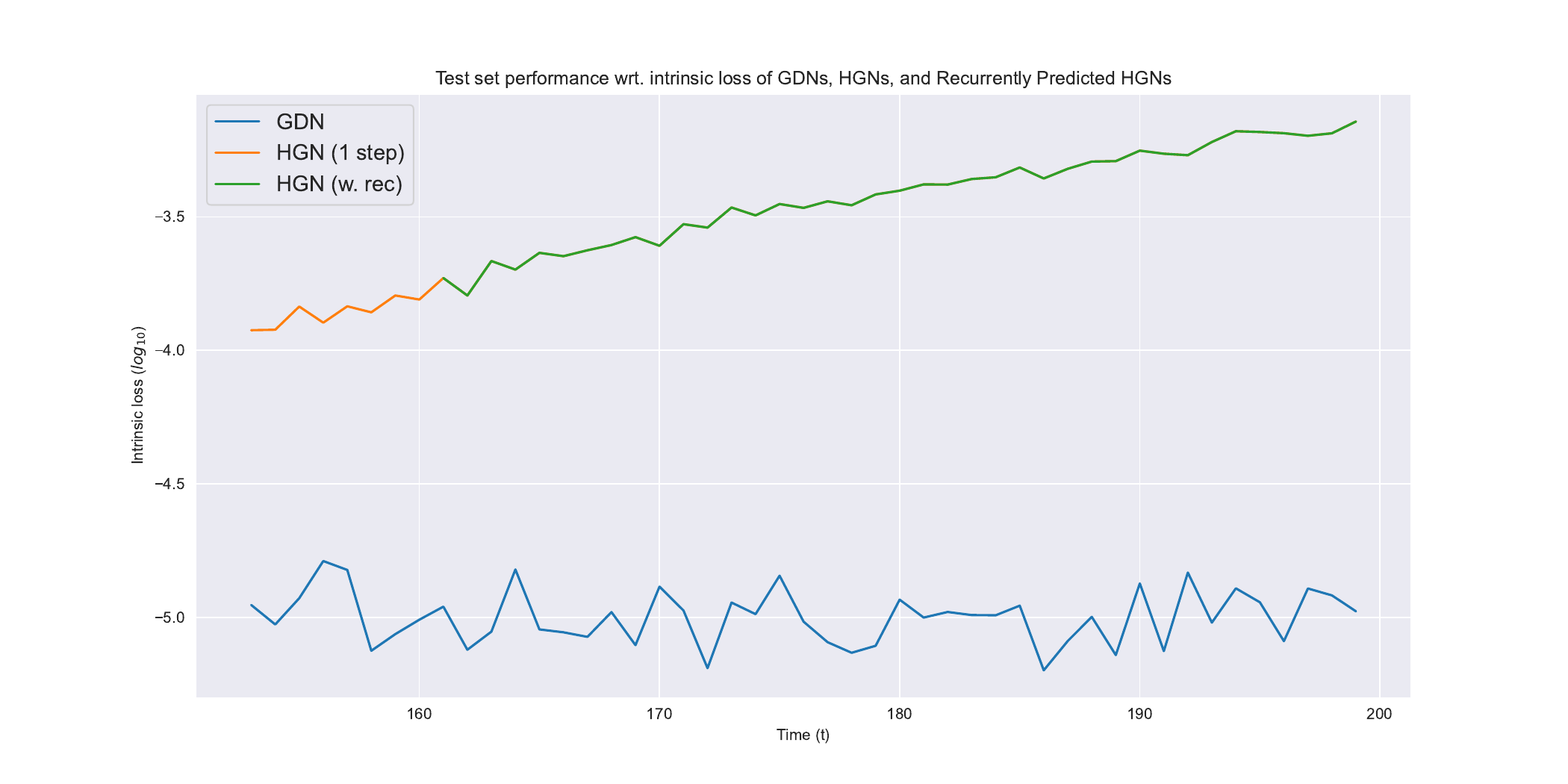}
    \caption{\textbf{Situation I - Nearly Logarithmic Degradation of HGN Accuracy:} The HGN performance slowly (logarithmically) departs from that of the GDN as time rolls forward.
    This is typically what is observed in most of our experiments.}
    \label{fig:gap_HGN}
% [0, (1000, 200, 10, 0.1, 0.5, 'torch.eye(self.d)', 'lambda X, ind: (0.01 - X) * 0.5', 'lambda X: torch.ones(X.shape[0]) * 0.05')]
\end{figure}%

Our experiments uncover two types of behaviours which the HGN can exhibit.  In the former case, seen in Figure~\ref{fig:gap_HGN}, there is a small but growing gap between the test set performance of the HGN and the GDN model which increases as time flows forward.  Furthermore, this gap is roughly the same for both the $1$-step and recurrent HGN models.  

\begin{figure}[H]
        \centering
    \includegraphics[width=1\textwidth]{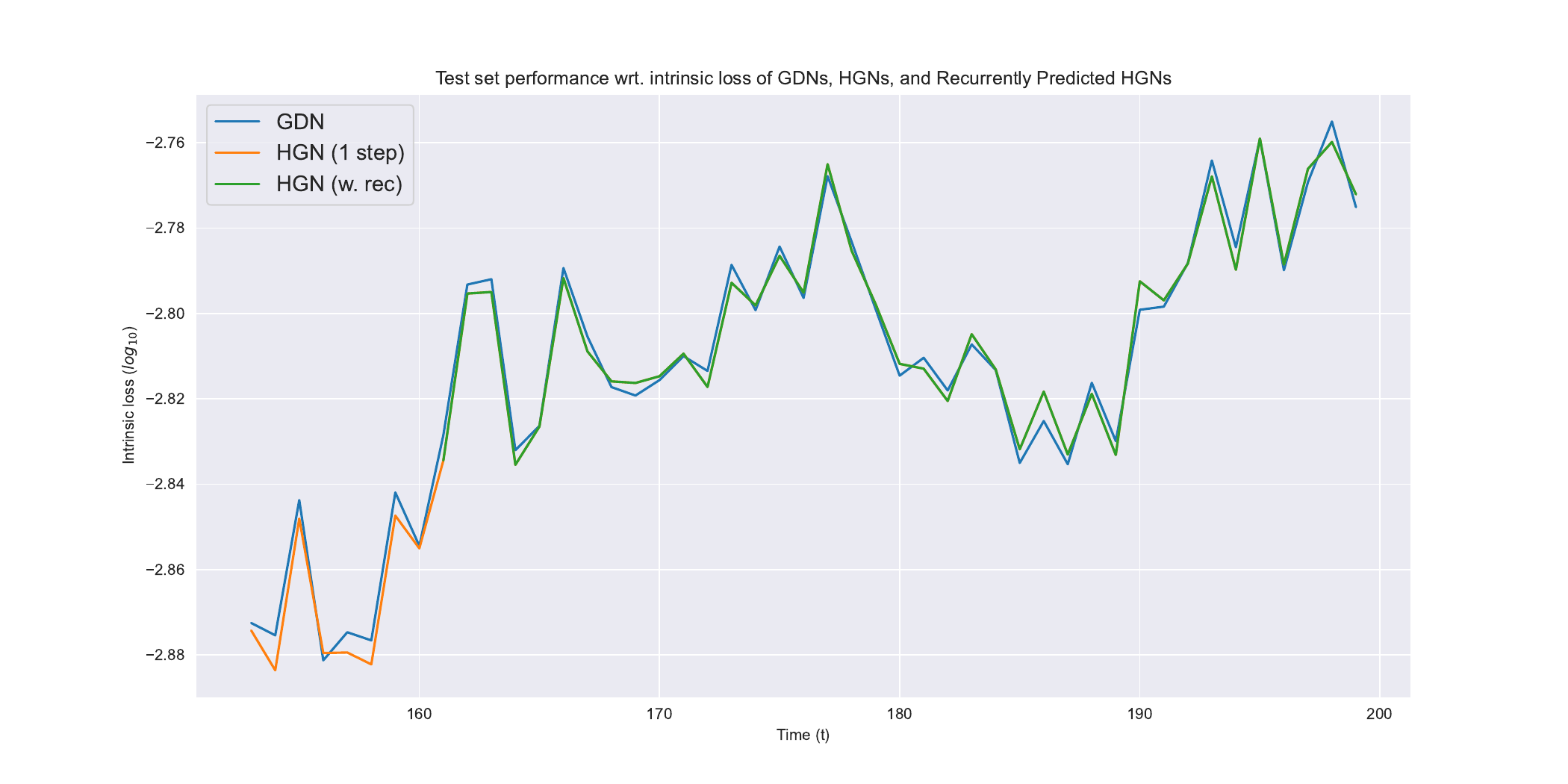}
    \caption{\textbf{Situation II - Nearly Perfect GDN Prediction by HGN:} The HGN continues to nearly perfectly predict the performance of the GDN as time rolls forward.
    This occurs in a subset of experiments where the GDN parameters do not change significantly between time steps.}
    \label{fig:no_gap__HGN}
    % [4, (1000, 200, 10, 0.1, 0.5, 'torch.eye(self.d)', 'lambda X, ind: (0.01 - X) * 0.5', 'lambda X: torch.ones(X.shape[0]) * 0.001')]
\end{figure}

Occasionally, the GDN training suffers from exploding gradients during training and one can re-run the stochastic gradient descent algorithm when this happens.  We note that there is nothing apriori particular about instances when this happens (see Appendix~\ref{ss:ExplodingGrads}).

\section{Conclusion}

We presented a framework for obtaining low-dimensional approximations of the conditional distribution (non-Markovian) stochastic Volterra processes in discrete time.  First, we develop a tool, the Gaussian projection, for projecting the condition of such processes down onto the smooth manifold $\mathcal{N}_d$ of non-singular Gaussian measure with a perturbation $\mathfrak{J}$ of its standard information geometry with favourable geometric and computational properties (see Table~\ref{tab:Non-Singular_Gaussian}). Like classical tools for dimension reduction of probability measures (e.g.\ information projections) the Gaussian projection is a projection-type optimization problem; however, unlike those tools the Gaussian projection is a Lipschitz operation (Theorem~\ref{thrm:Projection_Results} and Proposition~\ref{cor:DeterministicHighRegularity}) and can even be a \textit{smooth} map under additional conditions (Proposition~\ref{cor:DeterministicHighRegularity}).

Using these insights, we then constructed a sequential geometric deep learning model which is compatible with the non-positive curvature of $(\mathcal{N}_d,\mathfrak{J})$.  We showed that this model is a universal approximator of generalized dynamicals systems (Theorem~\ref{thrm:approx_HGCNN}), possibly with long memory, and we obtained quantitative approximation rates therefor (Theorem~\ref{theorem:optimal_GDN_Rates__ReLUActivation}).  
In the static case our model reduces the GDN model of~\cite{kratsios2022universal}; in this case, our main universal approximation theorem yields improved rates (compared to~\citep[Theorem 9]{kratsios2022universal}) when approximating both H\"{o}lder, as well as better rates when approximating functions on finite set (compare Table~\ref{tab:casestudies__BigO} against \citep[Theorem 20]{kratsios2022universal}).
We additionally obtain new probabilistic approximation guarantees for any Borel measurable function in that setting (Corollary~\ref{cor:Universality}).  

We numerically illustrated the HGN model showing its practical viability.  We conducted an ablation study, confirming our main theoretical results.  Further, our ablation study shows that certain geometric constants appearing in our parametric complexity estimates (Table~\ref{tab:casestudies__BigO}), depending on the sectional curvature of the underlying space, are legitimately non-negligible in practice.

\subsection*{Future Research}
In future work, we would like to extend our analysis to general stochastic processes, going beyond the stochastic Volterra setting.  We would also like to explore the impact of projecting onto different information-like geometries when approximation the conditional law of various processes and how to choose such geometries if one has information on the structure of these processes.

\section{Acknowledgment and Funding}
\label{s:AcknowledgmentandFunding}
AK acknowledges financial support from an NSERC Discovery Grant No.\ RGPIN-2023-04482.  AK also acknowledges that resources used in preparing this research were provided, in part, by the Province of Ontario, the Government of Canada through CIFAR, and companies sponsoring the Vector Institute~\href{https://vectorinstitute.ai/partnerships/current-partners/}{https://vectorinstitute.ai/partnerships/current-partners/}.

The authors would like to extend a very special thanks to Deane Yang from Courant Institute for his helpful references on Riemannian Centers of Mass.  They would also like to thank McKenzie Yuen-Kong Wang from McMaster University for his references on the regularity of the exponential map.  Both sets of references significantly helped our derivations.  

%%% REferences

% \newpage
\appendix

\section{Explicit Tables}
\label{s:Explicit_Tables}
This section records the versions of Tables~\ref{tab:casestudies},~\ref{tab:casestudies__dynamic}, and~\ref{tab:HGCNN} with explicit constants.  

\subsection{Static Case}

\begin{table}[htp]%[H]
    \centering
	\ra{1.3}
    \caption{\textbf{Detailed Complexity estimates for the GDN in Theorem~\ref{theorem:optimal_GDN_Rates__ReLUActivation}.}}
    \label{tab:casestudies}
    \resizebox{\columnwidth}{!}{%
    \begin{tabular}{@{}lll@{}}
    \cmidrule[0.3ex](){1-2}
	% \textbf{Hyperparam.} & 
    \textbf{$k$-Times Continuously Differentiable $f$ } & \textbf{$k\in \mathbb{N}_+$} \\
    \midrule
        Depth ($J)$ & $
            %%% Cost of Deep Parallelization
            D\Big(1 
                +
                %%%% Depth of Individual Network
                18 k^2(J+2) \log_2(4J) + 2 (1+\memory)d
            \Big)
            $\\
        Width Parameter ($w$) &  $
                %%% CONSTANT
            %%% Cost of Deep Parallelization
            C_0
            +
            %     %%%% Depth of Individual Network
            %     17k^{(1+\memory)d+1}3^{(1+\memory)d}\, 
            C_{1}
                %%% RATE
                \Big(2 + 
                    %N
                    %% Definition of N
                    \Big\lceil 
                        J\Big(
                            \varepsilon^{-1}
                            C_{\bar{f}} D^{1/2}
                            \Big)^{(1+\memory)/(2k)}
                    \Big\rceil 
                \Big)\log_2\Big(8
                    %N
                    \Big\lceil 
                        J\Big(
                            \varepsilon^{-1}
                            C_{\bar{f}} D^{1/2}
                            \Big)^{(1+\memory)d/(2k)}
                    \Big\rceil 
                \Big)
                $
        \\    \arrayrulecolor{lightgray}
    \cmidrule[0.3ex](){1-2}
    % \textbf{Hyperparam.} &  
    \textbf{Locally $\alpha$-H\"{o}lder} & \textbf{$0<\alpha\le 1$}\\
    \arrayrulecolor{lightgray}\hline
        Depth ($J)$ & $D
                \Big(
                % 1+
                    %%%% Depth of Individual Network
                    11 J + C_2
                \Big)$\\
        Width Parameter ($w$) & $
            %%% Cost of Deep Parallelization
                    C_0
                +
            %%%% Width of Individual Network
                C_1\,
                \max\big\{
                    (1+\memory)d
                    \big\lfloor
                    \big(
                    % Expression for N 
                    %%%%
                            \big\lceil
                            V\big(
                                J^{-2}\,
                                (
                                    C_2 
                                    \varepsilon
                                    /C_f)^{d/\alpha}
                            \big)
                        \big\rceil
                    %%%%
                    \big)^{1/((1+\memory)\,d)}
                    \big\rfloor
                ,
                    2 +
                    % Expression for N 
                        %%%%
                                \big\lceil
                                V\big(
                                    J^{-2}\,
                                    (
                                        C_2
                                        \varepsilon
                                        /C_f
                                    )^{1/(\alpha(1+\memory)d)}
                                \big)
                            \big\rceil
                        %%%%
            \big\}
            $
    \\
    \arrayrulecolor{lightgray}
    \cmidrule[0.3ex](){1-2}
    % \textbf{Hyperparam.} & 
    \textbf{No. Reg. - Finite $\mathcal{K}_{[0:\memory]}$} & \textbf{$N\eqdef \#\mathcal{K}_{[0:\memory]}$} \\
    \arrayrulecolor{lightgray}\hline
    Depth ($J)$ & $D(28 N^2 J)$ \\
    Width Parameter ($w$) & $
                            \lceil (\varepsilon^{-1}\,C)^{1/(2NJ)}\rceil - 1
                            \big)
                            +2N-8
                                $\\
    \bottomrule
    \end{tabular}}
    \end{table}
    \paragraph{\textbf{Details about Constants in Table~\ref{tab:casestudies}:}}%\hfill\\%%  Smooth Case
    \textbf{(1)} When $f$ is of class $C^k$, the constants are $C_0\eqdef d(1+\memory)(D-1)$, $
            C_1
        \eqdef 
            %%%% Depth of Individual Network
            17k^{d(1+\memory)+1}3^{d(1+\memory)}$ and 
            \[
                C_{\bar{f}}\eqdef \max_{i=1,\dots,D}\, 
                    \| (\rho^{-1}\circ f\circ \varphi^{-1}\circ W)_i\|_{C^k([0,1]^{(1+\memory)\,d})}
                /
                    \max\{1,\operatorname{Lip}\big(\rho\vert f\circ \varphi( \overline{\operatorname{Ball}(\mathcal{K}_{[0:\memory]},1} )\big)\}
            \]
            where $W$ is any bijective affine self-map on $\mathbb{R}^{(1+\memory)\,d}$ mapping $W(\overline{\operatorname{Ball}(\varphi(\mathcal{K}_{[0:\memory]}),1)})$ into $[0,1]^{(1+\memory)\,d}$. 
    \hfill\\
    %%  Holder Case
    \textbf{(2)} When $f$ is $\alpha$-H\"{o}lder, the constants are $C_0\eqdef (1+\memory)d(D-1)$, $C_1\eqdef 3^{(1+\memory)d+3}$, $C_2\eqdef 19 + 2(1+\memory)d$, $C_3\eqdef (131 ((1+\memory)dD)^{1/2})$, 
    \[
        C_f \eqdef \operatorname{Lip}_{\alpha}( \rho^{-1}\circ f \circ \varphi^{-1}  \vert \phi(\overline{\operatorname{Ball}(\mathcal{K}_{[0:\memory]},1)}),
    \]
    and $V:[0,\infty)\rightarrow [0,\infty)$ is the ``special function'' defined as the inverse of $t\mapsto t^2\log_3(t+2)$. %
    \hfill\\
    \textbf{(3)} When $\mathcal{K}$ is finite, $
    C=
    (1+\memory)d(D-1)
            +
        9
        \big(
\operatorname{diam}(f(\mathcal{K}_{[0:\memory]}))
        \,
        \Big(
            \frac{
                (1+\memory)d
                }{2(1+\memory)d+2}
        \Big)^{1/2}
        \,
        DN
    $.

\subsection{Dynamic Case}
Next, we provide detailed expressions for the constants in the dynamic case.

\begin{table}[H]
    \centering
	\ra{1.3}
    \caption{\textbf{Complexity estimates for the HGN in Theorem~\ref{thrm:approx_HGCNN}}}
    \label{tab:casestudies__dynamic}
    \resizebox{\columnwidth}{!}{%
    \begin{tabular}{@{}lll@{}}
    \cmidrule[0.3ex](){1-2}
	% \textbf{Hyperparam.} & 
    \textbf{$k$-Times Continuously Differentiable $f$ } & \textbf{$k\in \mathbb{N}_+$} \\
    \midrule
        Depth ($J)$ & $
            %%% Cost of Deep Parallelization
            D\Big(1 
                +
                %%%% Depth of Individual Network
                18 k^2(J+2) \log_2(4J) + 2 (1+\memory)d
            \Big)
            $\\
        Width Parameter ($w$) &  $
                %%% CONSTANT
            %%% Cost of Deep Parallelization
            C_0
            +
            %     %%%% Depth of Individual Network
            %     17k^{(1+\memory)d+1}3^{(1+\memory)d}\, 
            C_{1}
                %%% RATE
                \Big(2 + 
                    %N
                    %% Definition of N
                    \Big\lceil 
                        J\Big(
                            \varepsilon^{-1}
                            C_{\bar{f}} D^{1/2}
                            \Big)^{(1+\memory)/(2k)}
                    \Big\rceil 
                \Big)\log_2\Big(8
                    %N
                    \Big\lceil 
                        J\Big(
                            \varepsilon^{-1}
                            C_{\bar{f}} D^{1/2}
                            \Big)^{(1+\memory)d/(2k)}
                    \Big\rceil 
                \Big)
                $
        \\    \arrayrulecolor{lightgray}
    \cmidrule[0.3ex](){1-2}
    % \textbf{Hyperparam.} &  
    \textbf{Locally $\alpha$-H\"{o}lder} & \textbf{$0<\alpha\le 1$}\\
    \arrayrulecolor{lightgray}\hline
        Depth ($J)$ & $D
                \Big(
                % 1+
                    %%%% Depth of Individual Network
                    11 J + C_2
                \Big)$\\
        Width Parameter ($w$) & $
            %%% Cost of Deep Parallelization
                    C_0
                +
            %%%% Width of Individual Network
                C_1\,
                \max\big\{
                    (1+\memory)d
                    \big\lfloor
                    \big(
                    % Expression for N 
                    %%%%
                            \big\lceil
                            V\big(
                                J^{-2}\,
                                (
                                    C_2 
                                    \varepsilon
                                    /C_f)^{d/\alpha}
                            \big)
                        \big\rceil
                    %%%%
                    \big)^{1/((1+\memory)\,d)}
                    \big\rfloor
                ,
                    2 +
                    % Expression for N 
                        %%%%
                                \big\lceil
                                V\big(
                                    J^{-2}\,
                                    (
                                        C_2
                                        \varepsilon
                                        /C_f
                                    )^{1/(\alpha(1+\memory)d)}
                                \big)
                            \big\rceil
                        %%%%
            \big\}
            $
    \\
    \arrayrulecolor{lightgray}
    \cmidrule[0.3ex](){1-2}
    % \textbf{Hyperparam.} & 
    \textbf{No. Reg. - Finite $\mathcal{K}_{[0:\memory]}$} & \textbf{$N\eqdef \#\mathcal{K}_{[0:\memory]}$} \\
    \arrayrulecolor{lightgray}\hline
    Depth ($J)$ & $D(28 N^2 J)$ \\
    Width Parameter ($w$) & $
                                \lceil (\varepsilon^{-1}\,C)^{1/(2NJ)}\rceil - 1
                                \big)
                                +2N-8
                            $\\
    \bottomrule
    \end{tabular}}
    \end{table}
    \paragraph{\textbf{Details about Constants in Table~\ref{thrm:approx_HGCNN}}}
    \hfill\\
    %%  Smooth Case
    \textbf{(1)} \textbf{When $f$ is $(r,k,\lambda)$-smooth:} the constants are $C_0\eqdef d(1+\memory)(D-1)$, $
            C_1
        \eqdef 
            %%%% Depth of Individual Network
            17k^{d(1+\memory)+1}3^{d(1+\memory)}$ and $
                C_{\bar{f}}
        \eqdef 
            \displaystyle\max_{
            \underset{i=1,\dots,D}{t=T-\memory,\dots,T}
            }\, 
                    \frac{
                        \| (\rho^{-1}\circ 
                        f_t
                        \circ \phi^{-1}\circ W_t)_i\|_{C^k([0,1]^{d})}
                    }{
                    \max\{1,
                        \operatorname{Lip}\big(
                            \rho\vert 
                            f_t
                            \circ 
                            \varphi( 
                                \mathcal{K}^{\star:T}
                                )
                        \big)
                    \}
                    }
            $; where for each $t=\memory,\dots,T$, $W_t:\mathbb{R}^{(1+\memory)d}\ni 
        \frac1{C_{\bar{x}}\sqrt{M}\,2r}\, 
        \sqrt{
            \frac{2d(1+\memory)+2}{d(1+\memory)}
        }
        \,(x-\tilde{x}_t)
        \in \mathbb{R}^{(1+\memory)d}
        $ is affine.
    \hfill\\
    %%  Holder Case
    \textbf{(2)} \textbf{When $f$ is $(r,\alpha,\lambda)$-H\"{o}lder:} the constants are $C_0\eqdef (1+\memory)d(D-1)$, $C_1\eqdef 3^{(1+\memory)d+3}$, $C_2\eqdef 19 + 2(1+\memory)d$, $C_3\eqdef (131 ((1+\memory)dD)^{1/2})$, $C_f \eqdef \operatorname{Lip}_{\alpha}( \rho^{-1}\circ f \circ \varphi^{-1}  \vert \phi(\overline{\operatorname{Ball}(\mathcal{K}_{[0:\memory]},1)})$, 
    and $V:[0,\infty)\rightarrow [0,\infty)$ is the ``special function'' defined as the inverse of $t\mapsto t^2\log_3(t+2)$. %
    \hfill\\
    \textbf{(3)} \textbf{When $\mathcal{K}_{[0:T]}$ is finite:} $
    C\eqdef 
    (1+\memory)d(D-1)
            +
        9
        \big(
\operatorname{diam}(f(\mathcal{K}_{[0:\memory]}))
        \,
        \Big(
            \frac{
                (1+\memory)d
                }{2(1+\memory)d+2}
        \Big)^{1/2}
        \,
        % C_{N,D}
        DN
    $
    and $N\eqdef \binom{\max_{t=0,\dots,T}\, \mathcal{K}_t + M -1}{M}$.

We complete this section by providing more resolution on the rates in Table~\ref{tab:HGCNN__BigO}, with exact expressions wherever possible.
\begin{table}[H]
    \centering
	\ra{1.3}
    \caption{\textbf{Complexity estimates for the HGN in Theorem~\ref{thrm:approx_HGCNN}}}
    \label{tab:HGCNN}
    \resizebox{\columnwidth}{!}{%
    \begin{tabular}{@{}ll@{}}
    \cmidrule[0.3ex](){1-2}
    \textbf{Hyperparame.} & \textbf{Estimate} \\
    \hline
    \textbf{Depth} & $
        \mathcal{O}\Biggr(
        T\biggl(
        1+\sqrt{T\log(T)}\,\Big(1+\frac{\log(2)}{\log(T)}\,
        \biggl[
            C + \frac{\Big(\log\big(T^2\,2^{1/2}\big)-\log(\delta)\Big)}{\log(2)}
        \biggr]_+\Big)
        \biggr)
        \Biggr)$ 
    \\
        \textbf{Width} & $(P([\bar{J}])+Q)\, T + 12$ 
    \\
    \textbf{N. Parameters} & 
    $
        \mathcal{O}\Bigl(
        T^3(P([\bar{J}])+Q)^2\,\left(1+(P([\bar{J}])+Q)
        \sqrt{T\log(T)}\,\left(1+\frac{\log(2)}{\log(T)}\,
            \left[
            C_d+
                \frac{\Big(\log\big(T^2\,2^{1/2}\big)-\log(\delta)\Big)}{\log(2)}
            \right]_+\right)\right)
        \Bigr)
    ,
    $
    \\
    \textbf{Latent Dimension} & 
    $d^{\star} \ge \max\{12,P([\mathbf{d}])+1\}$ (free parameter)
    % $d^{\star}-P([\bar{J}])$ \anastasis{fix this} 
    \\
    \bottomrule
    \end{tabular}}
    \caption*{The values of $(J)$ and $(w)$ are as in Table~\ref{tab:casestudies} and $P([\bar{J}])$ is determined accordingly by Theorem~\ref{theorem:optimal_GDN_Rates__ReLUActivation}, with the value of $\varepsilon$ set to be $\varepsilon/2$.  These depend only on the regularity of the map $f$ and the cardinality of $\mathcal{K}_{[0:T]}$.}
\end{table}

\section{A problem with information projections for regime-switching processes}
\label{app:InfoprojProblems}
Consider the special case of~\eqref{eq:Volterra_X} where $X_{\cdot}$ evolves according to the regime-switching type hidden Markov model, with dynamics given by
\[
        X_t 
    = 
        \sum_{s=0}^t\, S_s \, W_s
,
\]
where $\{\mathbf{S}_s,W_s\}_{s=0}^{T}$ are independent random variables where the ``regimes'' are encoded by the identically distributed Bernoulli $\{0,1\}$ random variables $S_s$ with equal probabilities and the $(W_s)_{s=0}^{T}$ are standard Gaussian random variables. Let $x \in \mathbb{R}$ be a realized state of the process at time 0; the process $X_{.}$ is defined on the measurable space $(\mathbb{R}^{\mathbb{N}},\mathcal{B}(\mathbb{R}^{\mathbb{N}}))$ where $\mathcal{B}(\mathbb{R}^{\mathbb{N}})$ is the Borel $\sigma$-algebra on countable product $\mathbb{R}^{\mathbb{N}}$. These processes are such that the corresponding RCD being projected does not admit a Lebesgue density. More precisely, $\mathbb{P}[X_1\in \cdot\vert X_0=x]$ does not admit a well-defined $I$-projection on the set of $d$-dimensional Gaussian probability measures $\mathcal{N}_{d}$ with non-singular covariance, since for an $I$-projection to exist and be unique, the measure $\mathbb{P}[X_1\in \cdot\vert X_0=x]$ must admit a (Lebesgue) density.

\section{Auxiliary differential geometric results} 
\label{s:auxlemmata}
This appendix contains the required differential geometric results on which many of our proofs are built.

\subsection{Results from Riemannian geometry}
\label{s:auxlemmata__ss:RiemannianGeo}

The following result is due to \citep{Rauch_1951_ConstributionToDiffGeoInLarge_AnnMath_1951}, but we make use of the formulation which is essentially found in \citep[C.1 and the remark following (A4.2)]{Karcher_1977_RiemannianCenterofMass__MollifierSmoothing__CPAM}.  We record the additional minor details.

In what follows, for any $d$-dimensional Riemannian manifold $(\mathcal{M},g)$ and any point $p\in \mathcal{M}$, we fix a linear isomorphism $\iota_p:\mathbb{R}^d\rightarrow T_p\mathcal{M}$ which formalizes the identification of the tangent space of $\mathcal{M}$ at $p$ with the $d$-dimensional Euclidean space.

\begin{lemma}[\citep{Rauch_1951_ConstributionToDiffGeoInLarge_AnnMath_1951,Karcher_1977_RiemannianCenterofMass__MollifierSmoothing__CPAM}]
\label{lem:EstimateLocLipExp}
Let $(\mathcal{M},g)$ be a $d$-dimensional Cartan-Hadamard manifold with sectional curvatures bounded between $[-\kappa,0]$ for some constant $\kappa>0$.
For every $\bar{r}>0$ and every $p\in \mathcal{M}$, we have that 
\[
\operatorname{Lip}\left(\operatorname{Exp}_p \circ \iota_p \vert \overline{\operatorname{Ball}_{(\mathbb{R}^d, \|\cdot\|)}(0,\bar{r}/c_p)} \right)\le 2c_p\, \frac{\operatorname{sinh}(\sqrt{\kappa}\bar r)}{\sqrt{\kappa}\bar r} 
,
\]
where $\operatorname{Lip}\left(\operatorname{Exp}_p{\circ \iota_p} \vert \overline{\operatorname{Ball}_{(\mathbb{R}^d, \|\cdot\|)}(0,\bar{r}/c_p)} \right)$ denotes the best local Lipschitz constant of $\operatorname{Exp}_p\circ \iota_p$ on the the closed ball about the origin of radius $\bar{r}/c_p$ and $c_p\eqdef \abs{\iota_p}_{op}$ denotes the operator norm of $\iota_p$.
\end{lemma}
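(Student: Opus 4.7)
The plan is to combine the Rauch comparison theorem with a straight–line interpolation in $T_p\mathcal{M}$ (identified with $\mathbb{R}^d$ through $\iota_p$). The hypothesis that sectional curvatures lie in $[-\kappa,0]$ serves two roles: the non-positive upper bound guarantees that $\operatorname{Exp}_p$ is a diffeomorphism from $T_p\mathcal{M}$ onto $\mathcal{M}$ (Cartan--Hadamard), so paths of the form $t\mapsto \operatorname{Exp}_p(\iota_p(x+t(y-x)))$ are well-defined and connect their endpoints; the lower curvature bound $-\kappa$ is what produces, via Rauch, the quantitative growth estimate $\sinh(\sqrt{\kappa}\bar r)/(\sqrt{\kappa}\bar r)$ on Jacobi fields.

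First I would invoke Rauch's comparison theorem in the form appropriate for lower curvature bounds (see, e.g., the discussion in Karcher, Appendix C): along any geodesic $\gamma_v(t)\eqdef \operatorname{Exp}_p(tv)$, a Jacobi field $J$ with $J(0)=0$ is controlled by the corresponding hyperbolic-space Jacobi field of constant curvature $-\kappa$, so
\[
    |d(\operatorname{Exp}_p)_v(w)|_g
    \;\le\;
    \frac{\sinh(\sqrt{\kappa}\,|v|_g)}{\sqrt{\kappa}\,|v|_g}\,|w|_g
\]
for every $v\in T_p\mathcal{M}\setminus\{0\}$ and $w\in T_v(T_p\mathcal{M})\cong T_p\mathcal{M}$, with the obvious limit at $v=0$. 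Since the function $u\mapsto \sinh(u)/u$ is nondecreasing on $(0,\infty)$, this specializes on the closed metric ball $\overline{\operatorname{Ball}_g(0,\bar r)}\subset T_p\mathcal{M}$ to the uniform bound $\|d(\operatorname{Exp}_p)_v\|_{op}\le \sinh(\sqrt{\kappa}\bar r)/(\sqrt{\kappa}\bar r)$.

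Next I would transfer this bound to the composition $\operatorname{Exp}_p\circ\iota_p$ through a straight–line argument. Given $x,y\in\overline{\operatorname{Ball}_{(\mathbb{R}^d,\|\cdot\|)}(0,\bar r/c_p)}$, consider the curve $\gamma(t)\eqdef \operatorname{Exp}_p\bigl(\iota_p(x+t(y-x))\bigr)$, $t\in[0,1]$. For each $t$ we have $\|x+t(y-x)\|\le \bar r/c_p$, so the definition of $c_p=\|\iota_p\|_{op}$ gives $|\iota_p(x+t(y-x))|_g\le \bar r$, placing the base point of the differential inside the ball where the Rauch estimate applies. Chain rule plus the operator bound $|\iota_p(y-x)|_g\le c_p\|y-x\|$ yields
\[
    |\gamma'(t)|_g
    \;\le\;
    \frac{\sinh(\sqrt{\kappa}\bar r)}{\sqrt{\kappa}\bar r}\,c_p\,\|y-x\|.
\]
Integrating from $0$ to $1$ and using that $d_g$ is bounded above by any curve length connecting its endpoints gives the desired Lipschitz estimate.

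The main obstacle is essentially bookkeeping rather than mathematical depth: one must verify that the Rauch bound is indeed available in the form stated for manifolds whose curvatures are bounded only from below (rather than from above, which is the better-known direction), and track the identification through $\iota_p$ so that the $c_p$ factor appears exactly once per application. The constant $2$ in front of $c_p$ in the statement is slack: the argument above produces the sharper constant $c_p$, and the factor $2$ can be inserted to absorb any loss coming from the particular formulation of Rauch's theorem cited (as in Karcher's exposition) or from a safety margin taken when passing between $d_g$ and curve length estimates.
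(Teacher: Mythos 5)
Your argument is correct and follows the same outline as the paper's proof: Rauch's comparison theorem gives the pointwise operator-norm bound $\lvert d(\operatorname{Exp}_p)_v\rvert_{op}\le \sinh(\sqrt{\kappa}\lvert v\rvert)/(\sqrt{\kappa}\lvert v\rvert)$ inside the ball of radius $\bar r$, monotonicity of $u\mapsto\sinh(u)/u$ makes the bound uniform, and a curve-length (or equivalently $\sup$-of-differential) argument converts it into a Lipschitz estimate, with the factor $c_p$ produced once by $\iota_p$. The place where your route genuinely departs from the paper's is in \emph{how} Rauch is invoked. The paper starts from Karcher's statement, which controls $\lvert d(\operatorname{Exp}_p)_{rv}(A)\rvert$ only for $A$ \emph{orthogonal} to $v$; to treat a general $A$ the authors split $A=A^\parallel+A^\bot$, use the triangle inequality, and bound $\lvert A^\parallel\rvert+\lvert A^\bot\rvert\le 2\lvert A\rvert$. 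That crude inequality is exactly where the stated factor of $2$ comes from; it is a structural feature of the paper's argument, not a safety margin inserted after the fact. Your Rauch statement, which controls $\lvert d(\operatorname{Exp}_p)_v(w)\rvert$ for arbitrary $w$, already incorporates the Gauss lemma: $d(\operatorname{Exp}_p)_v$ maps the tangential part $A^\parallel$ to a vector orthogonal to the image of the normal part $A^\bot$, so the two contributions combine by the Pythagorean identity $(\lvert A^\parallel\rvert^2+\lvert A^\bot\rvert^2)^{1/2}=\lvert A\rvert$ and no factor of $2$ appears. You are therefore right that the constant $2$ is slack, but you should flag explicitly that the sharper constant is bought by the Gauss-lemma orthogonality (which your formulation of Rauch silently uses) rather than by any feature of the integration step, and that the looser constant in the paper is a deliberate concession to the weaker (orthogonal-only) form of the comparison theorem the authors cite.
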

\begin{proof}\textbf{of Lemma}~\ref{lem:EstimateLocLipExp}
%%%
First of all, let us recall the following general fact: given $(\mathfrak X,g)$ and $(\mathfrak Y,h)$ Riemannian manifolds, with $\mathfrak X$ connected, and $F:\mathfrak X\to\mathfrak Y$ a $C^1$ map, we have that the following are equivalent:
\begin{enumerate}
    \item $d_\mathfrak Y(F(x_1),F(x_2)) \le C d_\mathfrak X(x_1,x_2) $ with $x_1,x_2\in\mathfrak X$;
    \item $\abs{d F_x}_{op} \le C$ with $x\in\mathfrak X$;
\end{enumerate}
where  $\abs{d F_x}_{op} \eqdef \sup_{\abs{V}_x=1} \abs{dF_x(V) }_{F(x)} $ and where  $\abs{\cdot}_x$ simply denotes $\sqrt{g(\cdot,\cdot)_x}$, and similarly  $\abs{\cdot}_{ F(x)}=\sqrt{h(\cdot,\cdot)_{F(x)}}$.

Fix $p\in\mathcal{M}$, and for any $w \in T_p\mathcal{M}$, identify $T_w(T_p\mathcal{M})$ with $T_p\mathcal{M}$ itself.

Under our hypothesis, we can apply \citep[C.1]{Karcher_1977_RiemannianCenterofMass__MollifierSmoothing__CPAM}: therefore, for any $A,v\in T_p\mathcal{M}$ with $\abs{A}_p=\abs{v}_p=1$ and $A\bot v$ (i.e. orthgonal), and $r>0$ it holds
\begin{equation}\label{eq: Rauch}
    \abs{d(\operatorname{Exp}_p)_{rv}(A)}_{\operatorname{Exp}_p(rv)} \leq \frac{\operatorname{sinh}(\sqrt{\kappa}r)}{\sqrt{\kappa}r} 
\end{equation}
where again $\abs{\cdot}_p$ simply denotes $\sqrt{g(\cdot,\cdot)_p}$, and similarly for $\abs{\cdot}_{\operatorname{Exp}_p(rv)}$. 

Let us now show an analogous bound for the general case in which not necessarily it holds $A\bot v$. In an obvious notation, write $A= A^\parallel + A^\bot$, where $A^\bot$ is orthogonal to $v$ and $A^\parallel$ is parallel to $v$. Then, by linearity of the differential, we have, for arbitrary $\abs{A}_p=\abs{v}_p=1$:
\[
\begin{split}
    \abs{d(\operatorname{Exp}_p)_{rv}(A)}_{\operatorname{Exp}_p(rv)} &\leq 
    \abs{d(\operatorname{Exp}_p)_{rv}(A^\parallel)}_{\operatorname{Exp}_p(rv)} +
    \abs{d(\operatorname{Exp}_p)_{rv}(A^\bot)}_{\operatorname{Exp}_p(rv)} \\
    &\le
    \abs{A^\parallel}_p 
    \abs{d(\operatorname{Exp}_p)_{rv}(v)}_{\operatorname{Exp}_p(rv)} +
    \abs{A^\bot}_p \frac{\operatorname{sinh}(\sqrt{\kappa}r)}{\sqrt{\kappa}r},
\end{split}
\]
where in the second line we have used \eqref{eq: Rauch}. By  \citep[C.1]{Karcher_1977_RiemannianCenterofMass__MollifierSmoothing__CPAM} again, we have that\\
$\abs{d(\operatorname{Exp}_p)_{rv}(v)}_{\operatorname{Exp}_p(rv)}=\abs{v}_p=1$. Besides, $1\le \frac{\operatorname{sinh}(\xi)}{\xi}$ for any $\xi>0$. Hence, we may write,
\[
\abs{d(\operatorname{Exp}_p)_{rv}(A)}_{\operatorname{Exp}_p(rv)} \leq
\left[\abs{A^\parallel}_p + \abs{A^\bot}_p \right] \frac{\operatorname{sinh}(\sqrt{\kappa}r)}{\sqrt{\kappa}r} \le 2 \frac{\operatorname{sinh}(\sqrt{\kappa}r)}{\sqrt{\kappa}r}.
\]
In conclusion, we have proved that for any $p\in\mathcal{M},r>0$ and $\abs{A}_p=\abs{v}_p=1$, it holds
\[
\abs{d(\operatorname{Exp}_p)_{rv}(A)}_{\operatorname{Exp}_p(rv)}  \le 2 \frac{\operatorname{sinh}(\sqrt{\kappa}r)}{\sqrt{\kappa}r},
\]
i.e. 
\[
\abs{d(\operatorname{Exp}_p)_{rv} }_{op} \le 2 \frac{\operatorname{sinh}(\sqrt{\kappa}r)}{\sqrt{\kappa}r}.
\] 
Since $(0,\infty)\ni\xi\mapsto\frac{\operatorname{sinh}(\xi)}{\xi}$ is (strictly) increasing, given $\Bar{r}>0$, we have $\abs{d(\operatorname{Exp}_p)_{rv} }_{op} \le 2 \frac{\operatorname{sinh}(\sqrt{\kappa}\bar r)}{\sqrt{\kappa} \bar r}$ for $r\leq \bar r$, i.e. 
\[
\abs{d(\operatorname{Exp}_p)_{w} }_{op} \le 2 \frac{\operatorname{sinh}(\sqrt{\kappa}\bar r)}{\sqrt{\kappa}\bar r},\quad p\in\mathcal{M},\,w\in T_p\mathcal{M},\,\abs{w}_p\le \bar r.
\]

Therefore, we conclude that 
\begin{equation}
\label{eq:Tpnormbound_Rauch}
d_M(\operatorname{Exp}_p(w_1), \operatorname{Exp}_p(w_2)) \le 2 \frac{\operatorname{sinh}(\sqrt{\kappa}\bar r)}{\sqrt{\kappa}\bar r} \abs{w_1-w_2}_p, \quad p\in\mathcal{M},\, w_1,w_2\in T_p\mathcal{M},\, \abs{w_1}_p,\abs{w_2}_p\leq \Bar{r}.
\end{equation}

Let $c_p\eqdef \abs{\iota_p}_{op}$ and suppose now that $v_1,v_2\in \overline{\operatorname{Ball}_{\mathbb{R}^d,\|\cdot\|}(0,\bar{r}/c_p)}$.  Then, $\iota_p(v_1), \iota_p(v_2)\in \overline{\operatorname{Ball}_{T_p\mathcal{M},g}(0,\bar{r})}$.  Whence,~\eqref{eq:Tpnormbound_Rauch} implies that
\[
        d_M\big(
                \operatorname{Exp}_p(\iota_p(v_1))
            ,
                \operatorname{Exp}_p(\iota_p(v_2))
        \big)
    \le 
        2\frac{\operatorname{sinh}(\sqrt{\kappa}\bar{r})}{\sqrt{\kappa}\bar{r}}
        \abs{
                \iota_p(v_1)
            -
                \iota_p(v_2)
        }_p
    \le 
        2\frac{\operatorname{sinh}(\sqrt{\kappa}\bar{r})}{\sqrt{\kappa}\bar{r}}
        c_p
        \,
        \|
                v_1
            -
                v_2
        \|
    .
\]
This yields the conclusion.
\end{proof}
Lemma~\ref{lem:EstimateLocLipExp} and the Cartan-Hadamard Theorem (\citep[Corollary 6.9.1]{Jost_2017_RGGA}), which states that the Riemannian exponential map is a global diffeomorphism with $1$-Lipschitz inverse on a simply connected complete Riemannian manifold of non-positive sectional curvature, imply local-regularity estimates for the feature map $\varphi$, the readout map $\rho$, and their inverses; where $\varphi$ and $\rho$ are defined as follows
\begin{equation}
\label{eq:feature_readout_definitions}
\begin{aligned}
\rho(y) & \eqdef \operatorname{Exp}^g_{\bar{y}}\circ \iota^{-1}_{\mathcal{M},\bar{y}}(y)
\\
\varphi(x_1,\dots,x_m) & \eqdef 
    \big(\operatorname{Log}_{\bar{x}_m}^h\circ \iota_{\mathcal{N},\bar{x}_m}(x_m)\big)_{m=0}^M
,
\end{aligned}
\end{equation}
where $x_1,\dots,x_m\in \mathbb{N}^M$ and $y\in \mathcal{M}$.

\begin{lemma}[Regularity of feature map ($\varphi$)]
\label{lem:regularity_phi}
Suppose that $(\mathcal{N},h)$ satisfies Assumption~\ref{ass:NPC}, $M\in \mathbb{N}$, $\bar{x}_{[0:\memory]}\in \mathcal{N}^{\memory}$, and $\varphi$ is as in~\eqref{eq:feature_readout_definitions}.  Then $\varphi:\mathcal{N}^{1+\memory}\rightarrow\mathbb{R}^{(1+\memory)\,d}$ is
\begin{enumerate}
    \item[\textit{(i)}] smooth and $C_{\bar{x}}$-Lipschitz (globally) for some $C_{\bar{x}}>0$ depending only on $\bar{x}$ (see below),
    \item[\textit{(ii)}] $\varphi^{-1}$ exists, it is smooth, and for any $\mathcal{K}_{[0:\memory]}\eqdef \prod_{i=0}^{\memory}\, \mathcal{K}_i \subseteq \mathbb{R}^{(1+\memory)\,d}$ where $\mathcal{K}_0,\dots,\mathcal{K}_{\memory}$ are non-empty compact sets, we have
    \[
            \operatorname{Lip}(\varphi^{-1}\vert \mathcal{K}_{[0:\memory]})
        \le 
            \max_{\tilde{m}=0,\dots,\memory}
                c_{\bar{x}_m}\,
                \frac{\operatorname{sinh}(\kappa^{1/2}
                    c_{\bar{x}_m}\, \|K_m\|%(\operatorname{diam}(K_m)^2+\|K_m\|^2)^{1/2}
                    )
                }{\kappa^{1/2}
                    c_{\bar{x}_m}\, \|K_m\|%(\operatorname{diam}(K_m)^2+\|K_m\|^2)^{1/2}
                }
    ,
    \]
    where $c_{\bar{x}_m}\eqdef \|\iota_{\bar{x}_m}\|_{op}$ for $m=0,\dots,\memory$.
\end{enumerate}
Where for any non-empty subset $A\subseteq \mathbb{R}^{d}$ we define $\|A\|\eqdef \sup_{u\in A}\,\|u\|$ where $C_{\bar{x}}\eqdef \max\limits_{m=0,\dots,\memory}|\iota_{\bar{x}_m}^{-1}|_{op}$. 
\end{lemma}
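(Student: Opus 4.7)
The plan is to reduce both claims to componentwise statements using the product structure $\varphi(x_0,\ldots,x_{\memory})=\bigl(\iota_{\mathcal{N},\bar{x}_m}\circ \operatorname{Log}^h_{\bar{x}_m}(x_m)\bigr)_{m=0}^{\memory}$, and then invoke two ingredients: (a) the Cartan--Hadamard theorem (\citep[Corollary 6.9.1]{Jost_2017_RGGA}), which under Assumption~\ref{ass:NPC} guarantees that each $\operatorname{Exp}^h_{\bar{x}_m}:T_{\bar{x}_m}\mathcal{N}\to\mathcal{N}$ is a smooth global diffeomorphism with smooth inverse $\operatorname{Log}^h_{\bar{x}_m}$; and (b) Lemma~\ref{lem:EstimateLocLipExp} for the local Lipschitz bound on the exponential over a ball of prescribed radius. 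I will equip $\mathcal{N}^{1+\memory}$ with the natural $\ell^{2}$ product distance so that the componentwise Lipschitz constants combine into a global one by taking a maximum.

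For part \textit{(i)}, smoothness of $\varphi$ follows immediately by composition since each $\iota_{\mathcal{N},\bar{x}_m}$ is linear and each $\operatorname{Log}^h_{\bar{x}_m}$ is smooth by Cartan--Hadamard. For the global Lipschitz bound, I would use the classical fact that on a complete, simply connected Riemannian manifold of non-positive sectional curvature, the exponential map is globally distance non-decreasing, equivalently that $\operatorname{Log}^h_{\bar{x}_m}$ is globally $1$-Lipschitz from $(\mathcal{N},d_h)$ to $(T_{\bar{x}_m}\mathcal{N},|\cdot|_{\bar{x}_m})$. Composition with the linear isomorphism $\iota_{\mathcal{N},\bar{x}_m}$ then incurs a factor bounded by the operator norm controlling the passage from the tangent norm to the Euclidean norm on $\mathbb{R}^d$, which is $|\iota_{\bar{x}_m}^{-1}|_{op}$; taking the maximum over $m=0,\ldots,\memory$ produces the stated constant $C_{\bar{x}}$.

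For part \textit{(ii)}, since each $\operatorname{Exp}^h_{\bar{x}_m}$ is a diffeomorphism and each $\iota_{\mathcal{N},\bar{x}_m}$ is a linear isomorphism, $\varphi$ admits the explicit smooth inverse $\varphi^{-1}(u_0,\ldots,u_{\memory})=\bigl(\operatorname{Exp}^h_{\bar{x}_m}\circ \iota^{-1}_{\mathcal{N},\bar{x}_m}(u_m)\bigr)_{m=0}^{\memory}$. For the local Lipschitz estimate on $\mathcal{K}_{[0:\memory]}=\prod_{m=0}^{\memory}\mathcal{K}_m$, I would apply Lemma~\ref{lem:EstimateLocLipExp} to each factor: for $u\in\mathcal{K}_m$ one has $\|u\|\le \|\mathcal{K}_m\|$, so $\iota^{-1}_{\mathcal{N},\bar{x}_m}(u)$ lies in the tangent ball of radius $c_{\bar{x}_m}\|\mathcal{K}_m\|$, and Lemma~\ref{lem:EstimateLocLipExp} (identifying its $\iota_p$ with the present $\iota^{-1}_{\mathcal{N},\bar{x}_m}$, so that $c_p=c_{\bar{x}_m}$, and choosing $\bar r=c_{\bar{x}_m}\|\mathcal{K}_m\|$) bounds the local Lipschitz constant of the $m$-th component by $c_{\bar{x}_m}\sinh(\kappa^{1/2}c_{\bar{x}_m}\|\mathcal{K}_m\|)/(\kappa^{1/2}c_{\bar{x}_m}\|\mathcal{K}_m\|)$. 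Maximizing over $m$ and using that coordinate-wise Lipschitz constants control the constant on the product then gives the claimed bound. The main technical wrinkle is not geometric but notational, namely tracking which norm each $\iota$ or $\iota^{-1}$ is evaluated in, since the constants $c_{\bar{x}_m}$, $C_{\bar{x}}$, and the operator norms of $\iota$ vs.\ $\iota^{-1}$ must be consistently identified with those appearing in Lemma~\ref{lem:EstimateLocLipExp}; once this bookkeeping is handled, the proof is immediate.
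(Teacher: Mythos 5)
Your argument is correct and follows essentially the same route as the paper's proof: Cartan--Hadamard for smoothness and the global $1$-Lipschitz bound on $\operatorname{Log}^h_{\bar x_m}$ in part~\textit{(i)}, and Lemma~\ref{lem:EstimateLocLipExp} applied componentwise with the product $\ell^2$ metric in part~\textit{(ii)}. Your bookkeeping caveat is also apt --- the paper itself uses both conventions ($\iota_{\mathcal N,\bar x_m}:T_{\bar x_m}\mathcal N\to\mathbb R^d$ and $\iota_{\bar x_m}:\mathbb R^d\to T_{\bar x_m}\mathcal N$ as mutual inverses) without always flagging the switch, and the identification you make ($\iota_p\leftrightarrow\iota^{-1}_{\mathcal N,\bar x_m}$, so $c_p=c_{\bar x_m}$) is the one that makes everything consistent.
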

\begin{proof}\textbf{of Lemma}~\ref{lem:regularity_phi}
Since $\mathcal{N}$ is simply connected, complete, and has non-positive sectional curvatures then the first statement in the Cartan-Hadamard theorem, as formulated in \citep[Corollary 6.9.1]{Jost_2017_RGGA}, guarantees that each $\operatorname{Log}^h_{\bar{x}_m}\eqdef (\operatorname{Exp}^h_{\bar{x}_m})^{-1}:\mathcal{N} \to T_{\bar{x}_m}\mathcal{N}$, for $m=0,\dots,\memory$ is well-defined and smooth.  Moreover, the second statement of \citep[Corollary 6.9.1]{Jost_2017_RGGA} implies that each $\operatorname{Log}^h_{\bar{x}_m}$ is a $1$-Lipschitz map (globally on $(\mathcal{N},{d_h})$).  
Consequentially, for every $x_{[0:\memory]},\tilde{x}_{[0:\memory]}\in \mathcal{N}^{\memory}$ it follows that 
\allowdisplaybreaks
\begin{align*}
        \|
                \varphi(x_{[1:\memory]})
            -
                \varphi(\tilde{x}_{[1:M]})
        \|
    = &
        \biggl(
            \sum_{m=0}^{\memory}\,
                \big\|
                    \operatorname{Log}_{\bar{x}_m}{\circ \iota^{-1}_{\bar{x}_m}} (x_r)
                -
                    \operatorname{Log}_{\bar{x}_m}{\circ \iota^{-1}_{\bar{x}_m}} (\tilde{x}_r)
                \big\|^2_2
        \biggr)^{1/2}
\\
    \le &
    {
        \biggl(
            \sum_{m=0}^{\memory}\,
                [ |\iota^{-1}_{\bar{x}_m}| \,d_h(x_r,\tilde{x}_r)]^2
        \biggr)^{1/2}
    }
\\
    \le &
    {
        C_{\bar{x}}
        \,
        \biggl(
            \sum_{m=0}^{\memory}\,
                d_h(x_r,\tilde{x}_r)^2
        \biggr)^{1/2}
    }
\\
    = &
        {C_{\bar{x}}}
    \,
       d_{h:2}(x_{[1:\memory]},\tilde{x}_{[1:M]})
    .
\end{align*}
This yields claim \textit{(i)}, {where $d_{h:2}$ is the $(1+\memory)$-fold $2$-product metric on $\mathcal{N}^{1+\memory}$ of the geodesic distance $d_h$ on $(\mathcal{N},h)$}.

Let $u_{[0:\memory]},\tilde{u}_{[0:\memory]}\in (\mathbb{R}^{d})^{1+\memory}\cong \mathbb{R}^{{(1+\memory)}\,d}$.  Under the sectional curvature lower-bound in Assumption~\ref{ass:NPC} (iii), we may apply Lemma~\ref{lem:EstimateLocLipExp} to deduce that, for any $r_{{0}},\dots,r_{\memory}>0$ and each $m=0,\dots,\memory$, the restriction of the map $\operatorname{Exp}_{\bar{x}_{m}}$ to the closed Euclidean ball $\overline{\operatorname{Ball}_{(\mathbb{R}^{d},\ell^2_{d})}(0,c_{\bar{x}_m}r_m/c_{\bar{x}_m})}$ is Lipschitz with Lipschitz constant at-most $c_{\bar{x}_m}\,\frac{\operatorname{sinh}(\kappa^{1/2}c_{\bar{x}_m}r_m)}{\kappa^{1/2}c_{\bar{x}_m}r_m}$.  Therefore, we have that
\allowdisplaybreaks
\begin{align}
\notag
        d_{h:2}(\varphi^{-1}(u_{[0:\memory]}),\varphi^{-1}(\tilde{u}_{[0:\memory]}))
    = &
        \biggl(
            \sum_{m=0}^{\memory}
            \,
            d_h\big(
                    \operatorname{Exp}_{\bar{x}_m}(u_m)
                ,
                    \operatorname{Exp}_{\bar{x}_m}(\tilde{u}_m)
                \big)^2
        \biggr)^{1/2}
    \\
    \notag
    \le &
        \biggl(
            \sum_{m=0}^{\memory}
            \,
            \Big(
                c_{\bar{x}_m}
                \frac{\operatorname{sinh}(\kappa^{1/2}c_{\bar{x}_m} r_m)}{\kappa^{1/2}c_{\bar{x}_m} r_m}
                \,
                \|u_m-\tilde{u}_m\|
            \Big)^2
        \biggr)^{1/2}
    \\
    \notag
    \le &
        \biggl(
            \sum_{m=0}^{\memory}
            \,
            \biggl[
                \max_{\tilde{m}=0,\dots,\memory}
                c_{\bar{x}_m}\,
                \Big(
                    \frac{\operatorname{sinh}(\kappa^{1/2}r_{\tilde{m}})}{\kappa^{1/2}r_{\tilde{m}}}
                \Big)
            \biggr]^2
            \,
            \|u_m-\tilde{u}_m\|^2
        \biggr)^{1/2}
\\
    \notag
    \le &
        \biggl[
                \max_{\tilde{m}=0,\dots,\memory}
                    c_{\bar{x}_m}\,
                    \frac{\operatorname{sinh}(\kappa^{1/2}r_{\tilde{m}})}{\kappa^{1/2}r_{\tilde{m}}}
        \biggr]
        \,
        \biggl(
            \sum_{m=0}^{\memory}
            \,
            \|u_m-\tilde{u}_m\|^2
        \biggr)^{1/2}
\\
    \notag
    = &
        \biggl[
                \max_{\tilde{m}=0,\dots,\memory}
                    c_{\bar{x}_m}\,
                    \frac{\operatorname{sinh}(\kappa^{1/2}r_{\tilde{m}})}{\kappa^{1/2}r_{\tilde{m}}}
        \biggr]
        \,
        \|u_{[0:\memory]}-\tilde{u}_{[0:\memory]}\|
    .
\end{align}
Note that each {$K_m \subseteq\overline{\operatorname{Ball}_{(\mathbb{R}^{d},\ell^2_{d})}(0,\|K_m\| ) )}$}, for $m=0,\dots,\memory$. 
Upon setting $r_m\eqdef {\|K_m\|}$, for each $m=0,\dots,\memory$, we obtain the conclusion.
\end{proof}

\begin{remark}
\label{rem:badbasepointchoice=largernetworks}
Lemma~\ref{lem:regularity_phi} shows that a poor choice of base-points $\bar{x}_m$ when defining the feature map $\varphi$ will require a larger network to compensate for the unnecessarily large Lipschitz constant of $\varphi^{-1}$.  
\end{remark}
%%%
Since $\rho$ is $\varphi^{-1}$ when interchanging the roles of $\mathcal{M}$ and $\mathcal{N}$ in Lemma~\eqref{lem:regularity_phi}, in the special case where $M=1$, then Lemma~\ref{lem:regularity_phi} directly implies the following.  
%%%
\begin{lemma}[Regularity of readout map ($\rho$)]
\label{lem:regularity_rho}    
Suppose that $(\mathcal{M},g)$ satisfies Assumption~\ref{ass:NPC} \textit{(i)} and (iii).  Fix a $\bar{y}\in \mathcal{M}$ and define $\rho$ as in~\eqref{eq:feature_readout_definitions}.  Then, $\rho:\mathbb{R}^{D}\rightarrow \mathcal{M}$ satisfies
\begin{enumerate}
    \item[\textit{(i)}] $\rho$ is smooth and on any non-empty compact $K\subseteq \mathbb{R}^{D}$ its local-Lipschitz constant is 
    \[
            \operatorname{Lip}(\rho\vert K)
        \le 
            c_{\bar{y}}
                \frac{
                    c_{\bar{y}}
                    \operatorname{sinh}(\kappa^{1/2}
                    \operatorname{diam}(K)
                    )
                }{
                    c_{\bar{y}}
                    \kappa^{1/2}
                    \operatorname{diam}(K)
                }
    ,
    \]
    where $c_{\bar{y}}\eqdef 2\,\|\iota_{\bar{y}}\|_{op}$.  
    \item[\textit{(ii)}] $\rho^{-1}$ exists, it is smooth, and it is ${C_p}$-Lipschitz (globally) where 
    {where $C_{\bar{p}}\eqdef |\iota_{p}^{-1}|_{op}$.}
\end{enumerate}
\end{lemma}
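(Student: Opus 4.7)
The plan is to reduce both statements of the lemma to single-factor specializations of Lemma~\ref{lem:regularity_phi}, after swapping the roles of $(\mathcal{N},h)$ and $(\mathcal{M},g)$. Indeed, comparing the definitions in~\eqref{eq:feature_readout_definitions}, the readout map $\rho$ is precisely the inverse of the feature map $\varphi$ one obtains by taking a single slot $\memory=0$ and exchanging the two base manifolds. Since $(\mathcal{M},g)$ satisfies the same completeness, simple-connectedness, and sectional-curvature assumptions as $(\mathcal{N},h)$ does in Lemma~\ref{lem:regularity_phi}, the assertions (i) and (ii) of Lemma~\ref{lem:regularity_rho} should follow from the single-factor specializations of (ii) and (i) of Lemma~\ref{lem:regularity_phi}, respectively.

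For part (i), I would first note that $\rho=\operatorname{Exp}^g_{\bar{y}}\circ\iota^{-1}_{\mathcal{M},\bar{y}}$ is smooth by composition, since the Riemannian exponential map of a complete manifold is $C^\infty$ (Hopf--Rinow) and $\iota^{-1}_{\mathcal{M},\bar{y}}$ is linear. For the local Lipschitz bound on a non-empty compact $K\subseteq\mathbb{R}^D$, the key ingredient is the Rauch-type estimate in Lemma~\ref{lem:EstimateLocLipExp}, which I would apply with $p=\bar{y}$ and $\bar{r}$ taken on the order of $\operatorname{diam}(K)$ so that $\iota^{-1}_{\mathcal{M},\bar{y}}(K)$ is contained in the appropriate closed ball of $T_{\bar{y}}\mathcal{M}$ about the origin (possibly after a preliminary translation, exploiting that the Lipschitz bound depends only on the radius, not the location, of the ball). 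Substituting $c_p=\|\iota_{\bar{y}}\|_{op}=c_{\bar{y}}/2$ into the conclusion of Lemma~\ref{lem:EstimateLocLipExp} then produces the estimate as stated, with the apparent factors of $c_{\bar{y}}$ in numerator and denominator of the $\sinh$ argument cancelling as written.

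For part (ii), global existence and smoothness of $\rho^{-1}$ are immediate from the Cartan--Hadamard theorem \citep[Corollary 6.9.1]{Jost_2017_RGGA} applied to $(\mathcal{M},g)$: on a complete, simply connected, non-positively curved manifold, $\operatorname{Exp}^g_{\bar{y}}$ is a global diffeomorphism $T_{\bar{y}}\mathcal{M}\to\mathcal{M}$ with globally $1$-Lipschitz inverse $\operatorname{Log}^g_{\bar{y}}$. Composing with the linear identification then yields $\rho^{-1}=\iota_{\mathcal{M},\bar{y}}\circ\operatorname{Log}^g_{\bar{y}}$, which is smooth and globally Lipschitz with constant bounded by the product of the Lipschitz constants of its two factors, namely $1\cdot|\iota^{-1}_{\bar{y}}|_{op}=C_{\bar{p}}$.

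I do not anticipate any genuine obstacle: the entire argument is a short specialization of material already proved in Lemmas~\ref{lem:EstimateLocLipExp} and~\ref{lem:regularity_phi}, and the paper's own remark immediately preceding the statement confirms that the result is meant to be read off from Lemma~\ref{lem:regularity_phi} with the roles of source and target manifolds interchanged. The only point requiring care is the bookkeeping of the directions of the linear identifications $\iota$ versus $\iota^{-1}$ throughout, so that the operator-norm constants $c_{\bar{y}}$ and $C_{\bar{p}}$ in the final statement match precisely those that come out of Lemma~\ref{lem:EstimateLocLipExp}.
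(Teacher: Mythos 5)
Your proposal is correct and follows essentially the same route as the paper: part~(i) is a direct application of the Rauch-type estimate in Lemma~\ref{lem:EstimateLocLipExp} at $p=\bar{y}$, and part~(ii) is read off from the Cartan--Hadamard theorem, exactly as the paper's two-line proof does. The one genuine subtlety you flag — the bookkeeping of $\iota$ versus $\iota^{-1}$ and the resulting operator-norm constants — is real, but the paper's own notation is inconsistent on this point (the statement writes $c_{\bar{y}}=2\|\iota_{\bar{y}}\|_{op}$ whereas the map $\rho$ involves $\iota_{\bar{y}}^{-1}$, and the $c_{\bar{y}}$ in the numerator and denominator of the $\sinh$ fraction do cancel as written), so your reading is as faithful as one can be; your reduction to Lemma~\ref{lem:regularity_phi} with roles swapped and a single slot (your $\memory=0$, which the paper's preamble miscalls ``$M=1$'') is exactly the intended argument.
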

\begin{proof}
    Point \textit{(i)} follows from Lemma~\ref{lem:EstimateLocLipExp}, by setting $\bar{r}=c_{\bar{y}}\,\operatorname{diam}(K)$ and noting that $x\mapsto \operatorname{sinh}(\kappa^{1/2} x)/(\kappa^{1/2} x)$ is monotonically increasing on $[0,\infty)$.  Point \textit{(ii)} follows from the second statement of the Cartan-Hadamard Theorem \citep[Corollary 6.9.1]{Jost_2017_RGGA} {and upon recalling that we have linearly isomorphically identified the Hilbert spaces $\iota_p:(T_p(\mathcal{M}),\|\cdot\|_{g_p}) \to (\mathbb{R}^D,\|\cdot\|)$ (which needn't be an isometry); whence $\operatorname{Lip}(\iota_p^{-1}) = C_{\bar{p}}\eqdef |\iota_{p}^{-1}|_{op} >0$ (and need not equal to $1$).}
\end{proof}

\subsection{Results from matrix theory}
\label{s:auxlemmata__ss:Matrix_Alg}
This appendix contains a short proof of some folklore results in matrix theory.

Recall that, for a negative integer $t$ and an invertible matrix $\Sigma$, the matrix $\Sigma^{t}$ is defined to be $(\Sigma^{-1})^{|t|}$.  That is, $\Sigma^{t}$ is the $t$-fold product of $\Sigma^{-1}$ with itself.  

The next lemma justifies the definition of the real exponent of a symmetric positive definite matrix.  
\begin{lemma}[Powers of symmetric positive-definite matrices]
\label{lem:Exp_log_Identity}
For each $\Sigma \in \operatorname{Sym}_+(d)$ and each $t\in \mathbb{R}$, $\exp(t\log(\Sigma))$ is a well-defined element of $\operatorname{Sym}_+(d)$.  Furthermore, if $t\in \mathbb{Z}$ then
\[
    \exp(t\log(\Sigma)) 
        = 
    \Sigma^t.
\]
\end{lemma}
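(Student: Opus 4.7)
The plan is to diagonalize $\Sigma$ via the spectral theorem and then reduce all matrix computations to computations on eigenvalues, which are genuinely positive real numbers on which $\log$ and $\exp$ behave as expected.

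First, I would invoke the spectral theorem to write $\Sigma = Q D Q^{\top}$, where $Q \in O(d)$ and $D = \operatorname{diag}(\lambda_1,\dots,\lambda_d)$ with each $\lambda_i > 0$ (since $\Sigma \in \operatorname{Sym}_+(d)$). Because $\Sigma$ has no non-positive eigenvalues, the principal matrix logarithm $\log(\Sigma)$ is well-defined and equals $Q\,\operatorname{diag}(\log \lambda_1,\dots,\log \lambda_d)\,Q^{\top}$; in particular $\log(\Sigma) \in \operatorname{Sym}(d)$. Scaling by $t \in \R$ preserves symmetry, so $t\log(\Sigma) \in \operatorname{Sym}(d)$.

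Next, I would apply the identity $\exp(Q A Q^{\top}) = Q \exp(A) Q^{\top}$ for $Q \in O(d)$ (which follows termwise from the power series defining the matrix exponential) to compute
\[
\exp\bigl(t \log(\Sigma)\bigr)
= Q \, \operatorname{diag}\bigl(\exp(t \log \lambda_1),\dots,\exp(t\log \lambda_d)\bigr) \, Q^{\top}
= Q \, \operatorname{diag}(\lambda_1^t,\dots,\lambda_d^t) \, Q^{\top}.
\]
Since each $\lambda_i > 0$ implies $\lambda_i^t > 0$, the right-hand side is symmetric with strictly positive eigenvalues, hence in $\operatorname{Sym}_+(d)$; this establishes the first claim.

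Finally, for the integer case, I would argue by cases. For $t \in \mathbb{N}$, a direct induction on $t$ using $\Sigma = Q D Q^{\top}$ and $Q^{\top} Q = I_d$ yields $\Sigma^t = Q D^t Q^{\top} = Q\,\operatorname{diag}(\lambda_i^t)\,Q^{\top}$, which matches the formula above. For $t = 0$ both sides equal $I_d$. For negative integers, I would first observe that $\Sigma^{-1} = Q D^{-1} Q^{\top} = Q\,\operatorname{diag}(\lambda_i^{-1})\,Q^{\top}$ (since $\Sigma$ is invertible as all $\lambda_i > 0$), and then apply the positive-integer case to $\Sigma^{-1}$ with exponent $|t|$; by the definition of $\Sigma^t$ for negative $t$ this yields $\Sigma^t = Q\,\operatorname{diag}(\lambda_i^t)\,Q^{\top}$ as well, completing the proof. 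I do not anticipate a genuine obstacle here: the only subtlety is citing (or briefly justifying) that the principal branch of $\log$ is unambiguously defined on matrices whose spectrum is contained in $(0,\infty)$, and that the power-series manipulation $Q(\cdot)Q^{\top}$ commutes with both $\exp$ and $\log$ on such matrices.
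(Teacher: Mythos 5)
Your proposal is correct, but it takes a genuinely different route from the paper. The paper establishes well-definedness abstractly by citing a Lie-theoretic fact (Hilgert--Neeb, Proposition~3.3.5) that $\exp$ restricted to $\operatorname{Sym}(d)$ is a diffeomorphism onto $\operatorname{Sym}_+(d)$, so $\log(\Sigma)$ is simply the preimage under this diffeomorphism; for the integer identity the paper never touches eigenvalues, instead using that $X=\log(\Sigma)$ commutes with itself to write $\exp(tX)=\exp(X)^t$ for $t>0$, the case $t=0$ is trivial, and negative $t$ is handled by $\exp(-X)=\exp(X)^{-1}$. You instead diagonalize via the spectral theorem, $\Sigma=QDQ^{\top}$, and reduce all matrix functional calculus to scalar calculus on the eigenvalues, conjugating by $Q$. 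Both arguments are sound. Your route is more elementary and constructive — it exhibits $\log(\Sigma)$ explicitly rather than appealing to an external diffeomorphism theorem — and makes the positive-definiteness of $\exp(t\log\Sigma)$ visibly obvious from the positive scalars $\lambda_i^t$. The paper's route is shorter on the integer identity and is purely algebraic, so it would transfer more easily to settings where a spectral decomposition is unavailable or awkward. The one thing worth making explicit in your write-up is that the ``principal matrix logarithm'' you construct via $Q\,\operatorname{diag}(\log\lambda_i)\,Q^{\top}$ agrees with the inverse of $\exp|_{\operatorname{Sym}(d)}$ that the paper uses throughout — this is immediate from the conjugation identity $\exp(QAQ^{\top})=Q\exp(A)Q^{\top}$ you already invoke, but should be stated so the two conventions are visibly the same object.
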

\begin{proof}\textbf{of Lemma~\ref{lem:Exp_log_Identity}}
By \citep[Proposition 3.3.5]{HilgertNeeb_2012_StructureGeometryLie} the restriction of the matrix exponential map to $\operatorname{Sym}(d)$ is a diffeomorphism onto the open subset $\operatorname{Sym}_+(d)$ of $\operatorname{Sym}(d)$, and by definition, the matrix logarithm $\log$ is the inverse of $\exp$ on $\operatorname{Sym}_+(d)$.  Therefore, $X\eqdef \log(\Sigma)$ is a well-defined element of $\operatorname{Sym}(d)$. Consequentially, $tX$ belongs to $\operatorname{Sym}(d)$ for every $t\in \mathbb{R}$.  Thus, $\exp(tX)\in \operatorname{Sym}_+(d)$ nd the first claim follows.

    Let now be $t\in\mathbb Z$ with $t\neq 0$ (otherwise the statement is trivial). If $t>0$, then, since $X$ trivially commutes with itself, it easily follows that 
    \[
    \exp(tX) = \underbrace{\exp(X)\dots \exp(X)}_{\mbox{$t$-times}} = (\exp(X))^t.
    \]    
    If on the other hand $t<0$, then, from we have just proved, $\exp(tX) = \exp(-t(-X)) = (\exp(-X))^{\abs{t}}$. But since $\exp(X)$ is invertible with $\exp(-X)=\exp(X)^{-1}$, from the observation at the beginning of this section we infer $\exp(tX) = (\exp(X))^t$

    So, we have that $\exp(tX)=\big(\exp(X)\big)^t$ for every $t\in \mathbb{Z}$.  Plugging in the definition of $X$ and using the fact that $\log$ is the inverse of $\exp$ on $\operatorname{Sym}_+(d)$ yields the conclusion, namely that
    \[
            \exp(t\log(\Sigma)) 
        = 
            \big(\exp(\log(\Sigma))\big)^t 
        = 
            \Sigma^t
    ,
    \]
concluding our proof.
\end{proof}

\section{Proofs}
\label{s:Proofs}

This section contains the proofs of our main results.  
\subsection{Proof of Proposition \ref{prop:NPC_FisherRao}}
In order to prove Proposition \ref{prop:NPC_FisherRao}, we need some preliminary results.
\subsubsection{The global NPC geometry on the space of non-singular Gaussian measures}
\label{app:proof_proposition_NPC_FisherRao}
The following result is well-known in the metric geometry literature; however, parts of its proof were difficult to track down outside that literature, so we record it here.

\begin{lemma}[The Geometry of $\operatorname{Sym}_+$]
\label{lem:Symplus_Hadamard_1}
For every $d \in \mathbb{N}$, $\operatorname{Sym}_+(d )$ is a $d (d +1)/2$-dimensional Cartan-Hadamard manifold whose tangent spaces are identifiable with $\operatorname{Sym}(d )$. The Riemannian metric is given by
\[
    \hat{g}_{\Sigma}(X,Y)\eqdef \operatorname{tr}(\Sigma^{-1}X\Sigma^{-1}Y), \quad \Sigma \in \operatorname{Sym}_+(d), \,X,Y\in \operatorname{Sym}(d).
\]
Moreover, the sectional curvatures are  bounded in $[-1/2,0]$.  
\end{lemma}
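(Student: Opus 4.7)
The plan is to exhibit $\operatorname{Sym}_+(d)$ as the non-compact Riemannian symmetric space $GL(d,\R)/O(d)$ and then to read off the Cartan--Hadamard properties and the curvature bounds from standard symmetric-space theory, verifying the two computational claims (completeness and the explicit curvature constants) directly.

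First I would establish the smooth manifold structure. Since $\operatorname{Sym}_+(d)$ is an open subset of the finite-dimensional vector space $\operatorname{Sym}(d)$ of dimension $d(d+1)/2$ (it is the preimage of $(0,\infty)^d$ under the continuous map sending a symmetric matrix to its ordered eigenvalues), it inherits a smooth manifold structure of that dimension, with tangent space at every point canonically identified with $\operatorname{Sym}(d)$. Positive-definiteness of $\hat g_\Sigma$ is then immediate: writing $Y = \Sigma^{-1/2}X\Sigma^{-1/2}\in \operatorname{Sym}(d)$, one has $\hat g_\Sigma(X,X) = \operatorname{tr}(Y^2) = \|Y\|_F^2 \ge 0$ with equality iff $X=0$. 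Symmetry and bilinearity are clear, so $\hat g$ is a bona fide Riemannian metric.

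Next I would verify the three Cartan--Hadamard ingredients: simple connectedness, completeness, and non-positive curvature. Simple connectedness is automatic because $\operatorname{Sym}_+(d)$ is convex in $\operatorname{Sym}(d)$, hence contractible. For completeness, the plan is to exhibit the geodesics explicitly. The group $GL(d,\R)$ acts on $\operatorname{Sym}_+(d)$ by $A\cdot\Sigma \eqdef A\Sigma A^\top$, and a direct computation shows this action is by $\hat g$-isometries and is transitive with stabilizer $O(d)$ at $I$. Hence $\operatorname{Sym}_+(d) \cong GL(d,\R)/O(d)$ as a Riemannian homogeneous space. The geodesic through $\Sigma$ with initial velocity $X$ is then
\[
t \mapsto \Sigma^{1/2}\exp\bigl(t\,\Sigma^{-1/2}X\Sigma^{-1/2}\bigr)\Sigma^{1/2},
\]
which is defined for all $t\in\R$ (using Lemma~\ref{lem:Exp_log_Identity}). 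Hopf--Rinow then gives geodesic completeness.

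The hard step is the sectional curvature bound. The standard route is to use the symmetric-space structure: with the Cartan decomposition $\mathfrak{gl}(d) = \mathfrak{o}(d)\oplus\operatorname{Sym}(d)$, the tangent space at $I$ is identified with $\operatorname{Sym}(d)$ and the curvature operator at $I$ is given by the Lie bracket formula $R(X,Y)Z = -[[X,Y],Z]$ for $X,Y,Z\in\operatorname{Sym}(d)$. A direct calculation then gives, for an $\hat g_I$-orthonormal pair $X,Y$,
\[
K(X,Y) = -\hat g_I([X,Y],[X,Y]) = -\tfrac{1}{2}\|[X,Y]\|_F^2 \in [-1/2,\,0],
\]
where the lower bound $-1/2$ follows from the standard inequality $\|[X,Y]\|_F^2 \le 2\|X\|_F^2\|Y\|_F^2$ for symmetric matrices (equivalently, $\|XY\|_F\le\|X\|_F\|Y\|_F$ together with $[X,Y]=XY-YX$), while the upper bound $0$ is the non-positivity. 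Since $GL(d,\R)$ acts by isometries and transitively, the same bounds hold at every point of $\operatorname{Sym}_+(d)$.

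The main obstacles I anticipate are (i) carefully pinning down the constant in the sectional-curvature formula, which is sensitive to the normalization of $\hat g$ (the factor of $1/2$ in the inequality $\|[X,Y]\|_F^2\le 2\|X\|_F^2\|Y\|_F^2$ is exactly what produces $-1/2$ as the sharp lower bound), and (ii) checking that the $GL(d,\R)$-action is by $\hat g$-isometries, which is a direct verification using the cyclic property of the trace: $\hat g_{A\Sigma A^\top}(AXA^\top,AYA^\top) = \operatorname{tr}((A\Sigma A^\top)^{-1}AXA^\top(A\Sigma A^\top)^{-1}AYA^\top) = \operatorname{tr}(\Sigma^{-1}X\Sigma^{-1}Y) = \hat g_\Sigma(X,Y)$.
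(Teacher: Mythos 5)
Your approach is genuinely different from the paper's. The paper proves this lemma almost entirely by citation: it invokes \citep[Theorem 10.39]{BridsonHaefliger_1999Book} for the $\operatorname{CAT}(0)$ property, Cartan's theorem \citep[Theorem II.1A.6]{BridsonHaefliger_1999Book} to deduce non-positive sectional curvature, and \citep[Proposition I.1]{FCM_2022_AcceleratedFirstOrderMethodGDManifolds} for the $-1/2$ lower bound. You instead propose a self-contained computation via the realization $\operatorname{Sym}_+(d)=GL(d,\R)/O(d)$, the Cartan decomposition $\mathfrak{gl}(d)=\mathfrak{o}(d)\oplus\operatorname{Sym}(d)$, and the Lie-bracket curvature formula. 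This is a transparent and pedagogically superior route, and your treatment of the manifold structure, simple connectedness, completeness, and $GL(d,\R)$-equivariance is fine. However, the curvature computation --- the only place where the paper's citation route does real work --- is incorrect in several compounding ways.

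First, the formula $R(X,Y)Z=-[[X,Y],Z]$ holds on the abstract tangent space $\mathfrak{p}=\operatorname{Sym}(d)\subset\mathfrak{gl}(d)$ of the symmetric space $GL/O$ with the trace-form metric. But the identification $GL/O\to\operatorname{Sym}_+(d)$, $gK\mapsto gg^\top$, has differential $X\mapsto 2X$ at the basepoint, which is a dilation; transporting the $(1,3)$-curvature tensor through it introduces a factor of $\tfrac14$. The correct formula on $T_I\operatorname{Sym}_+(d)$ with the metric $\hat g_I(X,Y)=\operatorname{tr}(XY)$ is $R(X,Y)Z=-\tfrac14[[X,Y],Z]$. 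One can check this directly from the Christoffel symbol of $\hat g$, namely $\nabla_X Y=-\tfrac12\big(X\Sigma^{-1}Y+Y\Sigma^{-1}X\big)$ for constant vector fields. Second, the intermediate equality $\hat g_I([X,Y],[X,Y])=\tfrac12\|[X,Y]\|_F^2$ is not right: since $[X,Y]$ is skew-symmetric, $\operatorname{tr}\big([X,Y]^\top[X,Y]\big)=\|[X,Y]\|_F^2$ exactly, with no factor of $\tfrac12$. The correct sectional curvature for $\hat g_I$-orthonormal $X,Y$ is $K(X,Y)=-\tfrac14\operatorname{tr}\big([[X,Y],Y]X\big)=-\tfrac14\|[X,Y]\|_F^2$. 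Third, the commutator bound $\|[X,Y]\|_F^2\le 2\|X\|_F^2\|Y\|_F^2$ is the B\"ottcher--Wenzel inequality and does \emph{not} follow from $\|XY\|_F\le\|X\|_F\|Y\|_F$ and $[X,Y]=XY-YX$ as you claim; that argument only yields the constant $4$. The sharp constant $2$ is a nontrivial theorem (and holds for all square matrices, not just symmetric ones). Finally, even taking your displayed formulas at face value, $K=-\tfrac12\|[X,Y]\|_F^2$ combined with $\|[X,Y]\|_F^2\le 2$ gives $K\ge -1$, not $K\ge -1/2$: the stated conclusion does not follow from your own intermediate steps. The correct chain is $K=-\tfrac14\|[X,Y]\|_F^2\ge -\tfrac14\cdot 2=-\tfrac12$.

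A minor remark: once you have exhibited geodesics defined for all $t\in\R$, you already have geodesic completeness by definition; Hopf--Rinow is then what delivers metric completeness, not the reverse.
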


\begin{proof}\textbf{of Lemma}~\ref{lem:Symplus_Hadamard_1}
As discussed on \citep[page 314, section 10.31]{BridsonHaefliger_1999Book}, $\operatorname{Sym}_+(d )$ is a smooth manifold of dimension $d (d +1)/2$ with tangent spaces identifiable with $\operatorname{Sym}(d )$ (see \cite[Section 10.2.2]{CookingMatrices_v2105}): a smooth global chart is provided by $\operatorname{vec}: \operatorname{Sym}_+(d)  \to  \operatorname{vec}( \operatorname{Sym}_+(d) )  \subset \mathbb R^ {d(d+1)/2}$ (recall that $\operatorname{vec}:\mathbb{R}^{d(d+1)/2}\rightarrow\operatorname{Sym}(d)$ is the inverse of the vector space isomorphism $\operatorname{sym}$ defined in~\eqref{eq:vecsym_def}); moreover $\operatorname{Sym}_+(d)$ can be endowed with the Riemannian metric $\hat{g}_{\Sigma}(X,Y)\eqdef \operatorname{tr}(\Sigma^{-1}X\Sigma^{-1}Y)$, where $\Sigma \in \operatorname{Sym}_+(d )$ and $X,Y\in \operatorname{Sym}(d )$.

By Theorem \citep[10.39]{BridsonHaefliger_1999Book}, $(\operatorname{Sym}_+,d_{\hat{g}})$ is a $\operatorname{CAT}(0)$ space\footnote{See \citep[Definition II.1.1]{BridsonHaefliger_1999Book}.} and since $(\operatorname{Sym}_+,\hat{g})$ is a Riemannian manifold then \citep[Theorem II.1A.6]{BridsonHaefliger_1999Book}\footnote{Due to \cite{CartanOldBook}.} implies that the sectional curvatures of $(\operatorname{Sym}_+,\hat{g})$ are all bounded-above by $0$.  
On the other hand, \citep[Proposition I.1]{FCM_2022_AcceleratedFirstOrderMethodGDManifolds}\footnote{In their extended appendix in the ArXiV version.} we see that the sectional curvatures in $(\operatorname{Sym}_+,\hat{g})$ are bounded-below by $-1/2$.
\end{proof}

In what follows, for any $\mathfrak{m}\in\mathbb{R}^d$, we let $\mathcal{N}_d^{\mathfrak{m}}\eqdef \{\mathcal{N}_d(\mathfrak m,\Sigma ):\, \Sigma\in \operatorname{Sym}_+(d) \}$.   
Since $\mathcal{N}^{\mathfrak{m}}_{d}$ is a Riemannian submanifold of dimension $d (d +1)/2$ of $(\mathcal{N}_{d },\mathcal{I}^F)$ (with metric pullbacked via the inclusion map $\iota$), by defining
\begin{equation}\label{eq: iso between sym+ and N0d}
    \Phi: \operatorname{Sym}_+(d ) \to \mathcal{N}_d^{\mathfrak m},\quad
    \Sigma\mapsto \mathcal{N}(\mathfrak m,\Sigma)
\end{equation}
it is easy to see that $\Phi$ is an isometry between $(\operatorname{Sym}_+(d),\hat{g})$ and $(\mathcal{N}^{\mathfrak{m}}_{d },\iota^\ast\mathcal{I}^F)$: see also \eqref {eq:FisherRaoMetric2}.  Lemma~\ref{lem:Symplus_Hadamard_1} this implies the following.

\begin{corollary}[{The geometry of $\mathcal{N}^{\mathfrak{m}}_d$}]
\label{lem:Symplus_Hadamard_2}
For every $d \in \mathbb{N}$ and each $\mathfrak{m}\in \mathbb{R}^d$, $\mathcal{N}^{\mathfrak{m}}_{d}$ is a $d (d +1)/2$-dimensional Cartan-Hadamard manifold.
Moreover, their sectional curvatures are bounded in $[-1/2,0]$.  In particular, $(\mathcal{N}^{\mathfrak{m}}_{d},d_{\mathcal{I}^F})$ is a global NPC space.
\end{corollary}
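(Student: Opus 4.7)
The plan is simply to transfer all the required geometric properties from $(\operatorname{Sym}_+(d),\hat g)$, which were established in Lemma~\ref{lem:Symplus_Hadamard_1}, over to $(\mathcal{N}^{\mathfrak m}_d,\iota^\ast\mathcal{I}^F)$ by invoking the map $\Phi$ in~\eqref{eq: iso between sym+ and N0d}. The excerpt already asserts that $\Phi$ is an isometry; the corollary then follows because dimension, completeness, simple connectedness, and sectional curvature bounds are all isometry invariants.

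First I would record that $\Phi$ is a diffeomorphism: bijectivity onto $\mathcal N^{\mathfrak m}_d$ is built into the definition of the latter, and both $\Phi$ and $\Phi^{-1}$ are smooth when read through the global chart $\phi$ from~\eqref{eq:identification_Nd_Vectors} (they amount, respectively, to embedding and projecting on the $\Sigma$-coordinates while pinning the mean to $\mathfrak m$). Second, I would confirm the isometry claim: since the mean is held constant along curves in $\mathcal N^{\mathfrak m}_d$, $d\Phi_\Sigma$ sends each $X\in\operatorname{Sym}(d)\cong T_\Sigma\operatorname{Sym}_+(d)$ into the $\Sigma$-component of $T_{\mathcal N(\mathfrak m,\Sigma)}\mathcal{N}_d$. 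Comparing the second expression in~\eqref{eq:FisherRaoMetric2}, namely $\mathcal{I}^F(X,Y)=\tfrac12\operatorname{tr}(\Sigma^{-1}X\Sigma^{-1}Y)$, with $\hat g_\Sigma(X,Y)=\operatorname{tr}(\Sigma^{-1}X\Sigma^{-1}Y)$ shows that $\Phi^\ast(\iota^\ast\mathcal I^F)=\tfrac12 \hat g$; the factor $\tfrac12$ is harmless because a constant positive rescaling of a Riemannian metric preserves completeness, simple connectedness, the sign of the sectional curvature, and only multiplies the curvature bounds by $2$ (which leaves the interval $[-1/2,0]$ invariant, since $\hat g$ itself is bounded in $[-1/2,0]$ by Lemma~\ref{lem:Symplus_Hadamard_1}).

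Third, I would conclude by pulling each property through $\Phi$: the dimension $d(d+1)/2$ comes from $\Phi$ being a diffeomorphism; completeness and simple connectedness come from $\Phi$ being a homeomorphism onto a complete, simply-connected space; and the sectional curvature bounds in $[-1/2,0]$ are preserved since (up to the constant rescaling just discussed) curvature is an isometric invariant. Finally, Proposition~\ref{prop:sturmone} converts these properties into the global NPC statement for $(\mathcal N^{\mathfrak m}_d,d_{\mathcal I^F})$.

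The only non-routine step is the metric computation in the second paragraph; the rest is simply bookkeeping. I should be careful with the constant rescaling between $\mathcal I^F$ and $\hat g$ so that the curvature interval is cited correctly, but no genuine difficulty is anticipated beyond that.
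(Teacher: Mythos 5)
Your overall approach — transferring the geometry from $(\operatorname{Sym}_+(d),\hat g)$ to $(\mathcal N^{\mathfrak m}_d,\iota^\ast\mathcal I^F)$ via the map $\Phi$ — matches the paper exactly, which simply observes that $\Phi$ is an isometry and then cites Lemma~\ref{lem:Symplus_Hadamard_1}. You go one step further than the paper and actually compare the two metrics, and that is where a genuine error appears.

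You find $\Phi^\ast(\iota^\ast\mathcal I^F)=\tfrac12\hat g$, which is consistent with comparing \eqref{eq:FisherRaoMetric2} to the definition of $\hat g$ in Lemma~\ref{lem:Symplus_Hadamard_1}. But your subsequent claim that the rescaling ``multiplies the curvature bounds by $2$ (which leaves the interval $[-1/2,0]$ invariant)'' is self-contradictory: multiplying $[-1/2,0]$ pointwise by $2$ produces $[-1,0]$, not $[-1/2,0]$, and indeed no nontrivial positive scalar fixes that interval. The underlying fact you invoke is correct — if $g'=\lambda g$ for a constant $\lambda>0$ then the Levi–Civita connection and the $(1,3)$ curvature tensor are unchanged, while the sectional curvature satisfies $K_{g'}=\lambda^{-1}K_g$ — but you then misapply it. With $\lambda=\tfrac12$ and $K_{\hat g}\in[-1/2,0]$ you would actually get $K_{\iota^\ast\mathcal I^F}\in[-1,0]$, which contradicts the stated bound of the corollary. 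The paper itself side-steps this by asserting (without the factor-of-$\tfrac12$ computation) that $\Phi$ is an isometry; if your factor of $\tfrac12$ is real, then the bound $[-1/2,0]$ claimed in the corollary for $\iota^\ast\mathcal I^F$ is the bound one obtains only by identifying it isometrically with $\hat g$, and your more careful bookkeeping surfaces a discrepancy rather than resolving it. To keep your argument self-consistent you should either (i) reconcile the two metric normalizations so that $\Phi$ is a genuine isometry (for instance, by pinning down which normalization the cited curvature bound is stated for), or (ii) accept the factor of $\tfrac12$ and conclude $K\in[-1,0]$. Note that all the qualitative conclusions — Cartan–Hadamard, non-positive curvature, global NPC via Proposition~\ref{prop:sturmone} — survive either way, since they only require $K\le 0$ together with completeness and simple connectedness, and those three properties do transfer unchanged through a constant positive rescaling, exactly as you say. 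It is only the sharp numerical interval that is affected.
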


\noindent We are now in place to prove Proposition~\ref{prop:NPC_FisherRao}.

\begin{proof}\textbf{of Proposition}~\ref{prop:NPC_FisherRao}
We begin by determining a suitable Riemannian structure for $\mathcal{N}_d$  before studying its metric and topological characteristics, as well as obtaining estimates on its curvature.
\paragraph{Manifold structure}
We have just seen that $(\mathcal{N}^{\mathfrak{m}}_d,\iota^\ast\mathcal{I}^F)$ is a Riemannian submanifold (of dimension $d(d+1)/2$). Consider $\mathbb R^d$ endowed with its standard Euclidean metric $\boldsymbol{\delta}$, namely the identity matrix.
Define $(\Uu,g)$ to be the product of these two Riemannian manifolds
\[
\Uu\eqdef \mathbb R^d \times \mathcal{N}^0_d, \quad g\eqdef \boldsymbol{\delta} \oplus \iota^\ast\mathcal{I}^F,
\]
where a global chart for $\Uu$ is clearly given by $\kappa(\mathfrak m, \mathcal{N}_d(0,\Sigma)) = 
(\mathfrak m, \operatorname{vec}(\Sigma))$.  Consider the following bijective map $\chi:\mathcal N_d\to \Uu$ so defined 
\[
\chi: \mathcal{N}_d(\mathfrak m,\Sigma) \mapsto (\mathfrak m, \mathcal{N}_d(0,\Sigma)).
\]
It is immediate to see that (refer to Section \ref{sec:InformationGeometryGaussianMeasures} for unexplained notation) $\varphi^{-1}\circ\chi\circ\kappa:\Uu\to\Theta$ is the identity map, and thus smooth. The same holds for the inverse $\chi^{-1}$: we deduce that $\chi$ is a diffeomorphism and that the matrix representation of $\chi_\ast$ is the identity matrix of $\mathbb R^m,\,m=(d^2+3d)/2$. 

We finally pullback $g$ via $\chi$ on $\mathcal N_d$, and we set $\mathfrak{J} \eqdef \chi^\ast g$. It is straightforward to see that at an arbitrary point $\mathcal N_d(\mathfrak m,\Sigma)$~\eqref{eq:Riemannian_metric} is fulfilled.  By construction, $(\mathcal{N}_d,\mathfrak{J})$ is isometric to $(\Uu,g)$. From now on, we will work on $(\Uu,g)$: whence, geometric statements valid for $(\Uu,g)$ will also be valid for $(\mathcal{N}_d,\mathfrak{J})$. 

\paragraph{Topological properties}
Observe that $\operatorname{Sym}_+(d)$ is a convex cone, thus it is contractible\footnote{E.g.\ via the homotopy $(t,\Sigma)\mapsto t\Sigma + (1-t)I_{d }$}.  Therefore, it is connected and simply connected\footnote{See \citep[Theorem 1.10]{spanier1989algebraic}.}.  Consequentially, $\mathcal{N}^0_d$ is connected since it is homeomorphic to $\operatorname{Sym}_+(d)$.  Therefore, $\Uu$ is connected since it is the product of two connected spaces.  Moreover, $\mathcal{N}^0_d$ is also connected and simply connected, because $\operatorname{Sym}_+(d)$ is.

Since $\R^d$ is also path connected, $\Uu$ is the Cartesian product of path connected spaces, then\footnote{See \citep[Exercise 1.G]{spanier1989algebraic}.} the fundamental group $\pi(\Uu)$ of $\Uu$ is isomorphic\footnote{Here, $\cong$ denotes an isomorphism in the category of groups and group homomorphisms.} to the direct product $\pi(\mathbb{R}^d)\oplus \pi(\mathcal{N}^0_d)\cong \{0\}$ of the fundamental groups of $\mathbb R^d$ and $\mathcal N^0_d$, which are both trivial\footnote{I.e.\ homeomorphic, as groups, to the trivial group $\{0\}$.}.  Since $\Uu$ is homeomorphic to $\mathcal{N}_d$ then \citep[Theorem 1.8.8]{spanier1989algebraic} implies that $\pi(\mathcal{N}_d)\cong \{0\}$; thus, $\mathcal{N}_d$ is simply connected. 

\paragraph{Metric structure}
By Lemma~\ref{lem:productmetric}, the distance function on the $\mathcal{U}$ equals to the $2$-product metric given, for any $u_0=(\mathfrak m_0, \mathcal{N}_d(0,\Sigma_0))\in \Uu$ and $u_1=(\mathfrak m_1, \mathcal{N}_d(0,\Sigma_1))\in\Uu$, by
\begin{equation}
\label{eq:product_metric_form}
        d_g((\mathfrak m_0, \mathcal{N}_d(0,\Sigma_0)),
        (\mathfrak m_1, \mathcal{N}_d(0,\Sigma_1))) 
    = 
        \left[
        \norm{\mathfrak m_0 - \mathfrak m_1 }^2_2 + d_{\iota^\ast\mathcal{I}^F}(\mathcal{N}_d(0,\Sigma_0),
        \mathcal{N}_d(0,\Sigma_1))^2
        \right]^{1/2}
    .
\end{equation}
Since $(\mathcal{N}^0_d, \iota^\ast\mathcal{I}^F)$ is \textit{totally geodesic} see \citep[page 214]{Skovgaard_FisherRaoGeometry_NonDegenreateGaussians__1984}, then%%%
\footnote{This can be shown similarly to the Lemma on \citep[181]{KobayashiI}, by instead considering a distance-minimizing geodesic between two given points which exists since $\mathcal{N}_d$ is complete as a metric space (and this geodesically complete by the Hopf-Rinow theorem). }%%%
% \anastasis{find reference to this}
\begin{equation}
\label{eq:totally_geodesic_distanceform}
        d_{\iota^\ast\mathcal{I}^F}(\mathcal{N}_d(0,\Sigma_0),
        \mathcal{N}_d(0,\Sigma_1)) 
    = 
        d_{\mathcal{I}^F}(\mathcal{N}_d(0,\Sigma_0),
        \mathcal{N}_d(0,\Sigma_1))
\end{equation}
for every $\Sigma_0,\Sigma_1\in \operatorname{Sym}_+(d)$. \eqref{eq:totally_geodesic_distanceform} and~\eqref{eq:product_metric_form} imply that for any $(\mathfrak m_0, \mathcal{N}_d(0,\Sigma_0)), (\mathfrak m_1, \mathcal{N}_d(0,\Sigma_1))\in\Uu$
\[
        d_g((\mathfrak m_0, \mathcal{N}_d(0,\Sigma_0)),
        (\mathfrak m_1, \mathcal{N}_d(0,\Sigma_1))) 
    = 
        \left[
        \norm{\mathfrak m_0 - \mathfrak m_1 }^2_2 + 
        \frac12\sum_{i=1}^{d}\, \ln(\lambda_i)^2
        \right]^{1/2}
\]
where $\lambda_1,\dots,\lambda_{d}$ are the eigenvalues of $\Sigma_0^{-1}\Sigma_1$.  Since $(\mathcal{N}_d,\mathfrak J)$ is isometric to $(\Uu,g)$ we find that
\[
d_{\mathfrak J}(\mathcal{N}_d(\mathfrak m_0,\Sigma_0),
\mathcal{N}_d(\mathfrak m_1,\Sigma_1)) = 
\left[
\norm{\mathfrak m_0 - \mathfrak m_1 }^2_2 + 
\frac12\sum_{i=1}^{d}\, \ln(\lambda_i)^2
\right]^{1/2},\quad \mathcal{N}_d(\mathfrak m_0,\Sigma_0),\mathcal{N}_d(\mathfrak m_1,\Sigma_1)\in \mathcal{N}_d. 
\]

\paragraph{Curvature and NPC space}
By %\cite[Corollary 58]{OneillRiemGeo}
\cite[Equation (2.1)]{SurveyPropertiesProductRiemannianManifolds2003}, the product of Riemannian manifolds we have that 
\[
Rm^{\Uu}(X_1+X_2,Y_1+Y_2,Z_1+Z_2,W_1+W_2) = Rm^{\mathbb R^d}(X_1,Y_1,Z_1,W_1) + Rm^{\mathcal{N}^0_d}(X_2,Y_2,Z_2,W_2)
\]
for all $X_1,Y_1,Z_1,W_1$ tangent vectors of $\mathbb R^d$, for all $X_2,Y_2,Z_2,W_2$ tangent vectors of $\mathcal N^0_d$, and where $Rm^\Uu,Rm^{\mathbb R^d},Rm^{\mathcal N^0_d}$ denotes the Riemann curvature tensors of $\Uu,\mathbb R^d$ adn $\mathcal{N}^0_d$ respectively. Since trivially $Rm^{\mathbb R^d}$ is identically zero, the previous identity simplifies to
\begin{equation}\label{eq: Riemannian curvature tensors}
Rm^{\Uu}(X_1+X_2,Y_1+Y_2,Z_1+Z_2,W_1+W_2) = Rm^{\mathcal{N}^0_d}(X_2,Y_2,Z_2,W_2).    
\end{equation}

This fact has striking consequences: in the following $K^\Uu(X_1+X_2,Y_1+Y_2)$ will denote the sectional curve of $\Uu$ spanned by the linearly independent vectors $X_1+X_2$ and $Y_1+Y_2$; similarly, $K^{\mathcal{N}^0_d}(A,B)$ will denote the sectional curve of $\mathcal{N}^0_d$ spanned by the linearly independent vectors $A$ and $B$.

Given $X_1+X_2$ and $Y_1+Y_2$ orthonormal with respect to $g$, we have from \eqref{eq: Riemannian curvature tensors}
\[
K^\Uu(X_1+X_2,Y_1+Y_2) = Rm^{\mathcal{N}^0_d}(X_2,Y_2,Y_2,X_2)
\]
and 
\[
\begin{cases}
    1 = \boldsymbol{\delta}(X_1,X_1) + \iota^\ast\mathcal{I}^F(X_2,X_2)   \\
    1 = \boldsymbol{\delta}(Y_1,Y_1) + \iota^\ast\mathcal{I}^F(Y_2,Y_2)   \\
    0 = \boldsymbol{\delta}(X_1,Y_1) + \iota^\ast\mathcal{I}^F(X_2,Y_2),   
\end{cases}
\]
which implies $\mathcal{I}^F(X_2,X_2),\mathcal{I}^F(Y_2,Y_2)\le 1$.  To obtain the claimed sectional curvature bounds for $(\Uu,g)$, and thus for $(\mathcal{N}_d,\mathfrak J )$ we must consider three distinct cases:
\begin{enumerate}
    \item[\textit{(i)}] If $X_2=0$ or $Y_2=0$ then, $K^\Uu(X_1+X_2,Y_1+Y_2)=0$.
    \item[\textit{(ii)}] If $X_2\neq 0$, $Y_2\neq 0$ and linearly dependent then, $Y_2 = \gamma X_2$ for some $\gamma\neq 0$. We then get $K^\Uu(X_1+X_2,Y_1+Y_2)=\gamma^2 Rm^{\mathcal{N}^0_d}(X_2,X_2,X_2,X_2) = 0$.
    \item[(iii)] If $X_2$ and $Y_2$ are linearly independent (and hence $\neq 0$), then set for brevity
    \[
            t(X_2,Y_2)
        := 
            \iota^\ast\mathcal{I}^F(X_2,X_2)\iota^\ast\mathcal{I}^F(Y_2,Y_2) - 
            \iota^\ast\mathcal{I}^F(X_2,Y_2)^2
    \]
    which is $>0$ by Cauchy-Schwartz inequality. Besides, from above we also have
    \[
            t(X_2,Y_2)
        \le 
            \iota^\ast\mathcal{I}^F(X_2,X_2) \iota^\ast\mathcal{I}^F(Y_2,Y_2) \le 1,
    \]
    and thus $0<t(X_2,Y_2)\le 1$. We then obtain 
    \[
            K^\Uu(X_1+X_2,Y_1+Y_2)
        =
            K^{\mathcal{N}^0_d}(X_2,Y_2) \, t(X_2,Y_2).
    \]
   Since \citep[Proposition I.1]{FCM_2022_AcceleratedFirstOrderMethodGDManifolds}\footnote{In their extended appendix in the ArXiV version.} shows that the sectional curvatures in $\operatorname{Sym}_+(d)$ are bounded in $[-1/2,0]$ and since $\mathcal{N}_d^0$, endowed with $ \iota^\ast\mathcal I^F$, is isometric\footnote{In the Riemannian sense.} by construction to $\operatorname{Sym}_+(d)$, then $K^{\mathcal{N}^0_d}(\cdot,\cdot)\in [-1/2,0]$.
    Therefore, $K^\Uu(X_1+X_2,Y_1+Y_2)\in [-1/2,0]$.
\end{enumerate}
By exhaustion, \textit{(i)-(iii)} imply that all for possible planes in $T_{(\mathfrak m, \mathcal N_d(0,\Sigma) )}\Uu$ one has $K^\Uu(X_1+X_2,Y_1+Y_2)\in [-1/2,0]$. Since the point $(\mathfrak m, \mathcal N_d(0,\Sigma))\in\Uu $ was arbitrary and since $(\Uu,g)$ and $(\mathcal{N}_d,\mathfrak J)$ are isometric, then, $K^{\mathcal{N}_d} \in [-1/2,0]$.
In particular, $(\mathcal{N}_d,\mathfrak J)$ is a Cartan-Hadamard manifold and thus \citep[Proposition 3.1]{Sturm_2003} implies that it is a global NPC space.

\paragraph{{Exponential map at $\mathcal{N}_d(0,I_d)$}}
Lastly, since $(\Uu,g)$ is a product Riemannian manifold, then \citep[Corollary 57]{OneillRiemGeo} implies that a curve $\gamma:[0,1]\rightarrow \mathbb{R}^d\times \mathcal{N}_d^0$ is a geodesic in $(\Uu,g)$ if and only if $\gamma(t)=(\gamma_1(t),\gamma_2(t))$, where $\gamma_1$ is a geodesic in $\mathbb{R}^d$ and $\gamma_2$ is a geodesic in $\mathcal{N}_d^0$.  

Moreover, as shown on \citep[page 47]{pennec2006riemannian}, the (unique) geodesic emanating from $\Sigma\in \operatorname{Sym}_+(d)$ with initial velocity $X\in \operatorname{Sym}(d)$ is given by
\[
        t \mapsto
        \Sigma^{1/2}
        \operatorname{exp}\big(
            t\Sigma^{-1/2}
            X
            \Sigma^{-1/2}
        \big)
        \Sigma^{1/2}.
\]
Since $\operatorname{Sym}_+(d)$ is diffeomorphic to $\mathcal{N}^0_d$ (compare \eqref{eq: iso between sym+ and N0d}), we conclude that 
\[
\gamma(t) = \left(\mathfrak m + t\tilde{\mathfrak{m}},
\mathcal{N}\left(0, \Sigma^{1/2}
        \operatorname{exp}\left(
            t\Sigma^{-1/2}
            X
            \Sigma^{-1/2}
        \right)
        \Sigma^{1/2}\right)
\right) \in \Uu.
\]
Thus, the geodesics emanating from $\mathcal{N}(\mathfrak m,\Sigma)$ with velocity $(\tilde{\mathfrak{m}},X)\in \mathbb R^d\times \operatorname{Sym}(d)$ reads as
\begin{equation}
\label{eq:nameme}
t\mapsto \mathcal{N}\left(
\mathfrak m + t\tilde{\mathfrak{m}},
\Sigma^{1/2}
        \operatorname{exp}\left(
            t\Sigma^{-1/2}
            X
            \Sigma^{-1/2}
        \right)
        \Sigma^{1/2}
\right)\in \mathcal{N}_d.
\end{equation}
The expression of the Riemann exponential map of $\mathcal{N}_d$ follows upon taking $t=1$ in~\eqref{eq:nameme}. 
\end{proof}

\section{Proof of Theorem~\ref{thrm:VanishingMemoryProperty_Qx}}

\label{app:proof_thrm:VanishingMemoryProperty_Qx}
The proof of Theorem~\ref{thrm:VanishingMemoryProperty_Qx} hinges on some preparatory material, we are now going to state and prove.

\subsection{Riemann distance function on a product manifold}

We are now going to prove the following Lemma from Riemannian geometry, for which we have been unable to provide a suitable reference in the literature.

\begin{lemma}
\label{lem:productmetric}
    Let $(M_1,h_1)$ and $(M_2,h_2)$ be two smooth connected Riemannian manifolds, and let $d_1$ and $d_2$ be the respective induced Riemannian distance functions. Let $M\eqdef M_1\times M_2$ be the product manifold, endowed with the canonical Riemannian metric $h\eqdef h_1\oplus h_2$, and let $d$ be the induced Riemannian distance function. Then, it holds
    \[
    d((x_1,x_2),(y_1,y_2)) = \sqrt{d_1(x_1,y_1)^2 + d_2(x_2,y_2)^2},\quad (x_1,x_2),(y_1,y_2)\in M.
    \]
\end{lemma}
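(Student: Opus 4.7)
The plan is to show the claimed identity by proving both inequalities separately, working directly from the definition of the Riemannian distance as the infimum over lengths of piecewise smooth curves joining the two endpoints.

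For the upper bound $d((x_1,x_2),(y_1,y_2)) \le \sqrt{d_1(x_1,y_1)^2 + d_2(x_2,y_2)^2}$, I would argue as follows. Fix $\varepsilon>0$ and pick piecewise smooth curves $\gamma_i:[0,1]\to M_i$ from $x_i$ to $y_i$ with length $L_i \le d_i(x_i,y_i)+\varepsilon$, for $i=1,2$. By reparametrizing each $\gamma_i$ to have \emph{constant} speed $L_i$, the product curve $\gamma(t)\eqdef (\gamma_1(t),\gamma_2(t))$, viewed in $(M,h)$, satisfies
\[
|\gamma'(t)|_h^2 = |\gamma_1'(t)|_{h_1}^2 + |\gamma_2'(t)|_{h_2}^2 = L_1^2 + L_2^2
\]
almost everywhere, and hence has length $\sqrt{L_1^2+L_2^2}$. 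This yields $d((x_1,x_2),(y_1,y_2)) \le \sqrt{(d_1(x_1,y_1)+\varepsilon)^2+(d_2(x_2,y_2)+\varepsilon)^2}$, and sending $\varepsilon\downarrow 0$ gives the bound.

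For the reverse inequality, let $\gamma:[0,1]\to M$ be an arbitrary piecewise smooth curve from $(x_1,x_2)$ to $(y_1,y_2)$, and write $\gamma(t)=(\gamma_1(t),\gamma_2(t))$ with $\gamma_i:[0,1]\to M_i$ piecewise smooth and joining $x_i$ to $y_i$. Setting $f(t)\eqdef |\gamma_1'(t)|_{h_1}$ and $g(t)\eqdef |\gamma_2'(t)|_{h_2}$, the product structure of $h$ gives
\[
\operatorname{Length}_h(\gamma) = \int_0^1 \sqrt{f(t)^2+g(t)^2}\,dt.
\]
The key step is now Minkowski's integral inequality applied to the $\mathbb{R}^2$-valued integrand $t\mapsto (f(t),g(t))$ with the Euclidean norm:
\[
\sqrt{\Big(\int_0^1 f(t)\,dt\Big)^{\!2}+\Big(\int_0^1 g(t)\,dt\Big)^{\!2}} \;\le\; \int_0^1\sqrt{f(t)^2+g(t)^2}\,dt.
\]
The left-hand side equals $\sqrt{\operatorname{Length}_{h_1}(\gamma_1)^2+\operatorname{Length}_{h_2}(\gamma_2)^2}$, which in turn is at least $\sqrt{d_1(x_1,y_1)^2+d_2(x_2,y_2)^2}$. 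Taking the infimum over all such $\gamma$ gives the lower bound, and combining with the previous step yields the claimed identity.

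The only mildly delicate point is ensuring that reparametrizations to constant speed preserve piecewise smoothness (which is standard, passing through an arclength-type change of variables on each smooth piece), and checking that the product Riemannian metric $h_1\oplus h_2$ does indeed give $|\gamma'|_h^2 = |\gamma_1'|_{h_1}^2+|\gamma_2'|_{h_2}^2$ (immediate from the block-diagonal structure of $h$ in any product chart). Neither poses a real obstacle, so the proof is essentially a routine application of Minkowski's inequality together with the constant-speed construction on the upper-bound side.
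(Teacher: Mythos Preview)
Your proof is correct and follows essentially the same two-sided strategy as the paper: for the upper bound you build a product of constant-speed curves (the paper phrases this as arc-length parametrization, which is the same thing), and for the lower bound you bound the length of an arbitrary product curve below by $\sqrt{d_1^2+d_2^2}$ via an integral inequality. The only cosmetic difference is that the paper invokes the Cauchy--Schwarz inequality in $\mathbb{R}^2$ pointwise (pairing $(|\dot p_1|,|\dot p_2|)$ against $(\ell_1,\ell_2)$) while you invoke Minkowski's integral inequality; these are dual formulations of the same estimate here, so the arguments are equivalent.
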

\begin{proof}
    Consider $p=(p_1,p_2):[0,1]\to M$ an arbitrary piecewise regular smooth path connecting $(x_1,x_2)$ to $(y_1,y_2)$, with $(x_1,x_2)\neq (y_1,y_2)$. Evidently, also $p_1$ and $p_2$ are piecewise regular smooth path, connecting $x_i$ to $y_i$, $i=1,2$. Let $\ell_i$ denote the length of $p_i$, $i=1,2$. We have by Cauchy-Schwartz inequality
    \[
    \begin{split}
        \operatorname{length}(p)\cdot \sqrt{\ell_1^2+\ell_2^2} &=
        \int_0^1\sqrt{\abs{\dot{p}_1(t)}_{h_1}^2 + \abs{\dot{p}_2(t)}_{h_2}^2 }\,dt \cdot\sqrt{\ell_1^2+\ell_2^2}\\
        &\ge \ell_1\int_0^1 \abs{\dot{p}_1(t)}_{h_1}\,dt + 
        \ell_2\int_0^1 \abs{\dot{p}_2(t)}_{h_2}\,dt\\
        &= \ell_1^2+\ell_2^2\,,
    \end{split}
    \]
    leading to $\operatorname{length}(p)\ge \sqrt{\ell_1^2+\ell_2^2}\ge \sqrt{d_1(x_1,y_1)^2 + d_2(x_2,y_2)^2}$ and hence 
    \[
    d((x_1,x_2),(y_1,y_2)) \ge \sqrt{d_1(x_1,y_1)^2 + d_2(x_2,y_2)^2}\,.
    \]
    To prove the reverse inequality, consider now $\gamma_i:[0,\lambda_i]\to M_i$ arbitrary piecewise regular smooth curve, parametrized by arc-length, joining $x_i$ to $y_i$, $i=1,2$: here $\lambda_i\eqdef\operatorname{length}(\gamma_i)$. 
    
    Then, $\gamma:[0,1]\to M,\, \gamma(t)=(\gamma_1(\lambda_1 t), \gamma_2(\lambda_2 t))$ is a piecewise regular smooth curve connecting $(x_1,x_2)$ to $(y_1,y_2)$, whose length equals $\sqrt{\lambda_1^2+\lambda_2^2}$. We infer
    \[
    \sqrt{\lambda_1^2+\lambda_2^2} \ge d((x_1,x_2),(y_1,y_2)),
    \]
    and hence
    \[
    \sqrt{d_1(x_1,y_1)^2 + d_2(x_2,y_2)^2}\ge
    d((x_1,x_2),(y_1,y_2)).
    \]
    
\end{proof}
\subsubsection{Additional Lemmata}

\begin{lemma}[Control of the perturbed Fisher distance $d_{\mathfrak{J}}$ with constant mean]
\label{lem:Control_GeneralSym+Case}
Let $d \in \mathbb{N}_+$ and $M>0$.  For any $V,\tilde{V}\in \mathbb{R}^{d(d+1)/2}$ satisfying $\|V\|,\|\tilde{V}\|\le M$
and any $\mathfrak{m}\in \mathbb{R}^{d}$ we have 
\[
        d_{\mathfrak{J}}\big(
                \mathcal{N}_{d}(\mathfrak{m},\exp(\sym(V)))
            ,
                \mathcal{N}_{d}(\mathfrak{m},\exp(\sym(\tilde{V})))
        \big)
    \le 
            % \,
            % 2\frac{\sqrt{2}}{
            % M
            % }\sinh(\frac{M}{\sqrt{2}})\,
            % \,
        \frac{2^{3/2}}{
        M
        }\sinh\Big(\frac{
        M
        \,
            c_p
        }{\sqrt{2}}\Big)
            \|V-\tilde{V}\|
    .
\]
\end{lemma}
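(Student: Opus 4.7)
The plan is to realize both Gaussian measures as images of tangent vectors at the common base-point $p\eqdef \mathcal{N}_d(\mathfrak{m},I_d)$ under the Riemannian exponential map on $(\mathcal{N}_d,\mathfrak{J})$, and then apply the Rauch-type Lipschitz estimate of Lemma~\ref{lem:EstimateLocLipExp}.

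First, I would use the closed-form for the Riemannian exponential map given in Proposition~\ref{prop:NPC_FisherRao} (equation~\eqref{prop:NPC_FisherRao__ExponentialNiceForm}) with base-point $p=\mathcal{N}_d(\mathfrak{m},I_d)$ and zero mean-direction. Because $I_d^{\pm 1/2}=I_d$, the formula collapses to
\[
        \operatorname{Exp}_{p}(0,X)
    =
        \mathcal{N}_d\bigl(\mathfrak{m},\exp(X)\bigr)
    \quad\text{for all }X\in\operatorname{Sym}(d).
\]
Consequently, setting $X_V\eqdef \operatorname{sym}(V)$ and $X_{\tilde V}\eqdef \operatorname{sym}(\tilde V)$, we may write the two measures appearing in the lemma as $\operatorname{Exp}_p(0,X_V)$ and $\operatorname{Exp}_p(0,X_{\tilde V})$.

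Next, I would identify the tangent space $T_p\mathcal{N}_d$ in the $\Sigma$-direction with $\mathbb{R}^{d(d+1)/2}$ via the linear isomorphism $\iota_p:v\mapsto \operatorname{sym}(v)$. By~\eqref{eq:Riemannian_metric}, the norm of $\iota_p(v)$ in the metric $\mathfrak{J}$ at $p$ equals $\tfrac1{\sqrt 2}\|\operatorname{sym}(v)\|_F$; since the off-diagonal entries of $\operatorname{sym}(v)$ occur twice, one easily checks $\|\operatorname{sym}(v)\|_F^2\le 2\|v\|^2$, whence $|\iota_p|_{op}\le 1$. (In any case, the constant is exactly the $c_p$ appearing in Lemma~\ref{lem:EstimateLocLipExp}.) Because $\|V\|,\|\tilde V\|\le M$, both vectors lie in the closed Euclidean ball of radius $M=\bar r/c_p$ with $\bar r\eqdef c_p M$.

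Finally, I would apply Lemma~\ref{lem:EstimateLocLipExp} on $(\mathcal{N}_d,\mathfrak{J})$, which has sectional curvatures in $[-\kappa,0]$ with $\kappa=1/2$ by Proposition~\ref{prop:NPC_FisherRao}. The lemma yields
\[
        \operatorname{Lip}\bigl(\operatorname{Exp}_p\circ \iota_p\bigm| \overline{\operatorname{Ball}(0,M)}\bigr)
    \le
        2c_p\,\frac{\sinh(\sqrt{\kappa}\,\bar r)}{\sqrt{\kappa}\,\bar r}
    =
        2c_p\,\frac{\sinh(Mc_p/\sqrt 2)}{Mc_p/\sqrt 2}
    =
        \frac{2^{3/2}}{M}\,\sinh\!\Bigl(\frac{Mc_p}{\sqrt 2}\Bigr).
\]
Applying this Lipschitz estimate to the two input points $V,\tilde V$ yields
\[
        d_{\mathfrak{J}}\bigl(\operatorname{Exp}_p(\iota_p(V)),\operatorname{Exp}_p(\iota_p(\tilde V))\bigr)
    \le
        \frac{2^{3/2}}{M}\,\sinh\!\Bigl(\frac{Mc_p}{\sqrt 2}\Bigr)\,\|V-\tilde V\|,
\]
which is exactly the claim. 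The only delicate step is the bookkeeping of $c_p$ (i.e.\ making sure the norm on $\mathbb{R}^{d(d+1)/2}$ used to measure $\|V-\tilde V\|$ and the norm on $T_p\mathcal{N}_d$ induced by $\mathfrak{J}$ are related by exactly the same operator norm appearing in Lemma~\ref{lem:EstimateLocLipExp}); everything else is a direct substitution into that lemma.
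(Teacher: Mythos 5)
Your proof is correct and follows essentially the same route as the paper's: both proofs realize the two Gaussian measures as $\operatorname{Exp}_p(0,\cdot)$ of tangent vectors at $p=\mathcal{N}_d(\mathfrak{m},I_d)$ via the closed form in Proposition~\ref{prop:NPC_FisherRao}, verify $|\iota_p|_{op}\le 1$ from the metric~\eqref{eq:Riemannian_metric} and the relation $\|\operatorname{sym}(v)\|_F^2\le 2\|v\|^2$, and then substitute $\kappa=1/2$, $\bar r=c_pM$ into Lemma~\ref{lem:EstimateLocLipExp}. The only cosmetic difference is that you work with the $\Sigma$-direction component of $\iota_p$ alone (which is fine, since the mean direction is zero), whereas the paper keeps the full map $\iota_p=1_{\mathbb{R}^d}\times\operatorname{sym}$.
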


\begin{proof}\textbf{of Lemma}~\ref{lem:Control_GeneralSym+Case}
First remark that by Proposition \ref{prop:NPC_FisherRao}, specifically~\eqref{prop:NPC_FisherRao__ExponentialNiceForm}, 
\[
        \operatorname{Exp}_p(0,X)
    = 
        \mathcal{N}_d(\mathfrak{m},\exp(X))
\]
where we have set $p \eqdef \mathcal{N}_d(\mathfrak{m},I_{d})$ for brevity.  

By the sectional curvature estimates on $(\mathcal{N}_d,\mathfrak{J})$ in Proposition~\ref{prop:NPC_FisherRao},
we may apply Lemma~\ref{lem:EstimateLocLipExp} to deduce that $\operatorname{Exp}_p\circ\iota_p$ is not only locally-Lipschitz but its Lipschitz constant on the compact set $\overline{\operatorname{Ball}_{(\mathbb{R}^{d(d+1)/2},\|\cdot\|)}(0,M)}$ is bounded-above by
\begin{equation}
\label{eq:lem:Control_GeneralSym+Case__ControlExpLocLipConstant}
        \operatorname{Lip}\big(\operatorname{Exp}_{p}\circ \iota_p\vert \overline{\operatorname{Ball}_{(\mathbb{R}^{d},\|\cdot\|)}(0,M)}\big) 
    \le 
        2c_p
        \,
        \frac{\sqrt{2}}{
        M
        \,
        c_p
        }\sinh\Big(\frac{
        M
        \,
        c_p
        }{\sqrt{2}}\Big)
    =
        \frac{2^{3/2}}{
        M
        }\sinh\Big(\frac{
        M
        \,
        c_p
        }{\sqrt{2}}\Big)
        ,
\end{equation}
where $\iota_p\eqdef 1_{\mathbb{R}^d}\times \sym$. For any $v\in\mathbb R^d$ and $z\in\mathbb R^{d(d+1)/2}$, by definition of $\mathfrak J$
it holds 
\[
\abs{\iota_p(v,z)}_p^2 = \|v\|^2_2 + 
\|\operatorname{sym}(z)\|^2_F \le \|v\|^2_2 + 
\sqrt{2}
\| z\|^2_2 \le \|(v,z)\|^2 
\]
and thus, $\abs{\iota_p}_{op}\le 1$. Therefore,~\eqref{eq:lem:Control_GeneralSym+Case__ControlExpLocLipConstant} yields 
\[
\operatorname{Lip}\big(\operatorname{Exp}_{p}\circ \iota_p\vert \overline{\operatorname{Ball}_{(\mathbb{R}^{d},\|\cdot\|)}(0,M)}\big) 
    \le
            \frac{2^{3/2}}{
        M
        }\sinh\Big(\frac{
        M
        \,
            c_p
        }{\sqrt{2}}\Big)
\]
and hence the claim follows.
\end{proof}

\begin{lemma}[{Main path-wise stability estimate}]
\label{lem:lipshcitzBound_on_RCD}
Fix $t \in \mathbb{N}_+$ and $R>0$. Under Assumption~\ref{ass:uniformboundedness__SProcecess}, there exists $c>0$ such that for  $x_{[0:t]},\tilde{x}_{[0:t]}\in \mathbb{R}^{(1+t)\,d}$ and $s_{[0:t]},\tilde{s}_{[0:t]}\in \overline{\operatorname{Ball}_{(\operatorname{Sym}(d),\|\cdot\|_F )}(\boldsymbol{0}_d,R)}^{1+t}$
\begin{equation}
\label{eq:lem:lipshcitzBound_on_RCD__PreciseBound}
\resizebox{0.9\hsize}{!}{$
        d_{\mathfrak{J}}\Big(
            \Psi_t(x_{[0:t]},\operatorname{vec}\circ s_{[0:t]}),
            \Psi_t( \tilde{x}_{[0:t]},\operatorname{vec}\circ \tilde{s}_{[0:t]})
        \Big)
    \le 
    c
        \biggl(
            \|x_{t}-\tilde{x}_{t}\|
            +
        \sum_{r=0}^t
                \kappa(t,r)
                \big(
                        \|x_r-\tilde{x}_r\|
                    +
                        \|s_r-\tilde{s}_r\|_F
                \big)
        \biggr)
.
$}
\end{equation}
In particular, we have the Lipschitz condition
\[
    d_{\mathfrak{J}}\Big(
    \Psi_t(x_{[0:t]},\operatorname{vec}\circ s_{[0:t]})
    \Psi_t(\tilde{x}_{[0:t]},\operatorname{vec}\circ\tilde{ s}_{[0:t]})        \Big)
    \le 
    c\,
    \big(
        \|x_{[0:t]}-\tilde{x}_{[0:t]}\|
        +
        \| s_{[0:t]}-\tilde{ s}_{[0:t]}\|_F
    \big)
.
\]
Moreover, the Lipschitz constant $c$ is
\[
        c 
    \eqdef 
                2
                \max\{1,L_{\mu}\} 
            +
                \frac{
                    2^{3/2}
                    \max\{1,L_{\sigma}\} 
                }{M+R}\sinh\Big(\frac{M+R}{\sqrt{2}}\Big)
    .
\]
\end{lemma}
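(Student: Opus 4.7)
The strategy is to exploit the isometry between $(\mathcal{N}_d,\mathfrak J)$ and the product Riemannian manifold $(\mathbb R^d\times\mathcal N_d^0,\boldsymbol{\delta}\oplus \iota^\ast \mathcal I^F)$ established in Proposition~\ref{prop:NPC_FisherRao}. By Lemma~\ref{lem:productmetric}, $d_{\mathfrak{J}}^2 = d_{\mathrm{mean}}^2 + d_{\mathrm{cov}}^2$, and since $\sqrt{a^2+b^2}\le a+b$ for $a,b\ge 0$, it suffices to bound the mean and covariance contributions separately and add them.

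\textbf{Mean component.} Using the definition of $\Psi_t$ in~\eqref{eq:Psi_t_Definition} and the explicit form of $\operatorname{Drift}$ in~\eqref{eq:VolterraProcess_Dynamics}, the mean component of the distance equals
\[
\Bigl\|(x_t-\tilde x_t)+\sum_{r=0}^t\kappa(t,r)\bigl[\mu(t,x_r)-\mu(t,\tilde x_r)\bigr]\Bigr\|,
\]
which by the triangle inequality and the $L_\mu$-Lipschitz continuity of $\mu$ is at most
\[
\|x_t-\tilde x_t\| + L_\mu\sum_{r=0}^t\kappa(t,r)\|x_r-\tilde x_r\|.
\]

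\textbf{Covariance component.} Write $V\eqdef \sum_{r=0}^t\kappa(t,r)[\sigma(t,x_r)+s_r]\in\operatorname{Sym}(d)$ and analogously $\tilde V$. By Assumption~\ref{ass:uniformboundedness__SProcecess} and the hypothesis $\|s_r\|_F,\|\tilde s_r\|_F\le R$, together with $\sum_{r=0}^t\kappa(t,r)\le 1$, one has $\|V\|_F,\|\tilde V\|_F\le M+R$. Since the vectorization $\operatorname{vec}$ is $1$-Lipschitz from $(\operatorname{Sym}(d),\|\cdot\|_F)$ into $(\mathbb R^{d(d+1)/2},\|\cdot\|)$, the corresponding Euclidean vectors also have norm $\le M+R$. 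Lemma~\ref{lem:Control_GeneralSym+Case} (applied with the bound $M+R$ in place of $M$, and with $c_p\le 1$ as noted in its proof) then gives
\[
d_{\mathfrak{J}}\bigl(\mathcal N_d(\mathfrak m,e^{V}),\mathcal N_d(\mathfrak m,e^{\tilde V})\bigr)\le \frac{2^{3/2}}{M+R}\sinh\!\Bigl(\tfrac{M+R}{\sqrt 2}\Bigr)\,\|V-\tilde V\|_F,
\]
and the $L_\sigma$-Lipschitz property of $\sigma$ yields
\[
\|V-\tilde V\|_F\le L_\sigma\sum_{r=0}^t\kappa(t,r)\|x_r-\tilde x_r\|+\sum_{r=0}^t\kappa(t,r)\|s_r-\tilde s_r\|_F.
\]

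\textbf{Combining.} Adding the two contributions and collecting constants produces~\eqref{eq:lem:lipshcitzBound_on_RCD__PreciseBound} with the stated value of $c$: the $\max\{1,L_\mu\}$ term absorbs the mean estimate (the factor $2$ accounts for combining $\|x_t-\tilde x_t\|$ with the weighted sum, since $\kappa(t,t)\le 1$), while the $\sinh$ term absorbs the covariance estimate after factoring $\max\{1,L_\sigma\}$. The global Lipschitz form follows immediately by majorizing $\sum_{r=0}^t\kappa(t,r)\|x_r-\tilde x_r\|$ and $\sum_{r=0}^t\kappa(t,r)\|s_r-\tilde s_r\|_F$ by $\|x_{[0:t]}-\tilde x_{[0:t]}\|$ and $\|s_{[0:t]}-\tilde s_{[0:t]}\|_F$ respectively (using $\sum_r\kappa(t,r)\le 1$ and $\max_r\|\cdot\|\le\|\cdot\|_{2}$ on the concatenated vectors).

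The only genuine subtlety is the constant-tracking in the covariance estimate: one must be careful that the isometric identification $(\mathcal N_d,\mathfrak J)\cong(\mathbb R^d\times\mathcal N_d^0,\boldsymbol\delta\oplus\iota^\ast\mathcal I^F)$ is used to split the distance, and that the operator norm $|\iota_p|_{op}\le 1$ (shown inside the proof of Lemma~\ref{lem:Control_GeneralSym+Case}) is exploited to remove a factor of $c_p$ from the $\sinh$ argument. No uniformity in $t$ is lost because the weights $\kappa(t,\cdot)$ sum to at most $1$, so each constant depends only on $L_\mu$, $L_\sigma$, $M$, and $R$.
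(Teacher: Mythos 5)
Your proof is correct and follows essentially the same route as the paper's. The only cosmetic difference is that you split the distance via the product metric identity $d_{\mathfrak J}^2 = d_{\mathrm{mean}}^2 + d_{\mathrm{cov}}^2$ together with $\sqrt{a^2+b^2}\le a+b$, whereas the paper reaches the identical two terms by inserting the intermediate point $\mathcal N_d(\tilde{\mathfrak m}, \Sigma)$ and applying the triangle inequality; in the product geometry these manoeuvres are one and the same. The covariance estimate via Lemma~\ref{lem:Control_GeneralSym+Case}, the use of $c_p\le 1$ and $\sinh$ monotonicity, the bound $\|\operatorname{vec}(A)\|\le\|A\|_F$, and the mean estimate all match the paper's.

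One small imprecision in your combining step: the factor $2$ in $c=2\max\{1,L_\mu\}+\cdots$ does not come from ``combining $\|x_t-\tilde x_t\|$ with the weighted sum using $\kappa(t,t)\le 1$.'' Rather, the pointwise bound \eqref{eq:lem:lipshcitzBound_on_RCD__PreciseBound} already holds with the smaller constant $\max\{1,L_\mu\}+\tfrac{2^{3/2}\max\{1,L_\sigma\}}{M+R}\sinh(\tfrac{M+R}{\sqrt 2})$; the factor $2$ is present so that the same $c$ also serves for the Lipschitz condition, where, upon majorizing $\|x_t-\tilde x_t\|\le\|x_{[0:t]}-\tilde x_{[0:t]}\|$ and (via Cauchy--Schwarz with $\sum_r\kappa(t,r)^2\le 1$) $\sum_r\kappa(t,r)\|x_r-\tilde x_r\|\le\|x_{[0:t]}-\tilde x_{[0:t]}\|$, the two appearances of $\|x_{[0:t]}-\tilde x_{[0:t]}\|$ add up. This does not affect the validity of your argument, only the bookkeeping of where that constant originates.
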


\begin{proof}\textbf{of Lemma}~\ref{lem:lipshcitzBound_on_RCD}
By the triangle inequality, we can write 
\allowdisplaybreaks
\begin{align}
\nonumber
    & 
    d_{\mathfrak{J}}\Big(
     \Psi_t(x_{[0:t]},\operatorname{vec}\circ s_{[0:t]}),
\Psi_t(\tilde{x}_{[0:t]},\operatorname{vec}\circ\tilde{ s}_{[0:t]})        \Big)
    \\
    \nonumber
    = &
        d_{\mathfrak{J}}\Big(
            \mathcal{N}_d\big(
                x_t + \operatorname{Drift}(t,x_{[0:t]})
                ,
                \Sigma_{t,x_{[0:t]}, s_{[0:t]}}
                \big)
        ,
            \mathcal{N}_d\big(
                 \tilde{x}_t + \operatorname{Drift}(t,\tilde{x}_{[0:t]}) 
                ,
                \Sigma_{t,\tilde{x}_{[0:t]},\tilde{ s}_{[0:t]}}
            \big)
        \Big)
    \\
    \tag{Mean-Term}
    \label{eq:FisherRaoTriangle_Mean}
    \le &
        d_{\mathfrak{J}}\Big(
            \mathcal{N}_d\big(
                x_t + \operatorname{Drift}(t,x_{[0:t]})
                ,
                \Sigma_{t,x_{[0:t]}, s_{[0:t]}}
                \big)
        ,
            \mathcal{N}_d\big(
                 \tilde{x}_t + \operatorname{Drift}(t,\tilde{x}_{[0:t]}) 
                ,
                \Sigma_{t,x_{[0:t]}, s_{[0:t]}}
            \big)
        \Big)
    \\ 
    \tag{Cov-Term}
    \label{eq:FisherRaoTriangle_Covariance__Simplified}
    &+ 
        d_{\mathfrak{J}}\Big(
            \mathcal{N}_d\big(
                \tilde{x}_t + \operatorname{Drift}(t,\tilde{x}_{[0:t]}) 
                ,
                \Sigma_{t,x_{[0:t]}, s_{[0:t]}}
                \big)
        ,
            \mathcal{N}_d\big(
                 \tilde{x}_t + \operatorname{Drift}(t,\tilde{x}_{[0:t]}) 
                ,
                \Sigma_{t,\tilde{x}_{[0:t]},\tilde{ s}_{[0:t]}}
            \big)
        \Big)
,
\end{align}
where the function $\Sigma_{\cdot}$ is defined to be
\begin{equation}
\label{eq:DEF_SIGMANOTATION}
\Sigma_{t,x_{[0:t]}, s_{[0:t]}} 
    \eqdef 
        \exp\biggl(
            \sum_{r=0}^t
                \kappa(t,r)
                \,
                [\sigma(t,x_r) +  s_r]
        \biggr).
\end{equation}

We first bound~\eqref{eq:FisherRaoTriangle_Covariance__Simplified} term.  Under assumption~\ref{ass:uniformboundedness__SProcecess}, we have that $\sup_{t\in \mathbb{N}_+,x\in\mathbb R^d } \|\sigma(t,x)\|_F\le M$ for some constant $M>0$.  Thus, for every $t\in \mathbb{N}_{+}$, $x\in \mathbb{R}^{d}$, and $s\in \cup_{r=0}^t\,\{s_r,\tilde{s}_r\}$ we have that
\begin{equation*}
        \|
        \sigma(t,x)+s
        \|_F
    \le 
        \|\sigma(t,x)\|_F + \|s\|_F
    \le 
        M+R
    .
\end{equation*}
Consequently, for each $t\in \mathbb{N}_+$, every $x_{[0:t]},\tilde{x}_{[0:t]} \in \mathbb{R}^{(1+t)d}$, and $s_{[0:t],\tilde{s}_{[0:t]}} \in \overline{\operatorname{Ball}_{(\operatorname{Sym}(d),\|\cdot\|_F)}(\boldsymbol{0}_d,R)}^{1+t}$ we have by the triangle inequality that
\allowdisplaybreaks
\begin{align*}
\notag
        \Biggl\|
            \sum_{r=0}^t\,\kappa(t,r)\, [\sigma(t,x_r) + s_r]
        \Biggr\|_F
    \le & 
        M + R,
    \\ 
    \notag
    \Biggl\|
            \sum_{r=0}^t\,\kappa(t,r)\, [\sigma(t,\tilde{x}_r) + \tilde{s}_r]
        \Biggr\|_F \le & M +R,
\end{align*}
because $\kappa$ takes non-negative values whose sum is not bigger than 1.  
Since for any $A\in\operatorname{Sym}(d)$ it holds $\|\operatorname{vec}(A)\|\le \|A\|_F$, we conclude that
\[
    \Biggl\|
        \operatorname{vec}\left( \sum_{r=0}^t\,\kappa(t,r)\, [\sigma(t,x_r) + s_r]\right)
    \Biggr\|,\; \Biggl\|
        \operatorname{vec}\left( \sum_{r=0}^t\,\kappa(t,r)\, [\sigma(t,\tilde{x}_r) + \tilde{s}_r]\right)
    \Biggr\| \le M+R
.
\]
Therefore, upon recalling~\eqref{eq:DEF_SIGMANOTATION}, Lemma~\ref{lem:Control_GeneralSym+Case} applies; whence~\eqref{eq:FisherRaoTriangle_Covariance__Simplified} can be bounded as 
\allowdisplaybreaks
\begin{align}
\label{eq:FisherRaoTriangle_Covariance__CONTROL___BEGIN}
    & 
        d_{\mathfrak{J}}\Big(
            \mathcal{N}_d\big(
                \tilde{x}_t + \operatorname{Drift}(t,\tilde{x}_{[0:t]}) 
                ,
                \Sigma_{t,x_{[0:t]}, s_{[0:t]}}
                \big)
        ,
            \mathcal{N}_d\big(
                 \tilde{x}_t + \operatorname{Drift}(t,\tilde{x}_{[0:t]}) 
                ,
                \Sigma_{t,\tilde{x}_{[0:t]},\tilde{s}_{[0:t]}}
            \big)
        \Big)
    \\
    \notag
    \le &
        {\color{black}{
            \frac{2\sqrt{2}}{M+R}\sinh\Big(\frac{M+R}{\sqrt{2}}\Big)
        }}\,
        \Biggl\|
                \operatorname{vec}\biggl(
                    \sum_{r=0}^t
                    \kappa(t,r)
                    \,
                    [\sigma(t,x_r) +  s_r]
                \biggr)
            -
                \operatorname{vec}\biggl(
                    \sum_{r=0}^t
                    \kappa(t,r)
                    \,
                    [\sigma(t,\tilde{x}_r) +  \tilde{s}_r]
                \biggr)
        \Biggr\|
    \\
    \notag
    \le &
        {\color{black}{
            \frac{2\sqrt{2}}{M+R}\sinh\Big(\frac{M+R}{\sqrt{2}}\Big)
        }}\,
        \Biggl\|
                \biggl(
                    \sum_{r=0}^t
                    \kappa(t,r)
                    \,
                    [\sigma(t,x_r) +  s_r]
                \biggr)
            -
                \biggl(
                    \sum_{r=0}^t
                    \kappa(t,r)
                    \,
                    [\sigma(t,\tilde{x}_r) +  \tilde{s}_r]
                \biggr)
        \Biggr\|_F
    \\
    \notag 
    \le &
        {\color{black}{
            \frac{2\sqrt{2}}{M+R}\sinh\Big(\frac{M+R}{\sqrt{2}}\Big)
        }}\,
        \sum_{r=0}^t\,
        \kappa(t,r)
        \,
        \Biggl\|
                \biggl(
                    [\sigma(t,x_r) +  s_r]
                \biggr)
            -
                \biggl(
                    \,
                    [\sigma(t,\tilde{x}_r) +  \tilde{s}_r]
                \biggr)
        \Biggr\|_F
    \\
    \notag
    \le &
        {\color{black}{
            \frac{2\sqrt{2}}{M+R}\sinh\Big(\frac{M+R}{\sqrt{2}}\Big)
        }}\,
        \sum_{r=0}^t\,
        \kappa(t,r)
        \,
        \max\{1,L_{\sigma}\}\,
            (
                \|x_r-\tilde{x}_r\| + \|s_r-\tilde{s}_r\|_F
            )
    \\
    \label{eq:FisherRaoTriangle_Covariance__CONTROL___END}
    \le &
        {\color{black}{
            \max\{1,L_{\sigma}\}
            \,
            \frac{2\sqrt{2}}{M+R}\sinh\Big(\frac{M+R}{\sqrt{2}}\Big)
        }}\,
        \sum_{r=0}^t\,
        \kappa(t,r)
        \,
            (
                \|x_r-\tilde{x}_r\| + \|s_r-\tilde{s}_r\|_F
            )
    ,
\end{align}
\noindent for every $t\in \mathbb{N}_+$, each $x_{[0:t]},\tilde{x}_{[0:t]}\in \mathbb{R}^{(1+t)\,d}$, and each $s_{[0:t]},\tilde{s}_{[0:t]}\in \overline{\operatorname{Ball}_{(\operatorname{Sym}(d),\|\cdot\|_F) }(\boldsymbol{0}_d,R)}^{1+t}$.

Concerning the \eqref{eq:FisherRaoTriangle_Mean}, in force of Proposition \ref{prop:NPC_FisherRao}, we have
\allowdisplaybreaks
    \begin{align}
    &
    d_{\mathfrak{J}}\Big(
        \mathcal{N}\big(
            x_t + \operatorname{Drift}(t,x_{[0:t]})
            ,
            \Sigma_{t,x_{[0:t]}, s_{[0:t]}}
            \big)
    ,
        \mathcal{N}\big(
            \tilde{x}_t + \operatorname{Drift}(t,\tilde{x}_{[0:t]}) 
            ,
            \Sigma_{t,x_{[0:t]}, s_{[0:t]}}
        \big)
    \Big) 
    \\
    \nonumber
    = &
        \big\|
            (x_t + \operatorname{Drift}(t,x_{[0:t]})) - (\tilde{x}_t + \operatorname{Drift}(t,\tilde{x}_{[0:t]}) ) 
        \big\|
    \\
        = &
        \Biggl\|
            \biggl(
                    x_t
                +
                \sum_{r=0}^t
                    \kappa(t,r)
                    \,
                    \mu(t,x_r)
            \biggr)
            -
            \biggl(
                    \tilde{x}_t
                +
                \sum_{r=0}^t
                    \kappa(t,r)
                    \,
                    \mu(t,\tilde{x}_r)
            \biggr)
        \Biggr\|    
        \\ 
    \nonumber
        \le &
            \|x_t-\tilde{x}_t\|
            +
            L_{\mu}
            \sum_{r=0}^t
                    \kappa(t,r)
                    \|x_r-\tilde{x}_r\|
        \\ 
    \nonumber
        \le &
        \max\{1,L_{\mu}\} 
        \,
            \biggl(
                \|x_t-\tilde{x}_t\|
                +
                \sum_{r=0}^t
                    \kappa(t,r)
                    \|x_r-\tilde{x}_r\|
            \biggr)
    .
    \end{align}
\allowdisplaybreaks
We put it all together: by Cauchy-Schwarz inequality and since 
\[
\left[\sum_{r=0}^t\kappa(t,r)^2\right]^{1/2} \le \left[\sum_{r=0}^t\kappa(t,r)\right]^{1/2} \le 1.
\]
We therefore, find that
\[
\begin{split}
    d_{\mathfrak{J}}\Big(
     &\Psi_t(x_{[0:t]},\operatorname{vec}\circ s_{[0:t]}), 
\Psi_t(\tilde{x}_{[0:t]},\operatorname{vec}\circ\tilde{ s}_{[0:t]}) \Big) \\
&\le 2\left(\max\{1,L_{\mu}\} 
+\max\{1,L_{\sigma}\} 
\frac{\sqrt{2}}{M+R}\sinh\Big(\frac{M+R}{\sqrt{2}}\Big)
\right) \|x_{[0:t]}-\tilde{x}_{[0:t]}\| \\
&\quad + 2 \max\{1,L_{\sigma}\} \frac{\sqrt{2}}{M+R}\sinh\Big(\frac{M+R}{\sqrt{2}}\Big)\| s_{[0:t]}-\tilde{ s}_{[0:t]}\|_F,
\end{split}
\]
because trivially $\|x_{t}-\tilde{x}_{t}\| \le \|x_{[0:t]}-\tilde{x}_{[0:t]}\|$. The Lemma has been proven.
\end{proof}

\indent We now have the following lemma.
\begin{lemma}[Dependence of $x_{[0:t]}\mapsto \mathbb{Q}_{x_{[0:t]}}$ on the realizations of $X_{[0:t]}$]
\label{prop:VMP}
\hfill\\
Suppose that Assumption~\ref{ass:uniformboundedness__SProcecess} and the Volterra kernel $\kappa$ satisfies either of the following decay conditions
\begin{enumerate}
    \item[(i)] \textbf{Exponential decay:}  For some $0<\alpha<1$ and $C>0$, $\kappa(T,r) \le C \, \alpha^{T-r}$ for all integers $0\le r\le T,T>0$;
    \item[(ii)] \textbf{Polynomial decay:} For some $C>0>\alpha$, $\kappa(T,r) \le C(T-r)^{\alpha}$ for all integers $0\le r <T$; 
\end{enumerate}

Fix $T\in \mathbb{N}_+$ and a compact $\mathcal{K}_{[0:T]}\subseteq \mathbb{R}^{(1+T)d}$.  
For every pair of integers $\memory,t$, with $0\le \memory <t \le  T$, there is a Lipschitz function $f^{(t,\memory)}:(\mathbb{R}^{(1+\memory)\,d},\|\cdot\|)\rightarrow (\mathcal{P}_1\mathcal{N}_{d},\mathcal{W}_1)$ 
satisfying 
\[
        \mathcal{W}_1\big(
                \mathbb{Q}_{x_{[0:t]}}
            ,
                f^{(t,\memory)}(x_{[t-\memory:t]})
        \big)
    \le 
        c
            \,
        \operatorname{diam}(\mathcal{K}_{[0:T]})
        \,
        \begin{cases}
            \frac{C\alpha}{\alpha-1}\,(\alpha^t-\alpha^{\memory})
            &
            \mbox{if (i) holds}
            \\
            C\,(\memory+1)^{\alpha}\,(t-\memory) 
            &
            \mbox{if (ii) holds}
        \end{cases}
\]
for each $x_{[0:T]}\in \mathcal{K}_{[0:T]}$, where 
$c>0$ only depends on $M,R>0$ (in Assumption~\ref{ass:uniformboundedness__SProcecess}) and on $L_{\mu}$.
\end{lemma}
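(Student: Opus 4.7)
The strategy is to define $f^{(t,\memory)}$ by truncating the historical dependence of $\mathbb{Q}_{x_{[0:t]}}$: fix a reference point $x^{\star}_{[0:T]} \in \mathcal{K}_{[0:T]}$ and set
\[
    f^{(t,\memory)}(x_{[t-\memory:t]})
    \eqdef
    \operatorname{Law}\Bigl(
        \Psi_t\bigl(
            (x^{\star}_{[0:t-\memory-1]},x_{[t-\memory:t]}),\,
            \operatorname{vec}(\mathbf{S}_{[0:t]})
        \bigr)
    \Bigr).
\]
Since $\mathcal{N}_d$ is a geodesic space and $\mathbb{Q}_{x_{[0:t]}}$ has compact support (as recorded just before Definition~\ref{defn:Projection}), both laws lie in $\mathcal{P}_1(\mathcal{N}_d)$ and we may control their $\mathcal{W}_1$-distance by the obvious synchronous coupling that reuses the same realization of $\mathbf{S}_{[0:t]}$ on both sides.

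\textbf{Main estimate.} Under this coupling, the Kantorovich--Rubinstein duality (or just the definition of $\mathcal{W}_1$ applied to the coupling) yields
\[
    \mathcal{W}_1\bigl(\mathbb{Q}_{x_{[0:t]}},\,f^{(t,\memory)}(x_{[t-\memory:t]})\bigr)
    \le
    \mathbb{E}\Bigl[
        d_{\mathfrak{J}}\bigl(
            \Psi_t(x_{[0:t]},\operatorname{vec}(\mathbf{S}_{[0:t]})),\,
            \Psi_t((x^{\star}_{[0:t-\memory-1]},x_{[t-\memory:t]}),\operatorname{vec}(\mathbf{S}_{[0:t]}))
        \bigr)
    \Bigr].
\]
Invoking Lemma~\ref{lem:lipshcitzBound_on_RCD} pointwise in $\omega$ (legitimate because Assumption~\ref{ass:uniformboundedness__SProcecess} forces $\mathbf{S}_{[0:t]}(\omega)$ into $\overline{\operatorname{Ball}_{\|\cdot\|_F}(\mathbf{0}_d,R)}^{1+t}$) the terms involving $\mathbf{S}$ and the coordinates $x_{[t-\memory:t]}$ cancel inside the bound in~\eqref{eq:lem:lipshcitzBound_on_RCD__PreciseBound}, leaving
\[
    \mathcal{W}_1\bigl(\mathbb{Q}_{x_{[0:t]}},\,f^{(t,\memory)}(x_{[t-\memory:t]})\bigr)
    \le
    c\,\sum_{r=0}^{t-\memory-1}\kappa(t,r)\,\|x_r - x^{\star}_r\|
    \le
    c\,\operatorname{diam}(\mathcal{K}_{[0:T]})\,\sum_{r=0}^{t-\memory-1}\kappa(t,r),
\]
the last step using that $x,x^{\star}\in\mathcal{K}_{[0:T]}$.

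\textbf{Summation of the kernel tail.} All that remains is to estimate $\sum_{r=0}^{t-\memory-1}\kappa(t,r)$ under each decay hypothesis. Setting $j=t-r$:
\begin{itemize}
\item Under (i), $\sum_{r=0}^{t-\memory-1}\kappa(t,r)\le C\sum_{j=\memory+1}^{t}\alpha^{j} = \frac{C\alpha}{1-\alpha}\bigl(\alpha^{\memory}-\alpha^{t}\bigr) = \frac{C\alpha}{\alpha-1}\bigl(\alpha^{t}-\alpha^{\memory}\bigr)$.
\item Under (ii), since $j\mapsto j^{\alpha}$ is decreasing (as $\alpha<-1<0$), $\sum_{r=0}^{t-\memory-1}\kappa(t,r)\le C\sum_{j=\memory+1}^{t}j^{\alpha}\le C(\memory+1)^{\alpha}(t-\memory)$.
\end{itemize}
Substituting these into the previous display yields exactly the asserted bound.

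\textbf{Lipschitz property of $f^{(t,\memory)}$.} The same synchronous coupling argument, together with Lemma~\ref{lem:lipshcitzBound_on_RCD} applied to two inputs $x_{[t-\memory:t]},\tilde{x}_{[t-\memory:t]}$ sharing the common prefix $x^{\star}_{[0:t-\memory-1]}$ and the common $\mathbf{S}_{[0:t]}$, gives $\mathcal{W}_1$-Lipschitzness of $f^{(t,\memory)}$ with constant $c(1+\sum_{r=t-\memory}^{t}\kappa(t,r))\le 2c$. The main obstacle in the argument is essentially bookkeeping --- choosing $x^{\star}$ inside $\mathcal{K}_{[0:T]}$ so that the diameter bound applies uniformly in $t$, and keeping track of which variables the kernel $\kappa(t,r)$ weighs --- but no new geometric input beyond Lemma~\ref{lem:lipshcitzBound_on_RCD} and the coupling inequality is needed.
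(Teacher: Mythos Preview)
Your proof is correct and follows essentially the same route as the paper: define $f^{(t,\memory)}$ by padding the truncated path, bound $\mathcal{W}_1$ via the synchronous coupling, invoke the pathwise estimate~\eqref{eq:lem:lipshcitzBound_on_RCD__PreciseBound} from Lemma~\ref{lem:lipshcitzBound_on_RCD}, and then sum the kernel tail. The only cosmetic difference is that the paper pads with zeros (defining $\iota^{(t,\memory)}(x_{[t-\memory:t]})=(0,\dots,0,x_{t-\memory},\dots,x_t)$) rather than with a reference point $x^{\star}\in\mathcal{K}_{[0:T]}$; your choice is arguably cleaner, since it makes the bound $\|x_r-x^{\star}_r\|\le\operatorname{diam}(\mathcal{K}_{[0:T]})$ immediate.
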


\begin{proof}\textbf{of Lemma}~\ref{prop:VMP}
Fix $0\le \memory <T$ and $\memory < t \le T$.  Define the map
$\iota^{(t,\memory)}:\mathbb{R}^{(1+\memory)\,d}\rightarrow \mathbb{R}^{(1+t)\,d}$ sending any $x_{[t-\memory:t]}\in \mathbb{R}^{(1+\memory)\,d}$ to $(0,\dots,0,x_{t-M},\dots,x_t)\in \mathbb{R}^{(1+t)\,d}$
and define the map $f^{(t,\memory)}$ by
\begin{equation}
        f^{(t,\memory)}(x_{[t-\memory:t]}) 
    \eqdef
        \mathbb{Q}_{
            \iota^{(t,\memory)}(x_{[t-\memory:t]})
        }
\end{equation}
for $x_{[t-\memory:t]}\in \mathbb{R}^{(1+\memory)\,d}$.  Since $\mathbb{Q}_{x_{[0:t]}}\in \mathcal{P}_1(\mathcal{N}_{d},\mathfrak{J})$ for each $x_{[0:t]}$ then $f^{(t,\memory)}$ takes values in $\mathcal{P}_1(\mathcal{N}_{d},\mathfrak{J})$.  

Fix an arbitrary $x_{[0:t]}\in \mathbb{R}^{(1+t)d}$.  Since the $1$-Wasserstein distance $\mathcal{W}_1$ between $\mathbb{Q}_{x_{[0:t]}}$
and $f^{(t,\memory)}(x_{[t-\memory:t]})$ is defined as the infimum over all transport plans/couplings having these two laws as marginals, we have
\begin{equation}
\label{eq:prop:VMP__SimpleUpperBoundofW1}
\begin{split}
\mathcal{W}_1\big(
                \mathbb{Q}_{x_{[0:t]}}
            ,
                f^{(t,\memory)}(x_{[t-\memory:t]})
        \big)
    & \le   
        \mathbb{E}\Big[ 
            d_{\mathfrak{J}}\big(
                \Psi_t(x_{[0:t]},\operatorname{vec}\circ\mathbf{S}_{[0:t]})
            ,
                \Psi_t(\iota^{(t,\memory)}(x_{[t-\memory:t]}),\operatorname{vec}\circ\mathbf{S}_{[0:t]})
            \big)
        \Big]
.
\end{split}
\end{equation}
By Assumption~\eqref{ass:uniformboundedness__SProcecess}, we may apply Lemma~\ref{lem:lipshcitzBound_on_RCD} to upper-bound the quantity under expectation in~\eqref{eq:prop:VMP__SimpleUpperBoundofW1} via the estimate~\eqref{eq:lem:lipshcitzBound_on_RCD__PreciseBound}.  This yields
\begin{equation}
\label{eq:prop:Vwarmup}
\resizebox{0.9\linewidth}{!}{$
\begin{aligned}
\mathbb{E}\Big[ 
            d_{\mathfrak{J}}\big(
                \Psi_t(x_{[0:t]},\operatorname{vec}\circ\mathbf{S}_{[0:t]})
            ,
                \Psi_t(\iota^{(t,\memory)} (x_{[t-\memory:t]}),\operatorname{vec}\circ\mathbf{S}_{[0:t]})
            \big)
        \Big]
    \le &
        \mathbb{E}\biggr[
        c
        \sum_{r=0}^{t}\,
                \kappa(t,r)
                \|x_r - \iota^{(t,\memory)}(x_{[t-\memory:t]})_r\|
        \biggl]
\end{aligned}
$}
\end{equation}
because $x_t=\iota^{(t,\memory)}(x_{[t-\memory:t]})_t$.
Since $0\le M<t$, the right-hand side of~\eqref{eq:prop:Vwarmup} can be bounded above as follows
\begin{equation}
\label{eq:prop:VMP__ControlBy___lem:lipshcitzBound_on_RCD__PreciseBound}
\resizebox{0.95\linewidth}{!}{$
\begin{aligned}
&
\mathbb{E}\Big[ 
            d_{\mathfrak{J}}\big(
                \Psi_t(x_{[0:t]},\operatorname{vec}\circ\mathbf{S}_{[0:t]})
            ,
                \Psi_t(\iota^{(t,\memory)} (x_{[t-\memory:t]}),\operatorname{vec}\circ\mathbf{S}_{[0:t]})
            \big)
        \Big]
    \le &
        \mathbb{E}\biggr[
        c
        \sum_{r=0}^{t}\,
                \kappa(t,r)
                \|x_r - \iota^{(t,\memory)}(x_{[t-\memory:t]})_r\|
        \biggl]
    \\
    = &
        \mathbb{E}\biggr[
        c
        \sum_{r=0}^{t-\memory-1}\,
                \kappa(t,r)
                \|x_r\|
        \biggl]
\\
    = &
        c
        \sum_{r=0}^{t-\memory-1}\,
                \kappa(t,r)
                \|x_r\|
\\
    \le &
        c
        \operatorname{diam}(\mathcal{K}_{[0:T]})
        \,
        \sum_{r=0}^{t-\memory-1}\,
                \kappa(t,r)
,
\end{aligned}
$}
\end{equation}
where the compactness of $\mathcal{K}_{[0:T]}$ implies that $\operatorname{diam}(\mathcal{K}_{[0:T]})$ is finite.
Set $C\eqdef c\,\operatorname{diam}(\mathcal{K}_{[0:T]})$. Combining~\eqref{eq:prop:VMP__SimpleUpperBoundofW1} with~\eqref{eq:prop:VMP__ControlBy___lem:lipshcitzBound_on_RCD__PreciseBound} we have
\[
        \mathcal{W}_1\big(
                \mathbb{Q}_{x_{[0:t]}}
            ,
                f^{(t,\memory)}(x_{[t-\memory:t]})
        \big)
    \le 
        C\sum_{r=0}^{t-\memory-1}\,\kappa(t,r)
    ,
\]
for each $x_{[0:T]}\in \mathcal{K}_{[0:T]}$.  
Applying either Lemmata~\ref{lem:RapidlyVanishingMemory_exponential_decay} or~\ref{lem:persistentmemory_polycase} yields the first conclusion.

It only remains to show that $f^{(t,\memory)}$ is Lipschitz.  Let $x_{[t-\memory:t]},\tilde{x}_{[t-\memory:t]}\in \mathbb{R}^{(1+\memory)\,d}$ and $t\in [[T]]$. Arguing similarly as above, using Lemma~\ref{lem:lipshcitzBound_on_RCD}, we find that
\allowdisplaybreaks
\begin{align}
\nonumber
        \mathcal{W}_1\big(
                f^{(t,\memory)}(x_{[t-\memory:t]})
            ,
                f^{(t,\memory)}(\tilde{x}_{[t-\memory:t]})
        \big)
    \le &
        \mathbb{E}\big[
            d_{\mathfrak{J}}\big(
                \Psi_t(\iota^{(t,\memory)} (x_{[t-\memory:t]}),\operatorname{vec}\circ\mathbf{S}_{[0:t]})
            ,
    \\
    \nonumber
               &\quad\quad\quad \Psi_t(\iota^{(t,\memory)} (\tilde{x}_{[t-\memory:t]}),\operatorname{vec}\circ\mathbf{S}_{[0:t]})
            \big)
        \big]
    \\
    \label{eq:lem:lipshcitzBound_on_RCD__PreciseBound__SecondPart}
    \le &
        c\,
        \mathbb{E}\biggl[
                \|\iota^{(t,\memory)} (x_{[t-\memory:t]})-\iota^{(t,\memory)}(\tilde{x}_{[t-\memory:t]})\|
    \nonumber
        \\
            & + 
                \|x_t -\tilde{x}_t\|
        \biggr]
    \\
    \nonumber
    = &
        c\,
        \mathbb{E}\biggl[
                    \|x_{[t-\memory:t]}-\tilde{x}_{[t-\memory:t]}\|
         + 
                \|x_t -\tilde{x}_t\|
        \biggr]
    \\
    \nonumber
    \le &
        2c\,
        \|x_{[t-\memory:t]}-\tilde{x}_{[t-\memory:t]}\|
,
\end{align}
where~\eqref{eq:lem:lipshcitzBound_on_RCD__PreciseBound__SecondPart} follows from the second part of Lemma~\ref{lem:lipshcitzBound_on_RCD}; relabelling $2c$ to $c$ yields the statement.
\end{proof}

Since $\Pi$ is a Lipschitz function of the path $x_{[0:t]}$ then, perturbations to the realized path $x_{[0:t]}$ result in at-most linear changes of the value of $\Pi_{x_{[0:t]}}$. 
To quantify this stability, let $\tilde{\boldsymbol{S}}_{\cdot}$ be another $d\times d$-dimensional matrix-valued processes, also defined on $(\Omega,\mathcal{F},\mathbb{P})$ and satisfying Assumption~\ref{ass:uniformboundedness__SProcecess}.

\begin{proposition}[{Lipschitz stability wrt. perturbations of the paths of $X_{\cdot}$ and of $\mathbf{S}_{\cdot}$}]
\label{lem:Stability_Estimates}
\hfill\\
Fix $t\in \mathbb{N}$ and a compact $\mathcal{K}_{[0:t]}\subseteq \mathbb{R}^{(1+t)\,d}$.  
If both $\mathbf{S}_{\cdot}$ and $\tilde{\boldsymbol{S}}_{\cdot}$ satisfy Assumption~\ref{ass:uniformboundedness__SProcecess} then, for each $x_{[0:t]}\in K_{[0:t]}$, $\beta\big(
                \operatorname{Law}(
                    \Psi_t(x_{[0:t]},\operatorname{vec}\circ \mathbf{S}_{[0:t]})
                )
            \big)$ is well-defined.  Furthermore
\[
    d_{\mathfrak{J}}\Big(
            \beta\big(
                \operatorname{Law}(
                    \Psi_t(x_{[0:t]},\operatorname{vec}\circ \mathbf{S}_{[0:t]})
                )
            \big)
        ,
            \beta\big(
                \operatorname{Law}(
                    \Psi_t(\tilde{x}_{[0:t]},\operatorname{vec}\circ \tilde{\boldsymbol{S}}_{[0:t]})
                )
            \big)
        \Big)
    \lesssim 
        \|x_{[0:t]}-\tilde{x}_{[0:t]}\|
        +
        \mathbb{E}[\| \mathbf{S}_{[0:t]}-\tilde{\boldsymbol{S}}_{[0:t]}\|_F]
,
\]
holds for each $x_{[0:t]},\tilde{x}_{[0:t]}\in \mathcal{K}_{[0:t]}$, where $\lesssim$ suppresses a positive constant depending only on $t$ and on $R$. 
In particular, the following holds for each $x_{[0:t]},\tilde{x}_{[0:t]}\in \mathcal{K}_{[0:t]}$
\[
    d_{\mathfrak{J}}\Big(
            \beta(\mathbb{Q}_{x_{[0:t]}})
        ,
            \beta(\mathbb{Q}_{\tilde{x}_{[0:t]}})
        \Big)
    \lesssim 
        \|x_{[0:t]}-\tilde{x}_{[0:t]}\|
.
\]
\end{proposition}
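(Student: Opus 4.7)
The plan is to assemble the bound in four steps, each of which is an application of a previously stated tool: the barycenter-existence proposition, the fundamental contraction property, a synchronous coupling, and the pathwise Lipschitz estimate of Lemma~\ref{lem:lipshcitzBound_on_RCD}.

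\textbf{Step 1 (Well-posedness of the barycenters).} First I would verify that both laws whose barycenters we want to compare lie in $\mathcal{P}^1(\mathcal{N}_d)$, so that Proposition~\ref{prop:sturmtwo} applies. Under Assumption~\ref{ass:uniformboundedness__SProcecess}, the discussion preceding Definition~\ref{defn:Projection} shows $\Psi_t(x_{[0:t]},\operatorname{vec}\circ\mathbf{S}_{[0:t]})$ has law compactly supported in $\mathcal{N}_d$ (via the bound $\|e^A\|_F\le c_1 e^{c_2\|A\|_F}$ combined with the uniform bound on $\|\mathbf{S}_\cdot\|_F$ and on $\sigma$). The same argument applies to $\tilde{\mathbf{S}}_\cdot$. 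Hence both laws belong to $\mathcal{P}^\infty(\mathcal{N}_d)\subset\mathcal{P}^1(\mathcal{N}_d)$, and Proposition~\ref{prop:sturmtwo} guarantees both barycenters exist uniquely.

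\textbf{Step 2 (Contraction to Wasserstein, then synchronous coupling).} By Theorem~\ref{thm:sturmthree},
\[
d_{\mathfrak J}\Bigl(
\beta(\operatorname{Law}(\Psi_t(x_{[0:t]},\operatorname{vec}\circ\mathbf{S}_{[0:t]}))),
\beta(\operatorname{Law}(\Psi_t(\tilde x_{[0:t]},\operatorname{vec}\circ\tilde{\mathbf{S}}_{[0:t]})))
\Bigr)
\le
\mathcal{W}_1\bigl(\operatorname{Law}(\Psi_t(x,\mathbf{S})),\operatorname{Law}(\Psi_t(\tilde x,\tilde{\mathbf{S}}))\bigr).
\]
Since both $\mathbf{S}_\cdot$ and $\tilde{\mathbf{S}}_\cdot$ are defined on the common probability space $(\Omega,\mathcal{F},\mathbb{P})$, the map $\omega\mapsto\bigl(\Psi_t(x,\operatorname{vec}\circ\mathbf{S}(\omega)),\Psi_t(\tilde x,\operatorname{vec}\circ\tilde{\mathbf{S}}(\omega))\bigr)$ defines a valid (synchronous) coupling of the two laws, and Kantorovich--Rubinstein duality gives
\[
\mathcal{W}_1\bigl(\operatorname{Law}(\Psi_t(x,\mathbf{S})),\operatorname{Law}(\Psi_t(\tilde x,\tilde{\mathbf{S}}))\bigr)
\le
\mathbb{E}\!\left[d_{\mathfrak J}\bigl(\Psi_t(x,\operatorname{vec}\circ\mathbf{S}_{[0:t]}),\Psi_t(\tilde x,\operatorname{vec}\circ\tilde{\mathbf{S}}_{[0:t]})\bigr)\right].
\]

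\textbf{Step 3 (Pathwise Lipschitz bound and conclusion).} Since by Assumption~\ref{ass:uniformboundedness__SProcecess} both $\mathbf{S}_{[0:t]}$ and $\tilde{\mathbf{S}}_{[0:t]}$ take values $\mathbb{P}$-a.s.\ in $\overline{\operatorname{Ball}_{(\operatorname{Sym}(d),\|\cdot\|_F)}(0,R)}^{1+t}$, Lemma~\ref{lem:lipshcitzBound_on_RCD} applies $\mathbb{P}$-almost surely and yields
\[
d_{\mathfrak J}\bigl(\Psi_t(x,\operatorname{vec}\circ\mathbf{S}_{[0:t]}),\Psi_t(\tilde x,\operatorname{vec}\circ\tilde{\mathbf{S}}_{[0:t]})\bigr)
\le c\bigl(\|x_{[0:t]}-\tilde x_{[0:t]}\| + \|\mathbf{S}_{[0:t]}-\tilde{\mathbf{S}}_{[0:t]}\|_F\bigr)
\]
with $c$ depending only on $L_\mu,L_\sigma,M,R$ (and hence, for a fixed Volterra process, on $t$ and $R$). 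Taking expectations and chaining the inequalities of Steps~2 and~3 yields the first stated bound. The second bound is the special case $\tilde{\mathbf{S}}_\cdot=\mathbf{S}_\cdot$, for which $\mathbb{E}[\|\mathbf{S}-\tilde{\mathbf{S}}\|_F]=0$.

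\textbf{Main obstacle.} The proof is short because all the heavy lifting has been done in earlier lemmata; the only genuine check is that the synchronous coupling above is admissible in the Kantorovich problem, which relies on $\mathbf{S}_\cdot$ and $\tilde{\mathbf{S}}_\cdot$ being constructed on a common probability space (explicitly part of the setup). The only other subtlety is to confirm that the pathwise Lipschitz constant in Lemma~\ref{lem:lipshcitzBound_on_RCD} is finite under Assumption~\ref{ass:uniformboundedness__SProcecess}, which was already done in its proof.
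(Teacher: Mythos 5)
Your proof is correct and follows essentially the same route as the paper: establish well-posedness via compact support, invoke the fundamental contraction property (Theorem~\ref{thm:sturmthree}) to pass from barycenters to $\mathcal{W}_1$, bound $\mathcal{W}_1$ by the synchronous coupling, and close with the pathwise Lipschitz estimate of Lemma~\ref{lem:lipshcitzBound_on_RCD}. One small terminological slip: the inequality $\mathcal{W}_1 \le \mathbb{E}[d_{\mathfrak J}(\cdot,\cdot)]$ under the synchronous coupling follows from the \emph{primal} (infimum-over-couplings) definition of $\mathcal{W}_1$, not from Kantorovich--Rubinstein duality, which is the dual representation via Lipschitz test functions; the paper appeals directly to the definition.
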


\begin{proof}\textbf{of Lemma}~\ref{lem:Stability_Estimates}
By Proposition~\ref{prop:NPC_FisherRao} $(\mathcal{N}_d,d_{\mathfrak{J}})$ is a global NPC
and by Assumption~\ref{ass:uniformboundedness__SProcecess} (ii) we may apply Theorem~\ref{thm:sturmthree} to deduce that for every $t\in \mathbb{N}_+$ 
\begin{align}
&
\notag
        d_{\mathfrak{J}}\Big(
            \beta\big(\operatorname{Law}(\Psi_t(x_{[0:t]},\operatorname{vec}(\mathbf{S}_{[0:t]}))\big)
        ,  
            \beta\big(\operatorname{Law}(\Psi_t(\tilde{x}_{[0:t]},\operatorname{vec}(\tilde{\boldsymbol{S}}_{[0:t]}))\big)
        \Big)
\\
\label{eq:PROOF__lem:Stability_Estimates__Definitions___ContractingBarycenterProperty__A}
\le 
&
        \mathcal{W}_{1}\Big(
            \operatorname{Law}(\Psi_t(x_{[0:t]},\operatorname{vec}(S_{[0:t]}))
        ,  
            \operatorname{Law}(\Psi_t(\tilde{x}_{[0:t]},\operatorname{vec}(\tilde{S}_{[0:t]}))
        \Big)
\\
\label{eq:PROOF__lem:Stability_Estimates__Definitions___ContractingBarycenterProperty__B}
\le 
&
    \mathbb{E}\big[
        d_{\mathfrak{J}}\big(
            \Psi_t(x_{[0:t]},\operatorname{vec}(S_{[0:t]})
        ,  
            \Psi_t(\tilde{x}_{[0:t]},\operatorname{vec}(\tilde{S}_{[0:t]})
        \big)
    \big]
,
\end{align}
where~\eqref{eq:PROOF__lem:Stability_Estimates__Definitions___ContractingBarycenterProperty__A} follows from the definition of the $1$-Wasserstein distance on $(\mathcal{N}_d,\mathfrak{J})$.  
Applying Lemma~\ref{lem:lipshcitzBound_on_RCD} to the right-hand side of~\eqref{eq:PROOF__lem:Stability_Estimates__Definitions___ContractingBarycenterProperty__B}, allows us to deduce the estimate
\begin{align}
\notag
& 
        d_{\mathfrak{J}}\Big(
            \beta\big(\operatorname{Law}(\Psi_t(x_{[0:t]},\operatorname{vec}(\mathbf{S}_{[0:t]}))\big)
        ,  
            \beta\big(\operatorname{Law}(\Psi_t(\tilde{x}_{[0:t]},\operatorname{vec}(\tilde{\boldsymbol{S}}_{[0:t]}))\big)
        \Big)
\\
\notag
\lesssim & 
    \mathbb{E}\Big[
        \,
        \big(
            \|x_{[0:t]}-\tilde{x}_{[0:t]}\|
            +
            \| \mathbf{S}_{[0:t]}-\mathbf{S}_{[0:t]}\|_F
        \big)
    \Big]
\\
= &
    \notag
    \big(
        \|x_{[0:t]}-\tilde{x}_{[0:t]}\|
            +
        \mathbb{E}\big[
            \| \mathbf{S}_{[0:t]}-\mathbf{S}_{[0:t]}\|_F
        \big]
    \big)
.
\end{align}
This completes the proof of the first claim; the second claim directly follows by definition of $\mathbb{Q}_{x_{[0:t]}}$.
\end{proof}

\begin{proof}\textbf{of Theorem}~\ref{thrm:VanishingMemoryProperty_Qx}
Since the conditions of Lemma~\ref{prop:VMP} are met, for each pair of non-negative integers $\memory,T$ with $\memory < T$ and every compact $\mathcal{K}_{[0:T]}\subseteq \mathbb{R}^{(1+T)d}$ there is a constant $C>0$, depending only on $\mathcal{K}_{[0:T]}$, and functions $f^{(t,\memory)}:(\mathbb{R}^{(1+\memory)\,d},\|\cdot\|)\rightarrow(\mathcal{P}_1(\mathcal{N}_{d},\mathfrak{J}),\mathcal{W}_1)$, for $T-\memory \le \memory$, satisfying 
\begin{equation}
\label{eq:tail_decay}
        \mathcal{W}_1\big(\mathbb{Q}_{x_{[0:t]}}
            ,
                f^{(t,\memory)}(x_{[t-\memory:t]})
        \big)
    \le 
        c
            \,
        \operatorname{diam}(\mathcal{K}_{[0:T]})
        \,
        \begin{cases}
            \frac{C\alpha}{\alpha-1}\,(\alpha^t-\alpha^{\memory})
            &
            \mbox{if (i) holds}
            \\
            C\,(\memory+1)^{\alpha}\,(t-\memory) 
            &
            \mbox{if (ii) holds}
        \end{cases}
\end{equation}
where $c>0$ depends only on $M$ and on $R$ and where $C\ge 0$ depends only on $K$, $T$, and on which of (i) or (ii), holds in Lemma~\ref{prop:VMP}. 

In particular, $\tilde{C}$ only depends on $\mathcal{K}_{[0:T]}$ and on $T$. By Proposition \ref{prop:sturmone}, $(\mathcal{N}_d,d_{\mathfrak{J}})$ is a non-positively curved metric space since it is complete, simply connected, and it has everywhere non-positive sectional curvatures.  Thus, Theorem \ref{thm:sturmthree} implies that the barycenter map $\beta:\big(\mathcal{P}_1(\mathcal{N}_d,\mathfrak{J}),\mathcal{W}_1\big)\rightarrow (\mathcal{N}_d,d_{\mathfrak{J}})$ is $1$-Lipschitz.  
Therefore, the existence and $1$-Lipschitz regularity of $\beta$, together with estimate in~\eqref{eq:tail_decay} imply 
\allowdisplaybreaks
\begin{align*}
\label{eq:PROOF_thrm:VanishingMemoryProperty_Qx__barycentricity}
        d_{\mathfrak{J}}\big(
            \beta(\mathbb{Q}_{x_{[0:t]}})
            \big)
        ,
            \beta\circ f^{(t,\memory)}(x_{[t-\memory:t]})
        \big)
    \le &
    \mathcal{W}_1\Big(
                \operatorname{Law}\big(\mathbb{Q}_{x_{[0:t]}})\big)
            ,
                f^{(t,\memory)}(x_{[t-\memory:t]})
        \Big)
    \\
    \le &
        \,
        c
        \,
        \operatorname{diam}(\mathcal{K}_{[0:T]})
        \begin{cases}
            \frac{C\alpha}{\alpha-1}\,(\alpha^t-\alpha^{\memory})
            &
            \mbox{if (i) holds}
            \\
            C\,(\memory+1)^{\alpha}\,(t-\memory) 
            &
            \mbox{if (ii) holds}
        \end{cases}
,
\end{align*}
for each $x_{[0:T]}\in \mathcal{K}_{[0:T]}$ and each $T-\memory\le t\le T$.
Lastly, for each $T-\memory\le t\le T$, set $\tilde{f}^{(t,\memory)}\eqdef \beta\circ f^{(t,\memory)}$ to obtain the conclusion.  Relabelling $c\eqdef C_{M,R,\mu}$ yields the conclusion.
\end{proof}

Finally, we demonstrate the smoothness of the Gaussian random projection.
\begin{proof}\textbf{of Proposition}~\ref{cor:DeterministicHighRegularity}
    Fix any $t\in \mathbb{N}_+$ and any $x_{[0:t]}\in \mathbb{R}^{(1+t)d}$.  Observe that
    \begin{equation}
    \label{eq:PROOF_cor:DeterministicHighRegularity__Simplification}
            \Pi_{x_{[0:t]}}
        = 
            \beta\big(\operatorname{Law}(\Psi_t(x_{[0:t]},\boldsymbol{0}))\big)
        =
            \beta\big(\delta_{\Psi_t(x_{[0:t]},\boldsymbol{0})}\big)
        =
            \delta_{\Psi_t(x_{[0:t]},\boldsymbol{0})}
    ,
    \end{equation}
where $\boldsymbol{0}=(0,\dots,0)\in \operatorname{Sym}(d)^{1+t}$ and where the right-hand side of~\eqref{eq:PROOF_cor:DeterministicHighRegularity__Simplification} follows from the fact that $\mathbf{S}_{[0:t]}=0\,$ $\mathbb{P}$-a.s.\ and that $\beta$ is a (contracting) barycenter map then $\beta(\delta_{\mu})=\mu$ for every $\mu\in \mathcal{N}_{d}$.  
Since $\psi$ is real-analytic (whence it is $C^{\infty}$; i.e.\ smooth) and since the composition and sums of $C^{\infty}$ (smooth) maps defined a $C^{\infty}$ (smooth) map then each $\Psi_t$ is $C^{\infty}$ (smooth) map if $\mu$ and $\sigma$ are.  Therefore, the map $x_{[0:T]}\mapsto \Psi_t(x_{[0:t},\boldsymbol{0})=\Pi_{x_{[0:t]}}$ is smooth.
\end{proof}

\section{{Proof of universality in the static case - Theorem~\ref{theorem:optimal_GDN_Rates__ReLUActivation} and Corollary~\ref{cor:Universality}}}
\label{s:Proof_Static_Case}

Before proceeding with the proof of Theorem~\ref{theorem:optimal_GDN_Rates__ReLUActivation}, we give some preparatory material. First, we state and prove a special case of the universal approximation theorem of \cite{galimberti2022designing}, Theorem 1; see Subsection \ref{s:Approx_Euclidean__ss:Feedforward}. Second, we provide a Universal Approximation result for a general $f$ and finite $\mathcal{K}$; see Subsection \ref{s:Universal_general_f_finite_K}.
\subsection{A brief review of feedforward theory} 
\label{s:Approx_Euclidean__ss:Feedforward}
Lemma \ref{lem:ReLU_Approximation} is a multi-dimensional consequence of the main results of \cite{ShenYangHaizhaoZhang_OptimalReLU_2022} and \cite{LuShenYangZhang_2021_UATRegularTargets}.
We record the results here, since they can be obtained by modifying only one step of the proof of Theorem 1 in \cite{galimberti2022designing}, by decoupling the condition that the width and depth parameters are set to be equal. In particular, in this formulation, the ReLU feed-forward network's depth is kept as a hyperparameter.  This is useful since it will allow us to synchronize the depths of our convolutional neural networks, and to avoid parity issues or the need to deploy a trainable PReLU activation function \citep{Acciaio2022_GHT} capable of implementing the identity.  
%%%
\begin{lemma}[Universal approximation for ReLU feedforward Neural Networks]
\label{lem:ReLU_Approximation}
Fix positive integers $d^{\star}, D$, let $\mathcal{K}$ be a non-empty compact of $\mathbb R^{d^{\star}}$. Consider $f:\mathcal{K}\rightarrow \mathbb{R}^D$ such that either:
\begin{enumerate}
    \item[(i)] $f$ extends to an $\lambda$-Lipschitz $C^{k}$ function $F:\mathbb{R}^{d^{\star}}\rightarrow \mathbb{R}^{D}$,
    \item[(ii)] $f$ extends to a $C^{\alpha}$-H\"{o}lder function $F:\mathbb{R}^{d^{\star}}\rightarrow \mathbb{R}^{D}$ for some $0<\alpha \le 1$ with H\"{o}lder constant $\lambda$. 
\end{enumerate}
Then, for every ``approximation error'' $\varepsilon$ and every ``depth hyperparameter'' $J\in \mathbb{N}_+$ there exists a ReLU feedforward neural network $\hat{f}:\mathbb{R}^{d^{\star}}\rightarrow \mathbb{R}^{D}$ satisfying the uniform estimate
\[
        \max_{x\in \mathcal{K}}\,
            \big\|
                f(x)
                -
                \hat{f}(x)
            \big\|
        <
            \varepsilon
    .
\]
Furthermore the depth and width of $\hat{f}$ are recorded in Table~\ref{tab:ReLU_Rates_Summary}.  
\end{lemma}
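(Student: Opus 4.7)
The plan is to reduce the multi-dimensional, prescribed-depth statement to the scalar, fixed-depth approximation theorems of \cite{ShenYangHaizhaoZhang_OptimalReLU_2022} (for the $\alpha$-H\"{o}lder case) and \cite{LuShenYangZhang_2021_UATRegularTargets} (for the $C^k$ case), by following the template of \cite[Theorem~1]{galimberti2022designing} but decoupling the depth parameter from the width parameter.

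\textbf{Step 1: Reduction to a compact cube.} Since $\mathcal{K}\subset\mathbb{R}^{d^\star}$ is compact, there exists an affine bijection $W:\mathbb{R}^{d^\star}\to\mathbb{R}^{d^\star}$ mapping a closed neighbourhood of $\mathcal{K}$ into $[0,1]^{d^\star}$; precomposing $F$ with $W^{-1}$ preserves (up to a multiplicative constant in the Lipschitz/H\"{o}lder/derivative norms) the regularity hypothesis, so it suffices to approximate $F\circ W^{-1}$ uniformly on $[0,1]^{d^\star}$.

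\textbf{Step 2: Component-wise scalar approximation.} Write $F=(F_1,\dots,F_D)$. In case (ii) each $F_i$ is $\alpha$-H\"{o}lder with constant $\lambda$, and the quantitative ReLU approximation theorem of \cite[Theorem~2.4]{ShenYangHaizhaoZhang_OptimalReLU_2022} yields, for every width $w$ and depth $\ell$, a ReLU MLP $\hat F_i:\mathbb{R}^{d^\star}\to\mathbb{R}$ with width $\mathcal{O}(w)$ and depth $\mathcal{O}(\ell)$ satisfying $\sup_{[0,1]^{d^\star}}|F_i-\hat F_i|\lesssim \lambda (w^2\ell^2\log w)^{-\alpha/d^\star}$. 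In case (i) the analogous rates from \cite{LuShenYangZhang_2021_UATRegularTargets} provide a network of width $\mathcal{O}(w)$ and depth $\mathcal{O}(\ell)$ with error $\lesssim \|F_i\|_{C^k}(w\ell)^{-2k/d^\star}$ (up to logarithmic factors). In either case, choose $w,\ell$ as functions of $\varepsilon$ (with $\ell$ proportional to the prescribed $J$) so that $\|F_i-\hat F_i\|_\infty<\varepsilon/\sqrt{D}$ for each $i=1,\dots,D$.

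\textbf{Step 3: Parallelisation into a single vector-valued network.} Stack the $D$ scalar networks $\hat F_1,\dots,\hat F_D$ side-by-side, sharing the input layer, to obtain a single ReLU MLP $\hat F:\mathbb{R}^{d^\star}\to\mathbb{R}^D$ whose width is the sum of the individual widths and whose depth is the common depth. Summing the componentwise errors in quadrature yields $\sup_{\mathcal{K}}\|F-\hat F\|<\varepsilon$.

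\textbf{Step 4: Matching the prescribed depth $J$.} This is the one step that genuinely goes beyond citing the black-box references and is the main (minor) obstacle. The cited theorems fix the depth in terms of $w$ and $\ell$, but $J$ is prescribed as a free hyperparameter. To enforce depth exactly $J$, pad the end of $\hat F$ with identity layers implemented by the ReLU identity trick $x=\operatorname{ReLU}(x)-\operatorname{ReLU}(-x)$; each added layer costs a factor of $2$ in the width of the layer immediately preceding it but does not perturb the output. Symmetrically, if the native depth exceeds $J$, increase $w$ and decrease $\ell$ according to the depth-width trade-off inherent in the cited quantitative results (both references allow such a trade-off). Re-labelling the final width and depth after padding yields the depth-width estimates recorded in Table~\ref{tab:ReLU_Rates_Summary}, and precomposition with the affine map $W$ from Step~1 returns a ReLU MLP on $\mathbb{R}^{d^\star}$ achieving the claimed uniform bound on $\mathcal{K}$.
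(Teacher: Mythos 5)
Your Steps 1--3 (affine re-normalization to the cube, componentwise scalar approximation via \cite{ShenYangHaizhaoZhang_OptimalReLU_2022} and \cite{LuShenYangZhang_2021_UATRegularTargets}, parallel stacking with an $\varepsilon/\sqrt{D}$ per-component budget) match the paper's strategy exactly; this is also consistent with the $D^{1/2}$ factor that appears inside the width formulas of Table~\ref{tab:ReLU_Rates_Summary}.

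The difficulty is in Step~4, and it is not merely cosmetic. You frame the cited approximation theorems as fixing the depth as a function of $w$ and $\ell$, and you then propose to pad with the ReLU identity trick $x = \operatorname{ReLU}(x) - \operatorname{ReLU}(-x)$ to hit a prescribed depth. But both \cite{ShenYangHaizhaoZhang_OptimalReLU_2022} and \cite{LuShenYangZhang_2021_UATRegularTargets} already state their results with \emph{two independent free parameters} (a width parameter $N$ and a depth parameter $L$), with an error bound depending jointly on $(N, L)$. The paper's entire point -- stated explicitly in the surrounding text -- is that Lemma~\ref{lem:ReLU_Approximation} is obtained from \cite[Theorem 1]{galimberti2022designing} by \emph{undoing} the restriction $N = L$, and that this formulation is chosen precisely to \emph{avoid} having to pad with identity layers (or use a trainable PReLU). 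So your Step~4 is both unnecessary and in tension with the paper's stated design. More importantly, the padding construction does not produce the Table~\ref{tab:ReLU_Rates_Summary} bounds: the lemma does not claim depth exactly $J$, but depth $\mathcal{O}(J \log J)$ (e.g.\ $D(1 + 18k^2(J+2)\log_2(4J) + 2d^{\star})$ in the $C^k$ case), and this form comes from directly substituting $L \propto J$ into the depth formulas of the cited theorems and then solving the error bound for $N$. Appending identity blocks would give a network whose depth is the cited theorem's depth plus a constant-per-block increment and whose width is $\max(w, 2D)$, which is not the functional form Table~\ref{tab:ReLU_Rates_Summary} records. (Your claim that each identity block ``costs a factor of 2 in the width of the preceding layer'' is also incorrect; it adds an intermediate layer of width $2D$, it does not multiply any existing layer.) To close the gap, discard Step~4 and instead fold the depth choice into Step~2: set the depth parameter of the cited theorems to be (an explicit affine/logarithmic function of) $J$, then invert the two-parameter error estimate to obtain the width parameter as a function of $\varepsilon$, $J$, $d^{\star}$, $D$, and the regularity constants. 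That inversion is precisely what produces the expressions in Table~\ref{tab:ReLU_Rates_Summary}.
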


\begin{table}[ht!]%[H]
    \centering
	\ra{1.3}
    \caption{\textbf{Approximation Rates - ReLU feedforward Neural Network with fixed depth} 
    \hfill\\
    %%  Smooth Case
    In case (i), the constants in Table~\ref{tab:ReLU_Rates_Summary} are $C_0\eqdef d^{\star}(D-1)$, $
            C_1
        \eqdef 
            %%%% Depth of Individual Network
            17k^{d^{\star}+1}3^{d^{\star}}$ and $C_{\bar{F}}\eqdef \max_{i=1,\dots,D}\, \| F_i\circ W\|_{C^k([0,1]^d)}$; where $W$ is any bijective affine self-map on $\mathbb{R}^{d^{\star}}$ map satisfying $W(K)\subseteq [0,1]^{d^{\star}}$.  %
    \hfill\\
    %%  Holder Case
    In case (ii), the constants in Table~\ref{tab:ReLU_Rates_Summary} are $C_0\eqdef d^{\star}(D-1)$, $C_1\eqdef 3^{d^{\star}+3}$, $C_2\eqdef 19 + 2d^{\star}$% I added the extra parallelization 1 here
    , $C_3\eqdef (131 (d^{\star}D)^{1/2})$, and $V:[0,\infty)\rightarrow [0,\infty)$ is the ``special function'' defined as the inverse of $t\mapsto t^2\log_3(t+2)$.%
    \hfill\\
    %% No-Regularity
    In case (iii), 
    the constant in Table~\ref{tab:ReLU_Rates_Summary} is $C=
    (1+\memory)d(D-1)
            +
        9
        \big(
\operatorname{diam}(f(\mathcal{K}_{[0:\memory]}))
        \,
        \Big(
            \frac{
                (1+\memory)d
                }{2(1+\memory)d+2}
        \Big)^{1/2}
        \,
        % C_{N,D}
        DN
    $.}
    %%%
    \label{tab:ReLU_Rates_Summary}
    % \resizebox{\columnwidth}{!}{%
    \begin{tabular}{@{}lll@{}}
    \cmidrule[0.3ex](){1-2}
	\textbf{Hyperparam.} & \textbf{Exact Quantity - High Regularity } (Case (i))\\
    \midrule
        Depth & 
            $
            %%% Cost of Deep Parallelization
            D\Big(1 
                +
                %%%% Depth of Individual Network
                18 k^2(J+2) \log_2(4J) + 2d^{\star}
            \Big)
            $
        \\
        Width & 
          $
            %%% CONSTANT
            %%% Cost of Deep Parallelization
            C_0
            +
            %     %%%% Depth of Individual Network
            %     17k^{d^{\star}+1}3^{d^{\star}}\, 
            C_{1}
                %%% RATE
                \Big(2 + 
                    %N
                    %% Definition of N
                    \Big\lceil 
                        J\Big(\frac{C_{\bar{f}} D^{1/2}}{\varepsilon}\Big)^{d^{\star}/(2k)}
                    \Big\rceil 
                \Big)\log_2\Big(8
                    %N
                    \Big\lceil 
                        J\Big(\frac{C_{\bar{f}} D^{1/2}}{\varepsilon}\Big)^{d^{\star}/(2k)}
                    \Big\rceil 
                \Big)
            $
    \\
    \arrayrulecolor{lightgray}
    \cmidrule[0.3ex](){1-2}
    \textbf{Hyperparam.} &  \textbf{Exact Quantity - Low Regularity -}  (Case (ii)) 
    \\
    \midrule
        Depth &  
            %%%% 
            $
                %%% Cost of Deep Parallelization
                D
                \Big(
                % 1+
                    %%%% Depth of Individual Network
                    11 J + C_2
                \Big)
            $
        \\
        Width
        &
            $
            %%% Cost of Deep Parallelization
                    C_0
                +
            %%%% Width of Individual Network
                C_1\,
                \max\big\{
                    d^{\star}
                    \big\lfloor
                    \big(
                    % Expression for N 
                    %%%%
                            \big\lceil
                            V\big(
                                J^{-2}\,
                                (
                                    C_1 
                                    \varepsilon
                                    /\lambda
                                )^{d^{\star}/\alpha}
                            \big)
                        \big\rceil
                    %%%%
                    \big)^{1/d^{\star}}
                    \big\rfloor
                ,
                    2 +
                    % Expression for N 
                        %%%%
                                \big\lceil
                                V\big(
                                    J^{-2}\,
                                    (
                                        C_1 
                                        \varepsilon
                                        /\lambda
                                    )^{d^{\star}/\alpha}
                                \big)
                            \big\rceil
                        %%%%
            \big\}
            $
    \\
\arrayrulecolor{lightgray}
    \cmidrule[0.3ex](){1-2}
    \textbf{Hyperparam.} &  \textbf{Exact Quantity - No Regularity - Finite} $\#\mathcal{K}=N$  (Case (iii)) 
    \\
    \midrule
      Depth & $
      D\,\big(
        28\,N^2 \,J
    \big)
    $
\\
Width & 
    $\lceil (\varepsilon^{-1}\,C)^{1/(2NJ)}\rceil - 1
        \big)
        +2N-8
    $
    \\
    \arrayrulecolor{lightgray}\hline
    		\end{tabular}
    		% }% END Resize box
\end{table}

\subsection{{Static universality: The case of general \texorpdfstring{$f$}{f} and finite \texorpdfstring{$\mathcal{K}$}{domain}}}\label{s:Universal_general_f_finite_K}

\begin{lemma}[Universal approximation: General $f$ and finite $\mathcal{K}$]
\label{lem:ReLUMLPFiniteSetsNoRegularityTarget}
Let $M,d,D,N\in \mathbb{N}$. 
Fix any $N$-point subset $\mathcal{K}_{[0:\memory]}\subseteq \mathbb{R}^{(1+\memory)d}$ and any function $f:\mathbb{R}^{(1+\memory)d}\rightarrow \mathbb{R}^D$. For every ``approximation error'' $\varepsilon>0$ and every ``depth hyperparameter'' $J\in \mathbb{N}_+$, there exists a ReLU neural network $\hat{f}:\mathbb{R}^{(1+\memory)d}\rightarrow\mathbb{R}^D$ satisfying the uniform estimate
\[
    \max_{x\in \mathcal{K}_{[0:\memory]}}\,
            \big\|
                    f(x)
                -
                    \hat{f}(x)
            \big\|
        <
            \varepsilon
.
\]
Moreover, the depth and width of $\hat{f}$ are:
\begin{enumerate}
    \item \textbf{Depth:} $D\,\big(28\,N^2 \,J\big)$,
    \item \textbf{Width:} $D\,\big((1+\memory)d(D-1)+ 9(\lceil (\varepsilon^{-1}\,C)^{1/(2NJ)}\rceil - 1)+2N-8\big)$,
    \item \textbf{Number of trainable parameters:} $
                    % Correction Factor
                            \big(
                                (11/4)\,(1+\memory)dD -1
                            \big)
                            % Multiple from Depth
                            \\
                            \quad \times
                            \big(
                                28\,N^2 \,J + 1
                            \big)
                            % Parameters from Width
                            \Big(
                                (1+\memory)d(D-1)
                                    +
                                9\big(\lceil (\varepsilon^{-1}\,C)^{1/(2NJ)}\rceil - 1\big)+2N-8
                            \Big)^2
    +
        2(1+\memory)\,d + 2D$
\end{enumerate}
where $C>0$ is a constant independent of $J$ and of $\varepsilon$.
\end{lemma}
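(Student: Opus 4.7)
The plan is to decompose $f$ into its $D$ scalar coordinate functions $f_1, \dots, f_D$, apply Lemma~\ref{lem:ReLU_Approximation} in case (iii) to each, and assemble the resulting scalar approximants into a single vector-valued ReLU network by a serial-with-passthrough composition.

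For each coordinate $i=1,\dots,D$, the restriction $f_i|_{\mathcal{K}_{[0:\memory]}}$ is a completely arbitrary real-valued function on the $N$-point set $\mathcal{K}_{[0:\memory]} \subseteq \mathbb{R}^{(1+\memory)d}$. Applying Lemma~\ref{lem:ReLU_Approximation}(iii) with input dimension $d^\star = (1+\memory)d$, scalar target, depth hyperparameter $J$, and tolerance $\varepsilon/\sqrt{D}$ produces a scalar ReLU network $\hat{f}_i : \mathbb{R}^{(1+\memory)d} \to \mathbb{R}$ of depth $28N^2 J$ and width bounded by $\lceil(\sqrt{D}\,\varepsilon^{-1}C)^{1/(2NJ)}\rceil + 2N - 9$ that satisfies $|f_i(x) - \hat{f}_i(x)| < \varepsilon/\sqrt{D}$ for every $x \in \mathcal{K}_{[0:\memory]}$. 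By Pythagoras, assembling them coordinatewise into $\hat{f}(x) = (\hat{f}_1(x), \dots, \hat{f}_D(x))$ yields $\|f(x) - \hat{f}(x)\| < \varepsilon$ on $\mathcal{K}_{[0:\memory]}$, so it only remains to realise this assembly as a single ReLU network respecting the stated architecture.

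I would realise the assembly via $D$ serial stages. At stage $i$, the network (a) carries the original input $x$ and the already-computed outputs $\hat{f}_1(x), \dots, \hat{f}_{i-1}(x)$ through the intervening layers using the standard ReLU identity $z = \operatorname{ReLU}(z) - \operatorname{ReLU}(-z)$, (b) computes $\hat{f}_i(x)$ in parallel on the preserved input using the scalar architecture from the previous step, and (c) forwards everything to stage $i+1$. Stacking $D$ such stages multiplies the depth by a factor of $D$, recovering the bound $D(28 N^2 J)$, and inflates the per-layer width by the additive overhead $(1+\memory)d(D-1)$ for passing the input through the $D-1$ downstream stages, together with the scalar-width term $9(\lceil(\varepsilon^{-1}C)^{1/(2NJ)}\rceil - 1)$ and the offset $2N - 8$ inherited from the internal structure of the Shen--Yang--Zhang construction underlying Lemma~\ref{lem:ReLU_Approximation}(iii); the outer factor of $D$ arises from the fact that this additive overhead must be sustained across all $D$ serial stages. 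The parameter count then follows by bounding the weight-matrix sizes layer by layer, giving the dominant term (depth) $\times$ (width)$^2$ with the coefficient $(11/4)(1+\memory)dD - 1$ and the additive $2(1+\memory)d + 2D$ contributions from the input and output encoding layers.

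The main obstacle will be the careful bookkeeping of the passthrough overhead: the width skeleton must be homogenised across stages so that the affine maps compose correctly, which in turn requires padding each scalar subnetwork with zero-weight neurons to the common maximum width. This padding is standard but must be accounted for exactly in order to match the stated constants rather than merely universal multiplicative factors. Once the architecture is fixed, the approximation guarantee and the parameter counts are purely combinatorial consequences of the coordinatewise scalar bounds established at the first step.
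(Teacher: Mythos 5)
Your plan is circular at its core step. You invoke ``Lemma~\ref{lem:ReLU_Approximation}(iii)'' as the scalar primitive to be parallelised, but Lemma~\ref{lem:ReLU_Approximation} as stated covers only cases (i) and (ii), namely $C^k$ and H\"older extensions. The third row of Table~\ref{tab:ReLU_Rates_Summary} --- no regularity, finite $\mathcal{K}$ --- is not a case of Lemma~\ref{lem:ReLU_Approximation}; it is precisely the content of the present Lemma~\ref{lem:ReLUMLPFiniteSetsNoRegularityTarget}, recorded in the table for later unified reference. So when you write ``apply Lemma~\ref{lem:ReLU_Approximation}(iii) to each coordinate $f_i$,'' you are assuming the scalar instance of the very statement you are asked to prove. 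Even if one tried to reach past the Lemma to the underlying citation (the Lu--Shen--Yang--Zhang result), that result approximates \emph{polynomials} of fixed degree by ReLU networks --- it says nothing about arbitrary functions, and $f$ here has no regularity whatsoever.

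The missing idea is the bridge from ``arbitrary $f$ on an $N$-point set'' to ``something a ReLU net can approximate efficiently.'' The paper supplies it by constructing a polynomial interpolant of degree $2N$ --- a squared-Euclidean Lagrange interpolant
\[
F(u)=\sum_{\tilde n=1}^N \Biggl(\prod_{n\neq\tilde n}\frac{\|u^{(n)}-u\|^2}{\|u^{(n)}-u^{(\tilde n)}\|^2}\Biggr) w^{(n)}
\]
that matches $f$ \emph{exactly} on the $N$ (affinely renormalised) data points. This reduces the task to approximating a degree-$2N$ polynomial, which \citep[Proposition 4.1]{LuShenYangZhang_2021_UATRegularTargets} handles with the stated depth/width trade-off. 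The $N^2$ factor in the depth and the $2NJ$ exponent in the width are exact signatures of that polynomial's degree; your argument has no mechanism to produce them. Your ``serial with passthrough'' assembly is a reasonable alternative to the paper's parallelisation (which uses \citep[Proposition 5]{cheridito2021efficient}), and the affine rescaling to the unit cube via Jung's theorem is something you'd also need, but neither can rescue the argument without the interpolating-polynomial step. Fix the circularity by inserting that construction and you will have the right skeleton.
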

\begin{proof}\textbf{of Lemma~\ref{lem:ReLUMLPFiniteSetsNoRegularityTarget}}
The proof consists of several \textit{Steps}.\\
\textbf{Step 1: Re-scaling to the unit cube}
\hfill\\
We begin by enumerating $\mathcal{K}_{[0:\memory]} = \big\{x_{[0:\memory]}^{(n)}\big\}_{n=0}^{N-1}$.  
Clearly, there exists some {invertible} affine maps $A:\mathbb{R}^{(1+\memory)d}\rightarrow \mathbb{R}^{(1+\memory)d}$ and $\tilde{A}:\mathbb{R}^D\rightarrow \mathbb{R}^D$ ``re-normalizing the linearized data to the unite cubes''; i.e.\ satisfying
\begin{equation}
\label{PROOF_lem:UA_Finite_Data__renormalization}
    A\big(
    \{x^{(n)}\}_{n=1}^{N}
    \big) \subseteq [0,1]^{(1+\memory)d}
\qquad 
\mbox{ and }
\qquad 
    \tilde{A}\big(
    \{f(x^{(n)})\}_{n=1}^{N}
    \big) \subseteq [0,1]^D
.
\end{equation}
Accordingly, for each $n=1,\dots,N$ we label these ``re-normalized datasets'' by
\begin{equation}
\label{PROOF_lem:transformed_data}
{u^{(n)}}  \eqdef A (x^{(n)})
\qquad 
\mbox{ and }
\qquad 
{w^{(n)}} \eqdef \tilde{A} (f(x^{(n)}))
.
\end{equation}
Furthermore, Jung's Theorem, see e.g.~\citep[Theorem A]{LangSchroeder_Jung_1997_AnnGlobAnalGeom} (in the case where the Alexandrov curvature $\kappa=0$), implies that there exists $y_0\in \mathbb{R}^D$ such that
\begin{equation}
\label{eq:Jungs_Theorem_Simple_AffineMap}
    \tilde{A}(y)
        =
        \frac1{\operatorname{diam}(f(\mathcal{K}_{[0:\memory]}))}
        \,
        \biggl(
            \frac{2(1+\memory)d+2}{
                (1+\memory)d
                }
        \biggr)^{1/2}
        \,
        (y-y_0)
    \eqdef 
        D_{\tilde{A}}y
        +
        \tilde{a}
\end{equation}
for each $y\in \mathbb{R}^D$.  A similar expression of $A$ is also implied by Jung's Theorem.  In particular, the affine map $\tilde{A}=D_{\tilde{A}}\cdot + \tilde{a}$ (resp. $A=D_A\cdot + a$) is defined by multiplication against a square diagonal matrix $D_{\tilde{A}}$ (resp. $D_A$) and a shift by a vector $\tilde{a}\in \mathbb{R}^{(1+\memory)d}$ (resp. $a\in \mathbb{R}^D$) and therefore 
\begin{equation}
\label{eq:sparseparametercount_shiftandscale}
        \|\tilde{A}\|_0+\|\tilde{a}\|_0 \le 2(1+\memory)d
    \mbox{ and }
        \|A\|_0+\|a\|_0 \le 2D
.
\end{equation}
We will come back the the parameter count in~\eqref{eq:sparseparametercount_shiftandscale} towards the end of the proof.
\hfill\\
\textbf{Step 2: A polynomial extension of the target function on the transformed dataset}
\hfill\\
Note that the squared Euclidean norm $\|\,\cdot\,\|$ is a polynomial function.  Define the polynomial function $F:\mathbb{R}^{(1+\memory)d}\rightarrow \mathbb{R}^{D}$ of degree $2N$ by 
\[
\begin{aligned}
    F(u) 
        & \eqdef 
    \sum_{\tilde{n}=1}^N
        \left(
            \prod_{
            \underset{n\neq \tilde{n}}{n=1,\dots,N}
            }\,
                \Biggl(
                    \frac{
                        \|{u^{(n)}} - u\|^2
                    }{
                        \|{u^{(n)}}-u^{(\tilde{n})}\|^2
                    }
                \Biggr)
        \right)
            \cdot
            {w^{(n)}}
    \\
        & = 
    \sum_{\tilde{n}=1}^N
        \Biggl(
            c_{\tilde{n}}\,
            \prod_{
            \underset{n\neq \tilde{n}}{n=1,\dots,N}
            }\,
                \biggl(
                        \sum_{k=1}^{d^{\star}}\,
                            (u^{(n)}_k-u_k)^2
                \biggr)
        \Biggr)
            \cdot
            {w^{(n)}}
    ;
\end{aligned}
\]
where $c_{\tilde{n}}\eqdef \prod_{n=1,\dots,N;\,n\neq \tilde{n}}\,(\|{u^{(n)}}-u^{(\tilde{n})}\|^2)^{-1}$.
By construction, the polynomial function $F$ interpolates the re-normalized datasets in~\eqref{PROOF_lem:transformed_data}; that is: for each $n=1,\dots,N$ we have
\begin{equation}
\label{PROOF_lem:interpolator__data}
        F({u^{(n)}}) 
    = 
        {w^{(n)}} 
.
\end{equation}
Depending now on a few parameters, we approximate the function $F$ by a ``small'' ReLU feed-forward neural network.  
\hfill\\
\textbf{Step 3: Efficient approximation of the target functions polynomial extension}
\hfill\\
Fix a ``width hyperparameter'' $w$ and a ``depth hyperparameter'' $J$, both of which are positive integers and such that the value of $w$ will be determined at the end of the proof.
By construction, each of the components $F(x)_i,i=1,\dots, D$ of the function $F$ are polynomial.  
Thus, \citep[Proposition 4.1]{LuShenYangZhang_2021_UATRegularTargets} implies that for each $i=1,\dots,D$ there are deep ReLU networks $f^{(i)}:\mathbb{R}^{(1+\memory)d}\rightarrow \mathbb{R}$ each having width $9w+(2N)-8$ and depth $7(2N)^2J$, and each satisfying the uniform estimate
\begin{equation}
\label{PROOF_lem:polynomial_estimates}
    \max_{i=1,\dots , D}\,
        \max_{x\in [0,1]^{(1+\memory)d}}\, 
            \abs{f^{(i)}(x) - [F(x)]_i} 
    \le 
        C_N
        \,
        \frac1{
            (w+1)^{(2N)\,J}
        }
;
\end{equation}
where $C_N\eqdef 9(2N) \ge 0$; in particular, $C_N$ is independent of $w$ and $J$.  
We would like to efficiently ``parallelize'' these deep feedforward networks; that is, we seek a deep ReLU feedforward neural network $\tilde{f}:\mathbb{R}^{(1+\memory)d}\rightarrow \mathbb{R}^D$ satisfying
\begin{equation}
\label{PROOF_lem:parallelization}
        \tilde{f}(u) 
    = 
        \sum_{i=1}^{D}\, 
        f^{(i)}(u)\cdot e_i,
\end{equation}
for all $u\in \mathbb{R}^{(1+\memory)d}$, where $e_1,\dots,e_{D}$ denotes the standard orthonormal basis of $\mathbb{R}^{D}$.  
Since the (activation) function $\operatorname{ReLU}$ has the $2$-identity property (see \citep[Definition 4]{cheridito2021efficient}) then \cite[Proposition 5]{cheridito2021efficient} implies that there exists a deep ReLU feedforward neural network $f:\mathbb{R}^{(1+\memory)d}\rightarrow \mathbb{R}^{D}$ satisfying~\eqref{PROOF_lem:parallelization} and whose
\[
\begin{aligned}
\mbox{\textit{\textbf{Depth}}} & \mbox{ is at-most }  D\,\big(
                                                    28\,N^2 \,J
                                                \big)
\\
\mbox{\textit{\textbf{Width}}} & \mbox{ is at-most }  D\,\big(
                                                (1+\memory)d(D-1)
                                                        +
                                                    9w+2N-8
                                                \big)
\\
\mbox{\textit{\textbf{N. Trainable parameters}} } & \le \,
                            % Correction Factor
                            \big(
                                (11/4)\,(1+\memory)dD -1
                            \big)
                            % Multiple from Depth
                            \\
                            & \quad \times
                            \big(
                                28\,N^2 \,J + 1
                            \big)
                            % Parameters from Width
                            \big(
                                (1+\memory)d(D-1)
                                    +
                                9w+2N-8
                            \big)^2
.
\end{aligned}
\]
Therefore, together~\eqref{PROOF_lem:polynomial_estimates} and~\eqref{PROOF_lem:parallelization} imply that
\begin{equation}
\begin{aligned}
\label{PROOF_lem:polynomial_estimates__parallelized__Extension}
        \max_{u\in [0,1]^{(1+\memory)d}}\, 
            \Big\|
                \tilde{f}(u) - F(u)
            \Big\|
    = &
        \max_{u\in [0,1]^{(1+\memory)d}}\, 
            \biggl\|
                \sum_{i=1}^D
                    \, 
                    (f^{(i)}(u) - [F(u)]_i)e_i
            \biggr\| 
    \\
    \le & 
        \max_{u\in [0,1]^{(1+\memory)d}}\, \sum_{i=1}^{D}
            \abs{f^{(i)}(u) - [F(u)]_i}
    \\
    \le &
        D\,
        \max_{u\in [0,1]^{(1+\memory)d}}\, 
            \Big\|
                f(u) - F(u)
            \Big\|
    \\ 
    \le & 
        D
        N
        C_N
        \,
        \frac1{
            (w+1)^{2N J}
        }
    .
\end{aligned}
\end{equation}
Restricting the estimate in~\eqref{PROOF_lem:polynomial_estimates__parallelized__Extension} from all of $[0,1]^{(1+\memory)d}$ down to the subset $\{u^{(n)}\}_{n=1}^N \subseteq [0,1]^{(1+\memory)d}$ and recalling the interpolation property of $F$ on the pairs $\{(u^{(n)},w^{(n)})\}_{n=1}^N$ in~\eqref{PROOF_lem:interpolator__data}, we deduce that
\begin{equation}
\begin{aligned}
\label{PROOF_lem:polynomial_estimates__parallelized}    
    \max_{n=1,\dots,N}\, 
        \|\tilde{f}(u^{(n)}) - w^{(n)}\|
    \le & 
        C_{N,D}
        \,\,
        \frac1{
            (w+1)^{2N\,J}
        }
    ,
\end{aligned}
\end{equation}
where $C_{N,D}\eqdef DN C_N $ and we note that the constant $C_{N,D}> 0$ is independent of $w$ and of $J$.

Next, we observe that the pre-composition and post-composition of any deep ReLU neural network by invertible affine self-maps is again a deep ReLU neural network of the same depth and width.  Therefore, 
\[
        \hat{f}
    \eqdef 
        \tilde{A}^{-1}\circ \tilde{f}\circ A
\]
is a ReLU neural network with the same depth and width as $\tilde{f}$.  Furthermore, is defined by at-most
\begin{equation}
\label{eq:Parameters_flipped_map}
        P
    +
        2(1+\memory)\,d + 2D
\end{equation}
trainable parameters, due to the sparse representation of $\tilde{A}^{-1}$ and $A$ and the parameter count in~\eqref{eq:sparseparametercount_shiftandscale}; here $P$ denotes the number of trainable parameters of $\tilde{f}$.  

Putting it all together, we find that
\allowdisplaybreaks
\begin{align}
\notag
        \max_{n=1,\dots,N}\,
            \|\hat{f}(x_n)-f(x_n)\|
    = & 
        \max_{n=1,\dots,N}\,
            \|\tilde{A}^{-1}\circ \tilde{f}\circ A(x_n)-f(x_n)\|
\\
\notag
    = & 
        \max_{n=1,\dots,N}\,
            \|\tilde{A}^{-1}\circ \tilde{f}\circ A(x_n)-\tilde{A}^{-1}\circ \tilde{A}\circ f(x_n)\|
\\
\notag
    = & 
        \max_{n=1,\dots,N}\,
            \|\tilde{A}^{-1}\circ \tilde{f}(u_n)-\tilde{A}^{-1}(w^{(n)})\|
\\
\notag
    \le & 
        \operatorname{Lip}(\tilde{A}^{-1})
        \max_{n=1,\dots,N}\,
            \|\tilde{f}(u_n)-w^{(n)}\|
\\
\notag
    \le & 
        \operatorname{Lip}(\tilde{A}^{-1})
        C_{N,D}
        \,\,
        \frac1{
            (w+1)^{2N\,J}
        }
\\
\label{eq:Simplifiation_Via_ClosedFormExpressionUsingJungsTheorem}
    = & 
        \operatorname{diam}(f(\mathcal{K}_{[0:\memory]}))
        \,
        \Big(
            \frac{
                (1+\memory)d
                }{2(1+\memory)d+2}
        \Big)^{1/2}
        \,
        C_{N,D}
        \,\,
        \frac1{
            (w+1)^{2N\,J}
        }
\\
\label{eq:justlabelingconstants}
    = & 
        C
        \,
        \frac1{
            (w+1)^{2N\,J}
        }
\end{align}
where~\eqref{eq:Simplifiation_Via_ClosedFormExpressionUsingJungsTheorem} follows from the expression for $\tilde{A}$ in~\eqref{eq:Jungs_Theorem_Simple_AffineMap} which implies the expression and Lipschitz constant of its inverse $\tilde{A}^{-1}(w)\mapsto
\operatorname{diam}(f(\mathcal{K}_{[0:\memory]}))
\,
\Big(
    \frac{
        (1+\memory)d
        }{2(1+\memory)d+2}
\Big)^{1/2}
\,
y + y_0
$ and where~\eqref{eq:justlabelingconstants} follows by definition of the constant $C\eqdef \operatorname{diam}(f(\mathcal{K}_{[0:\memory]}))
        \,
        \Big(
            \frac{
                (1+\memory)d
                }{2(1+\memory)d+2}
        \Big)^{1/2}\,C_{N,D}$.  We note that $C> 0$ and that it does not depend on $J$ nor does it depend on $w$.
Upon setting $w\eqdef \lceil (\varepsilon^{-1}\,C)^{1/(2NJ)}\rceil - 1$, 
the right-hand side of~\eqref{eq:justlabelingconstants} is bounded-above by $\varepsilon$ and we obtain 
the conclusion and $\hat{f}$ is as in Table~\ref{tab:ReLU_Rates_Summary} case (iii).
\end{proof}

The diagram in Figure~\ref{fig:Thrm_Reference} is used to illustrate the workflow used in the proof of Theorem~\ref{theorem:optimal_GDN_Rates__ReLUActivation}.
\begin{figure}[H]
    \centering
    \begin{tikzcd}
    \mathcal{K}_{[0:\memory]} \arrow[r, hook] \arrow[d, "\varphi", red] & \mathcal{N}^{1+\memory} \arrow[r, "f"] \arrow[d, "\varphi", red]
    & \mathcal{M} \\
    \prod_{m=0}^M\,\operatorname{Log}_{\bar{x}_m}(\widetilde{\mathcal{K}}_{m}) \arrow[r, hook] & \mathbb{R}^{(1+\memory)d}\, \arrow[r, "\tilde{f}"] 
    & \mathbb{R}^{D} \arrow[u, "\rho", blue]
    \end{tikzcd}
    \caption{Diagram chase in the proof of Theorem~\ref{theorem:optimal_GDN_Rates__ReLUActivation}.}
    \label{fig:Thrm_Reference}
\end{figure}

\begin{proof}\textbf{of Theorem}~\ref{theorem:optimal_GDN_Rates__ReLUActivation} 
The proof is divided into three steps, highlighted in the following.\\
{We first begin by introducing some convenient notation.  Fix $x^{\star}\in \mathcal{N}$, $y^{\star}\in \mathcal{Y}$, and define
\[
\begin{aligned}
\varphi:\mathcal{N}^{1+\memory}\ni
x_{[0:\memory]} & \mapsto \big(\operatorname{Log}_{\bar{x}_m}^h\circ \iota_{\mathcal{N},\bar{x}_m}(x_m)\big)_{m=0}^M \in \mathbb{R}^{(1+\memory)D}\\
\rho:\mathbb{R}^d\ni y & \mapsto \operatorname{Exp}^g_{\bar{y}}\circ \iota^{-1}_{\mathcal{M},y}(y) \in \mathcal{M}
.
\end{aligned}
\]
where $\bar{x}_{[0:\memory]}$ and $\bar{y}$ are defined as in~\eqref{eq:HopfRinow_Encoding}.
}

\noindent\textbf{Step 0: Preliminary definitions}\hfill\\
Fix a non-empty compact subset $\mathcal{K}_{[0:\memory]} \subseteq \mathcal{N}^{1+\memory}$, a map $f : \mathcal{N}^{1+\memory} \rightarrow \mathcal{M}$, and some pre-specified ``approximation error'' $0< \varepsilon \leq 1$. Chasing the diagram in Figure~\ref{fig:Thrm_Reference}, we define the following subset of $\mathcal{M}$:
\begin{equation*}
\widetilde{\mathcal{K}} \eqdef \rho\biggl(\Big\{ u\in \mathbb{R}^{D}:\, \inf_{v\in \rho^{-1} \circ f(\mathcal{K}_{[0:\memory]})}\,\|u-v\|\le 1
    \Big\}\biggr)
,
\end{equation*}
which is the closure of the $1$-fattening/thickening of the set $\rho^{-1}\circ f(\mathcal{K}_{[0:\memory]})$ with respect to the Euclidean distance on $\mathbb{R}^D$. By construction, $\widetilde{\mathcal{K}}$ is a compact subset of $\mathcal{M}$, since $\{ u\in \mathbb{R}^{D}:\, \inf_{v\in \rho^{-1} \circ f(\mathcal{K}_{[0:\memory]})}\,\|u-v\|\le 1\}$ is compact in $\mathbb{R}^D$ and $\rho$ is a homeomorphism, {$\tilde{\mathcal{K}}$} contains $\rho^{-1} \circ f(\mathcal{K}_{[0:\memory]})$, and it does not depend on the approximation error $\varepsilon$.
\\
\textbf{Comment:} \textit{The role of the set $\widetilde{\mathcal{K}}$ is to define a compact region in $\mathcal{M}$, containing what will be the image of our approximation of $\rho^{-1}\circ f$ on $\mathcal{K}_{[0:\memory]}$, in which the local Lipschitz constant of the map $\rho$ may be controlled independently of $\varepsilon$.  Note, we need to do so, since $\rho$ is not globally Lipschitz.}\\
\noindent\textbf{Step 1: Reduction to a function approximation problem between Euclidean spaces}\hfill\\
For any map $F:\mathbb{R}^{(M+1) \,{D}}\rightarrow \mathbb{R}^{{d}}$
, the following preliminary estimates holds true. Since $\rho$ and $\varphi$ are global diffeomorphisms, then, in particular, they admit smooth inverses, and these smooth inverses are respectively defined on all of $\mathcal{M}$ and over all of $\mathcal{N}$. 

Chasing the diagram in Figure~\ref{fig:Thrm_Reference}, again, we define the induced maps $\tilde{f}:\mathbb{R}^{(1+\memory)D}\rightarrow \mathbb{R}^{{d}}$ and $\tilde{{F}}:\mathcal{N}^{1+\memory}\rightarrow \mathcal{M}$ by 
\[
\begin{aligned}
        \tilde{f}\eqdef \rho^{-1}\circ f\circ \varphi^{-1}
    \mbox{ and }
        \tilde{{F}}\eqdef \rho\circ {F}\circ \varphi
    .
\end{aligned}
\]
\noindent We deduce that:
\allowdisplaybreaks
\begin{align}
\notag
        \sup_{x_{[0:\memory]}\in \mathcal{K}_{[0:\memory]} } \, 
            d_g\big(f(x),\tilde{{F}}(x)\big)
        = & 
        \sup_{x_{[0:\memory]}\in \mathcal{K}_{[0:\memory]} } \, 
            d_g\big(
                    (\rho\circ \rho^{-1})\circ f\circ (\phi^{-1}\circ \phi)(x)
                ,
                    (\rho\circ \rho^{-1})\circ \tilde{{F}}\circ (\phi^{-1}\circ \phi)(x)
            \big)
\\
\notag
    = & 
        \sup_{x_{[0:\memory]}\in \mathcal{K}_{[0:\memory]} } \, 
            d_g\big(
                    \rho\circ \tilde{f}\circ \phi(x)
                ,
                    (\rho\circ \rho^{-1})\circ \tilde{{F}}\circ (\phi^{-1}\circ \phi)(x)
            \big)
\\
\notag
    = & 
        \sup_{x_{[0:\memory]}\in \mathcal{K}_{[0:\memory]} } \, 
            d_g\big(
                    \rho\circ \tilde{f}\circ \phi(x)
                ,
                    \rho\circ (\rho^{-1}\circ \rho)\circ {F} \circ (\phi \circ \phi^{-1})\circ \phi(x)
            \big)
\\
\notag
    = & 
         \sup_{x_{[0:\memory]}\in \mathcal{K}_{[0:\memory]} } \, 
            d_g\big(
                    \rho\circ \tilde{f}\circ \phi(x)
                ,
                    \rho\circ {F}\circ \phi(x)
            \big)
\\
\label{eq:1}
    = & 
        \sup_{u_{[0:\memory]}\in \phi ( \mathcal{K}_{[0:\memory]} )}\, 
            d_g\big(
                    \rho\circ \tilde{f}(u)
                ,
                    \rho\circ {F}(u)
            \big)
    .
\end{align}
Notice that if $g$ is any function satisfying the following uniform estimate on $\phi(\mathcal{K}_{[0:\memory]})$
\begin{equation}\label{eq:Prior_g_approximation}
\sup_{u \in \varphi(\mathcal{K}_{[0:\memory]})}\|\tilde{f}(u)-g(u) \| \leq 1,  
\end{equation}
then, ${F}\circ \varphi(\mathcal{K}_{[0:\memory]})\subseteq \rho^{-1}[\widetilde{\mathcal{K}}]$, and since $\rho$ is bijective, this implies that $\rho\circ {F} \circ \varphi(\mathcal{K}_{[0:\memory]}) \subseteq \widetilde{\mathcal{K}}$. Moreover, by construction, $\rho\circ\tilde{f} \circ \varphi(\mathcal{K}_{[0:\memory]}) \subseteq \widetilde{\mathcal{K}}$. Therefore the compact set $\widetilde{\mathcal{K}}$ contains $\rho \circ {F}(\phi ( \mathcal{K}_{[0:\memory]} )) \bigcup \rho\circ \tilde{f}(\phi (\mathcal{K}_{[0:\memory]} ))$.
Since smooth functions are locally-Lipschitz and since $\rho$ is smooth, then the restriction $\rho\vert {\widetilde{\mathcal{K}}}$ of $\rho$ to the compact set $\widetilde{\mathcal{K}}$ is Lipschitz.  
Denote its local-Lipschitz constant of $\rho$ thereon by $\operatorname{Lip}(\rho\vert {\widetilde{\mathcal{K}}})$.  Thus, under the Assumption that ${F}$ satisfies~\eqref{eq:Prior_g_approximation}, the right-hand side of~\eqref{eq:1} can be bounded-above as
\allowdisplaybreaks
\begin{align}
\label{eq:2}
    \sup_{u_{[0:\memory]}\in \phi ( \mathcal{K}_{[0:\memory]} )}\, 
            d_g\big(
                    \rho\circ \tilde{f}(u)
                ,
                    \rho\circ {F}(u)
            \big)
    \le & 
        \operatorname{Lip}\big(
                \rho\vert {\widetilde{\mathcal{K}}}
        \big)
        \,
        \sup_{u_{[0:\memory]}\in \phi ( \mathcal{K}_{[0:\memory]} )}\, 
            \|
                    \tilde{f}(u)
                -
                    {F}(u)
            \|
    .
\end{align}
Note that $\operatorname{Lip}(
                    \rho\vert{\widetilde{\mathcal{K}}}
\big)$ cannot be $0$ since it is a diffeomorphism and $\mathcal{M}$ was assumed to be of dimension at-least $1$; thus, set $
\bar{\varepsilon}
\eqdef 
\varepsilon
\,
\min\big\{
1
,\,
            {\operatorname{Lip}\big(
                    \rho
                \vert
                    \widetilde{\mathcal{K}}
            \big)}^{-1}
\big\}$.  

\noindent\textbf{Step 2: Universal approximation by ReLU feedforward Neural Networks:}\hfill\\
By Lemma~\ref{lem:ReLU_Approximation}, there exists a ReLU feedforward neural network
${F}:\mathbb{R}^{(M+1)\,{D}}\rightarrow \mathbb{R}^{{d}}$ satisfying the following uniform estimate: 
\begin{equation}
\label{eq:NN_Bound_OtherPaper}
        \sup_{u_{[0:\memory]}\in \phi ( \mathcal{K}_{[0:\memory]} )}\,
            \|
                    \tilde{f}(u)
                -
                    {F}(u)
            \|
    \le 
        \bar{\varepsilon}
    .
\end{equation}
In particular,~\eqref{eq:NN_Bound_OtherPaper} implies that ${F}$ satisfies~\eqref{eq:Prior_g_approximation} since $\varepsilon\le 1$.  
Inputting the estimate in~\eqref{eq:NN_Bound_OtherPaper} into the right-hand side of~\eqref{eq:2}, and thus~\eqref{eq:1}, yields 
\[
\begin{aligned}
        \sup_{x_{[0:\memory]}\in \mathcal{K}_{[0:\memory]} } \, 
            d_g\big(
                    f(x)
                ,
                   \tilde{{F}}(x)
            \big)
    \le &
        \operatorname{Lip}(\rho\vert \widetilde{\mathcal K})\, 
            \varepsilon 
        \min\Big\{
                1
            ,
                \frac1{\operatorname{Lip}(\rho\vert \widetilde{\mathcal{K}})}
            \big\}
    \\
    = &\,
        \varepsilon 
            \min\{
                \operatorname{Lip}(\rho\vert \widetilde{\mathcal{K}})
            ,
                1
            \}
    \\
    \le &\, 
        \varepsilon
    .
\end{aligned}
\]
Furthermore, the depth and width of $g$ are given in Table~\ref{tab:ReLU_Rates_Summary} with their specifications
$d^{\star}= (1+\memory)\,{D}$, $\varepsilon$ in their notation is set to $\varepsilon\min\{1,\operatorname{Lip}(\rho|\tilde{K}))$, and
$
        \lambda
    \eqdef 
        \operatorname{Lip}_{\alpha}( \rho^{-1}\circ f \circ \varphi^{-1}  \vert \phi(\overline{B(\mathcal{K}_{[0:\memory]},1)})
        )
    ,
$  is the $\alpha$-H\"{o}lder constant of $\rho^{-1}\circ f\circ \varphi^{-1}$ on the $1$-thickening of $\mathcal{K}_{[0:\memory]}$.\\
\indent In the case where $f$ is smooth, then since the composition of smooth functions is again smooth and all smooth functions are locally-Lipschitz then $\tilde{f}$ is smooth on $\mathbb{R}^{(1+\memory)D}$ and it is Lipschitz on the compact set $\varphi(\mathcal{K}_{[0:\memory]})$.  
% {\color{green}{
The depth and width of ${F}$ is recorded in Table~\ref{tab:ReLU_Rates_Summary} with $d^{\star}= (1+\memory)\,{D}$, $\varepsilon$ in their notation is set to $ \varepsilon \min\{1,\operatorname{Lip}(\rho|\tilde{K})\}$, $F= \rho^{-1}\circ f\circ \varphi^{-1}$, and $W$ is any bijective affine self-map on $\mathbb{R}^{(1+\memory)\,{D}}$ sending the compact set $\varphi(
\overline{B(\mathcal{K}_{[0:\memory]},1)}
)$ to $[0,1]^{(1+\memory){D}}$.
\end{proof}

\begin{proof}\textbf{of Corollary}~\ref{cor:Universality}
     Since $\mathcal{N}^{1+\memory}$ endowed with the product metric is Polish, we have that $\mathbb{P}\eqdef (X_1)_{\#}\mu$ is a Radon measure on $\mathcal{N}^{1+\memory}$. We consider the Borel function $\rho^{-1}\circ f:\mathcal{N}^{1+\memory}\to \mathbb R^D$. By Lusin theorem, we may find $\ell_\epsilon: \mathcal{N}^{1+\memory}\to \mathbb R^D$ continuous such that 
     \[
     \mathbb P\left[ 
        z\in \mathcal{N}^{1+\memory};\; \rho^{-1}\circ f(z) = \ell_\varepsilon(z)
     \right] > 1 -\epsilon
     .
     \]
     Therefore, the function $\rho\circ\ell_\epsilon:\mathcal{N}^{1+\memory}\to\mathcal{M}$ is continuous and $\mathbb P [z\in \mathcal{N}^{1+\memory};\; f(z) = \rho\circ\ell_\varepsilon(z) ] > 1 -\epsilon$. Since $\mathbb P$ is Radon, we may find $\mathcal{K}_{\epsilon} \subset \{z\in \mathcal{N}^{1+\memory};\; f(z) = \rho\circ\ell_\varepsilon(z)\}$ compact and such that $\mathbb P[ \mathcal{K}_\epsilon]>1-\epsilon$. 
     Applying Theorem~\ref{cor:Universality}, we obtain a GDN $\Hat{f}:\mathcal{N}^{1+\memory}\to\mathcal M$ such that
     \[
     d_h(\hat{f}(z),\rho\circ \ell_\epsilon(z))<\epsilon, \quad z\in\mathcal K_\epsilon
     .
     \]
     That is, we have
     \[
     d_h(\hat{f}(z),f(z))<\epsilon, \quad z\in\mathcal K_\epsilon.
     \]
     Since trivially, $\mathcal{K}_\epsilon\subset [z\in\mathcal N^{1+\memory};\;d_h(\hat{f}(z),f(z))<\epsilon ]$, the thus result follows. 
\end{proof}

\section{Proof of universality in the dynamic case - Theorem~\ref{thrm:approx_HGCNN}}
\label{s:Proof_Dynamic_Case}

\begin{proof}[{Theorem~\ref{thrm:approx_HGCNN}}]
Fix $Q,J\in \mathbb{N}_+$, define the time horizon $T\eqdef T_{\delta,Q}\eqdef \lfloor \delta^{-Q}\rfloor$, and let $\varepsilon>0$.
\hfill\\\textbf{Step 1: Obtaining proxy target functions via the finite virtual memory of $f$}
\hfill\\
In each case (1), (2), and (3), $f$ was assumed to have finite virtual memory with virtual memory $r\ge 0$ (see Definition~\ref{def:causal_maps}).  Therefore, there is an $\tilde{\memory}\eqdef 
\memory(\varepsilon,T,\mathcal{K})
\in \mathbb{N}$ with $
\tilde{\memory} \lesssim (\varepsilon/2)^{-r}
$ 
and functions $
f_1,\dots f_T \in C(\mathcal{N}^{\memory}, \mathcal{M})$ satisfying
\begin{equation}
\label{eq:causal_maps__insideproofs___epsover2}
    \max_{t \in [[T]]}
    \sup_{x \in \mathcal{K}} \,
        d_g\big(
                f(x)_t
            , 
                f_t(x_{(t-\memory: t]})
        \big)
    <
        \frac{\varepsilon}{2}
.
\end{equation}
Define $\memory\eqdef \max\{T,\tilde{\memory}\}$.
\hfill\\
\textbf{Comment:} \textit{Next, we approximate each $f_t$ with an ``expert'' GDN specialized at that time $t$.}
\hfill\\
\noindent
We consider three cases, depending on which of the assumptions on $f$ held.
\begin{enumerate}
    \item[(1)] If $f$ is $(r,k,\lambda)$-smooth then, 
    each $\{f_t\}_{t=0}^T\in C^{k,\lambda}(\mathcal{N}^{\memory}, \mathcal{M})$,
    \item[(2)] If $f$ is $(r,\alpha,\lambda)$-H\"{o}lder then, 
    each $\{f_t\}_{t=0}^T\in C^{\alpha,\lambda}(\mathcal{N}^{\memory}, \mathcal{M})$,
    \item[(3)] If each $\mathcal{K}_t$ is finite, then no additional condition on the $f_t$ is required.
\end{enumerate}
%%%
\noindent
\textbf{Step 2: Parallelized ``expert'' approximation on separate time-windows}
\hfill\\ 
Fix $\tilde{x}^{\star}\in \mathcal{N}$, {define $r\eqdef 
    \max_{x\in \mathcal{K}_{[0:T]}}
    \max_{t=0,\dots,T}
    \,
    d_h\big(
            x_t
        ,
            \tilde{x}^{\star}
    \big)$ and construct the compact subset $\mathcal{K}^{\star:T}$ of $\mathcal{N}^{1+T}$, containing $\mathcal{K}_{[0:T]}$ as follows:
\[
\mathcal{K}^{\star:T}
\eqdef 
    \begin{cases}
        \big\{
            x_{[0:T]}\in \mathcal{N}^{1+T}
            :
            \,
                \max_{t=0,\dots,T}\,
                    d_h(x_t,x^{\star})\le r
        \big\}
    & \mbox{ if } \#\bigcup_{t=0}^T\, \mathcal{K}_t = \infty 
    \\
        \big\{
            x_{[0:T]}\in \mathcal{N}^{1+T}
            :
            \,
                (\forall s=0,\dots,T)\,
                    x_s\in \cup_{t=0}^T\, \mathcal{K}_t
        \big\}
    & \mbox{ if } \#\bigcup_{t=0}^T\, \mathcal{K}_t < \infty.
    \end{cases}
\]
Observe, that $\mathcal{K}^{\star:T}$ is finite, if $\mathcal{K}_{[0:T]}$ is finite.  Furthermore, by the stars-and-bars problem from elementary combinatorics, we know that each $t\in \{\memory,\dots,T\}$
\begin{equation}
\label{eq:combinatorial_cardinality_bound}
\#K^{\star:T}_{(t-\memory:t]} 
\le 
N^{\star}_M
\eqdef
    \binom{N^{\star}_M + M -1}{M}
\mbox{ and }
    N^{\star:T}_{\memory}
    \eqdef \max_{t=0,\dots,T}\, \mathcal{K}_t
\end{equation}
where $K^{\star:T}_{(t-\memory:t]} = \{(x_{(t-\memory,\dots,t]}:\, \exists z_{[0:T]}\in \mathcal{K}^{\star:T}\, \forall t-\memory\le s\le t\,(x_{(t-\memory:t]})_s=(x_{[0:T]})_s\}$.
}

\noindent
Once for each $t\in [[T]]$, we apply Theorem~\ref{theorem:optimal_GDN_Rates__ReLUActivation}, to deduce that for each $t \in [[T]]$ there is are GDNs $\hat{f}_{\theta_t}$, satisfying 
\begin{equation}
\label{PROOF__thrm:approx_HGCNN__ApplyingApproximationLemma}
{
        \max_{x_{\cdot} \in \mathcal{K}^{\star:T}}\,
        \max_{t=\memory,\dots,T}
        \,
            d_g\big(
                f_t(x_{(t-\memory: t]}
                )
                        ,
                \hat{f}_{\theta_t}(x_{(t-\memory:t]})
            \big)
}       
        < 
            \varepsilon/2
,
    \end{equation}
where, for $t=\memory,\dots,T$, we have set the {distinguished base-point in~\eqref{eq:HopfRinow_Encoding} to be $x^{\star}_m=\tilde{x}^{\star}$, for each $m=0,\dots,\memory$.}  
Furthermore, the depth and width of each GDN $\hat{f}_{\theta_{T-M}},\dots,\hat{f}_{\theta_{T}}$ depends only {on:
\begin{enumerate}
    \item[(1)] \textbf{If $f$ is $(r,k,\lambda)$-Smooth}: the quantity $(r,k,\lambda)$ and on the diameter of $\mathcal{K}^{\star:T}$,
    \item[(2)] \textbf{If $f$ is $(r,\alpha,\lambda)$-H\"{o}lder}: the quantity $(r,\alpha,\lambda)$ and on the diameter of $\mathcal{K}^{\star:T}$,
    \item[(3)] \textbf{If $\mathcal{K}_{[0:T]}$ has finite Cardinality}: on $N^{\star:T}_{\memory}$ 
    which bounds the cardinality of $\mathcal{K}^{\star:T}$ above.
\end{enumerate}
as detailed in Table~\ref{tab:casestudies}, in each respective case.}
Note that each GDN $\hat{f}_{\theta_t}$ has the same domain and each $f_t$ has the same regularity {(i.e.\ as in cases $1$-$3$ above)}; therefore {Theorem~\ref{theorem:optimal_GDN_Rates__ReLUActivation} implies that the depth, width, multi-index $[\mathbf{d}]$, and parameter space $\mathbb{R}^{P([\mathbf{d}])}$ of each $\hat{f}_{\theta_{\memory}},\dots,\hat{f}_{\theta_T}$ are all the same.}
The quantities $\ell$ and $w$ are recorded in Table~\ref{tab:casestudies} with the following relevant constants
\begin{enumerate}
    \item[(i)] \textbf{Smooth case:} Since {$f$ is $(r,k,\lambda)$-smooth, then each $f_t$ belongs to $C^{k,\lambda}(\mathcal{N}^{1+\memory},\mathcal{M})$} define
    \[
            C_{\bar{f}}
        \eqdef 
            \max_{
            \underset{i=1,\dots,D}{t=T-\memory,\dots,T}
            }\, 
                    \frac{
                        \| (\rho^{-1}\circ 
                        f_t
                        \circ \phi^{-1}\circ W_t)_i\|_{C^k([0,1]^{d})}
                    }{
                    \max\{1,
                        \operatorname{Lip}\big(
                            \rho\vert 
                            f_t
                            \circ 
                            \varphi( 
                                \mathcal{K}^{\star:T}
                                )
                        \big)
                    \}
                    }
        ,
    \]
    where {for each $t=\memory,\dots,T$}, $W_{M},\dots,W_T:\mathbb{R}^{(1+\memory)\,d}\to \mathbb{R}^{(1+\memory)\,d}$ are affine bijections satisfying $W_t(\phi(\mathcal{K}^{\star:T}))\subseteq [0,1]^{(1+\memory)\,d}$.  
    {In particular, the definition of $\mathcal{K}^{\star:T}$, Lemma~\ref{lem:regularity_phi} (ii), and Jung's Thereom (see \cite{Jung1901}), there are $\tilde{x}_t\in \mathbb{R}^{(1+\memory)\,d}$ such that: for each $x\in \mathbb{R}^{d(1+\memory)}$
    \begin{equation}
    \label{eq:aff_recentring}
        W_t(x) \eqdef 
        \frac1{C_{\bar{x}}\sqrt{M}\,2r}\, 
        \sqrt{
            \frac{2d(1+\memory)+2}{d(1+\memory)}
        }
        \,(x-\tilde{x}_t)
    .
    \end{equation}}
    \item[(ii)] \textbf{H\"{o}lder case:} {Since $f$ is $(r,\alpha,\lambda)$-H\"{o}lder, then each $f_t$ belongs to $C^{\lambda}_{\alpha}(\mathcal{N}^{1+\memory},\mathcal{M})$}, set
    \[
            C_f
        \eqdef 
        \min_{t=T-\memory,\dots,\memory}\,
            \operatorname{Lip}_{\alpha}( \rho^{-1}\circ 
            f_t \circ 
            \varphi^{-1}  
        \vert 
        \phi(\mathcal{K}^{\star:T}))
    ,
    \]
    \textit{(note this is because we are maximizing $1/\operatorname{Lip}_{\alpha}( \rho^{-1}\circ f_t \circ \varphi^{-1}  \vert \phi(\mathcal{K}^{\star:T}))$ across $t=T-\memory,\dots,T$.}
\end{enumerate}

Combining the estimates in~\eqref{eq:causal_maps} and the family of estimates in~\eqref{PROOF__thrm:approx_HGCNN__ApplyingApproximationLemma} (indexed by $[[T]]$) we deduce that: for each $x\in \mathcal{K}$ and every $t=T-\memory,\dots,T$ the following holds
\begin{equation}
\begin{aligned}
        d_g\big(
            f(x)_t
        ,
            \hat{f}_{\theta_t}(x^{(n)}_{(t-\memory:t]})
        \big)
    \le &\,
        d_g\big(
            f(x)_t
        ,
            f_t(x_{(t-\memory:t]})
        \big)
    \\
    & +
        d_g\big(
            f_t(x_{(t-\memory:t]})
        ,
            \hat{f}_{\theta_t}(x_{(t-\memory:t]})
        \big)
    \\
    < &\,
        \varepsilon/2
        + 
        \varepsilon/2
    .
\end{aligned}
\end{equation}
\hfill\\
\textbf{Step 3: Weaving the approximators together}
\hfill\\
Fix a $Q\in \mathbb{N}_+$, with $Q+P([\bar{J}]) \ge \max\{12,P([\bar{J}])+1\}$.  Upon  applying the ``dynamic weaving lemma'' \cite[Lemma 5]{galimberti2022designing}, we deduce that there exists a $z \in \mathbb{R}^{P([\bar{J}])+Q}$, a linear map $L:\mathbb{R}^{P([\bar{J}])+Q}\rightarrow \mathbb{R}^{P([\bar{J}])}$, and a deep ReLU FFNN $\hat{f}:\mathbb{R}^{P([\bar{J}])+Q}\rightarrow \mathbb{R}^{P([\bar{J}])+Q}$ with 
\begin{enumerate}
    \item \textbf{Width:} $\big(P([\bar{J}]) +Q\big)T + 12$,
    \item \textbf{Depth:} $
        \mathcal{O}\Biggr(
        T\biggl(
        1+\sqrt{T\log(T)}\,\Big(1+\frac{\log(2)}{\log(T)}\,
        \biggl[
            C + \frac{\Big(\log\big(T^2\,2^{1/2}\big)-\log(\delta)\Big)}{\log(2)}
        \biggr]_+\Big)
        \biggr)
        \Biggr)$,
    \item \textbf{N. Parameters:} 
    \[
    \resizebox{1\linewidth}{!}{$
        \mathcal{O}\Bigl(
        T^3(P([\mathbf{d}])+Q)^2\,\left(1+(P([\mathbf{d}])+Q)
        \sqrt{T\log(T)}\,\left(1+\frac{\log(2)}{\log(T)}\,
            \left[
            C_d+
                \frac{\Big(\log\big(T^2\,2^{1/2}\big)-\log(\delta)\Big)}{\log(2)}
            \right]_+\right)\right)
        \Bigr)
    ,
    $}
    \]
\end{enumerate}
satisfying the interpolation
\begin{equation}
\label{PROOF__thrm:approx_HGCNN__Applying_Interpolation}
    \theta_t = L\big(h^{\circ t}(z)\big) \qquad \mbox{ for }t=T-\memory,\dots,T
;
\end{equation}
where $h^{\circ t}$ denotes the $t$-hold composition of $h$ with itself. Upon setting $d^{\star}\eqdef P([\bar{J}])+Q$ we obtain the conclusion.  
\end{proof}

\section{Memory persistence estimates for Volterra kernels}
In this appendix, we derive estimates of the error incurred by truncating the realized path in a Volterra series when predicting its evolution.  We consider three types of Volterra kernels, those which decay fast (exponentially) %, moderately (polynomially), 
and slowly (logarithmically).  
%%%
\begin{lemma}[Rapidly (exponential) vanishing memory]
\label{lem:RapidlyVanishingMemory_exponential_decay}
Suppose there are $C>0$, $0<\alpha{<1}$ such that for every pair of integers $0\le r\le t$ with $t\neq 0$
\[
        \kappa(t,r)
    \le 
        C\,\alpha^{t-r}
    .
\]
For every $t\in \mathbb{N}_+$ and each integer $0\le \memory < t$ one has
\[
                \sum_{r=0}^{t-\memory-1}\,\kappa(t,r) 
    \le 
        \frac{C\,\alpha}{\alpha-1} (\alpha^t-\alpha^{\memory})
\]
\end{lemma}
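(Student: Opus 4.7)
The plan is to substitute the hypothesized exponential upper bound directly into the partial sum and then evaluate the resulting geometric series in closed form. Concretely, I would first apply the pointwise bound $\kappa(t,r)\le C\alpha^{t-r}$ termwise to reduce the claim to estimating $\sum_{r=0}^{t-\memory-1}\alpha^{t-r}$.

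Next, I would re-index with $k=t-r$, which turns the sum into $\sum_{k=\memory+1}^{t}\alpha^{k}$, a finite geometric progression with ratio $\alpha\in(0,1)$. The standard formula gives
\[
\sum_{k=\memory+1}^{t}\alpha^{k}
= \frac{\alpha^{\memory+1}-\alpha^{t+1}}{1-\alpha}
= \frac{\alpha(\alpha^{\memory}-\alpha^{t})}{1-\alpha}
= \frac{\alpha(\alpha^{t}-\alpha^{\memory})}{\alpha-1}.
\]
Note that although $\alpha-1<0$ and $\alpha^{t}-\alpha^{\memory}<0$ (since $t>\memory$ and $\alpha\in(0,1)$), their ratio is positive, so the bound is non-trivial and correctly signed.

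Multiplying by $C$ yields the stated inequality
\[
\sum_{r=0}^{t-\memory-1}\kappa(t,r)\le \frac{C\alpha}{\alpha-1}(\alpha^{t}-\alpha^{\memory}),
\]
which completes the proof. There is no genuine obstacle here: the result is essentially a bookkeeping exercise with a geometric sum, and the only care required is in the index substitution and in observing that the two negative factors on the right-hand side cancel to give a non-negative upper bound, consistent with the non-negativity of the left-hand side.
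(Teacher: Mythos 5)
Your proof is correct and takes exactly the same route as the paper: substitute the pointwise bound $\kappa(t,r)\le C\alpha^{t-r}$, re-index to get the geometric series $\sum_{k=\memory+1}^{t}\alpha^{k}$, and evaluate it in closed form. The sign remark is a harmless but welcome sanity check that the paper leaves implicit.
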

\begin{proof}
We have
\allowdisplaybreaks
\begin{equation}
        \sum_{r=0}^{t-\memory-1}\,
        \kappa(t,r) 
    \le 
        \sum_{r=0}^{t-\memory-1}\,
            C\,\alpha^{t-r}
=
        \frac{C\alpha}{\alpha-1}\,(\alpha^t-\alpha^{\memory})
\label{eq:case_by_base}
.
\end{equation}
\end{proof}

\begin{lemma}[Moderately (Polynomially) vanishing memory]
\label{lem:persistentmemory_polycase}
Suppose there is $\alpha\in\R, C>0$ such that for each pair of integers $0\le r< t$ 
\[
        \kappa(t,r)
    \le 
        C(t-r)^{\alpha}
.
\]
Then, for each $t\in \mathbb{N}_+$ and each integer $0\le \memory \le t$ one has
\begin{equation*}
        \sum_{r=0}^{t-\memory-1}
        \,
        \kappa(t,r)
    \le 
        C
        \,
        (\memory+1)^{\alpha}\,(t-\memory)
\end{equation*}
\end{lemma}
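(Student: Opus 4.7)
The plan is to bound each term in the sum by its maximum value and then count the number of summands. First, I would substitute the assumed polynomial bound $\kappa(t,r) \le C(t-r)^{\alpha}$ into the sum, obtaining
\[
    \sum_{r=0}^{t-\memory-1} \kappa(t,r) \le C \sum_{r=0}^{t-\memory-1} (t-r)^{\alpha}.
\]
Next, I would observe that as the summation index $r$ ranges over $\{0, 1, \ldots, t-\memory-1\}$, the quantity $t-r$ takes values in $\{\memory+1, \memory+2, \ldots, t\}$, so $t-r \ge \memory+1 \ge 1$ throughout the sum.

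Since the regime of interest (the decay condition \textit{(ii)} introduced earlier) concerns $\alpha < 0$, the function $x \mapsto x^{\alpha}$ is monotonically decreasing on $[1,\infty)$. Consequently, $(t-r)^{\alpha}$ attains its largest value over the summation range precisely when $t-r$ is smallest, namely at $t-r = \memory+1$, giving the uniform bound $(t-r)^{\alpha} \le (\memory+1)^{\alpha}$ for every index $r \in \{0, \ldots, t-\memory-1\}$.

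Finally, I would count the summands: there are exactly $t-\memory$ integers in $\{0,1,\ldots,t-\memory-1\}$. Combining this with the pointwise bound in the previous paragraph yields
\[
    \sum_{r=0}^{t-\memory-1} \kappa(t,r) \le C \,(t-\memory)\,(\memory+1)^{\alpha},
\]
which is the desired inequality. No genuine obstacle is anticipated, as the argument is essentially a counting estimate combined with the monotonicity of the power function; the only subtlety is the implicit restriction to $\alpha \le 0$, which is the only regime in which the stated right-hand side is meaningful as a decay estimate and is consistent with the polynomial decay condition \textit{(ii)} used throughout the paper.
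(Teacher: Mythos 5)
Your proof is correct and follows essentially the same route as the paper's: substitute the polynomial bound, recognize that $t-r$ ranges over $\{\memory+1,\ldots,t\}$ (the paper phrases this as the change of variable $u=t-r$), then use monotonicity of $x\mapsto x^{\alpha}$ for $\alpha<0$ to bound each of the $t-\memory$ summands by $(\memory+1)^{\alpha}$. You are also right to flag that the hypothesis $\alpha\in\mathbb{R}$ in the lemma statement is slightly too permissive — the argument (and the paper's own one-line justification "Since $\alpha<0$ then...") requires $\alpha\le 0$, which is guaranteed in the intended use case by decay condition \textit{(ii)} where $\alpha<-1$.
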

\begin{proof}
Employing the growth assumption $\kappa(t,r)\le C(t-r)^{\alpha}$ and the fact that $t-\memory-1<t$ we find that
\allowdisplaybreaks
\begin{align}
        \sum_{r=0}^{t-\memory-1}
        \,
        \kappa(t,r)
    \le & 
        \sum_{r=0}^{t-\memory-1}
        \,
            C(t-r)^{\alpha}
\\
    = &
        {C}\,
        \sum_{u=\memory+1}^{t}
        \,
            u^{\alpha}
.
\end{align}
Since $\alpha<0$ then, $\sum_{u=M+1}^{t}u^{\alpha}\le (\memory+1)^{\alpha}\,(t-\memory)$.
\end{proof}

\section{Experiment Details}
\label{a:Experiment_Details}
We include the details of the experiments in Section~\ref{s:Numerics}. 
We first explicitly describe the algorithms we used to implement generate the sample paths from the Volterra process in~\eqref{eq:Volterra__ExperimentsS}, to compute its Gaussian random projections, and to train the HGN from this data.  
We then provide details on the hardware used to train these models.

\subsection{Experiment and Compute Details}
\label{s:ExperimentDetails}
The architecture used to tackle those problems (identical for all the models) had six layers with a maximum size of 512 in the GDN part and eight layers with a maximum size of 1024 in the hypernetwork (ignoring the input-output layers). Since the base parameter's $T$ value was 200, we had to train 200 GDNs and 1 hypernetwork. Considering the architecture mentioned, we had around 500 thousand parameters in each GDN and 750 million in the hypernetwork to train. Despite their outrageous look, these calculations are highly parallelizable. We trained our models within a reasonable time by exploiting the most basic optimizations and employing the graphic processing unit (GPU) to run the computations within each model in parallel. We know that there are some obvious ideas to improve the performance of these networks, like training GDNs in parallel using multiple GPUs simultaneously. However, there was no need to do that since our training time was short enough, and more GPUs would be needed to achieve that.

We ran experiments on the Vector Institute for Artificial Intelligence's computing cluster. Since each problem (process created with a specific set of parameters) is entirely independent, we used 30 machines, one for each problem in parallel. All the machines had the same config with 6 CPU cores, 1 Nvidia T4 GPU, 20GB of RAM, and 40GB of SSD memory. The run-time limit for the instances was 12 hours, although all machines finished their jobs in less than 8.5 hours. The problem with the base parameter set took about 4 hours and 44 minutes, and the overall average was around 6 hours. Note that these differences might be seen because not all machines were on the same host computing node. Thus, the CPU and RAM models and clock frequencies were not the same for all the machines.

\subsection{Algorithm Descriptions}
\label{a:Experiment_Details__ss:AlgoDescriptions}
The following algorithm is used to generate sample paths of the Volterra process defined in~\eqref{eq:Volterra__ExperimentsS}.  

\begin{algorithm}[H]%[ht]%
\caption{Construct $\boldsymbol{X}$}
\label{alg:get_X}
\begin{algorithmic}
\SetAlgoLined
\Require{Number of training samples $N\in \mathbb{N}_+$, time-horizon $T\in \mathbb{N}_+$, dynamics $\mu$, $\sigma$, and $\varsigma$, ``noise'' parameter $0\le \lambda$, memory $1\le w\le 0$}.
\DontPrintSemicolon
    \State \SetKwBlock{ForParallel}{For $n:0,\dots,N-1$ in parallel}{end}
        \ForParallel{\State 
        \For{$t:0,\dots,T$}{
            \State 
                \If{$t=0$}{
                        \State $x^n_{-1}\leftarrow 0$
                        \State $x^n_0\leftarrow 0$
                        \tcp*{Get Initial States}
                    }
                \Else{
                    \State Sample: $Z\sim N_d(0,I_d)$ \tcp*{Generate Gaussian Noise}
                    \State Sample: $B\sim \operatorname{Binom}(\{0,1\};1/2)$ \tcp*{Generate Hidden Process}
                    \State 
                    $x \leftarrow w \mu(x_{t-1}^n) + (1-w) \mu(x_t^n)$ 
                    \tcp*{Get Drift}
                    \State 
                    $y\leftarrow 
                        (\varsigma(x_t^n)\cdot \sigma + B\,\lambda \,I_d) Z
                    $ 
                    \tcp*{Get Diffusion}
                    \State  $x_{t+1}^n\leftarrow x_t^n + x + y$
                    \tcp*{Update Diffusion}
                }
                \State $x^n\leftarrow (x_t^n)_{t=0}^T$
                \tcp*{Save Sample Path}
            }
        }
    \State  $\boldsymbol{X}\leftarrow \{x^n\}_{n=0}^{N-1}$
    \tcp*{Compile Dataset}
    \State \Return $\boldsymbol{X}$
\end{algorithmic}
\end{algorithm}

The next algorithm (Algorithm~\ref{alg:SampleXstar}) implements the Gaussian projection of the Volterra process in~\eqref{eq:Volterra__ExperimentsS}.  This is given by the closed-form expression derived in~\eqref{eq:GaussianProjection__Experiments}.

\begin{algorithm}[H]%[ht!]%
\caption{{Generate $\Pi_{x_{[0:t]}}$ Given Paths}}
\label{alg:SampleXstar}
\begin{algorithmic}
\SetAlgoLined
\Require Time-Horizon, finite set of paths $\boldsymbol{X}\subseteq \mathbb{R}^{(2+T)d}$, drift $\mu$, diffusion parameters $\sigma$ and $\varsigma$, and a ``randomness'' $\lambda\ge 0$.
\DontPrintSemicolon
    \State \SetKwBlock{ForParallel}{For $x\eqdef x_{[-1:T]}\in \boldsymbol{X}$ in parallel}{end for} 
        \ForParallel{
        \State 
        \For{$t:0,\dots,T$}{
            \State 
                $\mu_{t}^x \leftarrow x_t + \operatorname{Drift}(x_{[t-1:x_t]})$
                \tcp*{Get Mean of Projection}
            \State $
                \sigma_{t}^x 
                    \leftarrow
                \varsigma(x_t)\cdot 
                \sigma^2(\sigma^{-2}(
                    \lambda\, I_d + \varsigma(x_t)\cdot \sigma
                    )^2)^{1/2}
            $
            \tcp*{Get Covariance of Projection}
        }
            \State $
                y^{x}
                    \leftarrow 
                \big(\mu_{t}^x,\operatorname{vec}(\sigma_{t}^x)\big)_{t=0}^T
                $
            \tcp*{Get Outputs}
    }
    \State 
    \Return $\boldsymbol{Z}\leftarrow \{(x,y^x)\}_{x\in \boldsymbol{X}}$ 
    \tcp*{Return Array of Input-Output Pairs $\{(x,y^x)\}_{x\in \boldsymbol{X}}$}
\end{algorithmic}
\caption*{Given a set of paths $\mathbf{X}$ in $\mathbb{R}^{(2+T)d}$, this algorithm returns an array of input-output pairs, whose elements are pairs of paths $x_{[-1:T]}$ in $\mathbf{X}$ paired with the parameters determining the path of Gaussian distributions $y^x\eqdef (\mathbb{Q}_{x_{[-1:t]}})_{t=0}^T$ traced out by sequentially applying the barycenter map to the process $(\mathbb{Q}_{x_{[-1:t]}})_{t=0}^T$.}
\end{algorithm}

Once the input data has been generated using Algorithm~\ref{alg:get_X} and the corresponding Gaussian random projections have been computed using Algorithm~\ref{alg:SampleXstar}, then we have input-output with which the HGN model can be trained.  Observe that the training scheme which we used and  which encodes the proof strategy of the dynamic universal approximation theorem~\ref{thrm:approx_HGCNN}, avoids backpropagating in time (as with RNNs).  Thus, even if the HGN has a recursion, it can be trained without recursion similarly and thus enjoys some of the training stability properties of transformers which RNNs do not share; namely, no backpropagation in time.

\begin{algorithm}[ht]%[H]
\caption{Construct HG-CNN}\label{alg:train_CNO}
\begin{algorithmic}
\SetAlgoLined
\Require A dataset $\boldsymbol{Z}\leftarrow \{(x,y^x)\}_{x\in \boldsymbol{X}}$, GDN depth and widths, (ReLU) hypernetwork dimensions $[\mathbf{d}]$.
\DontPrintSemicolon
    \State \SetKwBlock{ForParallel}{For $t:0,\dots,T$ in parallel}{end}
        \ForParallel{\State $\hat{f}_{\theta_{t}} 
        \leftarrow \underset{\hat{f}_{\theta}\in \mathcal{G}-\mathcal{CNN}_{[S],[L]}}{\operatorname{argmin}}\,
                \,
                \sum_{x\in \boldsymbol{X}}
                \,
                \big\|
                        \hat{f}_{\theta}(x_{[t-1:t]})
                    -
                        y^x_t
                \big\|^2
        $ \tcp*{Optimize G-CNN Nodes}
        \State $z_{t} \leftarrow (\theta_{t},t)$
        \tcp*{Separate Parameters}
        }
    \State \tcc{Create Recurrence/ Encode Causality}
    \State $\hat{h}
        \leftarrow 
            \underset{h \in \mathcal{N}\mathcal{N}^{\operatorname{ReLU}}_{[\mathbf{d}]}}{\operatorname{argmin}}\,
                \sum_{t=0}^T
                \,
                \|h(z_t) - z_{t+1}\|_2^2
        $
    \State \tcc{Server receives parameters of optimized neural filters for each time window}
    \State $L:\mathbb{R}^{P([\mathbf{d}])}\times \mathbb{R}^Q \rightarrow \mathbb{R}^{P([\mathbf{d}])}$ projection onto first component
    % \\
    \State \Return Trained HG-CNN: $(\hat{f},z_0,L)$.
\end{algorithmic}
\end{algorithm}

\subsection{Additional Loss Curves}
\label{a:additional_pots}
We plot the loss curves, in the test set, of a representative subset of the experiments reported in Tables~\ref{tab:loss_mu},~\eqref{tab:loss_memory},~\eqref{tab:loss_lambda},~\eqref{tab:loss_varsigma}, and~\eqref{tab:loss_varsigma}.  
Figure~\ref{fig:summaryplot} shows that the behaviour illustrated in Figures~\ref{fig:gap_HGN} and~\eqref{fig:no_gap__HGN} persists across our experiments.

\begin{figure}[H]%[t!]
    \centering
    \includegraphics[width=1\textwidth]{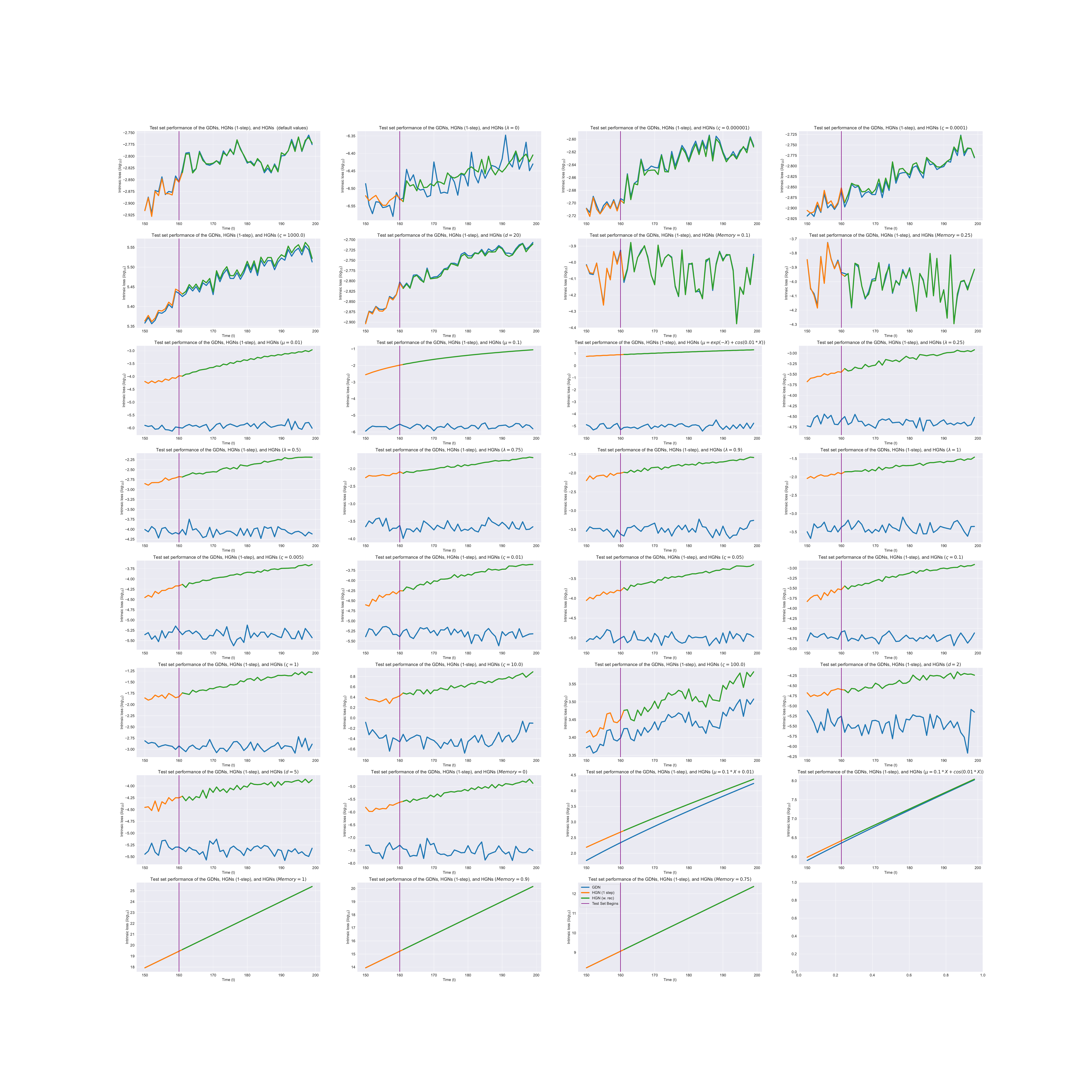}
    \caption{Typical Learning Curves - Including Cases With Exploding Gradients}
    \label{fig:summaryplot}
\end{figure}

\subsection{Exploding Gradients due to Small Eigenvalues}
\label{ss:ExplodingGrads}
There is one additional case, illustrated in Figure~\ref{fig:summaryplot}, where the HNN or HGN suffers from exploding gradients during training.  We included some examples of learning curves illustrating the exploding gradients phenomenon that occurs.  

A prior, there is nothing particular about the target function being learned when this occurs.  For instance, the experiments in Table~\ref{tab:loss_mu} where $x\mapsto \frac1{2}(\frac1{100}-x)$ is essentially the same as when $x\mapsto \frac1{10}\,x + \frac1{100}$; thus both should be equally easy to learn.  However, in the latter experiment, recorded in Table~\ref{tab:exploding_gradients}, the gradients exploded during training, resulting in a loss (both in and out of the sample).   Similarly, the drift $x\mapsto e^{-x}+ \cos(\frac{x}{100})$ and $x\mapsto \frac{x}{10} + \cos(\frac{x}{100}$ are essentially the same; but again the latter is not being learned due to exploding gradients during training (see Table~\ref{tab:exploding_gradients} again) while the former is not due since gradients did not explode (see Table~\ref{tab:loss_mu} again).  Similarly, the value of $w$ is similar to those considered in Table~\ref{tab:loss_memory}; however, the loss in situations where gradients exploded during training is several magnitudes larger.

\begin{table}[H]
\centering
\caption{\textbf{Examples of Exploding Gradients:} 
}
\label{tab:exploding_gradients}
\resizebox{1\textwidth}{!}{% %
\begin{tabular}{@{}lccccc@{}}
\toprule
\textbf{$\mu$} & \textbf{GDN Loss Mean} & \textbf{GDN Loss 95\% C.I.} & \textbf{HGN Loss Mean} & \textbf{HGN Loss 95\% C.I.} \\ \midrule
% 0.1 $x$ + 0.01
$\frac1{10}\,x + \frac1{100}$
& $4.19\times{10^{+3}}$ & $[2.75, 5.64]\times{10^{+3}}$ & $6.22\times{10^{+3}}$ & $[4.24, 8.21]\times{10^{+3}}$ \\
% 0.1 $x$ + cos(0.01 $x$)
$\frac{x}{10} + \cos(\frac{x}{100})$
& $2.82\times{10^{+7}}$ & $[1.92, 3.71]\times{10^{+7}}$ & $2.98\times{10^{+7}}$ & $[2.05, 3.91]\times{10^{+7}}$ \\
\arrayrulecolor{black!30}\midrule
\textbf{$w$} & \textbf{GDN Loss Mean} & \textbf{GDN Loss 95\% C.I.} & \textbf{HGN Loss Mean} & \textbf{HGN Loss 95\% C.I.} \\ \midrule
0.75 & $3.32\times{10^{+11}}$ & $[1.55, 5.09]\times{10^{+11}}$ & $3.32\times{10^{+11}}$ & $[1.55, 5.09]\times{10^{+11}}$ \\
0.9 & $1.42\times{10^{+19}}$ & $[0.45, 2.40]\times{10^{+19}}$ & $1.42\times{10^{+19}}$ & $[0.45, 2.40]\times{10^{+19}}$ \\
\arrayrulecolor{black}\bottomrule
\end{tabular}
}% %
\end{table}

A closer look at the error logs shows that the exploding gradient occurs due to rounding errors in the Riemannian distance function~\eqref{eq:totally_geodesic_distanceform} when the logarithm is applied to small eigenvalues of the relevant positive-definite matrix.  Though gradient clipping typically solves this issue, it occasionally resurfaces, and we thus report it here.

\bibliography{Bookkeaping/2_References}
\end{document}